\newtheorem {theorem}{Theorem}
\newtheorem {lemma}[theorem]{Lemma}
\newtheorem {proposition}[theorem]{Proposition}
\newtheorem {definition}[theorem]{Definition}
\theoremstyle{remark}
\newtheorem {remark}[theorem]{Remark}
\numberwithin{equation}{section}
\numberwithin{theorem}{section}
\newcommand{\comments}[1]{}
\newcommand{\Spinc}{$\mathrm{Spin}^c\;$}
\DeclareMathOperator{\CPo}{\mathbb{C}P^1}
\DeclareMathOperator{\CP2}{\mathbb{C}P^2}
\DeclareMathOperator{\barCP2}{\overline{\mathbb{C}P^2}}
\DeclareMathOperator{\T}{\mathcal{T}}
\DeclareMathOperator{\St}{\mathcal{S}}
\DeclareMathOperator{\Z}{\mathbb{Z}}
  \begingroup\color{blue}  Cagri \; $\blacktriangleright$ \;}{%
  \begingroup\color{red} Laura \; $\blacktriangleright$ \;}{%
\subjclass[2010]{57R17, 57R57, 32Q28, 32Q60}
\title{Surgery Along Star Shaped Plumbings and Exotic Smooth Structures on $4$-Manifolds}
\author{\c{C}a\u gr\i\; Karakurt}
\author{Laura \; Starkston}
\address{Department of Mathematics, Bogazici University, TR-34342 Bebek, Istanbul, TURKEY}
\email{cagri.karakurt@boun.edu.tr}
\address{Department of Mathematics,
The University of Texas at Austin 
2515 Speedway, RLM 8.100 Stop C1200,
Austin, TX 78712-1202, USA}%
\email{lstarkston@math.utexas.edu}
\date{}
\begin{document}
\maketitle

\begin{abstract}
We define a new $4$-dimensional symplectic cut and paste operation which is analogous to Fintushel and Stern's rational blow-down. We use this operation to produce multiple constructions of symplectic smoothly exotic complex projective space blown-up eight times, seven times, and six times. We also show how this operation can be used in conjunction with knot surgery to construct an infinite family of minimal exotic smooth structures on the complex projective space blown-up seven times.
\end{abstract}

\section{Introduction}

%With the discovery of Donaldson (and later Seiberg-Witten) invariants, it became possible to find many examples of 4-manifolds which are homeomorphic but not diffeomorphic to standard smooth 4-manifolds of the form $k\CP2\# \ell \barCP2$. Initially, constructions of these exotic 4-manifolds with small Euler characteristic proved more difficult to find. This produced interest in operations on 4-manifolds which decrease Euler characteristic and have the potential to yield exotic but computable gauge theoretic invariants.

In \cite{FS}, Fintushel and Stern introduced a cut and paste operation for $4$-manifolds called rational blow-down. They used it to compute the Donaldson polynomial of the logarithmic transforms of the elliptic surfaces. Since then, rational blow-down operation has proven to be very useful in $4$-dimensional topology. It was a useful constructional tool in the exotic copies of blown-up complex projective plane \cite{P, SS, PSS, FS2, Mic}. It could be used to construct symplectic manifolds \cite{Sym1, Sym2, GS1}, and in the presence of a certain Lefschetz fibration structure one can re-interpret it as a monodromy substitution \cite{EG, EMV}. 

The purpose of the present paper is to define a new cut and paste operation, called \emph{star surgery}, which is a strong generalization of Fintushel-Stern's rational blow-down. Just like rational blow-down, our operation reduces $b_2^-$ of the manifold to which it is applied. Moreover it can be performed symplectically and can be seen as a monodromy substitution.  

The original rational blow-down operation, and its generalizations \cite{P1, SSW, BS}, amount to removing the neighborhood of a union of spheres which intersect according to a particular plumbing tree and re-gluing a rational ball which has the same boundary as this neighborhood. Our star surgery operation is similarly defined. First identify (symplectic) spheres which intersect according to a star-shaped graph with a negativity condition on the central vertex. The star surgery operation cuts out a neighborhood of these spheres and replaces it by an alternate symplectic filling of the induced contact boundary. It is shown in \cite{Sta2} that these alternate fillings always have smaller Euler characteristic than the neighborhood of spheres and are negative definite. Unlike the rational blow-down we do not require the alternate filling to be a rational homology ball. This greatly generalizes the set of configurations of spheres which we can consider for these operations.

% For example in a special case of star surgery, one removes the neighborhood $\St_i$ of a union of spheres which intersect according to a star shaped plumbing graph as in Figure \ref{fig:Sn}. It can be shown that the $3$-manifold $\partial \St_i$ does not bound any rational homology ball unless $i=1$. Instead of a rational ball, one glues another $4$-manifold $\T_i$ whose intersection form is negative definite and Euler characteristic is strictly smaller than the Euler characteristic of $\St_i$. 

By reinterpreting this operation as a monodromy substitution, one can show that some star surgeries are obtained by a sequence of rational blow-downs. The spheres to rationally blow-down after the first step in the sequence are not all visible in the original configuration, and would be difficult to find. The star surgery bypasses the need to find these spheres by performing a single symplectic cut-and-paste operation that performs the entire sequence of rational blow-downs simultaneously. 

However, there are other star surgeries which are inequivalent to any sequence of symplectic rational blow-downs. An example of such a star surgery was proven in \cite{Sta2}. It is expected that many of these star surgery operations cannot be obtained from sequences of rational blow-downs. This contrasts with the operations one would obtain by replacing linear plumbings of spheres by alternate fillings, which were shown to all be equivalent to sequences of rational blow-downs in \cite{BO}.

Using star surgery, we construct many examples of exotic 4-manifolds. These constructions involve two steps. First we must find a configuration of symplectic spheres inside a well understood 4-manifold. In our examples we do this by looking at blow-ups of elliptic fibrations $E(1)$ using varying types of singular fibers to find symplectic spheres with the required intersection data. We explicitly construct many elliptic fibrations by blowing up various Lefschetz pencils on $\CP2$. Then we apply the star surgery operation which replaces this neighborhood of spheres with the smallest symplectic filling of the induced contact boundary Seifert fibered space. By keeping track of the homology classes of all of the spheres in the elliptic fibration, we are able to compute the small perturbation Seiberg-Witten invariants of the resulting manifold.

Using this technique, we construct a minimal symplectic $4$-manifold which is an exotic copy of  $\CP2 \# 8 \barCP2 $.
\begin{theorem}\label{theo:main}
There is a minimal symplectic $4$-manifold $X$ which is homeomorphic but not diffeomorphic to $\CP2 \# 8 \barCP2 $ which is obtained by a star surgery. The symplectic Kodaira dimension of $X$ is $2$.
\end{theorem}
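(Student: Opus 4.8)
The plan is to realize $X$ as the result of a single star surgery applied to a blow-up of the rational elliptic surface $E(1)$, and then to distinguish $X$ from the standard $\CP2 \# 8 \barCP2$ using Seiberg--Witten theory. First I would construct an explicit elliptic fibration on $E(1) = \CP2 \# 9\barCP2$ containing a singular fiber (or collection of fibers) rich enough to carry, after finitely many additional blow-ups, a configuration of symplectic spheres intersecting according to a star-shaped plumbing graph $\Gamma$ with the required negativity on the central vertex. Concretely, one starts from a cuspidal or $\tilde E_k$-type Lefschetz pencil on $\CP2$, blows up its base locus and some fiber components, and reads off the homology classes $h, e_1, \dots, e_N$ of all the exceptional and fiber spheres. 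The key bookkeeping step is to verify that the chosen spheres are symplectic, have the prescribed self-intersections and mutual intersections matching $\Gamma$, and that $b_2^-$ of the ambient manifold exceeds the number one needs so that the output has $b_2^- = 8$.

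Next I would invoke the star surgery operation itself: by the results recalled in the introduction (and in \cite{Sta2}), cutting out the plumbing neighborhood $Z_\Gamma$ and regluing the minimal symplectic filling $W$ of the induced Seifert-fibered contact boundary produces a symplectic $4$-manifold $X$, with $\chi(X) < \chi$(ambient) and with $W$ negative definite. A Mayer--Vietoris / characteristic-class computation using the known intersection form of $W$ and the homology classes of $\partial Z_\Gamma \hookrightarrow$ ambient then pins down $b_2^+(X) = 1$, $b_2^-(X) = 8$, and that $X$ is simply connected with the same parity as $\CP2 \# 8 \barCP2$; Freedman's theorem gives the homeomorphism.

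The main obstacle is proving $X$ is \emph{not} diffeomorphic to $\CP2 \# 8 \barCP2$ and is \emph{minimal}. For the exoticness, since $b_2^+(X) = 1$ the relevant invariant is the small-perturbation Seiberg--Witten invariant; I would show $X$ has a Seiberg--Witten basic class with nonzero invariant (e.g. the canonical class $K_X$, using Taubes' theorem that $(X,\omega)$ symplectic forces $SW_X(\pm K_X) \ne 0$, together with the wall-crossing/chamber analysis appropriate to $b_2^+ = 1$), whereas $\CP2 \# 8 \barCP2$ has trivial small-perturbation Seiberg--Witten invariants. The delicate part is the explicit identification of $K_X$: one must track the canonical class through the blow-ups (where $K$ picks up the exceptional classes) and through the surgery (using that $W$ glues in with a controlled effect on $K$, as in the rational blow-down formula), and then check $K_X^2 \ge 0$ or at least that $K_X$ is not torsion, which certifies a nontrivial invariant. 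Minimality follows because a symplectic $4$-manifold with $b_2^+ = 1$ and $K_X^2 \ge 0$, $K_X \cdot \omega > 0$ containing no symplectic $(-1)$-sphere is minimal; one rules out exceptional spheres by a positivity-of-intersections argument against $K_X$. Finally, the symplectic Kodaira dimension is $2$ precisely because $K_X^2 > 0$ and $K_X \cdot \omega > 0$, which the same computation of $K_X$ delivers; this is the statement I would verify last, as a corollary of the basic-class computation.
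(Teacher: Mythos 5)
Your high-level architecture matches the paper exactly: embed a star-shaped configuration of symplectic spheres into a blow-up of $E(1)$ via an explicit elliptic fibration (the paper uses $\CP2\#11\barCP2$ with the fibration of Lemma~\ref{l:fibration} having an $I_3$ and $I_0^*$ fiber), perform the $(\St_2,\T_2)$ star surgery, apply Freedman for the homeomorphism, and detect the exotic smooth structure via Seiberg--Witten theory in the small-perturbation chamber. The details, however, diverge in two places, one of which is a genuine gap.

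For non-diffeomorphism, the paper's primary argument (Proposition~\ref{p:exotic}) is actually \emph{more elementary} than what you propose: it computes $K_X\cdot\omega_X>0$ directly from the homology classes (Lemma~\ref{l:kodaira}) and invokes the Li--Liu uniqueness theorem that $\CP2\#k\barCP2$ ($2\le k\le 9$) supports a unique symplectic structure up to deformation and diffeomorphism, which has $K\cdot\omega<0$. No Seiberg--Witten theory is needed. Your route through Taubes' nonvanishing theorem can be made to work, but it is not self-contained as you describe it: for $b_2^+=1$ the Taubes chamber coincides with the small-perturbation chamber at $K_X$ precisely when $K_X\cdot\omega_X>0$, so you still need the full $K_X\cdot\omega_X>0$ computation anyway; the condition ``$K_X^2\ge 0$ or $K_X$ not torsion'' you mention is not the relevant one. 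The paper's secondary SW argument does not invoke Taubes at all; it computes $SW$ on $\CP2\#11\barCP2$ via wall-crossing against the positive-scalar-curvature chamber, then transfers it to $X$ using Michalogiorgaki's theorem, which requires observing that $\partial\St_2$ is a monopole $L$-space. This $L$-space transfer principle is the load-bearing tool in the paper's SW computations and is entirely absent from your sketch.

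The genuine gap is your minimality argument. You assert ``Minimality follows because a symplectic $4$-manifold with $b_2^+=1$ and $K_X^2\geq0$, $K_X\cdot\omega>0$ containing no symplectic $(-1)$-sphere is minimal; one rules out exceptional spheres by a positivity-of-intersections argument against $K_X$.'' This is circular (minimality \emph{is} the absence of $(-1)$-spheres) and the positivity claim does not hold: an exceptional sphere $E$ satisfies $K_X\cdot E=-1$ by adjunction, so positivity against $K_X$ yields no contradiction. Numerically, $K_X^2=1$ is exactly consistent with $X\cong X'\#\barCP2$ for a minimal $X'$ with $K_{X'}^2=2$, so no soft argument from characteristic numbers can exclude this. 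What the paper actually does (Proposition~\ref{p:minimal}) is a substantial Ozsv\'ath--Szab\'o-style computer search: enumerate all integral, characteristic, adjunctive classes $L\in H^2(X;\Z)$ with $d_X(L)\ge0$; for each, determine whether the restriction to $X\setminus\T_2$ extends over $\St_2$ to a characteristic class on $\CP2\#11\barCP2$, using $d$-invariant obstructions and the index-$2$ restriction map $H^2(\T_2)\to H^2(\partial\T_2)$; then apply wall-crossing plus Theorem~\ref{theo:mic} to conclude $\pm\widetilde{K}$ are the only adjunctive basic classes; finally rule out non-adjunctive basic classes by the adjunction-relations argument. Minimality then follows from the blow-up formula, since a non-minimal $X$ would have at least four basic classes. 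This is where the real work of the theorem lives, and your proposal leaves it essentially undone.

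The Kodaira-dimension conclusion is handled correctly in spirit (it requires minimality first, which you note), but it inherits the gap above.
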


We push our techniques further using different examples of star surgery operations. These examples yield exotic (potentially non-minimal) symplectic copies of $\CP2 \# k \barCP2 $, for $k=6,7$. 
\begin{theorem}\label{theo:main3}
There are constructions of symplectic exotic copies of $\CP2 \# 7 \barCP2 $ and $\CP2 \# 6 \barCP2$ obtained by performing star surgery operations on blow-ups of $E(1)$.
\end{theorem}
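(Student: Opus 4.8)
\textbf{Proof strategy for Theorem~\ref{theo:main3}.} The plan is to run the same two-step machinery used for Theorem~\ref{theo:main}, but feeding it different star-shaped plumbing graphs. First, from the list of symplectically fillable Seifert fibered spaces in \cite{Sta2} I would select star-shaped graphs $\Gamma$ (with the negativity required on the central vertex) whose minimal alternate symplectic filling $W$ satisfies $\chi(W)=\chi(N(\Gamma))-j$ and $\sigma(W)=\sigma(N(\Gamma))-j$ for the value of $j$ that makes the bookkeeping work out: excising the plumbing neighborhood $N(\Gamma)$ from a blow-up $E(1)\#m\,\overline{\mathbb{C}P^2}$ and gluing in $W$ should yield a closed $4$-manifold with $b_2^+=1$ and $b_2^-=7$ (respectively $6$).

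Second, I would realize the symplectic sphere configuration underlying $\Gamma$ inside a blow-up of $E(1)$. Starting from a cubic pencil on $\mathbb{C}P^2$ and blowing up its nine base points gives an elliptic fibration $E(1)\to\mathbb{C}P^1$; choosing the pencil so that its singular fibers degenerate to prescribed Kodaira types (nodal fibers $I_1$ and reducible fibers such as $I_n$, $III$, $IV$, $I_k^*$) produces symplectic $(-2)$-spheres as components of reducible fibers and symplectic $(-1)$-spheres as sections, and blowing up further at intersection points of these spheres lets one lower self-intersections and break incidences until the graph $\Gamma$ is matched exactly, including its sufficiently negative central vertex. The essential point is to do this while recording the homology class in $H_2(E(1)\#m\,\overline{\mathbb{C}P^2})=\langle h,e_1,\dots,e_{9+m}\rangle$ of every sphere involved and of the symplectic fiber class, since that is the data the invariant computation requires. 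Applying the star surgery of \cite{Sta2} --- remove $N(\Gamma)$, glue in $W$ along the induced contact boundary --- then produces symplectic $4$-manifolds $X_7$ and $X_6$.

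Third, identify $X_k$ topologically. A Mayer--Vietoris and van Kampen computation, using that $W$ is negative definite and simply connected (from \cite{Sta2}) and that the complement $E(1)\#m\,\overline{\mathbb{C}P^2}\setminus N(\Gamma)$ is simply connected (which is checked directly from the explicit fibration, e.g.\ a section survives in the complement), shows that $X_k$ is simply connected with $b_2^+=1$, $b_2^-=k$, and odd intersection form; Freedman's theorem then gives a homeomorphism $X_k\cong\mathbb{C}P^2\#k\,\overline{\mathbb{C}P^2}$. Fourth, distinguish $X_k$ smoothly. Since $\mathbb{C}P^2\#k\,\overline{\mathbb{C}P^2}$ is rational --- indeed it carries a positive scalar curvature metric --- all of its small-perturbation Seiberg--Witten invariants vanish, so it suffices to exhibit one nonzero small-perturbation invariant on $X_k$. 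I would compute these via the star-surgery analogue of the Fintushel--Stern rational blow-down formula: a $\mathrm{Spin}^c$ structure on $X_k$ contributes precisely when it glues a $\mathrm{Spin}^c$ structure on the complement to a fixed one on $W$, and the recorded homology classes pin down which characteristic classes of $E(1)\#m\,\overline{\mathbb{C}P^2}$ restrict appropriately; the $b_2^+=1$ chamber structure must be tracked, and is controlled by performing the count in the chamber adapted to the symplectic form. Equivalently, one may compute the symplectic Kodaira dimension of $(X_k,\omega)$ from $K_{X_k}^2$ and $K_{X_k}\cdot[\omega]$, and observe that it is $\geq 0$; since no symplectic structure on a rational surface has nonnegative Kodaira dimension, $X_k$ is not diffeomorphic to $\mathbb{C}P^2\#k\,\overline{\mathbb{C}P^2}$.

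The main obstacle is the second step: producing explicit symplectic sphere configurations for these $\Gamma$'s inside blow-ups of $E(1)$ while keeping complete control of all homology classes, and doing so in a way that still leaves a characteristic class in the complement surviving the surgery with nonzero invariant (equivalently, that makes the post-surgery Kodaira dimension nonnegative). The geometric realization and the Seiberg--Witten (or Kodaira-dimension) bookkeeping are tightly coupled: a careless choice of pencil, of singular-fiber configuration, or of blow-up locations can fail to embed $N(\Gamma)$ symplectically, or can kill every basic class of the output.
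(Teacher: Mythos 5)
Your outline matches the paper's overall strategy for Theorem~\ref{theo:main3}: find a star-shaped configuration, embed it in a blow-up of $E(1)$ with full control of homology classes, star-surger, then use Freedman plus a Seiberg--Witten (or Kodaira-dimension) argument in the small-perturbation chamber. However, the proposal leaves the two load-bearing choices unspecified, and those choices are exactly where the theorem's content lives. The paper does not ``select $\Gamma$ from a list in \cite{Sta2}''; rather, it introduces its own explicit star surgeries $(\mathcal{Q},\mathcal{R})$ and $(\mathcal{U},\mathcal{V})$ (Propositions~\ref{pro:exchangeST} and \ref{pro:exchangeUV}), proving the needed monodromy identities via Park's $(-2,-5,-3)$ substitution, lantern, and daisy relations. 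Correspondingly, the explicit elliptic fibrations it requires are also constructed here (Lemmas~\ref{l:fibration2} and \ref{l:fibration3}): one with two $I_2$ and two $I_4$ fibers to embed $\mathcal{Q}$ into $\CP2\#12\barCP2$ (giving the exotic $\CP2\#7\barCP2$), and one with two $I_5$ fibers and two fishtails to embed $\mathcal{U}$ into $\CP2\#13\barCP2$ (giving the exotic $\CP2\#6\barCP2$). Without naming these pairs and verifying (i) that the replacement fillings are simply connected and negative definite and (ii) that the embeddings place the spheres in homology classes for which a suitable chamber vector $R$ exists with $R\cdot[u_{i,j}]=0$, $R^2>0$, $R\cdot h>0$, $R\cdot K>0$, one has only a program, not a proof.

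Two smaller points. First, your bookkeeping $\sigma(W)=\sigma(N(\Gamma))-j$ has the wrong sign: since $W$ and $N(\Gamma)$ are both negative definite with $b_2(W)=b_2(N(\Gamma))-j$, one gets $\sigma(W)=\sigma(N(\Gamma))+j$ (it becomes less negative). Second, your simple-connectedness argument leans on showing the complement $E(1)\#m\barCP2\setminus N(\Gamma)$ is simply connected (``a section survives in the complement''), but in the paper's constructions the section is absorbed into the central sphere $u_0$, so it does not survive. The actual argument is a van Kampen computation that only needs $\mathcal{R}$ and $\mathcal{V}$ simply connected: since the ambient blow-up of $E(1)$ and the plumbing are both simply connected, the image of $\pi_1(\partial N(\Gamma))$ normally generates $\pi_1$ of the complement, and gluing in a simply connected filling kills those generators. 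Also note the paper does not use the Kodaira-dimension route for these two cases (that route is used for the $\CP2\#8\barCP2$ example); it relies on Michalogiorgaki's theorem and the wall-crossing formula to show the canonical class descends to a basic class, as you sketch in your Seiberg--Witten variant.
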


Other star surgery operations, including the star surgery which is known to be inequivalent to any sequence of rational blow-downs, can be used for similar constructions. In particular, we show that this star surgery can be used to construct an exotic $\CP2\# 8\barCP2$, and related star surgeries can be used to improve these constructions to manifolds with $b_2^-=6,7$.

While the star surgery operations are inspired by symplectic topology, they can also be used smoothly in the absence of a symplectic structure. By using star surgery after performing Fintushel and Stern's knot surgery in a double node neighborhood \cite{FS2} (which destroys the symplectic structure), we prove the following result.

\begin{theorem}\label{theo:main2}
For every $n\geq2$ there exist smooth minimal mutually non-diffeomorphic $4$-manifolds  $Y_n$ which are all homeomorphic to $\CP2 \# 7\barCP2$. These manifolds are obtained by a star surgery.
\end{theorem}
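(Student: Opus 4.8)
The plan is to produce the family $Y_n$ in three stages: build a convenient symplectic model, apply knot surgery in a double node neighborhood to get a smooth (non-symplectic) family with controlled Seiberg-Witten theory, and finally apply a star surgery that reduces $b_2^-$ to $7$ while keeping enough of the Seiberg-Witten invariant to distinguish the members of the family and to rule out a standard smooth structure.

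First I would start from a blow-up of $E(1)$ equipped with an elliptic fibration chosen to contain a double node neighborhood (a fishtail-type configuration as in Fintushel-Stern \cite{FS2}), together with a star-shaped configuration $\Gamma$ of symplectic spheres disjoint from that double node neighborhood. This is exactly the type of input the earlier constructions use, and I would reuse the explicit pencil/blow-up descriptions alluded to before Theorem \ref{theo:main}; the key bookkeeping is to record the homology classes of all spheres in $\Gamma$ so that after the cut-and-paste we can identify the surviving basic classes. The double node neighborhood must be chosen so that $E(1)$ blown up the appropriate number of times, after knot surgery and after the star surgery, has $b_2^- = 7$; this fixes how many blow-ups and which star surgery to use, matching the $b_2^-=7$ construction of Theorem \ref{theo:main3}.

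Next I would run Fintushel-Stern knot surgery using a knot $K_n$ (for instance a twist knot or a family with Alexander polynomials $\Delta_{K_n}(t)$ of strictly increasing complexity) in the double node neighborhood. The point of doing this inside a double node neighborhood rather than along a generic torus is that it does not require the torus to have simply connected complement in the usual sense and, crucially for Theorem \ref{theo:main2}, it is carried out so that the resulting manifold is still \emph{smoothly} well-defined even though it is no longer symplectic. The effect on the Seiberg-Witten (or small-perturbation Seiberg-Witten) invariant is the usual multiplication by $\Delta_{K_n}$, so the members of the family are pairwise non-diffeomorphic before the star surgery; I then need to check this distinction survives the last step. Then I apply the star surgery to the configuration $\Gamma$, replacing a neighborhood of the spheres by the minimal symplectic filling of the induced Seifert-fibered contact boundary provided by \cite{Sta2}; because $\Gamma$ is disjoint from the double node neighborhood, the knot surgery and the star surgery commute, and the homology computation from \cite{Sta2} together with the recorded sphere classes lets me compute $b_2^\pm$ and the characteristic cohomology of $Y_n$, verifying it is homeomorphic to $\CP2 \# 7\barCP2$ via Freedman (after checking simple connectivity and the intersection form type).

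The main obstacle, and the heart of the argument, is the last step: showing the $Y_n$ are minimal, mutually non-diffeomorphic, and not diffeomorphic to $\CP2 \# 7\barCP2$, all at the smooth (non-symplectic) level. I would handle the non-standardness and the pairwise distinction by a gluing/product-formula argument for Seiberg-Witten invariants under the star surgery: since the alternate filling is negative definite with no nontrivial Seiberg-Witten basic classes of its own, the basic classes of $Y_n$ are determined by those of the manifold before surgery that are ``supported away'' from the excised neighborhood, and these still carry the factor $\Delta_{K_n}$, so $SW_{Y_n}$ detects $n$ and is nontrivial for more than one class (hence $Y_n$ is not $\CP2 \# 7\barCP2$, which has trivial $SW$, and $Y_n \not\cong Y_m$ for $n \ne m$). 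Minimality is the subtler point: I would argue that any embedded $(-1)$-sphere would have to persist after reversing the star surgery to give a $(-1)$-sphere in a minimal (or near-minimal) model, contradicting the adjunction/SW-based minimality results; alternatively, minimality of the symplectic pieces combined with the structure of the basic classes (the canonical class argument behind symplectic minimality, adapted to the surgered manifold) rules out $(-1)$-spheres. Care is needed because $Y_n$ is not symplectic, so I cannot cite Taubes directly; the cleanest route is probably to note that the relevant SW information is inherited from a symplectic manifold to which $Y_n$ is related by knot surgery (which preserves SW basic class structure) and star surgery on a configuration disjoint from the surgery torus, and then invoke the minimality criteria in terms of SW basic classes.
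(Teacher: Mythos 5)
Your overall strategy matches the paper's in broad outline (start from a blow-up of $E(1)$ with an elliptic fibration, do Fintushel--Stern knot surgery with twist knots $K_n$ in a double node neighborhood, then apply a star surgery, and detect the exotic structures via Seiberg--Witten theory and Theorem~\ref{theo:mic}). However, there are two significant gaps.

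\textbf{The configuration is not disjoint from the double node neighborhood.} You assert that the star-shaped configuration $\Gamma$ is ``disjoint from the double node neighborhood'' so that ``the knot surgery and the star surgery commute.'' This misses the central idea of the construction. In the paper, the configuration $\St_2$ is built in $E(1)_{K_n}\#\barCP2$ using the \emph{pseudo-section}: after knot surgery in the double node neighborhood of a perturbed $I_2$ fiber, the old section $E_2$ survives as an immersed sphere $S$ with one double point (this is exactly what the Fintushel--Stern double node trick gives you, provided you choose the gluing carefully), and the central sphere $u_0$ of $\St_2$ is the symplectic resolution of the blown-up pseudo-section with $\widetilde E_4$, yielding $[u_0]=e_1-2e_{10}$. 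So the star surgery is applied to a configuration that only exists \emph{because of} the knot surgery; the two operations do not commute, and the double node neighborhood is there precisely to produce the pseudo-section needed to build the plumbing (not, as you suggest, to handle simple connectivity of the complement or to make the smooth surgery well-defined — ordinary knot surgery on a fiber is already smoothly well-defined).

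\textbf{The minimality argument is not carried out.} Your suggestion that a $(-1)$-sphere ``would have to persist after reversing the star surgery'' and your appeal to a loose SW-minimality criterion do not constitute a proof. In the paper, minimality of $Y_n$ is proved by an explicit Seiberg--Witten dimension count: the blown-up knot-surgered manifold $X_n=E(1)_{K_n}\#\barCP2$ has basic classes $\pm PD([F])\pm e_{10}$, and one shows (via Theorem~\ref{theo:mic}, the wall-crossing argument with the carefully chosen chamber class $H$, and the fact that $\partial \St_2$ is a monopole $L$-space) that $\pm\widetilde K$ are basic classes of $Y_n$ with $|SW_{Y_n}|=n$, while the other candidate class $P=-PD([F])+e_{10}$ does \emph{not} descend to a basic class of $Y_n$ because the expected dimension $d_{Y_n}(\widetilde P)=-13/6+(\widetilde P|_{\T_2})^2/4<0$ by negative definiteness of $\T_2$. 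This leaves exactly two basic classes $\pm\widetilde K$ with invariant $\pm n$, which rules out $(-1)$-spheres by the blow-up formula. This explicit computation, along with the role of the pseudo-section in the embedding, is the heart of the proof and is missing from your proposal.
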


Examples of  (minimal)  exotic copies of $\CP2 \# k \barCP2 $, for $k=6,7,8$ have previously been constructed using the rational blow-down technique. The exotic structure was detected by calculating the effect of the rational blow-down on the Seiberg-Witten invariants \cite{P, SS, FS2, Mic}. The effect of star surgery on Seiberg-Witten invariants is similar to the effect of rational blow-down \cite{Mic}. The main reason is that the boundary of the star shaped configuration  is an $L$-space. In other words the Monopole Floer homology of the boundary of the neighborhood of these configurations of spheres is the simplest group it could be.

Finding exotic copies of $\CP2 \# k \barCP2 $ for small $k$ is a problem which has been studied for many years. In the 1980s, gauge theoretic techniques were used to distinguish Dolgachev surfaces from $\CP2 \# 9 \barCP2$ \cite{D, FM} and the Barlow surface from $\CP2\# 8\barCP2$ \cite{Ko}. Significant progress was made using the rational blow-down to construct an exotic $\CP2 \#  k \barCP2$ for $k\geq 5$, \cite{P, SS, PSS, FS2, Mic}. Later this was improved to $k\geq 2$ using different techniques \cite{ AP1, AP2, A1, ABP, BK, FS5, FS6}.  Because these star surgery operations greatly increase the possible configurations of surfaces which can be cut out and replaced, we hope that more star surgery constructions will be found and can be used to improve this bound or exhibit other new phenomena in smooth 4-manifold topology.

The organization is as follows: In section \ref{s:starsurgerydefinitions} we define our star surgery operation, and describe the explicit examples which we will use in constructions of exotic 4-manifolds. In section \ref{s:fillingproperties}, we determine properties of these star surgery operations by computing algebraic topological invariants of the fillings. In section \ref{s:ellipticfibrations}, we construct three explicit elliptic fibrations which we will use to embed configurations of symplectic spheres to perform star surgery on. Theorems \ref{theo:main} and \ref{theo:main3} are proven in section \ref{s:exoticconstructions}, where we use star surgery to construct manifolds and compute their homeomorphism invariants, Kodaira dimension, and Seiberg-Witten invariants. Finally, theorem \ref{theo:main2} is proven in section \ref{s:smoothexample}, by using knot surgery and star surgery together.

\section*{Acknowledgments}
We would like to thank Kouichi Yasui and Tian-Jun Li for helpful e-mail correspondences.  In the course of this work, the first author  was supported by  the National Science Foundation FRG Grant DMS-1065178 and a TUBITAK grant BIDEB 2232. The second author was supported by a National Science Foundation Graduate Research Fellowship under Grant No. DGE-1110007.

\section{Star surgery} \label{s:starsurgerydefinitions}
{

\subsection{Description}
Rational blow-downs of plumbings of spheres were shown to be symplectic operations by Symington \cite{Sym1,Sym2} proving that both the plumbing of spheres and the rational homology ball, support symplectic structures with convex boundary inducing the same contact structures. One may ask more generally, what can replace a neighborhood of spheres in this symplectic cut-and-paste manner. This question is reduced to understanding symplectic fillings of certain contact structures by the following result of Gay and Stipsicz.

\begin{theorem}[Theorem 1.2 of \cite{GS2}]
If $C=C_1\cup \cdots \cup C_n\subset (X,\omega)$ is a collection of symplectic surfaces in a symplectic 4-manifold $(X,\omega)$ intersecting each other $\omega$-orthogonally according to the negative definite plumbing graph $\Gamma$ and $\nu C\subset X$ is an open set containing $C$, then $C$ admits an $\omega$-convex neighborhood $U_C\subset \nu C\subset (X,\omega)$.
\end{theorem}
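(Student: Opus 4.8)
\emph{Proof strategy.} The plan is to identify the germ of $(X,\omega)$ near $C$ with a standard symplectic model for the plumbing, and then to exhibit the convex hypersurface inside that model, where one has explicit control. First I would record the local normal forms. Along the portion of each $C_i$ away from the intersection points, the symplectic neighborhood theorem identifies a tubular neighborhood with a neighborhood of the zero section in a symplectic disk bundle of Euler number $[C_i]^2$, with fibre area as small as we wish. At a point $p\in C_i\cap C_j$, the hypothesis that $C_i$ and $C_j$ meet $\omega$-orthogonally lets us choose a Darboux ball $(B^4,\omega_{\mathrm{std}})$ about $p$ in which $C_i=\{z_2=0\}$ and $C_j=\{z_1=0\}$; this is precisely the local picture of two plumbed symplectic disk bundles.

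Next I would build the abstract model $(P_\Gamma,\omega_\Gamma)$: glue symplectic disk bundles over surfaces of the genera dictated by $\Gamma$, with the prescribed negative Euler numbers, using the local plumbing model above in the gluing regions, so that $P_\Gamma$ contains a configuration $C'$ of symplectic surfaces realizing $\Gamma$. We are free to arrange $\int_{C_i'}\omega_\Gamma=\int_{C_i}\omega$, since only a germ is needed and the areas of the zero sections are free parameters in this construction (the fibre areas being taken small). Because $\Gamma$ is negative definite, the intersection pairing on $H_2$ of a neighborhood $N$ of $C$ is nondegenerate, and $H_2(N;\mathbb{R})$ is spanned by $[C_1],\dots,[C_n]$; hence a class in $H^2(N;\mathbb{R})$ is detected by its evaluations $\int_{C_i}$, so once the areas are matched $\omega$ and the model form agree in $H^2(N;\mathbb{R})$. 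A Moser-type argument -- the symplectic neighborhood theorem for a configuration of $\omega$-orthogonally intersecting surfaces, obtained by patching the local normal forms above with a partition of unity and then correcting by a compactly supported isotopy -- upgrades the local identifications to a symplectomorphism $\Phi$ from a neighborhood of $C'$ in $(P_\Gamma,\omega_\Gamma)$ onto a neighborhood of $C$ in $(X,\omega)$ contained in $\nu C$.

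Finally, as recalled in the discussion above, a negative definite plumbing carries a symplectic structure with convex boundary (Symington); alternatively, by Grauert's contractibility criterion one realizes $P_\Gamma$ holomorphically as a resolution of a normal surface singularity with $C'$ the exceptional divisor and takes a strictly plurisubharmonic function $\varphi$ defined near $C'$ with $C'\subset\{\varphi<0\}$, or one patches the radial Liouville fields of the negative disk bundles, using negative definiteness to see that they glue to a field transverse to the level sets of $\varphi$. After a further isotopy and rescaling (as in the previous paragraph, using that only the germ matters) we may take this convex structure to be $\omega_\Gamma$. Then for a small regular value $c<0$ the hypersurface $\{\varphi=c\}$ is $\omega_\Gamma$-convex and bounds a neighborhood of $C'$, and $U_C:=\Phi(\{\varphi<c\})\subset\nu C$ is the desired $\omega$-convex neighborhood of $C$.

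The main obstacle is the globalization in the middle step: one must run a single Moser/partition-of-unity argument over a neighborhood of the whole configuration, reconciling the independently chosen Darboux normal forms at the intersection points with the disk-bundle normal forms along the rest of each $C_i$, and handling the mild non-smoothness of the neighborhood near the intersection points. The two hypotheses are exactly what make this possible: $\omega$-orthogonality at the intersections forces the local models there to be the standard plumbing model, and negative definiteness of $\Gamma$ ensures that matching symplectic areas suffices to match the cohomology class of $\omega$ on the neighborhood.
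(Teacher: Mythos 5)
This theorem is not proven in the paper you are reading; the authors quote it verbatim as Theorem~1.2 of Gay--Stipsicz \cite{GS2} and use it as a black box. So there is no ``paper's own proof'' to compare against, and your attempt has to be judged on its own terms.

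Your overall plan --- (i) build an abstract plumbing model $(P_\Gamma,\omega_\Gamma)$ realizing the graph, (ii) identify a germ of $(X,\omega)$ near $C$ with the model via a Moser/neighborhood theorem, and (iii) exhibit a convex hypersurface inside the model and push it forward --- is a legitimate high-level strategy, and you correctly identify the two hypotheses ($\omega$-orthogonality and negative definiteness) as the things that make it go. But the proof as written is a plan rather than a proof, because both cruxes are asserted rather than established, and the second one is precisely where the theorem lives.

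On step (ii): a symplectic neighborhood theorem for a \emph{configuration} of surfaces is genuinely more delicate than Weinstein's theorem for a single surface. Matching the periods $\int_{C_i}\omega$ with $\int_{C_i'}\omega_\Gamma$ only gives $[\omega-\Phi^*\omega_\Gamma]=0$ in $H^2(N;\mathbb{R})$; to run Moser relative to $C$ you also need the two forms to agree pointwise on $TC$ along $C$ (not just in the integrated sense --- for higher genus components equal areas do not force pointwise agreement), and you need a primitive of the difference that vanishes to the right order along the whole singular set $C$, including at the double points. You acknowledge this as ``the main obstacle,'' which is fair, but you do not overcome it, and the statement you are proving is exactly the statement that the obstacle can be overcome. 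On step (iii): saying that one ``patches the radial Liouville fields of the negative disk bundles, using negative definiteness to see that they glue'' is where the actual work of Gay--Stipsicz happens; the point is that the radial Liouville fields on the individual disk bundles are \emph{not} automatically compatible in the overlap regions, and one has to choose weights (scalings of the pieces) so that they fit together into a single outward-pointing Liouville field, and the solvability of the relevant linear system is what negative definiteness of the intersection matrix buys you. In your write-up this is invoked as if obvious. Likewise the Grauert/plurisubharmonic route requires identifying $\omega_\Gamma$ near $C'$ with a K\"ahler form arising from a $\mathrm{dd}^c$-potential on a resolution, which is again a symplectic neighborhood problem for the configuration, not a free move.

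Finally, a point of emphasis: the nontrivial content of the theorem is that $U_C$ can be taken inside an \emph{arbitrary} prescribed $\nu C$. Your argument addresses this by asserting that ``only a germ is needed'' and that fibre areas and scalings are free parameters, but once the Liouville field on the model is in hand one still has to check that its flow shrinks a convex neighborhood arbitrarily close to $C$; this is where the negativity of the Euler numbers (not merely of the global intersection form) is used to ensure the Liouville flow contracts toward the zero sections rather than escaping along some component. None of this is fatal to the plan, but as it stands the proposal reproduces the statement's difficulty rather than resolving it, and should not be regarded as a proof.
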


Note that the $\omega$-orthogonal condition can be achieved by any configuration of spheres which intersect positively and transversely by an isotopy through symplectic spheres.

 The contact structures induced on the boundaries can be understood through an open book decomposition by results of Gay and Mark (under the additional assumption that the plumbing graph contains no bad vertices i.e. $w_j+e_j<0$ where $w_j$ is the weight of a vertex $v_j$ and $e_j$ is the number of edges emanating from $v_j$). Let $C$ be the union of symplectic surfaces intersecting $\omega$-orthogonally according to such a graph. Form a surface $\Sigma$ from the plumbing graph as follows. Start with the surfaces corresponding to each vertex $v_j$  and connect sum on $|w_j+e_j|$ disks. Then connect sum the resulting surfaces according to the edges of the graph. Take one simple closed curve around each connect sum neck, and denote these curves by $c_1,\dots, c_k$. 
 
 \begin{theorem}[Theorem 1.1 of \cite{GM}] \label{thm:GM}
 Any neighborhood of $C$ contains a neighborhood of $C$ with strongly convex boundary that admits a symplectic Lefschetz fibration having regular fibers $\Sigma$ and exactly one singular fiber. The vanishing cycles are $c_1,\dots, c_k$ and $C$ is the union of the closed components of the singular fiber. The induced contact structure on the boundary is supported by the induced open book $(\Sigma,\tau)$, where $\tau$ is a composition of positive Dehn twists around the curves $c_1,\dots, c_k$.
 \end{theorem}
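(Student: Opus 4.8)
The plan is to construct an explicit model for the total space of the asserted Lefschetz fibration and then to identify it symplectically with a neighborhood of $C$. For the model, I would start from $\Sigma\times D^2$ and attach, over a single point $p\in\partial D^2$, one $2$-handle along each of the (pairwise disjoint) curves $c_i\times\{p\}\subset\Sigma\times\partial D^2$ with framing $-1$ with respect to the fiber framing. This produces a $4$-manifold $W$ with a Lefschetz fibration $f\colon W\to D^2$ whose generic fiber is $\Sigma$, whose vanishing cycles are $c_1,\dots,c_k$, whose monodromy along $\partial D^2$ is $\tau=\tau_{c_1}\cdots\tau_{c_k}$, and which has a single singular fiber $F_0=f^{-1}(0)$ obtained from $\Sigma$ by collapsing the $c_i$. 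By the way $\Sigma$ and the $c_i$ were extracted from $\Gamma$, the closed components of $F_0$ are surfaces $C'_1,\dots,C'_n$ with $g(C'_j)=g_j$ and the remaining components are spheres; moreover, since $[\Sigma]^2=0$ and a generic fiber is disjoint from $F_0$, one computes $(C'_j)^2=-\#\{\text{nodes of }F_0\text{ lying on }C'_j\}=-(e_j+|w_j+e_j|)=w_j$ (here $w_j+e_j<0$ is used), while $C'_i\cdot C'_j$ counts the edges of $\Gamma$ between $v_i$ and $v_j$. Hence $C':=C'_1\cup\cdots\cup C'_n$ is a configuration of symplectic surfaces realizing $\Gamma$.

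Next I would run Kirby calculus on this handle picture: the $2$-handles attached along the ``extra disk'' curves cancel against the corresponding $1$-handles of $\Sigma\times D^2$, and what remains reduces to the standard plumbing handle diagram associated to $\Gamma$ --- this is precisely where the count $|w_j+e_j|$ of extra disks per vertex is forced. Thus $W$ is diffeomorphic to the plumbing $P_\Gamma$, a regular neighborhood of $C'$. Equipping $W$ with a symplectic form via the Gompf--Thurston construction for Lefschetz fibrations over $D^2$ makes $F_0$, and hence each $C'_j$, symplectic, so by the symplectic neighborhood theorem for $\omega$-orthogonal configurations of symplectic surfaces (the uniqueness underlying Theorem~1.2 of \cite{GS2}), after rescaling the fiber and base areas there is a symplectic embedding of a neighborhood of $C'$ in $W$ onto a neighborhood of $C$ inside any prescribed $\nu C$, taking $C'$ to $C$. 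Transporting the Lefschetz fibration along this embedding yields the fibration asserted in the statement, with $C$ the union of the closed components of the singular fiber.

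It remains to arrange the symplectic form so that $\partial W$ is strongly convex and to recognize the induced contact structure. Restricting $f$ to $\partial W$ gives an open book with pages $f^{-1}(\text{radial ray})\cong\Sigma$ and monodromy $\tau$, and the Giroux correspondence then identifies the induced contact structure with the one supported by $(\Sigma,\tau)$. The hard part will be the convexity: one must tune the Gompf--Thurston form --- large base area relative to fiber area, together with controlled behavior in a collar of $\partial W$ --- so that a Liouville vector field transverse to $\partial W$ exists, and this is exactly where the ``no bad vertices'' hypothesis $w_j+e_j<0$ enters, since it guarantees that every piece of the fiber appearing in the monodromy has negative Euler characteristic and that $\tau$ is a genuinely positive word of Dehn twists, which is what forces the compatible boundary to be of contact type rather than concave. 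Given the elementary homological bookkeeping of the first paragraph and the routine (if lengthy) Kirby calculus of the second, I expect this convexity analysis, and pinning down exactly how the no-bad-vertices condition is used there, to be the main obstacle.
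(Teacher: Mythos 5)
This statement is quoted by the paper directly from Gay and Mark \cite{GM} without proof, so there is no in-paper argument to compare against; I can only assess the proposal on its own terms. Your model-building and the homological check in the first paragraph are correct (in particular $(C_j')^2=-e_j-|w_j+e_j|=w_j$ using $w_j+e_j<0$), and the overall strategy --- build the abstract Lefschetz fibration $W$, identify it smoothly with the plumbing, give it a Gompf--Thurston form, and relocate via uniqueness of symplectic configuration neighborhoods --- is a plausible alternative to Gay--Mark's more direct, handle-by-handle symplectic construction.

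However, there is a genuine gap in the transport step, and it is not where you say the difficulty lies. The symplectic neighborhood theorem for $\omega$-orthogonal configurations produces only a germ: it identifies \emph{some} small neighborhood of $C'$ in $W$ with \emph{some} small neighborhood of $C$ in $X$ after matching areas. But the Lefschetz fibration does not restrict to a Lefschetz fibration on an arbitrarily small neighborhood of $C'$; the natural shrinking $f^{-1}(D_\epsilon)$ is a neighborhood of the \emph{entire} singular fiber $F_0$, whose non-closed (disk) components run all the way out to $\partial W$. To ``transport the Lefschetz fibration'' you must embed all of $W$ (or at least some $f^{-1}(D_\epsilon)$) symplectically into the prescribed $\nu C$, carrying $C'$ to $C$. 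That requires showing the Gompf--Thurston form on $W$ can be made small in a controlled way --- small base area, small fiber area on the extra-disk pieces --- while pinning $\int_{C'_j}\omega_W=\int_{C_j}\omega$, and then extending the germ identification over all of $W$ by a Moser or Liouville-flow argument; none of this is addressed. By contrast, the convexity you flag at the end as the ``main obstacle'' is comparatively routine: a Lefschetz fibration over a disk whose fiber has nonempty boundary and whose vanishing cycles are homotopically essential carries a Weinstein structure with convex boundary, independently of the no-bad-vertices condition. The hypothesis $w_j+e_j<0$ is already fully used earlier: it makes $|w_j+e_j|=-w_j-e_j$ a genuine positive count of extra disks (so $\Sigma$ has boundary and the self-intersection formula holds), while the positivity of the monodromy word $\tau$ is automatic for any Lefschetz fibration and does not depend on the graph.
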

 
Note that since the curves $c_1,\dots, c_k$ are disjoint from each other, the order of the Dehn twists does not matter in defining $\tau$.

\begin{remark}
It was shown by Park and Stipsicz \cite{PS} that this contact structure is in fact the canonical contact structure on the boundary Seifert fibered space (given by the complex tangencies on the link of the corresponding normal surface singularity). Their result holds more generally for the boundary contact structure of any convex negative-definite plumbing of surfaces.
\end{remark}

In the case that the symplectic spheres intersect according to a star-shaped graph, additional tools are available to search for alternate convex fillings of the same convex boundary. Classifications of such fillings were studied in \cite{Sta}. While most plumbings do not share the same convex boundary with a rational homology ball, many share convex boundary with a symplectic filling of significantly smaller Euler characteristic. We will call the operation of cutting out the neighborhood of spheres which intersect according to a star-shaped graph, and replacing it with an alternative convex symplectic filling of strictly smaller Euler characteristic \emph{star surgery}.

It is not clear that any contactomorphism of the boundary extends over the alternate convex filling, so the star surgery will depend on an identification of the convex contact boundary of the filling with the concave contact boundary of the complement of the star-shaped plumbing. In our cases, this identification will be made using the open book decomposition defined by Theorem \ref{thm:GM} and an equivalent open book decomposition on the boundary of the alternate convex filling.

In each case, the neighborhood of spheres will be replaced by an alternate symplectic filling supported by a Lefschetz fibration. The fibers of this Lefschetz fibration will agree with the fibers of the Lefschetz fibration constructed by Gay and Mark on the plumbing neighborhood of spheres. However the vanishing cycles will differ. We will show that the induced contact structures on the boundary agree, by showing that the open book decompositions have equal monodromy. In order to do this, we will require knowledge of relations in the mapping class group of planar surfaces.

\subsection{Conventions on mapping class elements and handle diagrams for Lefschetz fibrations}
{
A Lefschetz fibration naturally induces an open book decomposition on the boundary where the fibers of the open book are the same as the fibers of the Lefschetz fibration, and the monodromy is given by a product of positive (right-handed) Dehn twists about the vanishing cycles. Since mapping class groups of surfaces are non-abelian, the order of the vanishing cycles generally matters. Conventions in the literature vary, but we will use a fixed set of conventions throughout this paper which are consistent with each other, that we describe here.

Suppose $c_1,\cdots, c_n$ are simple closed curves on the fiber. Denote by $D_{c_i}$ a positive Dehn twist around $c_i$. The product $D_{c_1}D_{c_2}\cdots D_{c_n}$ means first Dehn twist along $c_1$, then $c_2$, and so on until finally along $c_n$, meaning we are using group notation as opposed to functional notation. When the fiber is a disk with holes, we can place the holes along a circle concentric with the bounday of the disk. Labelling the holes $\{1,\cdots, m\}$ counterclockwise, we use the notation $D_{i_1,\cdots, i_k}$ for $i_1,\cdots, i_k\in \{1,\cdots , m\}$ to indicate a positive Dehn twist about a curve which convexly contains the holes $i_1,\cdots, i_k$.

Any factorization of the monodromy of an open book decomposition into a product of positive Dehn twists corresponds to a Lefschetz fibration. When the fibers are disks with holes, we have the natural handlebody decomposition for this Lefschetz fibration where the holes are represented by dotted circles forming a trivial braid corresponding to 1-handles and the vanishing cycles correspond to 2-handles. We view the holed-disk fibers as orthogonal to the dotted circles, oriented so that the outward normal points \emph{downward} (i.e. turn the holed-disk upside-down). Then the monodromy factorization $D_{c_1}\cdots D_{c_n}$ corresponds to the Lefschetz fibration where the vanishing cycles appear as curves, each lying in a disk transverse to the trivial braid of dotted circles such that $c_1$ is at the top of the diagram and $c_n$ at the bottom. 

To draw the handlebody, we will isotope the holes on the disk so that they all lie on the bottom half of the disk along a circle concentric to the boundary. Then using the upside-down disk convention, the holes, ordered counterclockwise on the downward pointing disk, correspond to dotted trivial braid components labeled \emph{left to right}. A curve which convexly encloses holes $i_1,\cdots, i_k$ will appear in this diagram as a circle, half of which passes in front of all of the dotted circles, and the other half passes behind the dotted circles corresponding to $i_1,\cdots, i_k$, but in front of all the other circles. An example, using the top to bottom convention where the outward normal to the disk points downward, is in figure \ref{fig:LFconv}.

\begin{figure}
	\centering
	\includegraphics[scale=.7]{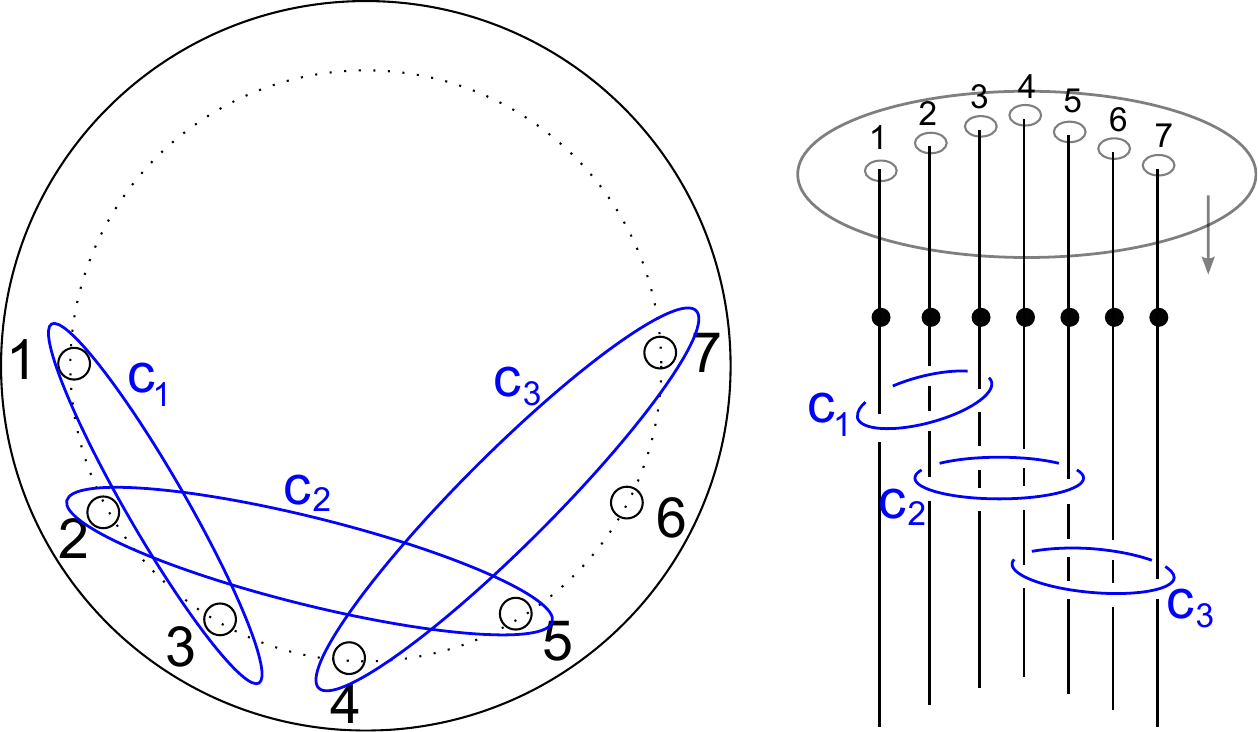}
	\caption{The Lefschetz fibration corresponding to the monodromy factorization $D_{1,3}D_{2,5}D_{4,7}=D_{c_1}D_{c_2}D_{c_3}$. The tops and bottoms of the dotted segments are identified to form a closed trivial braid.}
	\label{fig:LFconv}
\end{figure}

The mapping class group on a disk with holes is generated by Dehn twists. Dehn twists about disjoint curves commute. If we place the holes on a circle concentric with the boundary, we can order them counter-clockwise. Suppose $A$, $B$, and $C$ are collections of holes such that all the holes of $A$ precede all the holes of $B$ which preced all the holes of $C$ going around the circle counterclockwise. Then the \emph{lantern relation} states
\begin{equation}\label{e:lantern}
D_{A\cup B\cup C}D_AD_BD_C=D_{A\cup B}D_{A\cup C}D_{B\cup C}.
\end{equation}
The Dehn twists on the right-hand side can be cyclically permuted.

%We consider the mapping class group of the disk with $k$ holes, placed on the vertices of a regular $k$-gon inside the disk. The relations in this case are particularly simple. Dehn twists around disjoint curves commute and Dehn twists around curves which bound convex neighborhoods of collections of holes satisfy \emph{lantern relations}. Let $D_S$ denotes a positive Dehn twist around a curve which bounds a \emph{convex} region containing a set $S$ of holes. Let $A,B$, and $C$ denote disjoint sets of holes labelled counter-clockwise. Then the lantern relation (see \cite{MM}) says that 
%\begin{equation}\label{e:lantern}
%D_AD_BD_CD_{A\cup B\cup C}=D_{A\cup B}D_{B\cup C}D_{A\cup C}.
%\end{equation}  
By combining a sequence of lantern relations one obtains the \emph{daisy relation} which is given as follows. Suppose $B_0,B_1,\cdots , B_m$  are disjoint subsets of the $k$ holes on the disk labelled counter-clockwise ($m\geq 2$). Then 
\begin{equation}\label{e:daisy}
D_{B_0\cup B_1\cup \cdots \cup B_m} D_{B_0}^{m-1}D_{B_1}\cdots D_{B_m} = D_{B_0\cup B_1}D_{B_0\cup B_2}\cdots D_{B_0\cup B_m}D_{B_1\cup \cdots \cup B_m}.
\end{equation}
This daisy relation was shown to correspond to Fintushel and Stern's rational blow-down operations in \cite{EMV}.

We will use one more combination of lantern moves, corresponding to one of Park's generalized rational blow-downs which starts with a linear plumbing with weights $(-2,-5,-3)$. It was first worked out in \cite{EMV} that the relation is given as follows. Consider a disk whose holes are grouped into sets $A,B,C,D,E$ labelled counter-clockwise.
\begin{equation}\label{e:park}
D_{A\cup B\cup C \cup D \cup E}D_{A\cup B}D_{A}D_BD_C^2D_DD_E=D_{A\cup C}D_{B\cup C}D_{A\cup B\cup D}D_{A\cup B\cup E}D_{C\cup D\cup E}.
\end{equation}
This equality can be shown by performing one daisy relation \eqref{e:daisy} where $B_0=A\cup B$ introducing some negative Dehn twists, followed by a lantern relation \eqref{e:lantern}.
}

\subsection{The family of star surgeries $(\St_i, \T_i)$}
 A particularly nice family of star-shaped surgeries is given by symplectically replacing a neighborhood of a configurations of spheres, $\mathcal{S}_i$ by its smallest filling $\T_i$ (a specific filling of minimal Euler characteristic). The configurations $\mathcal{S}_i$ are made up of symplectic spheres which intersect according to star-shaped graphs with $i+2$ arms. Each arm contains $i-1$ spheres of square $-2$, and the central vertex is a sphere of square $-i-3$ (Figure \ref{fig:PlumbingSn}). Note that $\mathcal{S}_1$ is just a $-4$ sphere, and the replacement $\mathcal{T}_1$ is the rational blow-down of this $-4$ sphere. However for $i>1$, the graphs are star-shaped but not linear, and the replacement fillings $\mathcal{T}_i$ are not obtained by a rational blow-down of a subgraph of the spheres shown in the original configuration. Handlebody diagrams for the fillings $\mathcal{T}_i$ for $i=1,2,3$ are shown in Figure \ref{fig:ST123}. In general, a handlebody diagram for $\mathcal{T}_i$ (see Figure \ref{fig:SC1}) has $i+2$ 1-handles represented by dotted circles, and $\frac{(i+1)(i+2)}{2}$ 2-handles, one passing through each distinct pair of 1-handles. The corresponding monodromy factorization is
 $$(D_{1,2}D_{1,3}D_{1,4}\cdots D_{1,i+2})(D_{2,3}D_{2,4}\cdots D_{2,i+2})\cdots (D_{i,i+1}D_{i,i+2})(D_{i+1,i+2})$$

\begin{figure}
\centering
\subfloat[Plumbing graph for $\mathcal{S}_i$]
{\includegraphics[scale=.3]{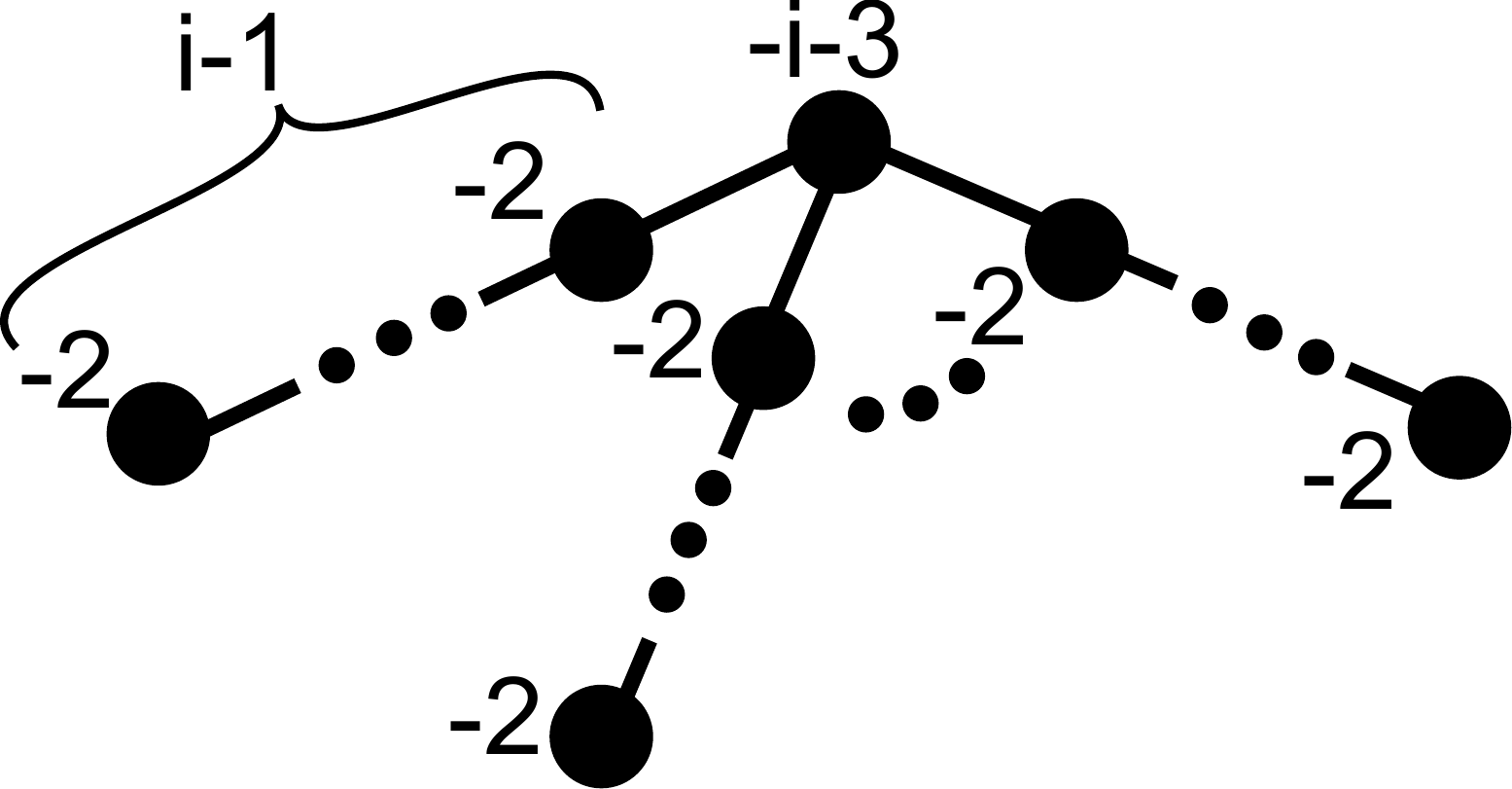}
\label{fig:PlumbingSn}
}
\subfloat[Fibers and vanishing cycles for $\mathcal{S}_i$]
{\includegraphics[scale=.3]{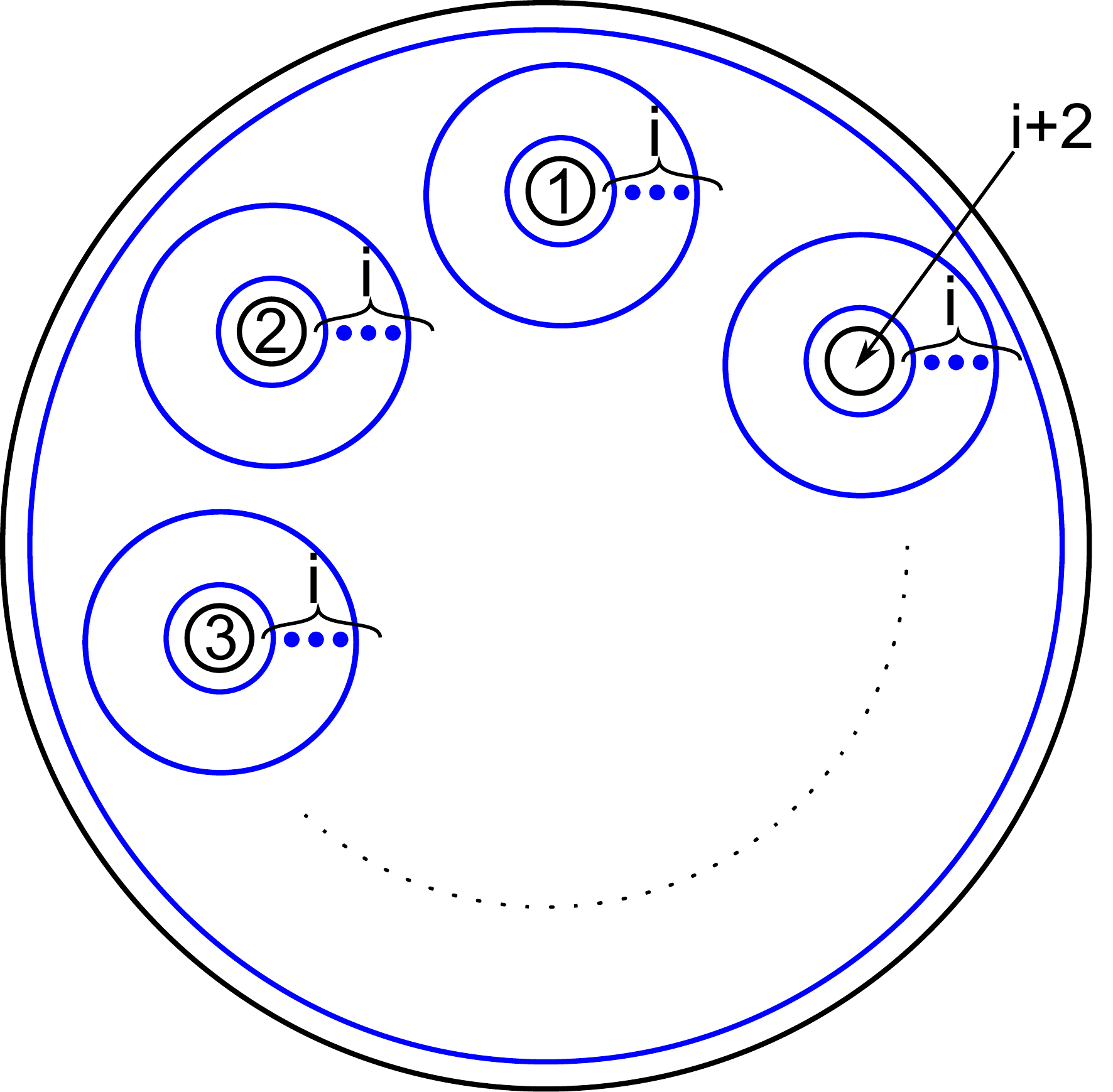}
\label{fig:SnLF}
}
\caption{$\St_i$}
\label{fig:Sn}
\end{figure}

\begin{figure}
\centering
\subfloat[$\mathcal{S}_1$ and $\mathcal{T}_1$]
{\includegraphics[scale=1]{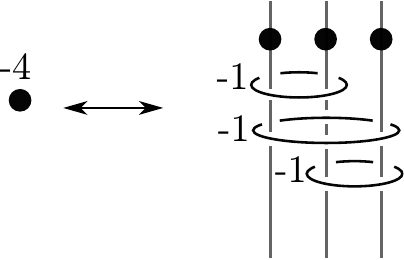}
}\hspace{.5cm}
\subfloat[$\mathcal{S}_2$ and $\mathcal{T}_2$]
{\includegraphics[scale=1]{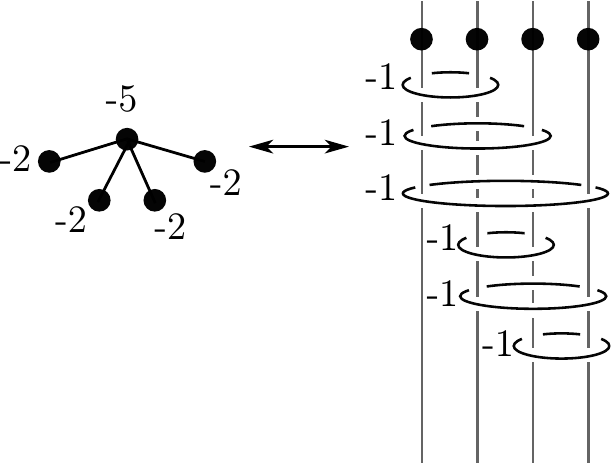}
}\\
\subfloat[$\mathcal{S}_3$ and $\mathcal{T}_3$]
{\includegraphics[scale=1]{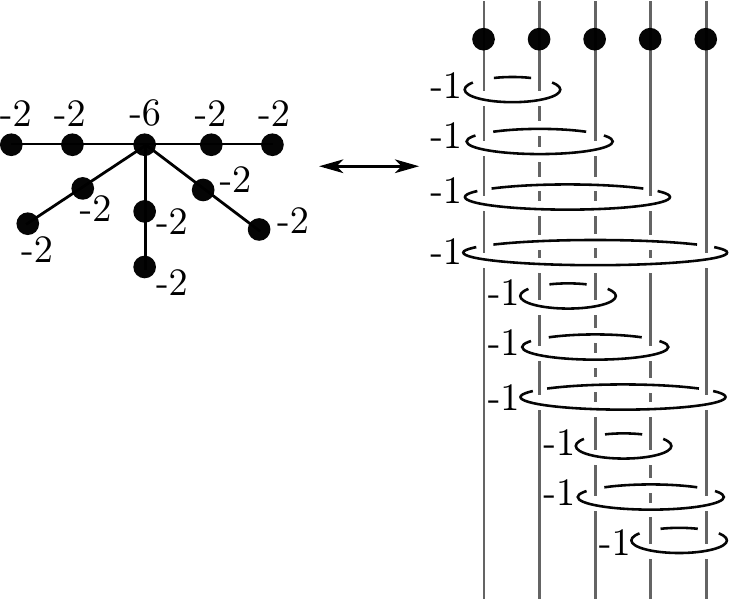}}
\caption{The first three configurations of spheres and their replacement symplectic fillings in the family $(\mathcal{S}_i,\mathcal{T}_i)$.}
\label{fig:ST123}
\end{figure}

We can replace $\St_i$ by $\T_i$ symplectically due to the following proposition.

\begin{proposition} \label{p:starsymplectic}
The contact structure induced on the convex boundary of the plumbing of spheres $\St_i$ is the same as the contact structure induced on the convex boundary of $\T_i$ (whose symplectic structure is determined by the supporting Lefschetz fibration described above).
\end{proposition}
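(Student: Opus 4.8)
The plan is to compare the open book decompositions that the two supporting Lefschetz fibrations induce on the common boundary three--manifold $\partial\St_i=\partial\T_i$, and to invoke the standard fact (Thurston--Winkelnkemper, Giroux) that an open book determines a compatible contact structure up to isotopy. By construction the Lefschetz fibration on $\T_i$ is arranged to have the same page as the Gay--Mark fibration (Theorem~\ref{thm:GM}) on a convex neighborhood of $\St_i$: one checks from the plumbing data that the central $(-(i+3))$--sphere and the $i+2$ terminal arm spheres each contribute one hole while the interior arm spheres contribute none, so both pages are the genus--zero surface with $i+3$ boundary circles, i.e. the disk with $i+2$ holes. (For $i\ge3$ the interior arm spheres satisfy $w_j+e_j=0$ and are bad vertices, so Theorem~\ref{thm:GM} does not apply verbatim to $\St_i$; this is handled either by exhibiting the Lefschetz fibration on a neighborhood of $\St_i$ directly, as in Figure~\ref{fig:SnLF}, or by a blow--up and destabilization argument.) Hence the whole content of the proposition is to show that the two monodromies --- the positive factorization $\tau_{\T_i}=\prod_{1\le a<b\le i+2}D_{a,b}$ read off the handlebody of $\T_i$, and the positive factorization $\tau_{\St_i}$ supplied by Gay--Mark (one positive Dehn twist per connect--sum neck, with the vanishing cycles drawn in Figure~\ref{fig:SnLF}) --- agree in the mapping class group of the $(i+2)$--holed disk, up to conjugation and commutation of disjoint twists.

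To do this I would exhibit a chain of mapping class group relations carrying $\tau_{\St_i}$ to $\tau_{\T_i}$, using only the relations recorded above: the lantern relation~\eqref{e:lantern}, the daisy relation~\eqref{e:daisy}, and Park's relation~\eqref{e:park} (the last applied, as in its derivation, by way of a daisy followed by a lantern, passing transiently through factorizations with cancelling negative Dehn twists). Since the arms of $\St_i$ lengthen with $i$, the number of relations needed grows with $i$, so the natural organization is an induction on $i$ --- equivalently on the common arm length $i-1$ --- in which a single daisy substitution collapses the outermost ring of arm $(-2)$--spheres and reduces the factorization to the one attached to a smaller configuration; the base case $i=1$ is the rational blow--down of a $(-4)$--sphere, for which the statement is classical (Symington). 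One then checks that after the final substitution the surviving vanishing cycles are exactly the curves enclosing the $\binom{i+2}{2}$ pairs of holes, appearing in the prescribed order.

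The step I expect to be the genuine obstacle is precisely this bookkeeping: through each application of a lantern, daisy, or Park relation one must track the isotopy classes of the simple closed curves produced, confirm that at every stage they sit in the configuration required for the next relation, and verify that the process terminates on the complete--graph factorization of $\T_i$ rather than on some other positive factorization of a conjugate monodromy; carrying the $i\ge3$ bad--vertex reduction out uniformly in $i$ is a further technical point. Once $\tau_{\St_i}$ and $\tau_{\T_i}$ are matched, the induced open books are isomorphic, hence the contact structures they support on $\partial\St_i$ and $\partial\T_i$ coincide, which is what the proposition asserts.
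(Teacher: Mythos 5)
Your plan matches the paper's proof: compare the induced open books via the common page (the $(i+2)$--holed disk) and establish the monodromy identity $D_{1,\dots,i+2}D_1^i\cdots D_{i+2}^i=\prod_{a<b}D_{a,b}$ --- the ``generalized lantern relation'' --- by induction on $i$, with the lantern relation as the base case and a single daisy relation furnishing the inductive step, so Park's relation is not actually needed here. Your observation that the interior arm spheres of $\St_i$ for $i\geq 3$ are borderline bad vertices ($w_j+e_j=0$), so that the Gay--Mark hypothesis as quoted does not apply verbatim, is a fair point which the paper's proof does not acknowledge.
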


\begin{proof}
For the plumbing of spheres $\St_i$, Gay and Mark's construction implies that the fibers of the supporting Lefschetz fibration are $i+2$ holed disks, and the vanishing cycles consist of a single curve parallel to the outer boundary component, and $i$ boundary parallel curves around each of the holes (see Figure \ref{fig:SnLF}). 

In order to show that the contact structures induced on the boundaries of $\St_i$ and $\T_i$ agree, we will show that the open book decompositions induced on the boundary of the corresponding Lefschetz fibrations are the same. Since the pages are the same it suffices to show the monodromies are equal, which amounts to the following relation.

\begin{equation}\label{e:genlantern}
D_{1,2,\cdots , i+1,i+2}D_1^{i}\cdots D_{i+2}^{i}=(D_{1,2}D_{1,3}D_{1,4}\cdots D_{1,i+2})(D_{2,3}D_{2,4}\cdots D_{2,i+2})\cdots (D_{i,i+1}D_{i,i+2})(D_{i+1,i+2})
\end{equation}
This relation is sometimes referred to in the literature as the \emph{generalized lantern relation}.

To see these are equal in the mapping class group, proceed by induction on $i$. When $i=1$ this is the standard lantern relation. By a relabelled version of the $i-1$ case, we can use the inductive hypothesis to say that the right hand side is equal to
$$(D_{1,2}D_{1,3}D_{1,4}\cdots D_{1,i+2})D_{2,3,\cdots, i+1,i+2}D_2^{i-1}D_3^{i-1}\cdots D_{i+1}^{i-1}D_{i+2}^{i-1}$$
Applying a daisy relation to this then gives the left hand side. 
\end{proof}

\begin{remark}
For the negative definite star plumbings we consider, the induced contact structure is supported by a planar open book. By work of Wendl, \cite{W}, any other convex filling is supported by a planar Lefschetz fibration inducing the same open book decomposition on the boundary. Therefore the two Lefschetz fibrations correspond to positive factorizations of the same monodromy. Equivalent elements in a planar mapping class group are always related by some sequence of lantern relations and commutation, but this sequence may pass through factorizations involving negative Dehn twists. If one can obtain one positive factorization from another through a sequence of relations so that at each stage we remain in a positive factorization then the overall symplectic operation is broken down into a sequence of other symplectic operations.

For example, this proof of the generalized lantern relation shows that it can be obtained by performing a sequence of daisy and lantern relations, so that after each relation, we still have a positive factorization. Endo, Mark, and Van Horn-Morris \cite{EMV} showed that daisy relations correspond to Fintushel-Stern rational blow-downs. Therefore these particular star surgeries are equivalent to sequences of rational blow-downs. However, it is not easy to see the existence of all the configurations which are rationally blown-down at each stage, so in applications it would be difficult to find all of these rational blow-downs to perform. Instead, we can just perform the sequence all at once with a single star surgery.

A question one can ask is whether all such star surgery operations arise as sequences of rational blow-downs. Surprisingly, this was shown in the linear case in \cite{BO}, but it was suspected that star surgery was more general. After the appearence of the first draft of this paper, the second author proved that a certain example of a star surgery cannot be realized as any sequence of symplectic rational blow-downs \cite[Theorem 5.2]{Sta2}.
\end{remark}

%\begin{remark} For the negative definite star plumbings we consider, the induced contact structure is supported by a planar open book. By work of Wendl, \cite{W}, any other convex filling is supported by a planar Lefschetz fibration inducing the same open book decomposition on the boundary. Therefore the two Lefschetz fibrations correspond to positive factorizations of the same monodromy. One way to show that a particular star surgery operation is equivalent to a sequence of rational blowdowns, is to show that there is a sequence of daisy/lantern relations (and commutator relations) which begins with the factorization corresponding to the plumbing and ends with the factorization corresponding to the smaller filling, such that all factorizations in between are in terms of \emph{positive} Dehn twists. The intermediate steps would correspond to other convex fillings of intermediate Euler characteristic. The planar mapping class group relations are generated by commutator relations and lantern relations, but one may need to use lantern relations that introduce negative Dehn twists at an intermediate stage, to relate two different factorizations. For example, the daisy relation can be obtained by a sequence of lantern relations, but all of the intermediate factorizations require negative Dehn twists. This is the difficulty that needs to be understood to determine the relation of a given star surgery operation to rational blow-downs.
%\end{remark}

\subsection{The star surgery $(\mathcal{Q},\mathcal{R})$}
The following star surgery is realted to the $\St_2$, $\T_2$ star surgery but improves it in the sense that it reduces the Euler characteristic by a larger amount. Let $\mathcal{Q}$ denote the configuration of spheres indicated on the left hand side of Figure~\ref{fig:plumbalt}. Let $\xi_{can}$ be the canonical contact structure $\partial \mathcal{Q}$. 

\begin{figure}[h]
\centering
\includegraphics[scale=1]{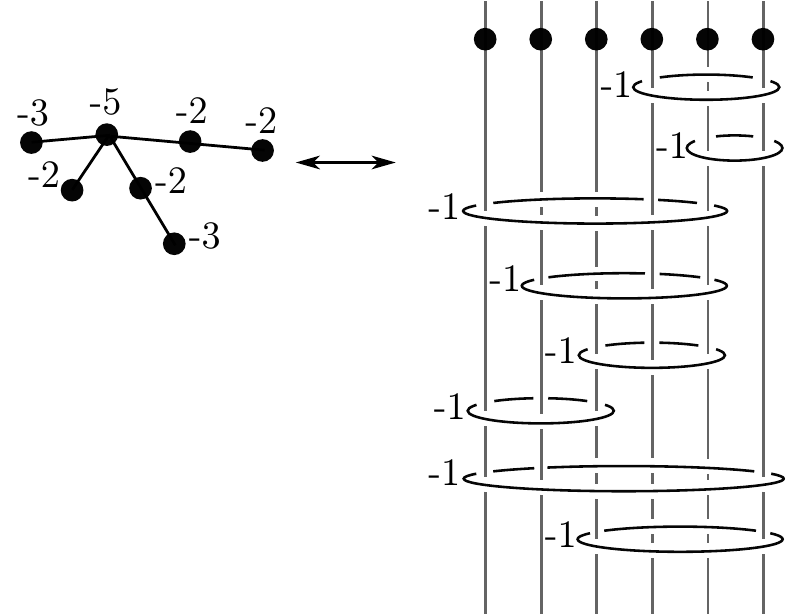}
\caption{A star-shaped plumbing graph for $\mathcal{Q}$, and an alternate symplectic filling, $\mathcal{R}$.}
\label{fig:plumbalt}
\end{figure}

\begin{proposition} \label{pro:exchangeST}
There exists a symplectic manifold $\mathcal{R}$ of Euler characteristic $3$ with convex boundary, such that the induced contact manifold on the boundary is contactomorphic to $(\partial \mathcal{Q},\xi_{can})$.
\end{proposition}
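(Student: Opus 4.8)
The plan is to mimic the strategy used in the proof of Proposition~\ref{p:starsymplectic}: exhibit an explicit positive Dehn twist factorization for the monodromy of the Lefschetz fibration on $\mathcal{R}$ (read off from the handlebody diagram on the right of Figure~\ref{fig:plumbalt}), exhibit the monodromy of the Gay--Mark open book on $\partial\mathcal{Q}$ (read off from the plumbing graph on the left), and prove these two words are equal in the planar mapping class group via a sequence of lantern-type relations. Since both open books have the same planar page, equality of monodromies forces the induced contact structures to agree, and by the Park--Stipsicz remark this common contact structure is $\xi_{can}$; the Euler characteristic count $\chi(\mathcal{R})=3$ is then just a matter of counting $1$- and $2$-handles in the diagram for $\mathcal{R}$.

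Concretely, first I would apply Theorem~\ref{thm:GM} to $\mathcal{Q}$: starting from each vertex sphere $v_j$, connect-sum on $|w_j+e_j|$ disks (checking that $\mathcal{Q}$ has no bad vertices), connect-sum along the edges, and take one boundary-parallel curve per neck. This produces the planar page $\Sigma$ for $\mathcal{Q}$ together with its monodromy $\tau_{\mathcal{Q}}$, a product of boundary-parallel twists (one per hole) and one twist parallel to the outer boundary, with multiplicities dictated by the weights; this is entirely analogous to the description in Figure~\ref{fig:SnLF} but with the specific weights of $\mathcal{Q}$. Next I would read the monodromy factorization $\tau_{\mathcal{R}}$ directly off the handlebody diagram of $\mathcal{R}$ using the conventions fixed in Section~2.2 (dotted circles $\leftrightarrow$ holes, $2$-handles $\leftrightarrow$ convex Dehn twists, ordered top-to-bottom). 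Since $\mathcal{R}$ is stated to be the $(-2,-5,-3)$-type replacement related to $\T_2$, I expect $\tau_{\mathcal{R}}$ to be built from the Park relation~\eqref{e:park}, so the right-hand side of~\eqref{e:park} (suitably interpreted with the holes of $\mathcal{Q}$) should be exactly $\tau_{\mathcal{R}}$ up to commutations of disjoint twists.

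The core of the argument is then the mapping class group identity $\tau_{\mathcal{Q}}=\tau_{\mathcal{R}}$. Following the hint after~\eqref{e:park}, I would prove it by first applying a daisy relation~\eqref{e:daisy} to $\tau_{\mathcal{Q}}$ with $B_0$ chosen to absorb the high-multiplicity boundary twists coming from the large-weight vertices (this introduces some negative Dehn twists, which is fine since we only need an equality in the group, not a positive-to-positive path), and then applying a single lantern relation~\eqref{e:lantern} to cancel the negative twists and arrive at $\tau_{\mathcal{R}}$. If the bookkeeping of which holes go into $A,B,C,D,E$ is set up correctly, this is essentially the derivation of~\eqref{e:park} itself, just transported to the page of $\mathcal{Q}$; alternatively one can cite~\eqref{e:park} directly once the pages and hole-labelings are matched. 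As with Proposition~\ref{p:starsymplectic}, one should note that the order of the boundary-parallel twists in $\tau_{\mathcal{Q}}$ is irrelevant since those curves are disjoint, which removes any ambiguity in stating the relation.

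The main obstacle I anticipate is the combinatorial setup rather than any deep difficulty: correctly translating the weighted star graph of $\mathcal{Q}$ into the planar page and its Gay--Mark monodromy, then choosing the partition of the holes so that the daisy-plus-lantern manipulation matches the handle diagram of $\mathcal{R}$ on the nose (including getting the multiplicities $D_C^2$ and the single twists right, and tracking the outer-boundary twist). Once the two words are written in a common normal form, verifying the relation is mechanical. A secondary point to be careful about is confirming that $\mathcal{R}$ as drawn genuinely carries a symplectic structure with convex boundary supported by the claimed Lefschetz fibration --- but this is immediate from the general fact, already invoked in Section~2.1, that any positive factorization of a planar open book monodromy gives rise to such a Lefschetz fibration and hence a convex symplectic filling.
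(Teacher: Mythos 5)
Your high-level strategy matches the paper exactly: both take the Gay--Mark monodromy of $\partial\mathcal{Q}$ as a product of boundary-parallel twists on a $6$-holed disk, read the monodromy factorization of $\mathcal{R}$ off its Lefschetz handle diagram (using the conventions of Section~2.2), and then establish equality of the two words in the planar mapping class group; the Euler characteristic count, the ``positive factorization $\Rightarrow$ convex filling'' step, and the appeal to Park--Stipsicz for $\xi_{can}$ are all handled the same way.

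However, there is a concrete gap in the sequence of relations you anticipate. You predict that a single application of the Park relation~\eqref{e:park} (equivalently, one daisy followed by one lantern, which is the derivation of~\eqref{e:park}) will carry $\tau_{\mathcal{Q}}$ all the way to $\tau_{\mathcal{R}}$, writing that ``the right-hand side of~\eqref{e:park} \ldots should be exactly $\tau_{\mathcal{R}}$ up to commutations.'' This is not correct. In the paper, the Park relation (with $A=\{4\}$, $B=\{5\}$, $C=\{6\}$, $D=\{1,2\}$, $E=\{3\}$) takes $\tau_{\mathcal{Q}}=D_{123456}D_{12}D_1D_2D_3^2D_{45}^2D_4D_5D_6^3$ only to an \emph{intermediate} factorization $D_{46}D_{56}D_{1245}D_{345}D_{1236}D_{45}D_6D_1D_2D_3$, which is a valid positive factorization but not yet that of $\mathcal{R}$. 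Two further lantern relations~\eqref{e:lantern} are required: first with $A=\{4,5\}$, $B=\{1\}$, $C=\{2\}$, and then with $A=\{1,2\}$, $B=\{3\}$, $C=\{6\}$, to arrive at $D_{46}D_{56}D_{145}D_{245}D_{345}D_{123}D_{126}D_{36}$. Geometrically this reflects that $\mathcal{R}$ arises from $\mathcal{Q}$ by a Park rational blowdown followed by \emph{two} rational blowdowns of $-4$-spheres, the second of which is only visible after the first has been performed (Remark~\ref{rem:Qbd}). So the combinatorics you flagged as the ``main obstacle'' is indeed where the proposal falls short: one Park-type move does not suffice, and you would need to exhibit the two extra lantern relations to complete the argument.
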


\begin{proof}
Let $\mathcal{R}$ be the 4-manifold given by the handlebody diagram on the right of figure \ref{fig:plumbalt}. This particular diagram makes apparent a Lefschetz fibration structure on $\mathcal{R}$. The fibers of the Lefschetz fibration are six holed disks, and the base is a disk. The fibers near the boundary are disks perpendicular to the dotted circles which give the holes. The vanishing cycles are given by the $-1$ framed 2-handles. We can verify that the induced open book decomposition on the boundary agrees with the one that is induced on the boundary of the symplectic plumbing given by \cite{GM}.

We choose a standard $6$-holed disk, such that the holes are centered at the vertices of a regular hexagon on the disk. Label the holes with numbers $1,\cdots, 6$ going around counter-clockwise. The construction of Gay and Mark \cite{GM} indicates that the open book induced on the boundary of the plumbing $\mathcal{Q}$ has pages which are 6-holed disks, with monodromy given by positive Dehn twists about disjoint curves enclosing holes as follows:
$$D_{123456}D_{12}D_1D_2D_3^2D_{45}^2D_4D_5D_6^3$$
The monodromy induced by the Lefschetz fibration on $\mathcal{R}$ (reading the vanishing cycles from top to bottom) is:
$$D_{46}D_{56}D_{145}D_{245}D_{345}D_{123}D_{126}D_{36}$$
Commuting when needed and then performing a Park  relation \eqref{e:park} on the plumbing monodromy where $A=\{4\}$, $B=\{5\}$, $C=\{6\}$, $D=\{1,2\}$ and $E=\{3\}$, we get an intermediate factorization:
$$D_{46}D_{56}D_{1245}D_{345}D_{1236}D_{45}D_6D_1D_2D_3$$
Note that this corresponds to a symplectic filling obtained from the original plumbing by rationally blowing down the configuration that comes from $u_{4,1}$, $u_0$ and the symplectic resolution of the union of $u_{3,1}$ with $u_{3,2}$. Continuing, by commuting terms and performing a lantern relation \eqref{e:lantern} where $A=\{45\}$, $B=\{1\}$, and $C=\{2\}$ we obtain the factorization
$$D_{46}D_{56}D_{145}D_{245}D_{12}D_{345}D_{1236}D_6D_3$$
Note this corresponds to rationally blowing down a $-4$ sphere which was not visible until after the first rational blowdown. We commute terms and perform one more lantern relation \eqref{e:lantern} (corresponding to blowing down another $-4$ sphere) where $A=\{12\}$, $B=\{3\}$, and $C=\{6\}$ to obtain the factorization corresponding to the Lefschetz fibration on $T$.
\begin{equation}D_{46}D_{56}D_{145}D_{245}D_{345}D_{123}D_{126}D_{36} \end{equation}
\end{proof}

\begin{remark} \label{rem:Qbd}
The proof makes it clear that $\mathcal{R}$ is obtained from $\mathcal{Q}$ through a sequence of rational blowdowns: one Park rational blowdown of a $(-2,-5,-3)$ configuration, followed by two consecutive rational blowdowns of $-4$ spheres.
%Another way to obtain the $(\mathcal{Q},\mathcal{R})$ star surgery is by sprouting the $(\St_2,\T_2)$ surgery as described in \cite{Sta2}. However, the sprouting construction does not reveal the equivalent sequence of rational blow-downs.
\end{remark}

\subsection{The star surgery $(\mathcal{U},\mathcal{V})$}

Let $\mathcal{U}$ denote the plumbing according to the graph in figure \ref{fig:plumbalt2}. Let $\xi$ denote the (canonical) contact structure induced on the convex boundary of the plumbing.

\begin{figure}
\centering
\includegraphics[scale=1]{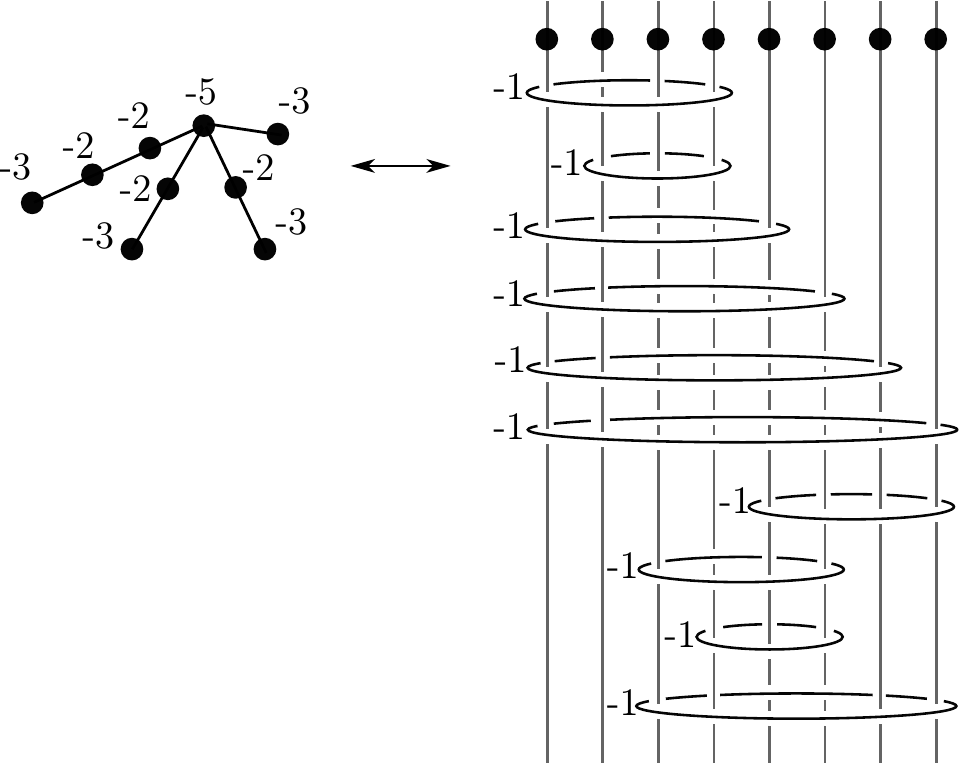}
\caption{A star-shaped plumbing graph for $\mathcal{U}$, and an alternate symplectic filling, $\mathcal{V}$.}
\label{fig:plumbalt2}
\end{figure}

\begin{proposition} \label{pro:exchangeUV}
There exists a symplectic manifold $\mathcal{V}$ of Euler characteristic $3$ with convex boundary, such that the induced contact manifold on the boundary is contactomorphic to $(\partial \mathcal{U},\xi_{can})$.
\end{proposition}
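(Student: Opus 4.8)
The plan is to mirror the proof of Proposition~\ref{pro:exchangeST} exactly, since the statement and setup are completely parallel: we must exhibit a symplectic filling $\mathcal{V}$ of $(\partial\mathcal{U},\xi_{can})$ with Euler characteristic $3$, and the way to do this is to present $\mathcal{V}$ by the handlebody diagram on the right of Figure~\ref{fig:plumbalt2}, read off the Lefschetz fibration structure it displays (fibers a holed disk, base a disk, vanishing cycles the $-1$-framed 2-handles), and then check that the induced open book on $\partial\mathcal{V}$ has the same monodromy, as an element of the planar mapping class group, as the open book that Gay--Mark's Theorem~\ref{thm:GM} assigns to the convex plumbing $\mathcal{U}$. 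By Theorem~\ref{thm:GM} and the Park--Stipsicz remark, matching the open books matches the contact structures (and identifies the latter with $\xi_{can}$), and the Euler characteristic count $\chi(\mathcal{V})=3$ is immediate from the handle diagram (one $0$-handle, some number of $1$-handles, and $1$-handles$+3$ many $2$-handles).

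First I would write down, using the Gay--Mark recipe, the monodromy of the open book on $\partial\mathcal{U}$: take the plumbing graph in Figure~\ref{fig:plumbalt2}, form the holed disk by attaching $|w_j+e_j|$ holes at each vertex, place the holes on a concentric circle labelled counterclockwise, and record the product of positive Dehn twists $D_{\dots}$ around the connect-sum curves, exactly as was done for $\mathcal{Q}$ (where it came out $D_{123456}D_{12}D_1D_2D_3^2D_{45}^2D_4D_5D_6^3$). Then I would read the vanishing cycles off the handle diagram for $\mathcal{V}$ top-to-bottom to get the second factorization. The core of the argument is then a sequence of lantern relations \eqref{e:lantern}, daisy relations \eqref{e:daisy}, and Park relations \eqref{e:park}, together with commutations of disjoint Dehn twists, transforming the first factorization into the second; as in Proposition~\ref{pro:exchangeST} I would try to keep every intermediate factorization positive, so that the star surgery is exhibited as an explicit sequence of (Park and ordinary) rational blow-downs, and I would add a remark analogous to Remark~\ref{rem:Qbd} identifying which subconfigurations get blown down at each stage.

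The main obstacle is finding the correct sequence of mapping-class-group moves: one must guess the right grouping of holes into the sets $A,B,C,D,E$ (or $B_0,\dots,B_m$) at each application of \eqref{e:lantern}, \eqref{e:daisy}, \eqref{e:park}, and there is no purely mechanical algorithm — it is the combinatorial heart of the construction. A secondary subtlety is bookkeeping: the Gay--Mark curves and the Lefschetz-fibration vanishing cycles are described with respect to possibly different cyclic labelings of the six (or however many) holes, so I would first fix one standard holed disk with one labeling and re-express both factorizations there, using commutations to put them in a comparable form, before attempting the relations. I expect the $(-2,-5,-3)$-type Park relation \eqref{e:park} to appear again (since $\mathcal{U}$, like $\mathcal{Q}$, is a near-$\St_2$ configuration improving it), followed by lantern relations corresponding to blow-downs of $-4$-spheres that only become visible after the first step, just as in the $(\mathcal{Q},\mathcal{R})$ case; verifying that these later $-4$-spheres are genuinely present requires tracking the intermediate factorizations carefully rather than appealing to the original plumbing graph.
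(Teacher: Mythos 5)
Your proposal matches the paper's proof in method and almost in detail: the paper writes the Gay--Mark monodromy for $\mathcal{U}$ on an $8$-holed disk, applies one Park relation \eqref{e:park} with $A=\{1\}$, $B=\{2\}$, $C=\{3,4\}$, $D=\{5,6\}$, $E=\{7,8\}$, then two lantern relations (with $A=\{1,2\}$ each time), and finally one daisy relation with $B_0=\{5,6\}$, $B_1=\{7,8\}$, $B_2=\{3\}$, $B_3=\{4\}$ to arrive at the positive factorization read off the handlebody for $\mathcal{V}$, exactly the ``Park then blow-downs of newly visible configurations'' structure you anticipated. The only divergence is cosmetic: the final step is a daisy rather than a lantern, but you listed both as tools and correctly predicted the positivity-preserving, rational-blowdown-decomposable nature of the sequence.
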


\begin{proof}
Gay and Mark's construction gives an open book on the boundary of the Lefschetz fibration for the plumbing $\mathcal{U}$ whose pages are $8$-holed disks and whose monodromy is given as follows:
$$D_{12345678}D_{12}^3D_1D_2D_{34}^2D_3D_4D_{56}^2D_5D_6D_{78}D_7D_8$$

Using the monodromy equivalence corresponding to the Park $(-2,-5,-3)$ rational blowdown, where $A=\{1\}$, $B=\{2\}$, $C=\{3,4\}$, $D=\{5,6\}$, $E=\{7,8\}$, we get the following monodromy.
$$D_{134}D_{234}D_{1256}D_{1278}D_{345678}D_{12}^2D_3D_4D_{56}D_5D_6D_7D_8$$
After commuting Dehn twists about disjoint curves, we can perform two lantern relations. One where $A=\{1,2\}$, $B=\{5\}$, and $C=\{6\}$ and the other where $A=\{1,2\}$, $B=\{7\}$, and $C=\{8\}$, which results in the following factorization.
$$D_{134}D_{234}D_{125}D_{126}D_{56}D_{127}D_{128}D_{78}D_{345678}D_3D_4D_{56}$$
After commuting $D_{56}$ and $D_{78}$ towards the end, we can use a daisy relation with $B_0=\{5,6\}$, $B_1=\{7,8\}$, $B_2=\{3\}$, and $B_3=\{4\}$. The resulting monodromy corresponds to that of the Lefschetz fibration for $\mathcal{V}$.
\begin{equation}D_{134}D_{234}D_{125}D_{126}D_{127}D_{128}D_{5678}D_{356}D_{456}D_{3478}\end{equation}
\end{proof}

\begin{remark}
Note that this proof shows that this operation is also obtained as a sequence of rational blowdowns.
\end{remark}

\subsection{The star surgeries $(\mathcal{K},\mathcal{L})$, $(\mathcal{M},\mathcal{N})$ and $(\mathcal{O},\mathcal{P})$}

In \cite{Sta2}, it was shown that a configuration of symplectic spheres intersecting according to a graph as in figure \ref{fig:notqbd} can be replaced by a symplectic filling of Euler characteristic two, whose Lefschetz fibration handlebody is shown in figure \ref{fig:notqbd}.

\begin{figure}
	\centering
	\includegraphics[scale=1]{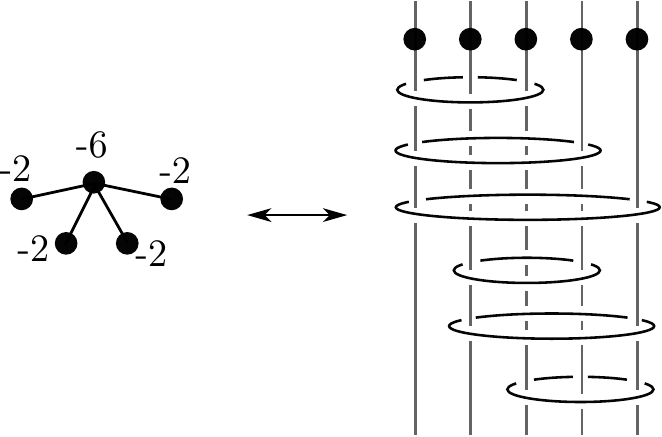}
	\caption{The $\mathcal{K}$ plumbing and alternate filling $\mathcal{L}$, providing a star surgery operation which is inequivalent to a sequence of rational blow-downs \cite{Sta2}.}
	\label{fig:notqbd}
\end{figure}

The corresponding monodromy substitution for this $\mathcal{K},\mathcal{L}$ star surgery is
\begin{equation}\label{eqn:notqbdsub}
D_1^2D_2^2D_3D_4^2D_5^2D_{12345}=D_{123}D_{14}D_{15}D_{24}D_{25}D_{345}
\end{equation}
The equivalence of these elements was shown directly in \cite{Sta2} and it was also shown that this substitution is not equivalent to any sequence of symplectic rational blow-downs/ups. From this star surgery, we can generate two other useful star surgeries, such that the alternate fillings also have Euler characteristic two. The plumbing and filling diagrams are shown in figures \ref{fig:notqbdspr2} and \ref{fig:notqbdspr4}.

\begin{figure}
\centering
	\includegraphics[scale=1]{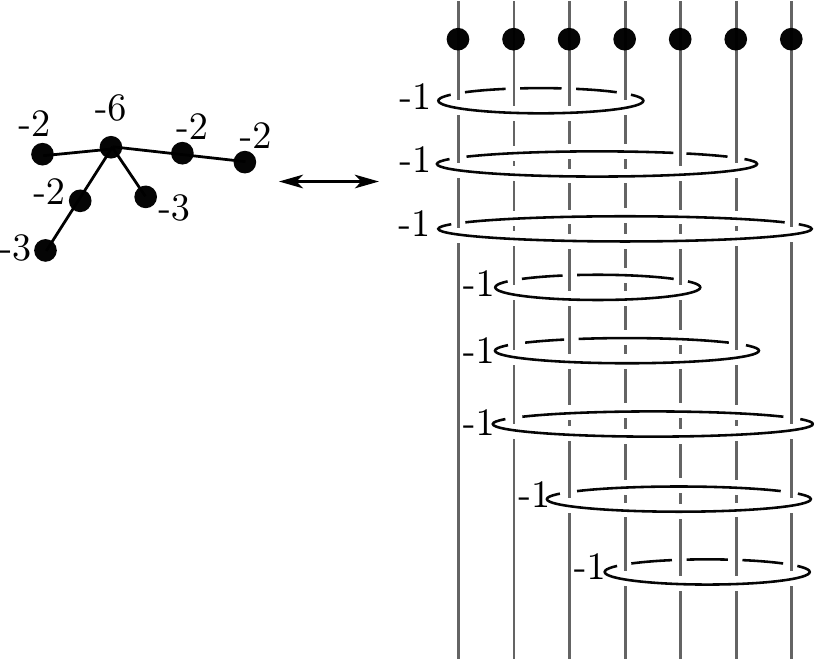}
	\caption{The $\mathcal{M}$, $\mathcal{N}$ star surgery. The dotted circles are labeled left to right as $1, 2_a, 2_b, 3, 4_a, 4_b, 5$}
	\label{fig:notqbdspr2}
\end{figure}

\begin{figure}
\centering
	\includegraphics[scale=1]{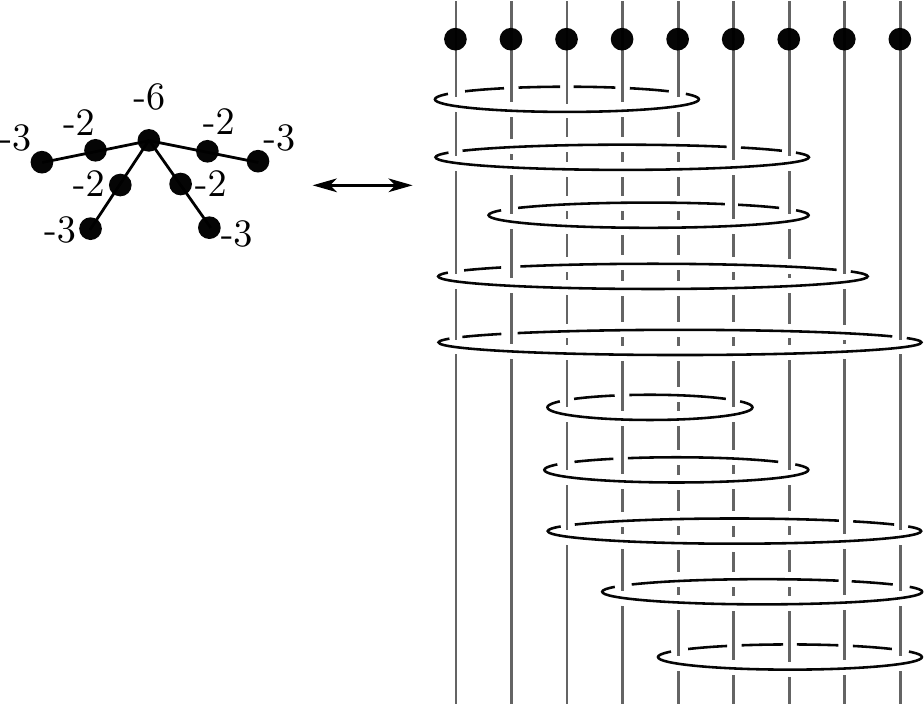}
	\caption{The $\mathcal{O}$, $\mathcal{P}$ star surgery. The dotted circles are labeled left to right as $1_a, 1_b, 2_a, 2_b, 3, 4_a, 4_b, 5_a, 5_b$.}
	\label{fig:notqbdspr4}
\end{figure}

These monodromy substitutions are obtained from the $\mathcal{K}$, $\mathcal{L}$ substitution (equation \ref{eqn:notqbdsub}) together with an additional lantern relation applied according to Lemma 2.1 from \cite{EMV}, where some of the original holes split into two holes (e.g. hole $2$ splits into holes $2_a$ and $2_b$). When applying the $\mathcal{K}$, $\mathcal{L}$ monodromy substitution, treat the split holes as a single joined hole, but when applying the lantern relations treat them as separate holes. Performing two splittings applying Lemma 2.1 of \cite{EMV} each time, we get a monodromy substitution corresponding to the $\mathcal{M},\mathcal{N}$ star surgery of figure \ref{fig:notqbdspr2}.
$$D_1^2D_{2_a2_b}^2D_{2_a}D_{2_b}D_3D_{4_a4_b}D_{4_a}D_{4_b}D_5^3D_{12_a2_b34_a4_b5}=D_{12_a2_b3}D_{14_a4_b}D_{15}D_{2_a2_b4_a}D_{2_a2_b4_b}D_{2_a5}D_{2_b5}D_{34_a4_b5}$$
Here, the holes are labeled $1,2_a,2_b,3,4_a,4_b,5$ counter-clockwise around the disk. Performing two more splits, we get the monodromy substitution corresponding to the $\mathcal{O},\mathcal{P}$ star surgery, as shown in figure \ref{fig:notqbdspr4} on the disk with holes labeled $1_a,1_b,2_a,2_b,3,4_a,4_b,5_a,5_b$ counter-clockwise.
\begin{align*}
&D_{1_a1_b}^2D_{1_a}D_{1_b}D_{2_a2_b}^2D_{2_a}D_{2_b}D_3D_{4_a4_b}^2D_{4_a}D_{4_b}D_{5_a5_b}^2D_{5_a}D_{5_b}D_{1_a1_b2_a2_b34_a4_b5_a5_b}\\
&=D_{1_a1_b2_a2_b3}D_{1_a4_a4_b}D_{1_b4_a4_b}D_{1_a1_b5_a}D_{1_a1_b5_b}D_{2_a2_b4_a}D_{2_a2_b4_b}D_{2_a5_a5_b}D_{2_b5_a5_b}D_{34_a4_b5_a5_b}
\end{align*}
Note that the proof that these factorizations are equivalent involves applying the substitution from equation \ref{eqn:notqbdsub} and then lantern relations, but the first step will introduce negative Dehn twists into the factorization, so there is not a simple way to understand these operations a sequence of known operations (though it would be more difficult to prove that they are not equivalent to sequences of known operations--see \cite{Sta, Sta2} for an idea of how this might be proven).

\section{Algebraic topology of the star surgery fillings}\label{s:fillingproperties}

In this section we will compute the fundamental group of the fillings $\T_i,\mathcal{R},\mathcal{V},\mathcal{L}$, and various other algebraic topology invariants which will be needed to understand the homeomorphism type, Kodaira dimension, and Seiberg-Witten invariants of the manifolds constructed by star surgeries using these fillings. The computations are reasonably straightforward given the handlebody descriptions of these manifolds and the Lefschetz fibration structure. The most thorough computations will be given for $\T_2$, as this will be our model example used to show how star surgery can be applied to create exotic manifolds whose Kodaira dimension and Seiberg-Witten invariants can be fully computed.

\subsection{Properties of the Fillings $\mathcal{T}_i$} \label{s:fil}
We will compute basic algebraic topological properties of $\T_i$, and specifically $\T_2$ since we will use the $\St_2$, $\T_2$ star surgery to construct an exotic copy of $\CP2\# 8 \barCP2$.

\begin{proposition}\label{p:fund}
The fillings $\T_i$ satisfy $\pi_1(\T_i)=\mathbb{Z}/2\mathbb{Z}$. The generator can be represented by the meridian of any of the -2 framed surgery curves on the ends of the arms in $\partial \St_i$.
\end{proposition}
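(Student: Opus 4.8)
The plan is to read off a presentation of $\pi_1(\T_i)$ directly from the Lefschetz fibration handlebody diagram described in Section~\ref{s:starsurgerydefinitions}. Recall that $\T_i$ has $i+2$ one-handles (dotted circles, giving generators $x_1,\dots,x_{i+2}$ of the free group) and $\binom{i+2}{2}$ two-handles, one attached along a curve $c_{j,k}$ convexly enclosing exactly the pair of holes $\{j,k\}$. A curve enclosing holes $j$ and $k$ traverses the corresponding dotted circles once each, so in the standard way each two-handle imposes the relation $x_jx_k=1$ (up to orientation/conjugation bookkeeping; since the curve encloses only two holes there are no other generators involved and no framing subtleties for $\pi_1$). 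So $\pi_1(\T_i)$ has presentation $\langle x_1,\dots,x_{i+2}\mid x_jx_k=1,\ 1\le j<k\le i+2\rangle$.

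Next I would simplify this presentation. From $x_jx_k=1$ we get $x_k=x_j^{-1}$ for every pair, so in particular $x_2=x_1^{-1}$, $x_3=x_1^{-1}$, hence $x_2=x_3$; but also $x_2x_3=1$ gives $x_2^2=1$. Thus all generators equal $x_1^{\pm1}$ and $x_1^2=1$, so $\pi_1(\T_i)\cong \Z/2\Z$ (for $i\ge 1$, where there are at least three holes so that a relation among distinct indices forces the order-two condition; for $i=1$ one has three holes and the same conclusion, consistent with $\T_1$ being the rational blowdown of a $-4$ sphere, whose complement has $\pi_1=\Z/2$). I should double-check the $i=2$ case against the explicit diagram in Figure~\ref{fig:ST123} to make sure the reading of relations is correct, but this is routine.

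For the second sentence — identifying the generator with the meridian of a $-2$-framed arm-end surgery curve in $\partial\St_i$ — I would argue via the inclusion $\partial\T_i = \partial\St_i \hookrightarrow \T_i$. The meridian $\mu$ of an end-of-arm $-2$-sphere in the plumbing $\St_i$ corresponds, under the open book / Lefschetz fibration picture of Theorem~\ref{thm:GM}, to a loop encircling the outermost hole in the corresponding arm; after the lantern/daisy identifications relating the pages of $\St_i$ and $\T_i$ (Proposition~\ref{p:starsymplectic}), this loop is isotopic on the common fiber to a loop $\gamma$ encircling one of the holes of the $6$-holed (resp. $(i+2)$-holed) disk that is the fiber of $\T_i$. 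Such a loop is a conjugate of one of the generators $x_j$ in $\pi_1(\T_i)$, and since all $x_j$ map to the nontrivial element of $\Z/2\Z$, so does $\mu$. I would make this precise by tracking the curve through the monodromy substitution explicitly on the fiber rather than abstractly.

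The main obstacle I expect is the \emph{second} claim, not the computation of $\pi_1$: carefully matching "meridian of a $-2$ surgery curve on the end of an arm of $\partial\St_i$" with a concrete loop on the page of the open book, and then following it through the page identification used in Proposition~\ref{p:starsymplectic} to see which generator $x_j$ of $\pi_1(\T_i)$ it represents. This is essentially a bookkeeping exercise with the open book decomposition and the handle diagram, but it requires being careful about which boundary torus of the plumbing is which, and about the isotopy of the page that realizes the generalized lantern relation. Everything else (reading the group presentation off the diagram, the elementary group theory) is routine.
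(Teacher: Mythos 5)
Your computation of $\pi_1(\T_i)\cong\Z/2\Z$ from the handlebody presentation is exactly the paper's argument, so the first half is in order.

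The gap is in the second claim. The step you need but only assert is: the meridian $\mu$ of the end-of-arm $-2$ surgery curve, viewed in $\partial\St_i$, is isotopic in the boundary to a small circle around one of the holes of a page of the open book. You attribute this to the Gay--Mark picture (Theorem~\ref{thm:GM}), but Gay--Mark only identifies the abstract page, the vanishing cycles, and the monodromy; it does not tell you how the binding components or page curves of the resulting open book sit relative to the meridians of the individual surgery curves in the standard plumbing Kirby diagram. Establishing that correspondence is the entire content of the second sentence of the proposition, and the paper proves it by a chain of explicit Kirby moves: starting from the $\T_i$ handlebody with a reference curve representing $y_1$, converting the boundary to a surgery diagram, blowing down the $-1$-framed handles, blowing back up to reach the Gay--Mark Lefschetz picture of $\St_i$, and then sliding and cancelling handles until the diagram is the standard plumbing picture, dragging the reference curve along until it is visibly the meridian of the last $-2$ curve in an arm. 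Without a substitute for this computation your argument stops exactly at the step that needs to be proved.

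Moreover, the remedy you propose --- tracking the curve through the monodromy substitution of Proposition~\ref{p:starsymplectic} --- does not touch this gap. The page is literally the same $(i+2)$-holed disk for $\St_i$ and $\T_i$ (the $6$-holed disk you mention is the $\mathcal{Q}/\mathcal{R}$ case, not $\St_i/\T_i$), and a hole-encircling curve is identified with itself under the gluing; the lantern/daisy relations change only the factorization of the monodromy, not the page or curves drawn on it, and in any case all hole-encircling curves already map to the unique nontrivial element of $\Z/2\Z$, so nothing is at stake in which hole you land on. The real work is the translation between the open-book Kirby picture and the plumbing Kirby picture for $\St_i$, and that is supplied by the handle slides, not by the monodromy substitution.
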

\begin{proof}
A handlebody diagram for $\T_i$ is given as in Figure \ref{fig:SC1} by $(i+2)$ 1-handles corresponding to generators $\{y_1,\cdots, y_{i+2}\}$ of $\pi_1(\T_i)$, and $\frac{(i+1)(i+2)}{2}$ 2-handles corresponding to the relations $\{y_jy_k=1\}_{j\neq k\in \{1,\cdots , i+2\}}$. We can easily compute the fundamental group:
$$\pi_1(\T_i)= \langle y_1,\cdots , y_{i+2}: y_j=y_k^{-1}, j\neq k\rangle = \langle y_1: y_1^2=1\rangle.$$

Now we will track a curve representing the generator $y_1$ of $\pi_1(\T_i)$ through a sequence of equivalent surgery diagrams, to show that it restricts to a nontrivial element in $\partial \T_i=\partial S_i$ represented by the meridian of a $-2$-framed surgery curve on the end of the first arm in the standard handlebody diagram for $\St_i$. Note that $\pi_1$ is equivalently generated by any of the $y_j$ with the relation $y_j^2=1$, and that $y_j$ restricts in the same way to the meridian of the last curve in the $j^{th}$ arm.

Start with the handlebody diagram in Figure \ref{fig:SC1}. The generator of the fundamental group, $y_1$, is represented by the dashed red curve. By rotating the plane of projection about a vertical axis, observe this handlebody diagram is isotopic to that given by Figure \ref{fig:SC2} (the direction into the page in figure \ref{fig:SC1} corresponds to the right side of the page in figure \ref{fig:SC2}). Now, exchange the dotted circles for $0$-framed circles and treat the handlebody diagram as a surgery diagram for its boundary 3-manifold. After blowing-down all the $-1$ framed 2-handles, we obtain a surgery diagram as in Figure \ref{fig:SC3} containing $i+2$ unknotted $(i+1)$-framed circles, twisted together with a full positive twist. Blowing up negatively once at a common intersection point of their Seifert surfaces, and then $i$ times along each individual curve,the curves become untwisted and 0-framed again, so we switch the $0$-framings to dotted circles as in Figure \ref{fig:SC4}.  Perform handleslides by first sliding the dashed red curve over the top $-1$ framed 2-handle on that dotted circle, then sliding that 2-handle over the one below it, and so on until there is a chain of $-2$ curves linked to a single $-1$ curve. Do this for each arm (without the red reference curve on the other arms), and then finally slide the $-1$ framed handle that links all of the dotted circles over each of the remaining $-1$ framed 2-handles. Then we see that after cancelling 1,2-handle pairs, the dashed red curve appears as the meridian of the last $-2$ framed curve as in Figure \ref{fig:SC5}.

\begin{figure}
\centering
\captionsetup[subfigure]{width=2.5cm}
\subfloat[$\T_i$]{\includegraphics[scale=.8]{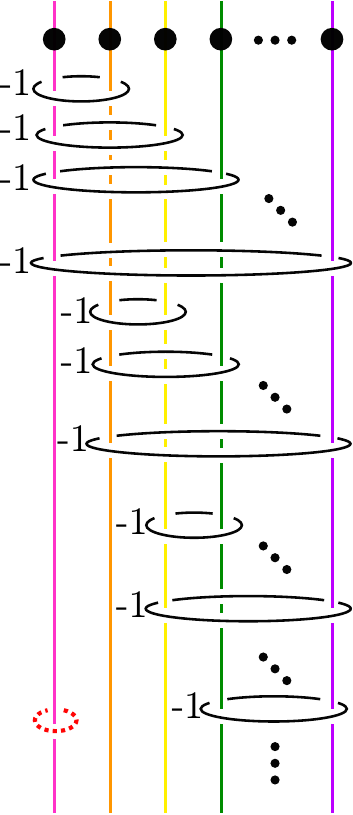} \label{fig:SC1}} \hspace{1cm}
\subfloat[Isotopic diagram in a different projection]{\includegraphics[scale=.5]{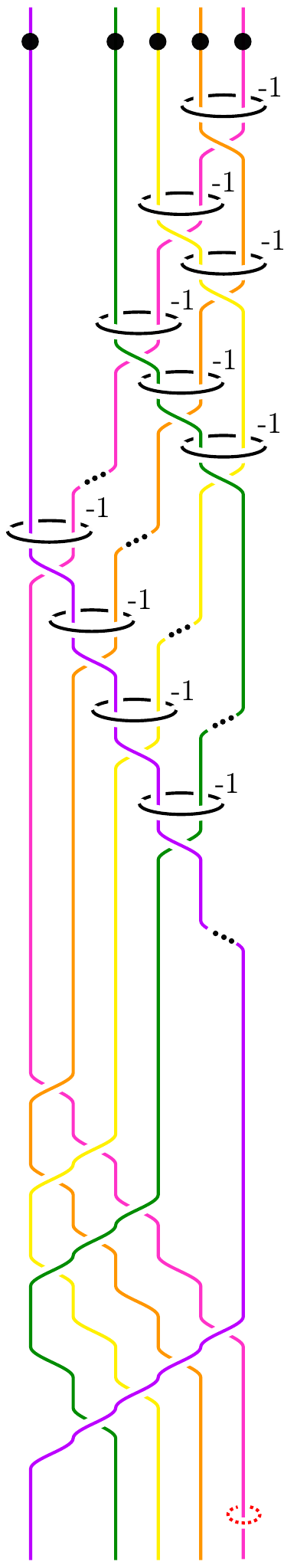}\label{fig:SC2}}\hspace{1cm}
\subfloat[Surgery diagram for the common boundary]{\includegraphics[scale=.8]{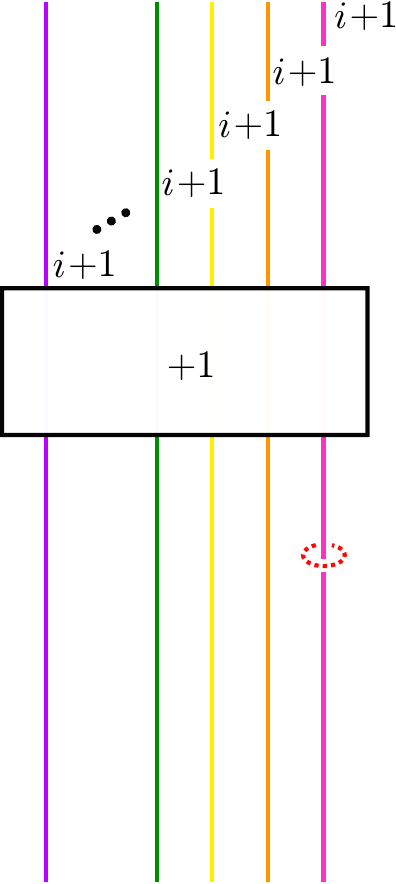}\label{fig:SC3}}\hspace{1cm}
\subfloat[$\St_i$ Lefschetz fibration]{\includegraphics[scale=.8]{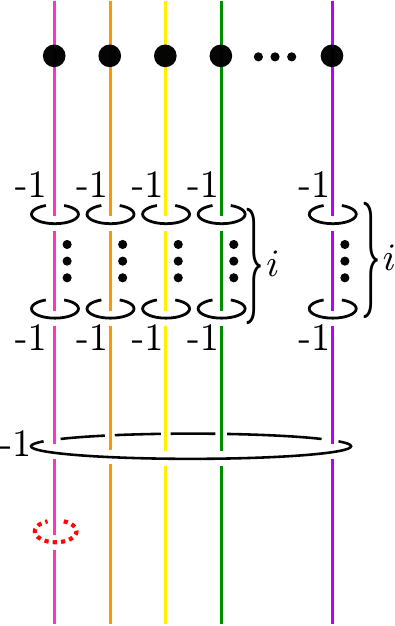}\label{fig:SC4}}\hspace{1cm}
\subfloat[$\St_i$ equivalent diagram showing the plumbing]{\includegraphics[scale=.6]{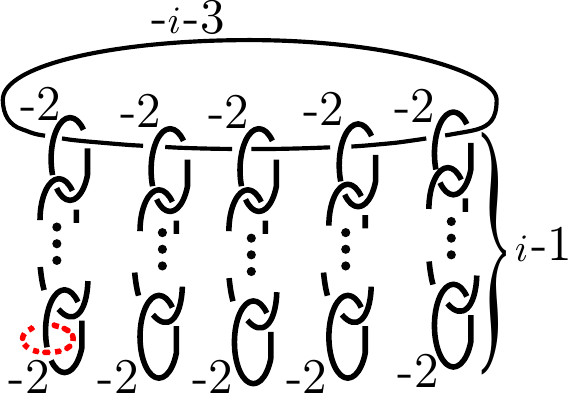}\label{fig:SC5}}
\caption{Relating diagrams for $\partial \T_i$ and $\partial \St_i$}
\label{fig:SC}
\end{figure}
\end{proof}

\begin{proposition}\label{p:hom}
The second homology of the fillings is given by $H_2(\T_2;\mathbb{Z})=\mathbb{Z}\oplus\mathbb{Z}$. The generators can be represented by tori. Moreover $\chi(\T_2)=3$ and $\sigma(\T_2)=-2$.
\end{proposition}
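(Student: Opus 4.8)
The plan is to read everything off the handlebody picture of $\T_2$ (the relevant diagrams are in Figure~\ref{fig:ST123} and Figure~\ref{fig:SC1}) together with the fundamental group computation of Proposition~\ref{p:fund}. First count handles: $\T_2$ is built from one $0$-handle, four $1$-handles, and $\tfrac{3\cdot 4}{2}=6$ $2$-handles, one running through each pair of $1$-handles, so $\chi(\T_2)=1-4+6=3$. Since there are no handles of index $>2$ we have $H_3(\T_2)=H_4(\T_2)=0$, and $H_2(\T_2)=\ker(\partial_2\colon C_2\to C_1)$ is a subgroup of the free group $C_2\cong\Z^6$, hence itself free. By Proposition~\ref{p:fund}, $\pi_1(\T_2)=\Z/2$, so $b_1(\T_2)=0$ and $H_1(\T_2)=\Z/2$; therefore $b_2(\T_2)=\chi(\T_2)-1-b_1(\T_2)=2$ and $H_2(\T_2;\Z)\cong\Z\oplus\Z$. (The same count gives $b_2(\T_i)=\tfrac{(i+1)(i+2)}{2}-(i+2)$, so this rank-two conclusion is particular to $i=2$.) Concretely, the relations $y_jy_k=1$ of Proposition~\ref{p:fund} identify $\partial_2$ with the $4\times 6$ integer matrix whose columns are the vectors $e_j+e_k$, $1\le j<k\le 4$; this matrix has rank $4$ because its cokernel is the finite group $\Z/2$, so its kernel has rank $2$, and one checks that the classes $v_1$ (using the $2$-handles through $\{1,2\}$ and $\{3,4\}$ with coefficient $+1$ and those through $\{1,3\},\{2,4\}$ with coefficient $-1$) and $v_2$ (defined analogously with the roles of $\{1,3\},\{2,4\}$ replaced by $\{1,4\},\{2,3\}$) form a $\Z$-basis of $\ker\partial_2=H_2(\T_2)$.

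Next I would exhibit the torus representatives using the Lefschetz fibration on $\T_2$ over $D^2$, whose fiber $F$ is a four-holed disk and whose vanishing cycles $c_{jk}$ are the curves enclosing holes $j$ and $k$. A class $\sum a_{jk}\,[\text{$2$-handle}_{jk}]\in\ker\partial_2$ satisfies $\sum a_{jk}[c_{jk}]=0$ in $H_1(F)$, so the union of the corresponding Lefschetz thimbles (disks bounded by the $c_{jk}$, taken over disjoint arcs from the critical values to a common point of $\partial D^2$) can be completed by a suitable $2$-chain in $F$ to a closed surface representing that class. Carrying this out for $v_1$ and $v_2$ — choosing the capping surfaces inside $F$ and resolving the intersections among the various thimble boundaries and capping pieces — one checks that the resulting closed surface has genus exactly one in each case, so the two generators of $H_2(\T_2)$ are represented by embedded tori. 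Alternatively, one can continue the Kirby moves of Figure~\ref{fig:SC} until these tori appear explicitly in the diagram.

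For the signature, once the two tori $T_1,T_2$ are in hand, $\sigma(\T_2)$ is the signature of the symmetric matrix $(T_i\cdot T_j)$, which one computes from a signed count of intersection points after a generic perturbation together with the normal Euler numbers $T_i\cdot T_i$; this matrix turns out to be negative definite, giving $\sigma(\T_2)=-2$. Equivalently $(T_i\cdot T_j)$ is the restriction to $\ker\partial_2$ of the symmetric linking matrix of the six $(-1)$-framed $2$-handles, which can be read off the diagram. The quickest route, if preferred, is simply to invoke the result of \cite{Sta2} that the alternate fillings occurring in these star surgeries are negative definite; combined with $b_2(\T_2)=2$ this gives $\sigma(\T_2)=-b_2(\T_2)=-2$ immediately.

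The parts concerning $\chi$, $H_1$, and hence the $\Z\oplus\Z$ statement, are routine given Proposition~\ref{p:fund}. The main obstacle is the honest bookkeeping in the last two steps: assembling the Lefschetz thimbles together with a fiber-capping surface into a genuinely \emph{embedded} closed surface and verifying that its genus is $1$, and correctly tracking framings and signs when evaluating the intersection form by hand — an issue one can sidestep only by importing the negative-definiteness of $\T_2$ from \cite{Sta2}.
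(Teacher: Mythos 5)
Your computation of $\chi(\T_2)=3$ and $H_2(\T_2)\cong\Z^2$ is correct and close in spirit to the paper's: both arguments come out of the cellular chain complex of the handlebody. You go through $\pi_1$ (Proposition~\ref{p:fund}) and the Euler characteristic to get $b_2$, while the paper directly identifies $\ker\partial_2$ and writes down its two generators $x_{12}+x_{34}-x_{13}-x_{24}$ and $x_{14}+x_{23}-x_{12}-x_{34}$; the difference is cosmetic, and in fact you end up exhibiting the same two cycles $v_1,v_2$ anyway.

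Where you diverge, and where your proposal stops short, is in the last two steps. For the signature, the decisive observation the paper makes is that in these planar Lefschetz fibration handlebodies the attaching circles of the six $(-1)$-framed $2$-handles are mutually unlinked and each has framing $-1$, so the linking form on $C_2$ is simply $-I_6$; the intersection form on $H_2$ is then just the restriction of $-I_6$ to $\ker\partial_2$, which in the basis $v_1,v_2$ is $\left[\begin{smallmatrix}-4&2\\2&-4\end{smallmatrix}\right]$, hence $\sigma=-2$ with no genericity or perturbation needed. Your proposal either re-derives this by a ``signed count of intersection points after a generic perturbation'' (extra work that is unnecessary once you notice the linking matrix is diagonal) or falls back on importing negative-definiteness from \cite{Sta2}, which is valid but less self-contained and also does not by itself give the specific intersection form, which the paper records and later uses in the Seiberg--Witten computation in Section~\ref{s:min}.

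For the torus representatives, your Lefschetz-thimble description is morally the same construction as the paper's (the thimbles are the cores of the $2$-handles, and the fiber-capping $2$-chain is the union of twice-punctured disks in the fiber, with tubes around the dotted circles joining the punctures), but you defer the genus computation to ``one checks.'' The paper actually closes this gap with the one-line Euler characteristic count $4\chi(D^2)+4\chi(\text{2-holed }D^2)+4\chi(\text{tube})=4-4+0=0$. You have correctly identified that this genus bookkeeping is the real obstacle; in a finished write-up you would need to carry out precisely that count (or, equivalently, observe that exactly four tubes are used and each raises the genus contribution appropriately) rather than leave it implicit.
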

\begin{proof}
Using the handlebody diagram as in Figure \ref{fig:SC1} but ignoring the reference curve, we can compute the CW chain complex. Let $y_j$ denote the $j^{th}$ 1-handle and let $x_{jk}$ denote the 2-handle whose attaching circle passes through the $j^{th}$ and $k^{th}$ 1-handles. Then the relevant chain groups are
$C_2(\T_i)=\langle x_{jk}: j\neq k\in \{1,\cdots, i+2\}\rangle$, $C_1(\T_i)=\langle y_j: j\in \{1,\cdots, i+2\}\rangle$. The boundary map is determined by $\partial x_{jk}=y_j+y_k$. 

In particular, the Euler characteristic is
$$\chi(\T_i)=\frac{(i+1)(i+2)}{2}-(i+2)+1$$

When $i=2$, $\chi(\T_2)=6-4+1=3$ and the 2-cycles are generated freely by $x_{12}+x_{34}-x_{13}-x_{24}$ and $x_{14}+x_{23}-x_{12}-x_{34}$. Note that each $x_{jk}$ has square $-1$ so the intersection form with respect to the above basis is given by the matrix
$$\left[ \begin{array}{cc} -4 & 2\\2 & -4\end{array}\right]\sim_{\mathbb{R}} \left[ \begin{array}{cc} -4 & 0\\0 & -3\end{array}\right].$$
Therefore $\T_2$ is negative definite.

We can see that the homology class $x_{12}+x_{34}-x_{13}-x_{24}$ can be represented by tori by examining the handlebody diagram. Take the cores of the 2-handles $x_{12}$ and $x_{34}$, and the cores with opposite orientation for $x_{13}$ and $x_{14}$. We connect these up to a closed torus by adding in the twice punctured disks whose outer boundary coincides with the attaching circle for $x_{ij}$ which does not intersect any of the dotted circles, and tubing together the holes with a tube encircling the dotted circle so the orientations match up as in Figure \ref{fig:HomologyTori}. One can see this surface is indeed a torus directly or check by calculating its Euler characteristic is $4\chi(D^2)+4\chi(\text{2 holed }D^2)+4\chi(\text{tube})=4-4+0=0$. A similar surface represents the other generator $x_{14}+x_{23}-x_{12}-x_{34}$.

\begin{figure}
\includegraphics[scale=.5]{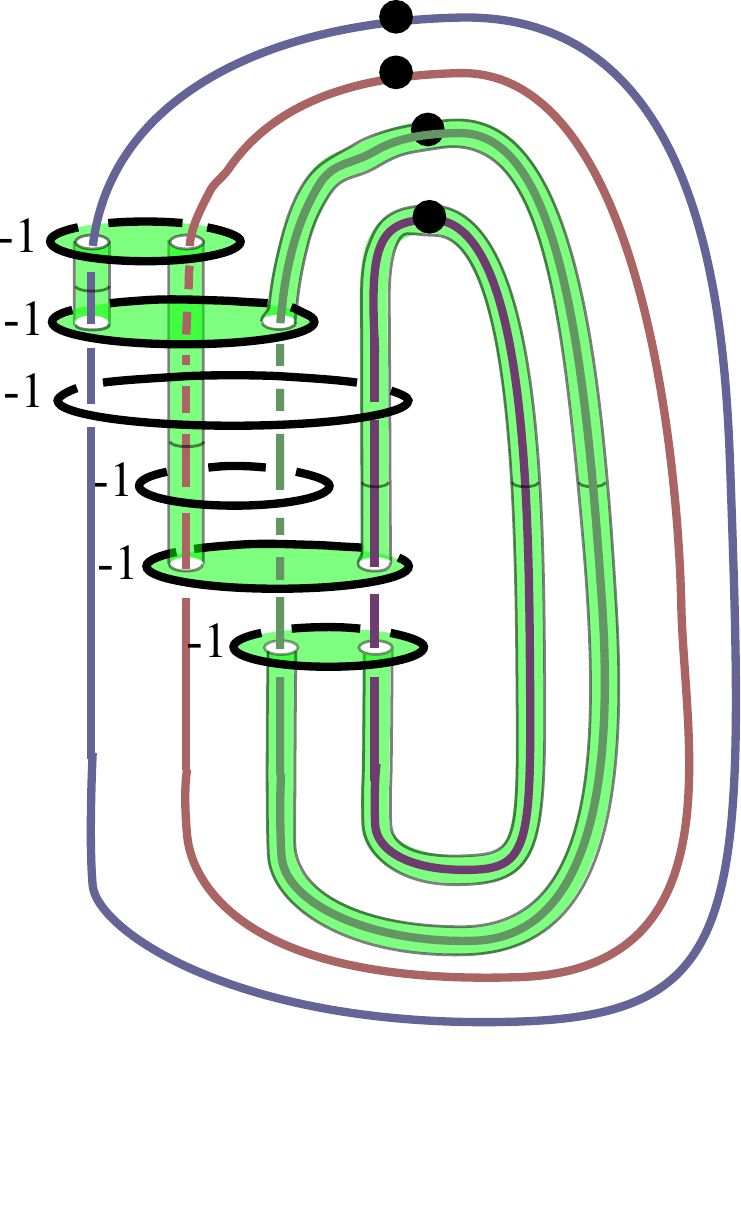}
\caption{A torus representing a generator of the homology of $\T_2$.}
\label{fig:HomologyTori}
\end{figure}
\end{proof}

\begin{proposition}\label{p:can}
The canonical class of the filling vanishes: $K|_{\T_2}=0$.
\end{proposition}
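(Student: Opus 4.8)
The plan is to compute the canonical class of $\T_2$ directly from the Lefschetz fibration structure, using the fact that for a Lefschetz fibration over the disk with fiber a planar surface, the canonical class is represented by a combination of a section class and fiber-component classes, and then to pin down the coefficients using the adjunction formula together with the homological data already computed in Proposition \ref{p:hom}. Concretely, $H_2(\T_2;\mathbb{Z})=\mathbb{Z}\oplus\mathbb{Z}$ is generated by the two torus classes $T_1 = x_{12}+x_{34}-x_{13}-x_{24}$ and $T_2 = x_{14}+x_{23}-x_{12}-x_{34}$, each of square $0$ (from the intersection matrix $\left[\begin{smallmatrix} -4 & 2\\ 2 & -4\end{smallmatrix}\right]$, note $T_1\cdot T_1 = -4$, so actually I must recompute: in the given basis the form is $\left[\begin{smallmatrix} -4 & 2\\ 2 & -4\end{smallmatrix}\right]$, and I would first re-express things in whatever basis makes the adjunction computation cleanest). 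Since $K|_{\T_2}$ is determined by its pairing with a basis of $H_2$, it suffices to compute $K\cdot T_1$ and $K\cdot T_2$.

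First I would apply the adjunction formula to the embedded tori $T_1, T_2$ constructed in Proposition \ref{p:hom}. For an embedded symplectic (or even just smoothly embedded) torus $T$ of self-intersection $T\cdot T$ in a symplectic $4$-manifold with boundary, the relevant adjunction relation reads $2g(T)-2 = T\cdot T + K\cdot T$, giving $K\cdot T = -2 - T\cdot T = -T\cdot T$ once $g=1$; I need to check that these tori can be taken symplectic, or else argue via the relative adjunction inequality / the fact that they are built from cores of $2$-handles and tubes in a way compatible with the fibration, so that the formula applies. If the tori are Lagrangian rather than symplectic the argument changes (Lagrangian tori have $K\cdot T = 0$ automatically, which would also force $K|_{\T_2}=0$ if both generators are Lagrangian and $T_i\cdot T_i = 0$). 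Either way, computing $T_i\cdot T_i$ from the intersection form and combining with adjunction should give $K\cdot T_i = 0$ for $i=1,2$, whence $K|_{\T_2}=0$ since the $T_i$ span $H_2$ over $\mathbb{Q}$ and $H_2(\T_2;\mathbb{Z})$ is torsion-free.

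An alternative, and perhaps cleaner, route is to use the explicit Lefschetz fibration: the canonical class of a Lefschetz fibration with planar fiber $F$ and vanishing cycles $c_1,\dots,c_k$ is given by a standard formula (e.g. as in Gay–Mark or the Lefschetz fibration literature) in terms of $\mathrm{PD}[F]$, the Euler class contributions of the $2$-handles, and a section; since the fiber here is a disk with $i+2=4$ holes, its relative Euler characteristic is controlled, and the homology classes $x_{jk}$ of the vanishing-cycle $2$-handles are exactly the generators of $C_2$. I would write $K|_{\T_2} = \sum a_{jk}\,\mathrm{PD}(x_{jk}) + (\text{section terms})$ and solve for the coefficients using the adjunction relations $K\cdot x_{jk} = -1 - x_{jk}\cdot x_{jk} = 0$ coming from the $(-1)$-framing (each $2$-handle core caps off to a sphere-with-boundary of square $-1$, but these are not closed, so I must be careful: the correct statement is an adjunction \emph{inequality} for surfaces with boundary, or I should use the closed tori instead).

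The main obstacle I anticipate is making the adjunction argument rigorous for surfaces in a manifold \emph{with boundary}: the naive adjunction equality requires closed surfaces or a careful relative version, and I must ensure the tori $T_1, T_2$ are genuinely of the type (symplectic, or Lagrangian, or at least satisfying the relevant relative adjunction) for which $K\cdot T_i$ is computable from $g$ and $T_i\cdot T_i$ alone. Identifying the symplectic type of these tori — built by tubing together $2$-handle cores around dotted circles — is the delicate point; I would resolve it by either showing the construction can be made symplectic relative to the Lefschetz fibration, or by invoking that $\T_2$ embeds (after the star surgery) into a closed symplectic manifold where the closed-surface adjunction formula applies and the restriction of $K$ is well-defined on $H_2(\T_2)$ via the inclusion. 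Once that is settled, the computation $K\cdot T_i = 0$ and hence $K|_{\T_2}=0$ is immediate.
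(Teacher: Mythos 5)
Your primary route (adjunction on the tori $T_1 = x_{12}+x_{34}-x_{13}-x_{24}$ and $T_2 = x_{14}+x_{23}-x_{12}-x_{34}$) does not work, and you nearly noticed why: you correctly observe $T_i\cdot T_i = -4$, but then the adjunction formula you wrote, $K\cdot T_i = -T_i\cdot T_i$, gives $K\cdot T_i = +4$, not $0$. If these tori were symplectic, adjunction would force $K\cdot T_i = 4 \neq 0$, contradicting the proposition; if they were Lagrangian they would have to have square $0$, not $-4$. Since the proposition is true (so $K\cdot T_i = 0$) and $T_i^2 = -4$, the tori built in Proposition~\ref{p:hom} are \emph{neither} symplectic \emph{nor} Lagrangian, and the adjunction formula simply does not compute $K\cdot T_i$ for them. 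The conclusion ``combining with adjunction should give $K\cdot T_i=0$'' is a sign error carried forward from not substituting the $-4$ you had just computed.

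The paper instead carries out exactly the ``alternative route'' you sketch but do not complete: it uses the obstruction-theoretic definition of $c_1(T\T_2,J)$ with respect to the fiberwise almost complex structure coming from the Lefschetz fibration. Drawing the planar fibers in the plane gives a trivialization of $(T\T_2,J)$ over the $1$-skeleton, and the obstruction to extending that trivialization over a $2$-cell $x_{jk}$ is the winding number of its attaching circle $\tilde x_{jk}$ in the fiber $F_{jk}$ containing it. Each $\tilde x_{jk}$ is an embedded planar curve, so the winding number is $1$, i.e.\ $\langle c_1, x_{jk}\rangle = 1$ for every $j,k$. Since each generator $T_i$ is a signed sum of four $x_{jk}$'s with two $+1$ and two $-1$ coefficients, $\langle c_1, T_i\rangle = 1+1-1-1 = 0$, and since $H_2(\T_2;\Z)$ is free of rank $2$ this gives $K|_{\T_2} = -c_1 = 0$. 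Your concern about ``adjunction inequality for surfaces with boundary'' is a red herring here: no adjunction input is needed, only the winding-number interpretation of $c_1$ evaluated cell by cell on the CW-chain generators.
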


\begin{proof}
We will use the obstruction theoretic interpretation of the first Chern class. See [\cite{Gom}, Proposition 2.3] and  [\cite{EO}, Section 3.1] for  related discussions. Figure \ref{fig:SC1} suggests that each $\T_i$ admits a positive allowable Lefschetz fibration whose fibers are disks with $i+2$ punctures, which are indicated by the circles with dots, and $\frac{(i+2)(i+1)}{2}$ vanishing cycles which are indicated by $-1$ framed $2$-handles. This Lefschetz fibration defines an almost complex structure which is compatible with the symplectic structure of $\T_i$. We will show that $c_1(T\T_i,J)=0$ for $i=2$.

Let $x_{jk}$ be a $2$-cell which is a generator of the chain complex $C_2(\T_i)$ as indicated in the proof of Proposition \ref{p:hom}. The attaching curve $\tilde{x}_{jk}$ of each $2$-cell $x_{jk}$ can be put on a  fiber $F_{jk}$ of the Lefschetz fibration.  Drawing the regular fibers on planes induces  trivializations of their tangent bundles which in turn induce a trivialization of $(T\T_i,J)$ over 1-skeleton. Now $c_1(T\T_i,J)(x_{jk})$ is the obstruction to extending this trivialization over the $2$-cell $x_{jk}$.  This obstruction is precisely the winding number of $\tilde{x}_{jk}$ measured with respect to the trivialization of $F_{jk}$. Each $\tilde{x}_{jk}$ is an embedded planar curve, so its winding number is one. For $i=2$ the proof of Proposition \ref{p:hom} tells us that the generators of $H_2(\T_2,\mathbb{Z})$ are  $x_{12}+x_{34}-x_{13}-x_{24}$ and  $x_{14}+x_{23}-x_{12}-x_{34}$. Hence  $c_1(T\T_i,J)$ evaluates as zero on both of these generators.
\end{proof}

We will need to understand $H^2(\T_2)$ and $H^2(\partial \T_2)$, as well as the restriction map between them. By Poincare duality, $H^2(\partial \T_2)\cong H_1(\partial \T_2)$. We can compute the first homology of the boundary of the filling explicitly from the surgery diagram obtained from the handlebody diagram of $\T_2$ by switching the dotted circles to $0$-framed circles as in Figure \ref{fig:H1boundary}.

\begin{figure}
\includegraphics[scale=.5]{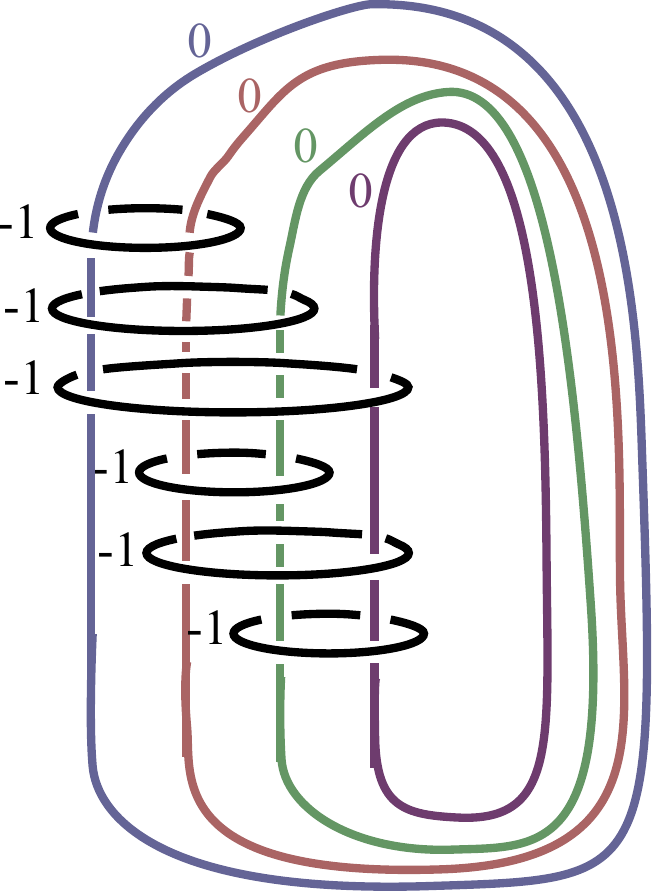}
\caption{A surgery diagram for $\partial \T_2$.}
\label{fig:H1boundary}
\end{figure}

\begin{proposition}\label{p:H1boundary}
Let $\lambda_1,\lambda_2,\lambda_3,\lambda_4$ denote the meridians of the four $0$-framed curves (from left to right) in the surgery diagram in Figure \ref{fig:H1boundary}. Then 
$$H_1(\partial \T_2;\Z) = \langle \lambda_1, \lambda_2-\lambda_1,\lambda_3-\lambda_1| 2(\lambda_2-\lambda_1)=2(\lambda_3-\lambda_1)=12\lambda_1=0\rangle\cong \Z/12\oplus \Z/2\oplus \Z/2$$
\end{proposition}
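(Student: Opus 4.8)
The plan is to compute $H_1(\partial \T_2)$ directly from the surgery diagram of Figure \ref{fig:H1boundary}, which presents $\partial \T_2$ as surgery on a link of four $0$-framed unknots (the former dotted circles) together with six $(-1)$-framed unknots (the 2-handles $x_{jk}$), where $x_{jk}$ links the $j$-th and $k$-th $0$-framed circles once each. First I would write down the Wirtinger-type presentation of $H_1$ from this link: generators are the meridians $\lambda_1,\dots,\lambda_4$ of the $0$-framed circles and $\mu_{jk}$ of the six $(-1)$-framed circles, and the relations are the surgery relations, one per component. The relation at the $(-1)$-framed curve $x_{jk}$ reads $-\mu_{jk} + \lambda_j + \lambda_k = 0$ (its framing contributes $-\mu_{jk}$ and it links $\lambda_j,\lambda_k$), so each $\mu_{jk}$ is eliminated: $\mu_{jk} = \lambda_j+\lambda_k$. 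The relation at the $0$-framed curve $\lambda_j$ reads $0\cdot\lambda_j + \sum_{k\neq j}\mu_{jk} = 0$, i.e. $\sum_{k\neq j}(\lambda_j+\lambda_k) = 3\lambda_j + \sum_{k\neq j}\lambda_k = 0$, which after substituting and using $\sigma := \lambda_1+\lambda_2+\lambda_3+\lambda_4$ becomes $2\lambda_j + \sigma = 0$ for each $j$.

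So $H_1(\partial\T_2) = \langle \lambda_1,\lambda_2,\lambda_3,\lambda_4 \mid 2\lambda_j + \sigma = 0,\ j=1,\dots,4\rangle$. Subtracting the relations pairwise gives $2(\lambda_j - \lambda_k)=0$ for all $j,k$; in particular with the new generators $\lambda_1$, $\lambda_2-\lambda_1$, $\lambda_3-\lambda_1$, $\lambda_4-\lambda_1$ we get $2(\lambda_2-\lambda_1)=2(\lambda_3-\lambda_1)=2(\lambda_4-\lambda_1)=0$. The $j=1$ relation $2\lambda_1+\sigma=0$ becomes $6\lambda_1 + (\lambda_2-\lambda_1)+(\lambda_3-\lambda_1)+(\lambda_4-\lambda_1) = 0$, i.e. $\lambda_4-\lambda_1 = -6\lambda_1 - (\lambda_2-\lambda_1) - (\lambda_3-\lambda_1)$, so $\lambda_4-\lambda_1$ is expressible in terms of the other three generators and can be eliminated. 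Feeding this back into its own $2$-torsion relation $2(\lambda_4-\lambda_1)=0$ yields $-12\lambda_1 - 2(\lambda_2-\lambda_1) - 2(\lambda_3-\lambda_1) = 0$; but $2(\lambda_2-\lambda_1)=2(\lambda_3-\lambda_1)=0$ already, so this collapses to $12\lambda_1 = 0$. Hence
$$H_1(\partial \T_2;\Z) = \langle \lambda_1,\ \lambda_2-\lambda_1,\ \lambda_3-\lambda_1 \mid 12\lambda_1 = 2(\lambda_2-\lambda_1) = 2(\lambda_3-\lambda_1) = 0\rangle \cong \Z/12 \oplus \Z/2 \oplus \Z/2,$$
as claimed, and the result follows by Poincaré duality for $H^2(\partial\T_2)$.

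The only real bookkeeping obstacle is getting the signs and the self-linking (framing) contributions right in the surgery relations, i.e. confirming that the correct linking matrix is the $10\times 10$ matrix with the four $0$'s and six $-1$'s on the diagonal and the stated off-diagonal incidences; once that is fixed, the reduction above is purely mechanical row/column operations on the presentation matrix (equivalently, computing its Smith normal form, which one can sanity-check by noting $|\,\det\,| = |H_1| = 48$ should match the order of the torsion group $12\cdot 2\cdot 2$). An alternative route that avoids even this is to observe that $\partial\T_2 = \partial\St_2$ and compute $H_1$ from the plumbing graph of $\St_2$ in Figure \ref{fig:PlumbingSn} (with $i=2$): there $H_1$ is the cokernel of the plumbing intersection matrix, whose determinant is again $48$, and one checks the torsion coefficients agree; but the surgery-diagram computation is the most direct and is what the figure is set up for. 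I would present the surgery-diagram computation as the main argument and mention the plumbing cross-check as a remark.
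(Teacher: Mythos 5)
Your proof is correct and follows essentially the same approach as the paper: both compute $H_1(\partial\T_2)$ from the surgery presentation (meridians $\lambda_j$, $\mu_{jk}$ with the linking-matrix relations), eliminate the $\mu_{jk}$, then eliminate one of the $\lambda$'s and rewrite in the stated basis; your use of the shorthand $\sigma=\lambda_1+\cdots+\lambda_4$ is merely a cosmetic difference in the algebraic reduction.
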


\begin{proof}
Let $\mu_{ij}$ denote the meridian of the $-1$-framed curve which links the ith and jth $0$-framed curves ($i\neq j\in \{1,2,3,4\}$).

A presentation for $H_1(\partial \T_2)$ has generators $(\lambda_1,\lambda_2,\lambda_3,\lambda_4, \mu_{12},\mu_{13}, \mu_{14}, \mu_{23}, \mu_{24}, \mu_{34})$ with relations given by the linking matrix:
\begin{eqnarray*}
-\mu_{ij}+\lambda_i+\lambda_j&=&0\\
\mu_{12}+\mu_{13}+\mu_{14}&=&0\\\
\mu_{12}+\mu_{23}+\mu_{24}&=&0\\
\mu_{13}+\mu_{23}+\mu_{34}&=&0\\
\mu_{14}+\mu_{24}+\mu_{34}&=&0\\
\end{eqnarray*}

%Simplifying, this yields the presentation generated by $(\lambda_1,\lambda_2,\lambda_3,\lambda_4)$, with relations
%\begin{eqnarray*} 
%3\lambda_1+\lambda_2+\lambda_3+\lambda_4&=&0\\
%\lambda_1+3\lambda_2+\lambda_3+\lambda_4&=&0\\
%\lambda_1+\lambda_2+3\lambda_3+\lambda_4&=&0\\
%\lambda_1+\lambda_2+\lambda_3+3\lambda_4&=&0\\
%\end{eqnarray*}

Eliminate the $\mu_{ij}$ using the first relations and then solve for $\lambda_4$. The resulting presentation for $H_1(\partial \T_2;\Z)$ is
$$\langle \lambda_1,\lambda_2,\lambda_3| 2(\lambda_2-\lambda_1)=2(\lambda_3-\lambda_1)=-8\lambda_1-2\lambda_2-2\lambda_3=0\rangle.$$
Equivalently
$$\langle \lambda_1,\lambda_2-\lambda_1,\lambda_3-\lambda_1| 2(\lambda_3-\lambda_1)=2(\lambda_2-\lambda_1)=12\lambda_1=0\rangle$$
\end{proof}

\begin{proposition} \label{p:cohomT2}
$$H^2(\T_2)\cong \Z\oplus \Z \oplus \Z/2$$
\end{proposition}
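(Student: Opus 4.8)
The plan is to read off the integral homology of $\T_2$ from the handlebody/CW structure already set up, and then apply the universal coefficient theorem. Since the handlebody diagram in Figure \ref{fig:SC1} has handles only of index $0$, $1$, and $2$, the manifold $\T_2$ is homotopy equivalent to a $2$-dimensional CW complex, so $H_n(\T_2;\Z)=0$ for all $n\geq 3$. We have $H_0(\T_2;\Z)=\Z$ since $\T_2$ is connected, and $H_2(\T_2;\Z)=\Z\oplus\Z$ by Proposition \ref{p:hom}. For $H_1$, either abelianize $\pi_1(\T_2)=\Z/2$ from Proposition \ref{p:fund}, or compute directly from the chain complex in the proof of Proposition \ref{p:hom}: with $C_1(\T_2)=\langle y_1,\dots,y_4\rangle$ and $\partial x_{jk}=y_j+y_k$, the relations $y_1+y_2=y_1+y_3=0$ force $y_2=y_3=-y_1$, and then $y_2+y_3=0$ gives $2y_1=0$, so $H_1(\T_2;\Z)\cong\Z/2$.

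Now I would invoke the universal coefficient theorem for cohomology, which gives a (split) short exact sequence
\begin{equation*}
0\longrightarrow \operatorname{Ext}^1_{\Z}\big(H_1(\T_2;\Z),\Z\big)\longrightarrow H^2(\T_2;\Z)\longrightarrow \operatorname{Hom}_{\Z}\big(H_2(\T_2;\Z),\Z\big)\longrightarrow 0.
\end{equation*}
Here $\operatorname{Hom}(H_2(\T_2;\Z),\Z)=\operatorname{Hom}(\Z^2,\Z)=\Z^2$ and $\operatorname{Ext}^1(H_1(\T_2;\Z),\Z)=\operatorname{Ext}^1(\Z/2,\Z)=\Z/2$, so the sequence reads $0\to \Z/2\to H^2(\T_2;\Z)\to \Z^2\to 0$, which splits, yielding $H^2(\T_2;\Z)\cong \Z\oplus\Z\oplus\Z/2$.

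There is essentially no hard step here; the only things to be careful about are (i) confirming that $H_1(\T_2;\Z)\cong\Z/2$ rather than something larger, which the direct chain-level computation above settles, and (ii) noting that $\operatorname{Hom}(H_2,\Z)$ contributes only the free part $\Z^2$ so that all of the torsion in $H^2$ comes from the $\operatorname{Ext}$ term on $H_1$. As a consistency check one can verify $\chi(\T_2)=1-1+2=2$... actually $b_0-b_1+b_2 = 1-0+2 = 3$ matches $\chi(\T_2)=3$ from Proposition \ref{p:hom}, and the computed $H^2$ is compatible with Poincar\'e--Lefschetz duality $H^2(\T_2;\Z)\cong H_2(\T_2,\partial\T_2;\Z)$ together with the long exact sequence of the pair using Proposition \ref{p:H1boundary}.
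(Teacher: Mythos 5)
Your proof is correct, but it proceeds by a genuinely different route than the paper. The paper computes $H^2(\T_2)$ directly from the cellular cochain complex: it writes down $C^1(\T_2)=\langle\phi^1,\dots,\phi^4\rangle$ and $C^2(\T_2)=\langle\psi^{12},\dots,\psi^{34}\rangle$, computes $\delta\phi^i=\sum_{\ell\neq i}\psi^{i\ell}$, and reduces $C^2/\operatorname{im}\delta$ by hand to obtain $\Z\oplus\Z\oplus\Z/2$ with explicit generators $\psi^{12}$, $\psi^{13}$, and $\psi^{12}+\psi^{13}+\psi^{23}$. You instead note that $H_1(\T_2;\Z)\cong\Z/2$ (either by abelianizing $\pi_1$ or by a quick chain-level check) and then invoke the universal coefficient theorem, reading off $H^2\cong\operatorname{Hom}(H_2,\Z)\oplus\operatorname{Ext}^1(H_1,\Z)\cong\Z^2\oplus\Z/2$. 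Both are valid; your version is quicker and more conceptual, since it leans on the already-established $H_1$ and $H_2$. The trade-off is that the paper's cochain-level computation produces explicit cocycle representatives, and these are used immediately in Proposition \ref{p:restriction}, where the image of $\psi^{12},\psi^{13},\psi^{12}+\psi^{13}+\psi^{23}$ under the restriction $H^2(\T_2)\to H^2(\partial\T_2)$ is computed. Your UCT argument establishes the isomorphism type but does not by itself give these distinguished generators, so one would still need the cochain description for the follow-up restriction-map computation. (Minor note: the aborted ``$1-1+2=2$'' in your consistency check should just be deleted; the corrected $b_0-b_1+b_2=1-0+2=3=\chi(\T_2)$ is the right check.)
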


\begin{proof}
The $d$-cells of $\T^2$ are generators for the $d^{th}$ CW homology chain complex. Their duals freely generate $C^d(\T_2)$. Let $y_1,y_2,y_3,y_4$ denote the 1-handles in the diagram for $\T_2$ (from left to right), and let $x_{ij}$ denote the 2-handle whose attaching circle passes over $y_i$ and $y_j$. Let $\phi^i$ denote the dual of $y_i$ and $\psi^{ij}$ denote the dual of $x_{ij}$ so $C^1(\T_2)=\langle \phi^1,\phi^2,\phi^3,\phi^4\rangle$ and $C^2(\T_2)=\langle \psi^{12},\psi^{13},\psi^{14},\psi^{23},\psi^{24},\psi^{34}\rangle$ and $C^k(\T^2)=0$ for $k>2$. Then
$$(\delta \phi^i)(x_{jk})=\phi^i(\partial x_{jk})=\phi^i(y_j+y_k)=\delta_j^i+\delta_k^i = \left(\sum_{\ell \neq i}\psi^{i\ell}\right)(x_{jk}).$$
Therefore $\delta \phi^i = \sum_{\ell\neq i}\psi^{i\ell}$. (Note we identify $\psi^{ij}=\psi^{ji}$.)

We conclude
$$H^2(\T_2) = \langle \psi^{12},\psi^{13},\psi^{14},\psi^{23},\psi^{24},\psi^{34}|\psi^{12}+\psi^{13}+\psi^{14}=\psi^{12}+\psi^{23}+\psi^{24}= \psi^{13}+\psi^{23}+\psi^{34}=\psi^{14}+\psi^{24}+\psi^{34}=0\rangle$$

By eliminating variables we obtain
$$H^2(\T_2) = \langle \psi^{12},\psi^{13},\psi^{12}+\psi^{13}+\psi^{23}| 2(\psi^{12}+\psi^{13}+\psi^{23})=0\rangle\cong \Z\oplus \Z\oplus \Z/2$$
\end{proof}

\begin{proposition}  \label{p:restriction}
The image of the restriction map $i: H^2(\T_2)\to H^2(\partial T_2)$ has index 2 (therefore order 24).
\end{proposition}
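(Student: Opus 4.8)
The plan is to read the index directly off the long exact cohomology sequence of the pair $(\T_2,\partial \T_2)$, using Lefschetz duality to convert the relative groups into absolute homology of $\T_2$, which is already known.

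First I would record the two facts about $\T_2$ itself that are needed. The handlebody picture in Figure~\ref{fig:SC1} exhibits $\T_2$ as built from handles of index $0,1,2$ only, so $\T_2$ is homotopy equivalent to a $2$-complex; in particular $H_3(\T_2)=0$, and since $H_2(\T_2)\cong\Z^2$ is free (Proposition~\ref{p:hom}) the universal coefficient theorem gives $H^3(\T_2)=0$. Also, by Proposition~\ref{p:fund} we have $\pi_1(\T_2)=\Z/2$, hence $H_1(\T_2)\cong\Z/2$. Lefschetz duality for the compact oriented $4$-manifold $\T_2$ then identifies $H^k(\T_2,\partial \T_2)\cong H_{4-k}(\T_2)$; in particular $H^3(\T_2,\partial \T_2)\cong H_1(\T_2)\cong\Z/2$.

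Next I would write out the relevant segment of the long exact sequence of the pair,
$$H^2(\T_2)\xrightarrow{\ i\ }H^2(\partial \T_2)\xrightarrow{\ \delta\ }H^3(\T_2,\partial \T_2)\longrightarrow H^3(\T_2)=0.$$
Since the rightmost term vanishes, $\delta$ surjects onto $H^3(\T_2,\partial \T_2)\cong\Z/2$, and by exactness $\operatorname{im}(i)=\ker(\delta)$. Therefore $\operatorname{im}(i)$ has index equal to the order of $\operatorname{im}(\delta)$, namely $2$, in $H^2(\partial \T_2)$. Finally, $\partial \T_2$ is a closed oriented $3$-manifold, so Poincar\'e duality together with Proposition~\ref{p:H1boundary} gives $H^2(\partial \T_2)\cong H_1(\partial \T_2)\cong\Z/12\oplus\Z/2\oplus\Z/2$, which has order $48$; hence $\operatorname{im}(i)$ has order $24$, as claimed.

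The only step carrying real content is the identification $H^3(\T_2,\partial \T_2)\cong H_1(\T_2)$ together with the fact that this group is $\Z/2$ rather than $0$ — this is exactly what forces the index to be $2$ and not $1$, and it is supplied by Proposition~\ref{p:fund}, which computes $\pi_1(\T_2)$ and hence $H_1(\T_2)$. Everything else is formal homological algebra. If at a later stage one needs the image described concretely (for instance to decide which classes extend over the filling in the Seiberg--Witten computation), one can instead track the generators $\psi^{12},\psi^{13},\psi^{12}+\psi^{13}+\psi^{23}$ of $H^2(\T_2)$ from Proposition~\ref{p:cohomT2} through the surgery diagram of Figure~\ref{fig:H1boundary} and express their restrictions in the presentation of Proposition~\ref{p:H1boundary}; but for the index statement alone the exact sequence argument above is enough.
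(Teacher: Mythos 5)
Your proof is correct, and it takes a genuinely different route from the paper's.

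The paper computes the restriction map explicitly: it post-composes with Poincar\'e duality to get $\rho = PD\circ i\colon H^2(\T_2)\to H_1(\partial\T_2)$, evaluates $\rho$ on the generators $\psi^{12},\psi^{13},\psi^{12}+\psi^{13}+\psi^{23}$ of $H^2(\T_2)$ (using $\rho(\psi^{ij})=\mu_{ij}=\lambda_i+\lambda_j$), and reads off that the image is generated by $\lambda_2-\lambda_1,\ \lambda_3-\lambda_1,\ 2\lambda_1$, which visibly has index $2$ in the presentation $\langle\lambda_1,\lambda_2-\lambda_1,\lambda_3-\lambda_1\rangle$ of Proposition~\ref{p:H1boundary}. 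You instead get the index ``for free'' from the long exact sequence of the pair and Lefschetz duality: $\operatorname{coker}(i)\cong H^3(\T_2,\partial\T_2)\cong H_1(\T_2)\cong\Z/2$, the surjectivity of $\delta$ being guaranteed by $H^3(\T_2)=0$ (handles of index $\le 2$ plus $H_2(\T_2)$ free). Each step is correct, including the orientability of $\T_2$ needed for Lefschetz duality (it is a Stein/Lefschetz-fibered domain). Your approach is more conceptual and shorter, and uses Proposition~\ref{p:fund} in place of Proposition~\ref{p:cohomT2}; the paper's explicit computation has the small advantage of producing concrete generators of the image, though in fact the later Seiberg--Witten argument only invokes the index and the order, so for the application either proof suffices.
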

\begin{proof}
The restriction map $i: H^2(\T_2)\to H^2(\partial \T_2)$ composed with Poincare duality yields a map $\rho=PD\circ i: H^2(\T_2)\to H_1(\partial \T_2).$ On generators $\psi^{ij}$ we have $\rho(\psi^{ij})=\mu_{ij}$ where $\mu_{ij}$ is the meridian of the surgery curve corresponding to the attaching circle for $x_{ij}$. In $H_1(\partial \T_2)$ we had the relation $\mu_{ij}=\lambda_i+\lambda_j$ so 
$$\rho(\psi^{12}) = (\lambda_2-\lambda_1)+2\lambda_1 \qquad \rho(\psi^{13})=(\lambda_3-\lambda_1)+2\lambda_1$$
$$\rho(\psi^{12}+\psi^{13}+\psi^{23})=2(\lambda_2-\lambda_1)+2(\lambda_3-\lambda_1)+6\lambda_1 =6\lambda_1\in H_1(\partial \T_2)$$
Therefore the image of $\rho$ (which equals the image of $i$) is generated by $(\lambda_2-\lambda_1,\lambda_3-\lambda_1,2\lambda_1)$ which has index 2 in $H_1(\partial \T_2)$.
\end{proof}

\subsection{Properties of $\mathcal{R}$}

\begin{lemma} The filling $\mathcal{R}$ is simply connected. \end{lemma}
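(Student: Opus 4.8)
The plan is to compute $\pi_1(\mathcal{R})$ directly from the Lefschetz fibration handlebody diagram in Figure~\ref{fig:plumbalt}, in exactly the style of the proof of Proposition~\ref{p:fund}. The fibers are six-holed disks, so the six dotted circles give generators $y_1,\dots,y_6$ of a free group, and each of the eight $-1$-framed $2$-handles, attached along the vanishing cycles recorded in the monodromy factorization $D_{46}D_{56}D_{145}D_{245}D_{345}D_{123}D_{126}D_{36}$, contributes one relation: by the conventions of Section~\ref{s:starsurgerydefinitions}, a vanishing cycle convexly enclosing holes $i_1,\dots,i_k$ gives the relation $y_{i_1}\cdots y_{i_k}=1$. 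The height at which a $2$-handle is attached is irrelevant here, since it only affects the order in which relations are imposed.

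Concretely, I would write out the eight relations
\[
y_4y_6=1,\quad y_5y_6=1,\quad y_1y_4y_5=1,\quad y_2y_4y_5=1,
\]
\[
y_3y_4y_5=1,\quad y_1y_2y_3=1,\quad y_1y_2y_6=1,\quad y_3y_6=1,
\]
and collapse them. From $y_4y_6=1$, $y_5y_6=1$, $y_3y_6=1$ we get $y_6=y_4^{-1}$, $y_5=y_4$, $y_3=y_4$; then $y_1y_4y_5=1$ and $y_2y_4y_5=1$ give $y_1=y_2=y_4^{-2}$. Substituting into $y_3y_4y_5=1$ forces $y_4^{3}=1$, while substituting into $y_1y_2y_6=1$ forces $y_4^{5}=1$ (and $y_1y_2y_3=1$ becomes the redundant $y_4^{-3}=1$). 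Since $\gcd(3,5)=1$, this yields $y_4=1$, hence $y_1=\dots=y_6=1$, so $\pi_1(\mathcal{R})$ is trivial.

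There is no real obstacle in this argument; the only thing that needs care is correctly extracting the handle presentation (which dotted circles each vanishing cycle runs over, and with what multiplicity) from the figure and checking it against the stated monodromy factorization. I would present the direct handlebody computation above as the proof, since it is short and self-contained; one could additionally note as a consistency check that $\mathcal{R}$ is built from the (simply connected) plumbing $\mathcal{Q}$ by the rational blowdowns listed in Remark~\ref{rem:Qbd}, though the handlebody computation is the cleanest route.
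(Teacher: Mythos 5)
Your proof is correct and is essentially the same as the paper's: both extract the eight relations from the Lefschetz fibration handlebody (one per vanishing cycle), eliminate five generators, and reduce to relations of coprime order $3$ and $5$ on the remaining generator. The paper keeps $y_6$ as the surviving generator while you keep $y_4$, but this is a cosmetic difference.
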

\begin{proof}
Using the handlebody decomposition, a presentation for $\pi_1(\mathcal{R})$ is
$$\langle y_1,\cdots, y_6 | y_3y_6=y_4y_6=y_5y_6=y_1y_4y_5=y_2y_4y_5=y_3y_4y_5=y_1y_2y_3=y_1y_2y_6=1\rangle$$
We can eliminate $y_1,y_2,y_3,y_4,y_5$ with the first five relations resulting in the following presentation.
$$\langle y_6 | y_6^{-3}=y_6^3=y_6^5=1\rangle$$
Since $3$ and $5$ are relatively prime, this is a presentation for the trivial group.
\end{proof}

\begin{lemma} \label{l:homT}
The intersection form on $H_2(\mathcal{R};\Z)$ is negative definite. In fact, there exists homology classes $\alpha$ and $\beta$ which freely  generate $H_2(\mathcal{R};\Z)\cong \Z^2$, such that the intersection form with respect to the basis $\langle \alpha, \beta\rangle $ for $H_2(\mathcal{R};\Z)$ is given by 
$$\left[\begin{array}{cc} -10&-23\\ -23&-79\end{array} \right].$$
\end{lemma}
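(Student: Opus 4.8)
The plan is to read the intersection form directly off the Lefschetz fibration handlebody diagram for $\mathcal{R}$ in Figure \ref{fig:plumbalt}, exactly as was done for $\T_2$ in Proposition \ref{p:hom}. First I would set up the CW chain complex: the six dotted circles are the $1$-handles, giving generators $y_1,\dots,y_6$ of $C_1$, and the eight $-1$-framed $2$-handles $D_{46},D_{56},D_{145},D_{245},D_{345},D_{123},D_{126},D_{36}$ (reading the vanishing cycles of the Lefschetz fibration) give generators of $C_2$. The boundary map sends each $2$-handle to the sum of the $1$-handle generators it passes over, e.g. $\partial x_{145}=y_1+y_4+y_5$, and so on. Since $\pi_1(\mathcal{R})$ is trivial (and in particular $H_1(\mathcal{R})=0$), the boundary map $C_2\to C_1$ is surjective onto a rank-$6$ group, so $H_2(\mathcal{R};\Z)=\ker(\partial)$ is free of rank $8-6=2$, consistent with $\chi(\mathcal{R})=3$. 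I would solve the linear system to find an explicit basis of cycles $\alpha,\beta$, each written as an integer combination of the eight $2$-handle classes.

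Next I would compute the intersection form. Each $2$-handle is attached along a vanishing cycle with framing $-1$, so the self-intersection of each generator $x_{ijk}$ is $-1$, and two distinct $2$-handles attached along disjoint curves have intersection number $0$ — but here the relevant pairings come from the linking of the attaching circles, or equivalently are computed from the framing data. The cleanest approach is: the form on $C_2$ (before passing to homology, this is just a bilinear form on the free module of $2$-handles) is governed by the linking matrix of the $-1$-framed attaching circles, and the induced form on the rank-$2$ sublattice $\ker(\partial)=H_2(\mathcal{R};\Z)$ is obtained by restriction. Plugging the explicit cycles $\alpha,\beta$ into this form yields a $2\times 2$ integer matrix; the claim is that after a change of basis it equals $\left[\begin{array}{cc}-10&-23\\-23&-79\end{array}\right]$, which has determinant $790-529=261>0$ and negative diagonal, hence is negative definite, so $\mathcal{R}$ is negative definite.

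The main obstacle is bookkeeping the intersection numbers of the $2$-handles correctly: unlike the $\T_i$ case where the attaching circles were particularly symmetric, here the vanishing cycles $D_{46},D_{56},D_{145},\dots$ enclose overlapping sets of holes, so the handle attaching circles are genuinely linked and one must carefully extract the pairwise intersection numbers (including signs, which depend on orientation conventions for the cores and the co-cores) from the handle diagram. I would handle this either by computing the linking matrix of the attaching circles in the surgery diagram obtained by converting dotted circles to $0$-framed unknots, or by using the fact that the form must have determinant equal to $|H_1(\partial\mathcal{R})|$ up to sign (computed from the same surgery diagram as in Proposition \ref{p:H1boundary}) as a consistency check. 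Once the $2\times 2$ matrix is obtained in some basis, a unimodular change of basis puts it in the stated form; since determinant and definiteness are basis-independent, the essential content is just that the resulting form is negative definite with the correct discriminant.
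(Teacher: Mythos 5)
Your framework matches the paper's — set up the handlebody CW complex, compute $\ker\partial\subset C_2$, and restrict the intersection pairing to that rank-$2$ sublattice — but there is a conceptual error in the step you flag as the main obstacle. You assert that because the vanishing cycles $D_{46},D_{56},D_{145},\dots$ enclose overlapping sets of holes, "the handle attaching circles are genuinely linked" and that one must carefully extract nontrivial off-diagonal pairings with signs. This is false, and the falsity is exactly why the computation is easy. In the Lefschetz-fibration handlebody convention of Section~\ref{s:starsurgerydefinitions} and Figure~\ref{fig:LFconv}, each vanishing cycle is drawn on its own page, stacked at a distinct height transverse to the dotted-circle braid. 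Any two attaching circles therefore lie in disjoint horizontal slabs of $S^3$ and have linking number zero, no matter which (possibly overlapping) holes they enclose. Consequently the pairing on $C_2$ is simply $-I_8$: each $x_i$ has self-pairing $-1$ from the Lefschetz framing, and any two distinct $x_i,x_j$ are orthogonal. This is precisely what the paper invokes in Lemma~\ref{l:homT} ("the $x_i$ are orthogonal with respect to the intersection pairing") and is the same fact that made Proposition~\ref{p:hom} for $\T_2$ work; it was never a special symmetry of the $\T_i$ diagrams, just the general geometry of planar Lefschetz handlebodies.

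Once you have this, the remainder is a short linear-algebra exercise. Label the vanishing cycles $x_1=D_{46},\dots,x_8=D_{36}$ (top to bottom), write $\partial x_i$ as the sum of the $y_j$ that curve encircles, find two integral generators of $\ker\partial$ (the paper uses $\alpha = x_1+x_2-x_3-x_4+x_5+x_6-2x_8$ and $\beta = x_1+x_2-3x_3-3x_4+5x_5+3x_7-5x_8$), and evaluate the diagonal form on them to obtain $\alpha^2=-10$, $\beta^2=-79$, $\alpha\cdot\beta=-23$. Your proposal never actually produces the cycles, which is a smaller gap appropriate for a sketch; the significant one is the incorrect expectation of a nontrivial linking matrix, which would send you hunting for phantom pairings and introduce opportunities for sign errors. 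The determinant versus $|H_1(\partial\mathcal{R})|$ consistency check you suggest is a reasonable sanity test, but it is not a substitute for recognizing that the form on $C_2$ is already diagonal.
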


\begin{proof}
Labeling the 2-handles from top to bottom as $x_1,\cdots, x_8$, the cycles generating homology from the CW complex given by the handlebody decomposition are $\alpha=x_1+x_2-x_3-x_4+x_5+x_6-2x_8$ and  $\beta=x_1+x_2-3x_3-3x_4+5x_5+3x_7-5x_8$. Since $x_i^2=-1$ and the $x_i$ are orthogonal with respect to the intersection pairing, the intersection form can be computed directly.
\end{proof}

\subsection{Properties of $\mathcal{V}$}

\begin{lemma} 
The filling $\mathcal{V}$ is simply connected. 
\end{lemma}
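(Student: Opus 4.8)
The plan is to mirror the proof that $\mathcal{R}$ is simply connected, reading $\pi_1(\mathcal{V})$ directly off the Lefschetz fibration handlebody of Figure \ref{fig:plumbalt2}. The eight dotted circles (the $1$-handles, one per hole of the $8$-holed page) furnish generators $y_1,\dots,y_8$, and each of the ten $-1$-framed $2$-handles is attached along one of the vanishing cycles read off from Proposition \ref{pro:exchangeUV}, namely $D_{134},D_{234},D_{125},D_{126},D_{127},D_{128},D_{5678},D_{356},D_{456},D_{3478}$; by the handlebody conventions of Section \ref{s:starsurgerydefinitions}, such a $2$-handle contributes the relation that the (cyclically ordered) product of the generators it encircles equals $1$. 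This gives
$$\pi_1(\mathcal{V}) = \langle y_1,\dots,y_8 \mid y_1y_3y_4 = y_2y_3y_4 = y_1y_2y_5 = y_1y_2y_6 = y_1y_2y_7 = y_1y_2y_8 = y_5y_6y_7y_8 = y_3y_5y_6 = y_4y_5y_6 = y_3y_4y_7y_8 = 1\rangle.$$

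Next I would collapse this presentation by repeated cancellation, exactly as for $\mathcal{R}$. Comparing $y_1y_3y_4=1$ with $y_2y_3y_4=1$ forces $y_1=y_2$; comparing the four relations $y_1y_2y_j=1$ for $j=5,6,7,8$ forces $y_5=y_6=y_7=y_8=(y_1y_2)^{-1}=y_1^{-2}$; and comparing $y_3y_5y_6=1$ with $y_4y_5y_6=1$ forces $y_3=y_4=(y_5y_6)^{-1}=y_1^{4}$. Substituting into $y_5y_6y_7y_8=1$ yields $y_1^{8}=1$, while substituting into $y_1y_3y_4=1$ yields $y_1^{9}=1$; since $\gcd(8,9)=1$ we get $y_1=1$, hence all generators are trivial and $\pi_1(\mathcal{V})=1$. (The remaining relation $y_3y_4y_7y_8=1$ becomes $y_1^{4}=1$, which is automatic given the above, so no inconsistency arises.)

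This argument is pure bookkeeping, so the only thing to be careful about is the first step: one must trust that Figure \ref{fig:plumbalt2} genuinely presents $\pi_1(\mathcal{V})$ by the indicated generators and relations, which is guaranteed by the Lefschetz fibration handlebody conventions (holes $\leftrightarrow$ dotted $1$-handle circles, vanishing cycles $\leftrightarrow$ $2$-handles) and is the same bookkeeping already used successfully for $\mathcal{R}$ above. The second mild subtlety is choosing an elimination order that exhibits the group as a quotient of $\Z$ by two coprime powers; any order works, but the one above makes the coprimality $\gcd(8,9)=1$ transparent, which is the crux just as $\gcd(3,5)=1$ was for $\mathcal{R}$.
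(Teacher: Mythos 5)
Your proof is correct and follows essentially the same route as the paper: read the presentation of $\pi_1(\mathcal{V})$ off the Lefschetz fibration handlebody and collapse it by Tietze eliminations until a coprimality condition forces triviality. The only difference is bookkeeping — the paper uses the first six relations to eliminate $y_1,y_2,y_5,\dots,y_8$, lands on $(y_3y_4)^5=(y_3y_4)^8=1$, and invokes $\gcd(5,8)=1$, whereas you express every generator as a power of $y_1$ and invoke $\gcd(8,9)=1$; both are equivalent elimination orders yielding the same conclusion.
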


\begin{proof}
Using the handlebody decomposition, a presentation for $\pi_1(\mathcal{V})$ is
\begin{align*}\langle y_1,\cdots, y_8|&y_1y_3y_4=y_2y_3y_4=y_1y_2y_5=y_1y_2y_6=y_1y_2y_7\\&=y_1y_2y_8=y_5y_6y_7y_8=y_3y_5y_6=y_4y_5y_6=y_3y_4y_7y_8=1\rangle\end{align*}
Using the first six relations to eliminate $y_1,y_2,y_5,y_6,y_7,y_8$ this simplifies to
$$\langle y_3,y_4|(y_3y_4)^8=y_3(y_3y_4)^4=y_4(y_3y_4)^4=(y_3y_4)^5=1\rangle$$
Since $5$ and $8$ are relatively prime we get that $y_3y_4=1$, and thus $y_3=y_4=1$ so the group is trivial.
\end{proof}

\begin{lemma} The intersection form on $H_2(\mathcal{V};\Z)$ is negative definite. In fact $H_2(\mathcal{V};\Z)\cong \Z^2$ and the intersection form with respect to a basis is $$\left[\begin{array}{cc} -30&5\\ 5&-49\end{array} \right].$$\end{lemma}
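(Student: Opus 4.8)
The plan is to proceed exactly as in the parallel computations for $\mathcal{R}$ and for $\mathcal{T}_i$, using the explicit Lefschetz-fibration handlebody diagram for $\mathcal{V}$ in Figure \ref{fig:plumbalt2} (the right-hand side). Since $\mathcal{V}$ is built from a disk with $8$ one-handles (dotted circles) and exactly ten $(-1)$-framed $2$-handles (the ten vanishing cycles in the monodromy factorization $D_{134}D_{234}D_{125}D_{126}D_{127}D_{128}D_{5678}D_{356}D_{456}D_{3478}$), the CW chain complex has $C_0 = \Z$, $C_1 = \Z^8$ (generated by the cores of the $1$-handles $y_1,\dots,y_8$), and $C_2 = \Z^{10}$ (generated by the cores $x_1,\dots,x_{10}$ of the $2$-handles, read top to bottom), with $\partial x = \sum (\text{holes enclosed by the corresponding vanishing cycle})$. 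Having just shown $\pi_1(\mathcal{V}) = 1$ and hence $H_1(\mathcal{V};\Z) = 0$, the boundary map $\partial_1 : C_1 \to C_0$ is zero and $\partial_2 : C_2 \to C_1$ is surjective, so its kernel $H_2(\mathcal{V};\Z) = \ker \partial_2$ is free of rank $10 - 8 = 2$, confirming $H_2(\mathcal{V};\Z) \cong \Z^2$.

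First I would write down the $8 \times 10$ matrix of $\partial_2$ from the subscripts of the ten Dehn twists above (column $x_j$ has a $1$ in row $i$ precisely when hole $i$ appears in the $j$-th vanishing cycle), and then solve $\partial_2 v = 0$ over $\Z$ to produce an explicit basis $\langle \alpha, \beta\rangle$ of the kernel — exactly the analogue of the vectors $\alpha,\beta$ exhibited in the proofs of Lemma \ref{l:homT} and Proposition \ref{p:hom}. Next, since the $2$-handles are attached along a Lefschetz fibration with fiber a planar surface, the cores $x_j$ are pairwise orthogonal spheres of self-intersection $-1$ (each vanishing cycle gives a $(-1)$-framed handle and distinct vanishing cycles are disjoint on the page), so the intersection form on $C_2$ in the basis $x_1,\dots,x_{10}$ is $-I_{10}$. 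I would then compute the $2\times 2$ Gram matrix of $\langle\alpha,\beta\rangle$ by the formula $Q(\alpha,\beta) = \alpha^{T}(-I_{10})\beta = -\langle \alpha,\beta\rangle_{\R^{10}}$, read off the stated matrix $\begin{bmatrix} -30 & 5 \\ 5 & -49\end{bmatrix}$, and verify negative definiteness directly since its determinant $30\cdot 49 - 25 = 1445 > 0$ and its $(1,1)$-entry is negative.

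The only genuine content beyond bookkeeping is choosing the right primitive basis of the rank-$2$ kernel so that the Gram matrix comes out in the claimed form; the main obstacle — a mild one — is correctly reading the ten vanishing cycles' hole-sets off Figure \ref{fig:plumbalt2} and keeping the ordering of $2$-handles consistent, since a sign or labeling slip would alter $\alpha,\beta$ and hence the off-diagonal entry. (Of course the matrix is only well-defined up to $\mathrm{GL}_2(\Z)$-congruence, so any correct computation yielding a form congruent to the stated one, and in particular negative definite, suffices; one checks the stated matrix is $-\mathrm{diag}$-equivalent to something like $\langle -1 \rangle \oplus \langle -1445\rangle$ over $\R$ to confirm definiteness.) This mirrors verbatim the structure of the proof of Lemma \ref{l:homT}, so no new techniques are required.
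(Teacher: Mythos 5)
Your proposal is correct and follows the same route as the paper: set up the CW chain complex from the Lefschetz handlebody, identify a basis $\{\alpha,\beta\}$ of $\ker\partial_2 \cong H_2(\mathcal V;\Z)$, and read off the Gram matrix from the fact that the $x_j$ are pairwise orthogonal classes of square $-1$. The paper simply exhibits the two cycles explicitly (namely $\alpha = -2x_1-2x_2+x_3+x_4-3x_7+x_8+x_9+3x_{10}$ and $\beta = -3x_3-3x_4+3x_5+3x_6-x_7+2x_8+2x_9-2x_{10}$) and states that the form is then computed directly, while you add a little more scaffolding by invoking $\pi_1(\mathcal V)=1$ to get $H_1=0$ and hence rank of the kernel equal to $10-8=2$.
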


\begin{proof}
Labelling the 2-handles from top to bottom as $x_1,\cdots, x_{10}$, two cycles generating homology from the CW complex given by the handlebody decomposition are $-2x_1-2x_2+x_3+x_4-3x_7+x_8+x_9+3x_{10}$ and  $-3x_3-3x_4+3x_5+3x_6-x_7+2x_8+2x_9-2x_{10}$. Since $x_i^2=-1$ and the $x_i$ are orthogonal with respect to the intersection pairing, the intersection form can be computed directly.
\end{proof}

\subsection{Properties of $\mathcal{L}$}

We compute the fundamental group and second homology of the filling $\mathcal{L}$ of figure \ref{fig:notqbd}.

\begin{proposition}
$\pi_1(\mathcal{L})\cong \Z/4$, and the generator restricts to the boundary Seifert fibered space as a meridian of any of the $-2$ surgery curves in the plumbing diagram.
\end{proposition}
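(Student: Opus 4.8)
The plan is to follow exactly the template already used for $\mathcal{T}_i$ in Proposition \ref{p:fund} and, in slightly modified form, for $\mathcal{R}$ and $\mathcal{V}$: first read off a presentation of $\pi_1(\mathcal{L})$ directly from the Lefschetz-fibration handlebody diagram in Figure \ref{fig:notqbd}, simplify it algebraically, and then track the generating loop through a sequence of Kirby moves to the standard plumbing diagram for $\partial\mathcal{K}$ to identify its image in the boundary. The handlebody of $\mathcal{L}$ has $5$ one-handles, giving generators $y_1,\dots,y_5$ of $\pi_1$, and one $2$-handle for each vanishing cycle in the monodromy factorization $D_{123}D_{14}D_{15}D_{24}D_{25}D_{345}$ of equation \eqref{eqn:notqbdsub}. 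Since a $2$-handle attached along a curve convexly enclosing holes $i_1,\dots,i_k$ imposes the relation $y_{i_1}\cdots y_{i_k}=1$, we obtain the presentation
\[
\pi_1(\mathcal{L})=\langle y_1,\dots,y_5 \mid y_1y_2y_3=y_1y_4=y_1y_5=y_2y_4=y_2y_5=y_3y_4y_5=1\rangle.
\]

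The first step is then purely algebraic simplification. From $y_1y_4=1$ and $y_1y_5=1$ we get $y_4=y_5=y_1^{-1}$; from $y_2y_4=1$ we get $y_2=y_4^{-1}=y_1$. Substituting into $y_1y_2y_3=1$ gives $y_3=y_1^{-2}$, and substituting into $y_3y_4y_5=1$ gives $y_1^{-2}y_1^{-1}y_1^{-1}=y_1^{-4}=1$. Hence $\pi_1(\mathcal{L})=\langle y_1\mid y_1^4=1\rangle\cong\mathbb{Z}/4$. (The remaining relations are then automatically satisfied, as one checks.) This part is routine; the only care needed is to match the correct enclosed-hole sets to the labels $1,\dots,5$ in Figure \ref{fig:notqbd}, exactly as the conventions in Section \ref{s:starsurgerydefinitions} prescribe.

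The second step — identifying that the generator $y_1$ restricts on $\partial\mathcal{L}=\partial\mathcal{K}$ to the meridian of one of the $-2$-framed surgery curves of the plumbing — is the main obstacle, since it is the one genuinely geometric (rather than bookkeeping) part. The strategy mirrors Proposition \ref{p:fund}: draw the handlebody of $\mathcal{L}$ with $y_1$ represented by a small loop around the first dotted circle, convert dotted circles to $0$-framed circles to view the diagram as a surgery presentation of the common boundary, blow down the $-1$-framed $2$-handles coming from the vanishing cycles, and then blow up to reintroduce $0$-framed circles that can be traded back for dotted circles exhibiting the $\mathcal{K}$-plumbing. Throughout, one isotopes the reference loop along with the diagram and performs handleslides of the reference loop over the resulting chains of $-2$-framed $2$-handles — precisely the manoeuvre carried out in Figures \ref{fig:SC1}--\ref{fig:SC5} — until it appears as the meridian of the terminal $-2$-sphere on one arm of the plumbing. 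The bookkeeping here is more delicate than in the $\mathcal{T}_i$ case because the $\mathcal{K}$-graph and the monodromy \eqref{eqn:notqbdsub} are less symmetric, so one must be attentive to which arm the loop ends up on; by the symmetry of the relation $y_1^4=1$ under the available relabellings, any of the $-2$ meridians represents a generator, which is all that is claimed.
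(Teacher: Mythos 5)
Your proposal matches the paper's proof: both read off the presentation $\langle y_1,\dots,y_5 \mid y_1y_2y_3=y_1y_4=y_1y_5=y_2y_4=y_2y_5=y_3y_4y_5=1\rangle$ from the handlebody of $\mathcal{L}$, simplify to $\langle y_1\mid y_1^4=1\rangle$, and then cite the Kirby-move tracking of Proposition~\ref{p:fund} to identify the generator with a meridian of a $-2$ curve in $\partial\mathcal{K}$. The only cosmetic difference is your appeal to ``symmetry of $y_1^4=1$'' at the end, where the paper instead notes explicitly that the relations already identify $y_1$, $y_2$, $y_4^{-1}$, $y_5^{-1}$, and each of these 1-handle loops traces to the meridian of a different $-2$ surgery curve — a slightly cleaner way to justify ``any'' of the $-2$ meridians.
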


\begin{proof}
Using the handlebody decomposition of figure \ref{fig:notqbd}, we obtain a presentation for $\pi_1(\mathcal{L})$ generated by the five 1-handles with relations given by the 2-handles as follows

$$\langle y_1,y_2,y_3,y_4,y_5 | y_1y_2y_3=y_1y_4=y_1y_5=y_2y_4=y_2y_5=y_3y_4y_5=1\rangle$$
which simplifies to
$$\langle y_1,y_2,y_3 | y_1y_2y_3=y_2y_1^{-1}=y_3y_2^{-2}=1\rangle$$
which again simplifies to
$$\langle y_1 | y_1^4=1\rangle.$$
Note that the relations set $y_1,y_2,y_4^{-1}$ and $y_5^{-1}$ all equal. Each of these curves can be isotoped into the boundary of $\mathcal{L}$. By performing blow-downs, handle-slides, and handle cancellations, in a similar manner to proposition \ref{p:fund}, we see that these curves in $\partial \mathcal{L}$ are meridians of the $-2$ surgery curves in the diagram for $\partial \mathcal{K}$.
\end{proof}

\begin{proposition}
$H_2(\mathcal{L})\cong \Z$. It is generated by an element $x_{14}-x_{15}-x_{24}+x_{25}$ of square $-4$, represented by a torus obtained by gluing tubes to the cores of the specified 2-handles $x_{ij}$ which link the $i^{th}$ and $j^{th}$ dotted circles in figure \ref{fig:notqbd}.
\end{proposition}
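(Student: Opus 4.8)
\emph{Proof proposal.} The plan is to treat the handlebody diagram of $\mathcal{L}$ in Figure~\ref{fig:notqbd} as a CW complex and compute $H_2$ as a kernel of a cellular boundary map, exactly as in the proof of Proposition~\ref{p:hom}. The five $1$-handles give free generators $y_1,\dots,y_5$ of $C_1$; the six $2$-handles are the vanishing cycles $D_{123},D_{14},D_{15},D_{24},D_{25},D_{345}$ of the Lefschetz fibration, giving free generators $x_{123},x_{14},x_{15},x_{24},x_{25},x_{345}$ of $C_2$; and there is a single $0$-handle, so $\partial_1=0$. The boundary map $\partial_2$ is the abelianization of the relations already recorded in the computation of $\pi_1(\mathcal{L})$, namely $\partial_2 x_S=\sum_{j\in S}y_j$. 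Hence $H_2(\mathcal{L})=\ker\partial_2$, and solving this $5\times 6$ system is short: the equations force the coefficients of the two three-hole generators $x_{123}$ and $x_{345}$ to vanish and pin down the remaining four up to a common scalar, so $\ker\partial_2=\Z\langle x_{14}-x_{15}-x_{24}+x_{25}\rangle$. Being a subgroup of the free group $C_2$, this is torsion-free, whence $H_2(\mathcal{L})\cong\Z$, generated by $\zeta:=x_{14}-x_{15}-x_{24}+x_{25}$. As a consistency check, $\chi(\mathcal{L})=1-5+6=2$ and $H_1$ is torsion, matching the Euler characteristic of $\mathcal{L}$ and $\pi_1(\mathcal{L})\cong\Z/4$.

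For the self-intersection I would argue as in Proposition~\ref{p:hom}: each $2$-handle $x_S$ is a vanishing cycle, hence attached with framing $-1$, so the classes $x_S$ enter the intersection pairing as a mutually orthogonal family with $x_S\cdot x_S=-1$; this gives $\zeta\cdot\zeta=(1)^2(-1)+(-1)^2(-1)+(-1)^2(-1)+(1)^2(-1)=-4$.

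To produce the torus I would mimic Figure~\ref{fig:HomologyTori}: orient the cores of $x_{14},x_{25}$ positively and the cores of $x_{15},x_{24}$ negatively. Each core of $x_{ij}$ is a disk bounded by its attaching circle; cap that circle off with the twice-punctured disk lying in the complement of the dotted circles, whose two inner boundary circles are meridians of the dotted circles $i$ and $j$, and then connect, by a tube running around the corresponding dotted circle, the two such meridian circles sitting on each dotted circle (dotted circles $1,2,4,5$ each receive one tube, dotted circle $3$ none). The four cores are strung together by these tubes around a $4$-cycle, so the surface is connected, and the sign choices make the gluings orientation-compatible. An Euler characteristic count, $4\chi(D^2)+4\chi(\text{twice-punctured }D^2)+4\chi(\text{tube})=4-4+0=0$, shows it is a torus, which necessarily represents $\zeta$; computing its normal Euler number re-derives $\zeta\cdot\zeta=-4$.

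The only delicate point is the self-intersection bookkeeping: one must check that the off-diagonal contributions — the linking of the attaching circles of the $x_S$ together with the way the capping surfaces thread the $1$-handles — cancel, so that the ``orthogonal $(-1)$-classes'' shortcut is legitimate. The torus representative is precisely the device that makes this transparent, since its normal Euler number is an intrinsic, geometrically computable quantity and therefore gives an independent confirmation of $\zeta\cdot\zeta=-4$.
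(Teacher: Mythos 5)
Your proof is correct and carries out precisely the "direct computation from the handlebody decomposition" that the paper invokes: the cellular $\partial_2$ from the six vanishing-cycle $2$-handles forces $a=f=0$ and pins the kernel to $\Z\langle x_{14}-x_{15}-x_{24}+x_{25}\rangle$, the self-intersection follows from the mutually unlinked $(-1)$-framed attaching circles exactly as in the paper's Proposition~\ref{p:hom}, and the tubed-cores torus mirrors Figure~\ref{fig:HomologyTori}. Nothing further is needed.
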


The proof is a direct computation from the handlebody decomposition of figure \ref{fig:notqbd}.

\begin{proposition}
Using the symplectic structure induced by the Lefschetz fibration on $\mathcal{L}$, $c_1(\mathcal{L})=0$, therefore the canonical class is trivial.
\end{proposition}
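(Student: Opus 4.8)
The plan is to mimic the proof of Proposition~\ref{p:can} for $\T_2$, using the obstruction-theoretic description of the first Chern class of the almost complex structure determined by the Lefschetz fibration on $\mathcal{L}$ displayed in Figure~\ref{fig:notqbd}. First I would note that the handlebody diagram for $\mathcal{L}$ exhibits a positive allowable Lefschetz fibration whose generic fiber is a disk with five punctures (the dotted circles) and whose ten vanishing cycles are the attaching circles of the ten $-1$-framed $2$-handles. As in the $\T_2$ case, drawing each generic fiber in a plane furnishes a trivialization of its tangent bundle, hence a trivialization of $(T\mathcal{L},J)$ over the $1$-skeleton, and $c_1(\mathcal{L})$ evaluated on a $2$-cell $x_{ij}$ equals the winding number, with respect to that planar trivialization, of the corresponding vanishing cycle $\tilde{x}_{ij}$ pushed onto a regular fiber.

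Next I would observe that each vanishing cycle in the diagram for $\mathcal{L}$ is an embedded planar curve that convexly encloses some subset of the holes, so each $\tilde{x}_{ij}$ is an embedded curve on the punctured-disk fiber and therefore has winding number one relative to the trivialization. Consequently $c_1(\mathcal{L})$ sends every generator $x_{ij}$ of $C_2(\mathcal{L})$ to $1$ (up to the sign fixed by orientation conventions), just as for $\T_2$. Then by the previous proposition, $H_2(\mathcal{L};\Z)\cong\Z$ is generated by the class $x_{14}-x_{15}-x_{24}+x_{25}$, on which $c_1(\mathcal{L})$ evaluates as $1-1-1+1=0$. Since $H_2$ has no torsion, this shows $c_1(\mathcal{L})=0$, and hence the canonical class $K_{\mathcal{L}}=-c_1(\mathcal{L})$ is trivial.

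The only genuine subtlety — the step I would be most careful about — is verifying that every vanishing cycle really does appear as an embedded, convexly-hole-enclosing curve on the fiber, so that the winding-number-one claim is legitimate; this is exactly the reading of the monodromy factorization
$$D_{123}D_{14}D_{15}D_{24}D_{25}D_{345}$$
from equation~\eqref{eqn:notqbdsub} against the handlebody conventions fixed in Section~\ref{s:starsurgerydefinitions}. Since each factor is a Dehn twist about a curve of the form $D_{i_1\cdots i_k}$, convexly enclosing holes, embeddedness is immediate, and the rest is the same bookkeeping as in Proposition~\ref{p:can}. I would therefore present the argument as a short remark that the proof of Proposition~\ref{p:can} applies verbatim, replacing the reference to the $\T_2$ generators by the single generator of $H_2(\mathcal{L};\Z)$ computed above.
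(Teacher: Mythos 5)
Your proof is correct and takes essentially the same winding-number approach as the paper's own (one-sentence) proof, spelling out the details in the style of Proposition~\ref{p:can}. One small factual slip worth fixing: $\mathcal{L}$ has six, not ten, $-1$-framed $2$-handles and vanishing cycles, as the monodromy factorization $D_{123}D_{14}D_{15}D_{24}D_{25}D_{345}$ and the six relations in the $\pi_1(\mathcal{L})$ presentation both show; this does not affect the computation, which only uses the four handles $x_{14}$, $x_{15}$, $x_{24}$, $x_{25}$ appearing in the generator $x_{14}-x_{15}-x_{24}+x_{25}$ of $H_2(\mathcal{L})$.
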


\begin{proof}
This follows from the winding number interpretation of $c_1(\mathcal{L})$ for Lefschetz fibrations, and the fact that the generator of $H_2(\mathcal{L})$ passes over two Lefschetz 2-handles with $+1$ multiplicity and two 2-handles with $-1$ multiplicity.
\end{proof}

\section{Elliptic Fibrations}\label{s:ellipticfibrations}

In order to find symplectic embeddings of the plumbings into well-understood symplectic 4-manifolds, we will use many different elliptic fibrations exhibiting various types of singular fibers. These fibrations were classified by Persson in \cite{Per}, providing a full list of possible configurations of singular fibers. However, in order to keep track of homology classes of symplectic spheres, we need to explicitly construct these elliptic fibrations by blowing-up a special Lefschetz pencil in $\CP2$.

\begin{lemma}\label{l:fibration}
There is an elliptic fibration on $E(1)=\CP2 \# 9 \barCP2$ with one $I_3$ fiber, one $I^*_0$ fiber, three fishtail fibers, and a section. Labelling the components of the $I_0^*$ fiber as $S_1,\cdots, S_5$ where $S_5$ intersects the $E_9$ section, and $S_4$ intersects $S_1,S_2,S_3$ and $S_5$, and labelling the components of the $I_3$ fiber as $V_1,V_2,V_3$ where $V_1$ intersects the $E_9$ section, the homology classes are as follows.
$$\begin{array}{rlrl}
\left[S_1\right]&=h-e_1-e_2-e_3 & [V_1]&=h-e_1-e_8-e_9\\
\left[S_2\right]&=h-e_1-e_4-e_5 & [V_2]&=h-e_2-e_4-e_6\\
\left[S_1\right]&=h-e_1-e_6-e_7 & [V_3]&=h-e_3-e_5-e_7\\
\left[S_4\right]&=e_1-e_8&&\\
\left[S_5\right]&=e_8-e_9&&\\
\end{array}$$
\end{lemma}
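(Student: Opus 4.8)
The plan is to construct the desired elliptic fibration explicitly by starting from a cubic pencil on $\CP2$ and performing a carefully chosen sequence of blow-ups, keeping track of homology classes in $H_2(\CP2 \# 9\barCP2; \Z)$ with respect to the standard basis $\langle h, e_1, \dots, e_9\rangle$. The general strategy: realize a nodal cubic together with another cubic (or a degenerate cubic) as the two members of a pencil whose base locus consists of the nine points we wish to blow up, arranged so that the generic member is smooth (giving the elliptic fibration on $E(1)$) while the special degenerations that pass through clusters of base points become the $I_3$, $I_0^*$, and fishtail fibers. The components $S_i$ and $V_j$ of the reducible fibers are the proper transforms of the lines and conics appearing in these degenerate cubics, so their homology classes are forced by which base points lie on them.

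Concretely, I would look for the $I_0^*$ fiber as the proper transform of a degenerate cubic consisting of three concurrent lines $L_1, L_2, L_3$ all passing through a common point $p_1$, plus the exceptional behavior at $p_1$: blowing up $p_1$ produces the exceptional curve $e_1$, and the three lines, which each also pass through two further base points, become the components $S_1 = h - e_1 - e_2 - e_3$, $S_2 = h - e_1 - e_4 - e_5$, $S_3 = h - e_1 - e_6 - e_7$. To get the full $\tilde D_4$ ($I_0^*$) configuration one needs the additional chain: blow up again at the point where the three lines concentrate on the total transform to split off $S_4$, then a further infinitely-near blow-up to produce $S_5$; choosing these as $e_8$ and then $e_9$ forces $S_4 = e_1 - e_8$ and $S_5 = e_8 - e_9$, with $S_5$ meeting the section $E_9 = e_9$ (up to the usual conventions). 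Simultaneously the $I_3$ fiber is the proper transform of a triangle of lines $V_1, V_2, V_3$; requiring each to avoid the extra blow-ups appropriately, and to pass through the right triples of the original base points, gives $[V_1] = h - e_1 - e_8 - e_9$, $[V_2] = h - e_2 - e_4 - e_6$, $[V_3] = h - e_3 - e_5 - e_7$. One then checks that $[V_1]+[V_2]+[V_3] = [S_1]+\cdots+[S_5] = 3h - e_1 - \cdots - e_9 = $ the fiber class $f$, that each pairing $[S_i]\cdot[S_j]$, $[V_i]\cdot[V_j]$, and $[S_i]\cdot f$, $[V_j]\cdot f$ matches the $\tilde D_4$ and $\tilde A_2$ Dynkin data, that $E_9 = e_9$ is a section ($e_9 \cdot f = 1$), and that the intersection patterns with $S_5$ and $V_1$ are as claimed. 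A Euler-characteristic/Persson-list count ($\chi = 12$: $I_3$ contributes $3$, $I_0^*$ contributes $6$, leaving $3$ for three fishtails) confirms there is room for exactly three nodal fibers and no more, so the fibration is consistent with Persson's classification.

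The main obstacle is not any single computation but rather exhibiting an actual pencil of cubics whose base points can be taken in the prescribed incidence configuration and whose specified degenerate members genuinely occur — i.e., verifying that the combinatorics of "three lines through $p_1$" and "a triangle" can be simultaneously realized by two cubics in a single pencil with the nine base points (some infinitely near) in general enough position that the generic member is smooth. This is a question of writing down explicit equations (or invoking a dimension count / known construction of the $I_3 + I_0^* + 3\,\text{fishtail}$ configuration from Persson's table) and then reading off which $e_i$ get subtracted from which line or conic class; once the pencil and the order of blow-ups are pinned down, the homology classes are completely determined and the remaining verifications are the routine intersection-number checks sketched above.
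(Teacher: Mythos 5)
Your overall strategy---take a pencil of cubics on $\CP2$, one member a union of three concurrent lines $L_1,L_2,L_3$ (to become the $I_0^*$ fiber) and another a triangle (to become the $I_3$ fiber), blow up the base locus, and read off homology classes---is precisely the paper's construction. But there is a concrete gap in the key step, and as written your recipe would not produce an $I_0^*$ fiber.

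The issue is the source of the infinitely-near base points $e_8$ and $e_9$. You write that after blowing up the concurrence point $p_1$ one should ``blow up again at the point where the three lines concentrate on the total transform,'' and that $V_1$ should ``avoid the extra blow-ups.'' After blowing up $p_1$, however, the lines $L_1,L_2,L_3$ meet the exceptional divisor $E_1$ at three \emph{distinct} points and no longer concentrate anywhere; there is no canonical further point to blow up unless the other cubic forces one. In fact, if the triangle $L_4\cup L_5\cup L_6$ is generic relative to $L_1,L_2,L_3$, the nine base points are all distinct, the point $p_1$ is \emph{not} a base point at all, and the degenerate member $L_1\cup L_2\cup L_3$ appears as a type $IV$ fiber, not $I_0^*$. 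The essential arrangement, which the paper imposes at the start, is that one line of the triangle---call it $L_4$---must \emph{pass through} $p_1$. This makes $p_1$ a base point of multiplicity three, produces an infinitely-near base locus $e_1, e_8, e_9$ along $L_4$ (first blow up $p_1$, then $E_1\cap L_4$, then $E_8\cap L_4$), and yields $S_4=[E_1]=e_1-e_8$, $S_5=[E_8]=e_8-e_9$, with $E_9$ the section. In particular $V_1=[L_4]=h-e_1-e_8-e_9$ is the component of the $I_3$ fiber that passes through all three infinitely-near points, exactly the opposite of ``avoiding the extra blow-ups.'' Once this clustering of the base locus along $L_4$ is specified, the rest of your checks (fiber classes, Dynkin intersection data, $E_9\cdot f=1$, the Euler characteristic count giving three fishtails) go through and match the paper.
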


\begin{proof}
Let $C_1$ be a reducible degree three algebraic curve in $\CP2$ defined by a homogeneous polynomial $p_1$ and made up of three complex projective lines, $L_1$, $L_2$, and $L_3$, which share a single common intersection point $p\in \CP2$. Let $C_2$ be a reducible degree three algebraic curve defined by a homogeneous polynomial $p_2$ and made up of three lines, $L_4$, $L_5$ and $L_6$, such that $L_4$ passes through $p$, and $L_5$ and $L_6$ each intersect all other $L_i$ generically in double points (see Figure \ref{fig:LP1}). Note that the homology class of $L_i$ for $i=1,\cdots, 6$ is the generator of $H_2(\CP2;\Z)$, $h$. Define a Lefschetz pencil on $\CP2$ by setting $C_{[t_1:t_2]}=\{t_1p_1+t_2p_2=0\}$. Note the base locus is the set of points where $C_1$ intersects $C_2$.

Blow-up at $p$, and let the exceptional sphere represent the homology class $e_1$. Then the proper transforms of $L_1,L_2,L_3$ and $L_4$ represent $h-e_1$ in homology, and $L_5$ and $L_6$ are unchanged (see Figure \ref{fig:LP2}). Therefore the proper transform $\widetilde{C_1}$ represents $3h-3e_1$ in homology and the proper transform $\widetilde{C_2}$ represents $3h-e_1$. We redefine $C_1$ to be the curve given by $\widetilde{C_1}$ together with the exceptional class $E_1$ with multiplicity two, and let $C_2=\widetilde{C_2}$ (see Figure \ref{fig:LP3}). Then the curves $C_1$ and $C_2$ represent the same class in homology so they define a new Lefschetz pencil as before.

Now blow-up at the six intersection points of $L_1,L_2,L_3$ with $L_5,L_6$, so that the exceptional classes are labeled $e_2,\cdots, e_7$ (see Figure \ref{fig:LP4}). Redefine $C_1$ and $C_2$ as the proper transforms, and note that these curves both represent the homology class $3h-e_1-e_2-\cdots - e_7$ and thus define a Lefschetz pencil.

There is still a non-empty base locus since the exceptional sphere $E_1$ is now part of $C_1$ which intersects the proper transform of $L_4$. We blow-up at this point to obtain a new exceptional sphere $E_8$ (see Figure \ref{fig:LP5}). The proper transform of $E_1$ represents $e_1-e_8$, so the proper transform of $C_1$ represents $3h-e_1-e_2-\cdots -e_7-2e_8$ whereas the proper transform of $C_2$ represents $3h-e_1-e_2-\cdots -e_8$. Redefining $C_1$ as its proper transform together with the exceptional sphere $E_8$ with multiplicity one, and $C_2$ as its proper transform, the two curves again define a Lefschetz pencil, but still intersect where $E_8$ meets $L_4$ (see Figure \ref{fig:LP6}). We blow-up one more time at this point, and the resulting proper transforms of $C_1$ and $C_2$ are homologous and do not intersect (Figure \ref{fig:LP7}).

The resulting elliptic fibration has singular fibers $C_1$ and $C_2$ which are of type $I_0^*$ and $I_3$ respectively, as well as other singular fibers, which we can perturb to generic fishtail fibers. An Euler characteristic computation implies that there are three fishtail fibers. A schematic for the singular fibers in this elliptic fibration together with the section $E_9$ is given by Figure \ref{fig:LP8}.

\begin{figure}
\centering
\subfloat[Starting configuration]{\includegraphics[scale=.5]{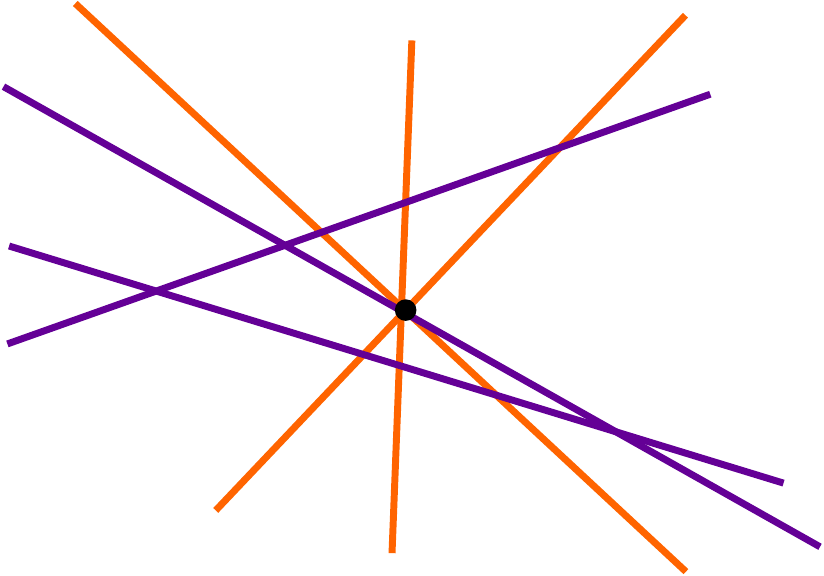} \label{fig:LP1}}
\subfloat[Blow-up $e_1$]{\includegraphics[scale=.5]{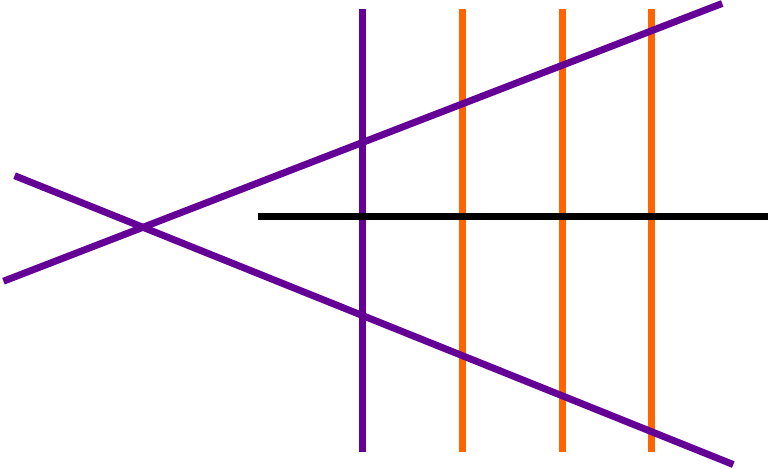} \label{fig:LP2}}
\subfloat[Reset $C_1$, $C_2$]{\includegraphics[scale=.5]{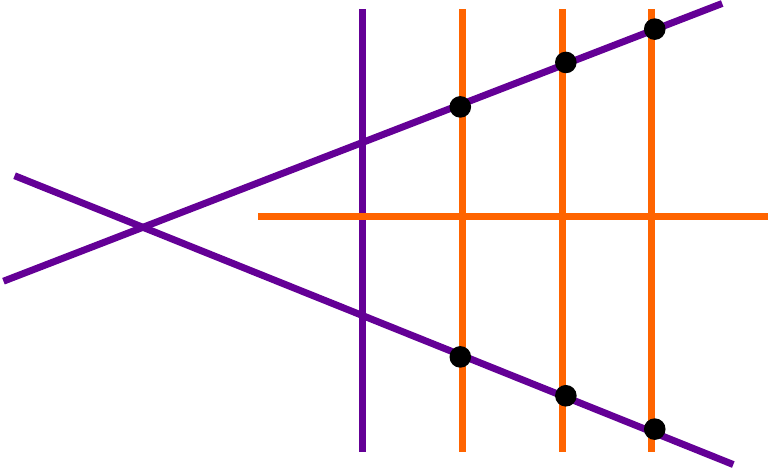} \label{fig:LP3}}
\subfloat[Blow-up $e_2,\cdots , e_7$]{\includegraphics[scale=.5]{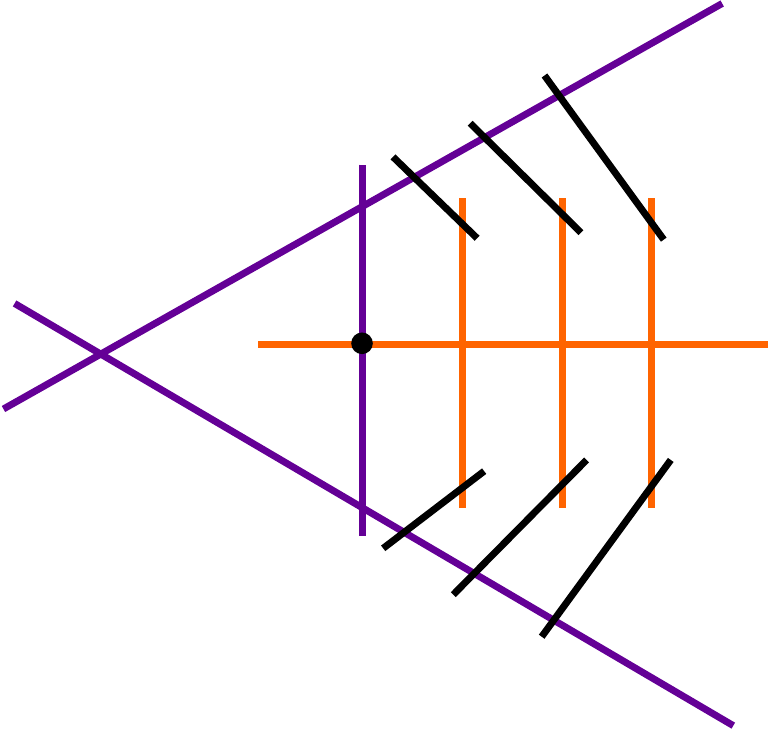} \label{fig:LP4}}\\
\subfloat[Blow-up $e_8$]{\includegraphics[scale=.5]{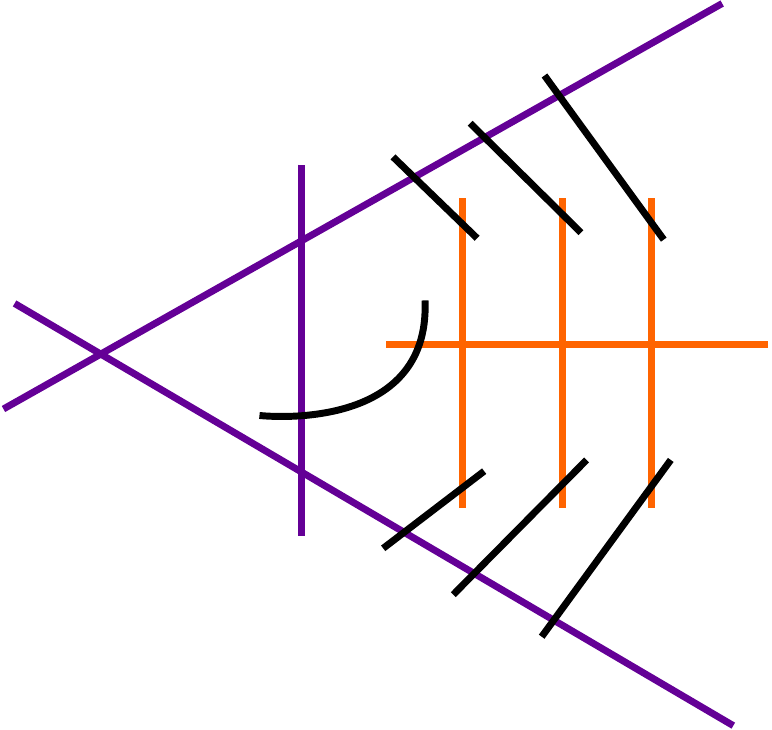} \label{fig:LP5}}
\subfloat[Reset $C_1$, $C_2$]{\includegraphics[scale=.5]{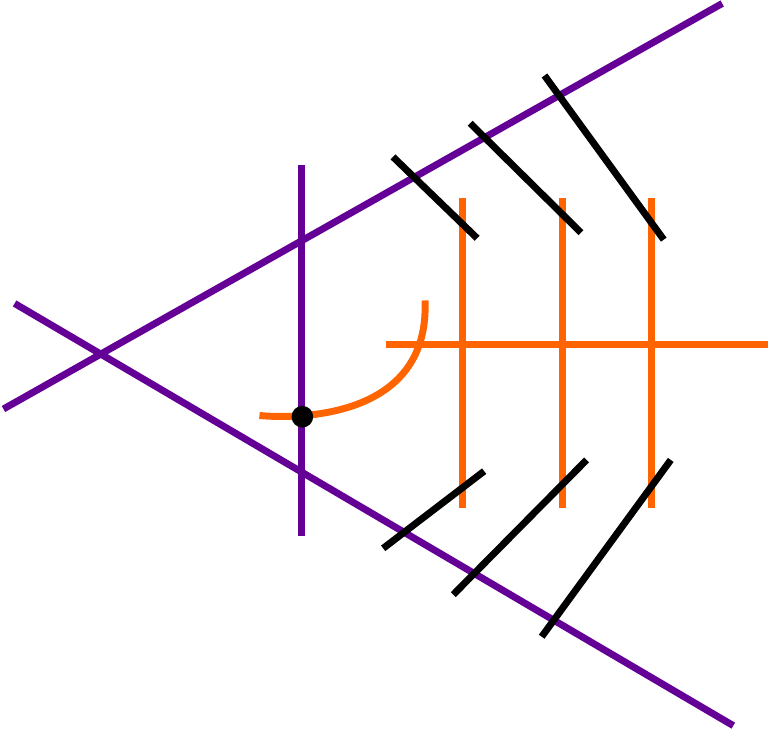} \label{fig:LP6}}
\subfloat[Blow-up $e_9$]{\includegraphics[scale=.5]{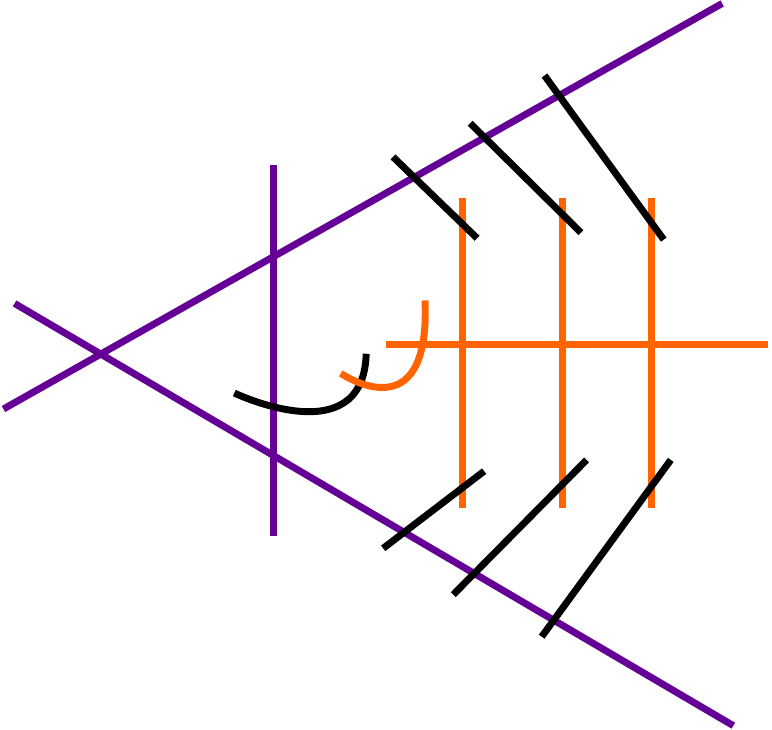} \label{fig:LP7}}
\subfloat[Singular fibers and section]{\includegraphics[scale=.4]{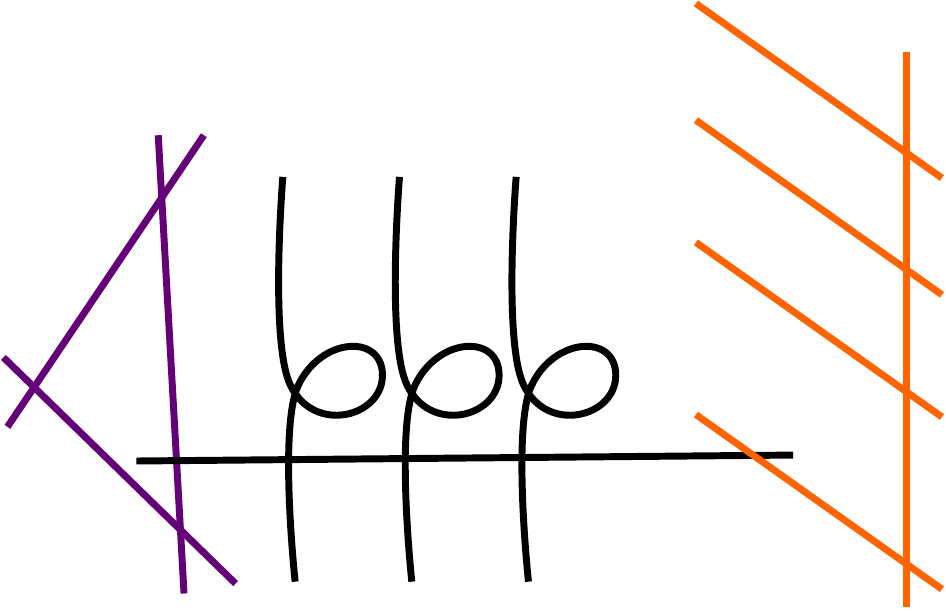} \label{fig:LP8}}
\caption{Blowing up the Lefschetz pencil to a fibration. The orange curves represent $C_1$ and the purple curves represent $C_2$.}
\label{fig:LP0}
\end{figure}
\end{proof}

Next, we will produce an elliptic fibration on $E(1)$ with two $I_2$ fibers and two $I_4$ fibers, and specify the homology classes of the spheres in the singular fibers.

\begin{figure}
	\includegraphics[width=0.60\textwidth]{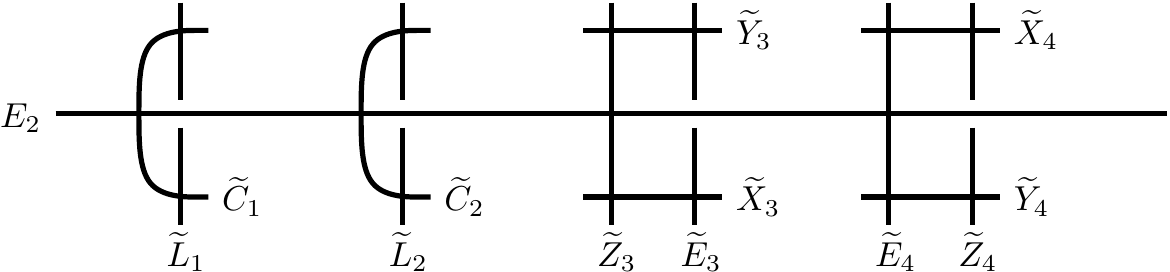}
	\caption{An elliptic fibration with two $I_2$ fibers and two $I_4$ fibers.}
	\label{fig:fibration2}
\end{figure}

\begin{lemma}\label{l:fibration2}
There exists an elliptic fibration on $E(1)\cong \CP2\# 9\barCP2$, with two $I_2$ fibers and two $I_4$ fibers such that the spheres in these singular fibers represent homology classes as follows.
\\
The first $I_2$ fiber is made up of two $-2$ spheres $\widetilde{C}_1$ and $\widetilde{L}_1$ such that \\
$\qquad \left[\widetilde{C}_1\right]=2h-e_1-e_2-e_3-e_4-e_7-e_8\qquad
\left[\widetilde{L}_1\right]= h-e_5-e_6-e_9$.\\
\\
The second $I_2$ fiber is made up of two $-2$ spheres $\widetilde{C}_2$ and $\widetilde{L}_2$ such that\\
$\qquad \left[\widetilde{C}_2\right]=2h-e_1-e_2-e_3-e_4-e_5-e_6\qquad
\left[\widetilde{L}_2\right]= h-e_7-e_8-e_9$.\\
\\
The first $I_4$ fiber is made up of four $-2$ spheres, $\widetilde{X}_3, \widetilde{Y}_3, \widetilde{Z}_3,\widetilde{E}_3$ with homology classes\\
$\left[\widetilde{X}_3\right]= h-e_3-e_6-e_8\qquad
\left[\widetilde{Y}_3\right]= h-e_3-e_5-e_7\qquad
\left[\widetilde{Z}_3\right]= h-e_1-e_2-e_9\qquad
\left[\widetilde{E}_3\right]=e_3-e_4$.\\
\\
The second $I_4$ fiber is made up of four $-2$ spheres, $\widetilde{X}_4, \widetilde{Y}_4, \widetilde{Z}_4,\widetilde{E}_4$ with homology classes\\
$\left[\widetilde{X}_4\right]= h-e_1-e_6-e_7\qquad
\left[\widetilde{Y}_4\right]= h-e_1-e_5-e_8\qquad
\left[\widetilde{Z}_4\right]= h-e_3-e_4-e_9\qquad
\left[\widetilde{E}_4\right]= e_1-e_2$.\\
Moreover this fibration admits a section whose homology class is $e_2$. See Figure \ref{fig:fibration2} for an illustration.
\end{lemma}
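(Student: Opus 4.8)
The plan is to build this fibration by the same method as Lemma~\ref{l:fibration}: write down a concrete pencil of cubics on $\CP2$, blow up its base locus, and keep track of every homology class. The plane model I would use is a pencil whose two ``visible'' reducible members are $C_1\cup L_1$ and $C_2\cup L_2$ with $C_1,C_2$ conics and $L_1,L_2$ lines. To force the stated classes, take $C_1$ and $C_2$ to be \emph{bitangent} conics, meeting at two points each with multiplicity two; the two tangency points are the points that, after blow-ups, acquire infinitely near partners, namely $p_2$ infinitely near $p_1$ and $p_4$ infinitely near $p_3$ (this is exactly what makes $\widetilde E_4=e_1-e_2$ and $\widetilde E_3=e_3-e_4$ the proper transforms of the exceptional divisors over $p_1$ and $p_3$). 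Let $p_9$ be the intersection of the two common tangent lines of $C_1$ and $C_2$ (these common tangents will turn out to be $Z_3$ and $Z_4$), let $L_1,L_2$ be lines through $p_9$, and let $\{p_5,p_6\}=L_1\cap C_2$, $\{p_7,p_8\}=L_2\cap C_1$. One checks that $C_1\cup L_1$ and $C_2\cup L_2$ then meet exactly in the nine base points $p_1,\dots,p_9$ (with $p_1,p_3$ counted with multiplicity two, i.e.\ together with their infinitely near partners), so their span is a genuine cubic pencil, and after blowing up the base locus we get an elliptic fibration on $\CP2\#9\,\barCP2=E(1)$.

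The substantive step — and what I expect to be the main obstacle — is arranging the \emph{auxiliary incidences} that make the two remaining reducible members of the pencil split as $I_4$'s (cycles of four rational curves) rather than as $I_3$'s or worse: one needs the triples $\{p_3,p_6,p_8\}$, $\{p_3,p_5,p_7\}$, $\{p_1,p_6,p_7\}$, $\{p_1,p_5,p_8\}$ to be collinear (producing the lines $X_3,Y_3,X_4,Y_4$), together with $p_9$ lying on both common tangent lines (producing $Z_3,Z_4$) and $C_1,C_2$ passing through the prescribed points with the prescribed tangencies. For a generic choice of the free data — the points $p_1,p_3$, the two tangent directions, and then $C_1,C_2$ in the resulting pencil of bitangent conics and $L_1,L_2$ through $p_9$ — these collinearity conditions are over-determined, so one must exhibit an actual simultaneous solution. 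I would do this as in Lemma~\ref{l:fibration}: use $\mathrm{PGL}_3(\mathbb C)$ to put $p_1,p_3$ and the two tangent lines in normal form, turn the four collinearity requirements into explicit polynomial equations in the few remaining parameters, and solve them (equivalently, check that the incidence variety cut out by these equations is nonempty), while also recording the open genericity conditions ($p_5,\dots,p_9$ pairwise distinct, $L_i$ not tangent to $C_j$, the base points distinct from the tangency points, etc.) that are needed to keep $C_1\cup L_1$ and $C_2\cup L_2$ of type $I_2$ and the two other members of type $I_4$.

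With such a configuration fixed, I would then blow up the nine base points one at a time — the ordinary points $p_5,p_6,p_7,p_8,p_9$ and the infinitely near pairs $p_1\to p_2$, $p_3\to p_4$ — renaming the proper transforms of the fiber components at each stage just as in the proof of Lemma~\ref{l:fibration}. Tracking homology through the sequence then yields the asserted classes: $\widetilde C_1,\widetilde C_2$ are the proper transforms of the conics, $\widetilde L_1,\widetilde L_2$ of the lines, $\widetilde X_i,\widetilde Y_i,\widetilde Z_i$ of the auxiliary lines, and $\widetilde E_3=e_3-e_4$, $\widetilde E_4=e_1-e_2$ of the exceptional divisors over $p_3$ and $p_1$. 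Finally, since $I_n$ contributes $n$ to the Euler characteristic and $2\cdot 2+2\cdot 4=12=\chi(E(1))$, the two $I_2$ fibers and two $I_4$ fibers exhaust all singular fibers; and with fiber class $f=3h-e_1-\cdots-e_9$ one has $e_2^2=-1$ and $e_2\cdot f=1$, so the exceptional sphere $e_2$ is a section, completing the proof.
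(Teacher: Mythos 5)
Your geometric reverse-engineering of the required plane configuration is correct and matches what the paper's proof actually builds: the two conics $C_1,C_2$ are bitangent, the two tangency points carry the infinitely near pairs (giving $\widetilde E_4=e_1-e_2$ and $\widetilde E_3=e_3-e_4$), the common tangent lines become $Z_3,Z_4$ and meet at $p_9$, the remaining base points split as $L_1\cap C_2$ and $L_2\cap C_1$, and the four collinearity conditions you list are exactly what produce $X_3,Y_3,X_4,Y_4$. The blow-up bookkeeping (including the need to add exceptional spheres with the right multiplicity to keep the proper-transform fibers homologous), the Euler characteristic count $2\cdot2+2\cdot4=12=\chi(E(1))$ showing there are no other singular fibers, and the observation that $e_2\cdot f=1$ makes $E_2$ a section are all sound and are handled just as you indicate.

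However, you leave a genuine gap at precisely the step you yourself flag as ``the main obstacle'': you never exhibit a pencil actually satisfying the over-determined incidences. Saying ``put $p_1,p_3$ and the tangent lines in normal form via $\mathrm{PGL}_3(\mathbb C)$, write the collinearity conditions as polynomial equations in the remaining parameters, and solve'' is a plan, not a proof that the incidence variety is nonempty, and nothing in your argument guarantees it is; establishing this existence is exactly what the lemma requires. The paper closes this by simply writing the pencil down, generated by $p_1=(yz-x^2)(y-z)$ and $p_2=(yz+x^2)(y+z)$, whose symmetry (e.g.\ $x\mapsto ix$ exchanging $C_1\leftrightarrow C_2$ and $L_1\leftrightarrow L_2$) forces all four collinearities simultaneously. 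The two further reducible members are then computed directly as $T_{[-1:1]}=\{y(x+iz)(x-iz)=0\}$ and $T_{[1:1]}=\{z(y+ix)(y-ix)=0\}$, and the intersection points, tangencies, and blow-up data are read off from these explicit equations. Note also that ``do this as in Lemma~\ref{l:fibration}'' does not rescue the argument: the proof of Lemma~\ref{l:fibration} does not solve any nontrivial incidence system either --- it, too, just exhibits a concrete configuration (lines through a common point). Until you produce such a pencil (the one above will do) and verify the incidence table from it, the proof is incomplete.
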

\begin{proof}
Consider the degree 3 homogeneous polynomials in three complex variables $p_1(x,y,z)=(yz+x^2)(y-z)$ and $p_2(x,y,z)=(yz+x^2)(y+z)$. Let $p_{(t_1,t_2)}(x,y,z)=t_1p_1(x,y,z)+t_2p_2(x,y,z)$. Then the degree three curves $T_{[t_1:t_2]} := \{p_{(t_1,t_2)}=0\}\subset \CP2$ are the fibers of a Lefschetz pencil on $\CP2$. Observe that $T_{[1:0]}=\{p_1=0\}$ and $T_{[0:1]}=\{p_2=0\}$ are reducible curves each made up of a linear part and an irreducible quadratic part. Let $C_1=\{yz-x^2=0\}$ denote the quadratic part of $T_{[1:0]}$ and $L_1=\{y-z=0\}$ denote the linear part. Similarly, let $C_2=\{yz+x^2=0\}$ and $L_2=\{y+z=0\}$ denote the quadratic and linear parts of $T_{[0:1]}$ respectively.

 We will keep track of two more fibers in this Lefschetz fibration:  $T_{[-1:1]}=\{(x+iz)(x-iz)y=0\}$ and $T_{[1:1]}=\{(y+ix)(y-ix)z=0\}$. These are reducible curves, each made up of three linear parts which we will label as $X_3=\{x+iz=0\}$, $Y_3 = \{x-iz=0\}$, $Z_3 =\{y=0\}$, $X_4 = \{x+iy=0\}$, $Y_4 = \{x-iy=0\}$, and $Z_4 = \{z=0\}$.
 
 Now we will blow-up at the intersection points of $T_{[1:0]}$ and $T_{[0:1]}$, and we would like to know how all of these curves intersect at those points and with what multiplicities, in order to determine their homology classes after blowing up the pencil. We summarize the relevant intersection data in Table \ref{tab:int}.

\begin{table} 
$$ \begin{array}{l|c|c|c|c|c|c|c|c|}
     & C_2 & L_2 & X_3 & Y_3 & Z_3 & X_4 & Y_4 & Z_4 \\\hline
C_1  & [0:0:1]_2, & [i:-1:1],  & [0:1:0],  & [0:1:0],  
& [0:0:1]_2  & [0:0:1],  & [0:0:1], & [0:1:0]_2\\
      & [0:1:0]_ 2 & [-i:-1:1] & [-i:-1:1] &  [i:-1:1]
&            & [i:-1:1]  & [-i:-1:1] &      
\\\hline
L_1  & [i:1:1]  & [1:0:0] & [-i:1:1] & [i:1:1] & [1:0:0] & 
[-i:1:1] & [i:1:1] & [1:0:0] \\
     & [-i:1:1] &         &          &         &         &
         &         &         
\\\hline
C_2  &   &   & [0:1:0]  & [0:1:0] & [0:0:1]_2 & [0:0:1]  & [0:0:1]
 & [0:1:0]_2\\ 
     &   &   & [-i:1:1] & [i:1:1] &           & [-i:1:1] & [i:1:1]
 &          
 \\\hline
 L_2 &   &   & [-i:-1:1] & [i:-1:1] & [1:0:0] & [i:-1:1] & [-i:-1:1] & [1:0:0] \\
 &&&&&&&&
 \\\hline
 X_3 &  &  &  & [0:1:0] & [-i:0:1] & [-i:1:1] & [-i:-1:1] & [0:1:0] \\
 &&&&&&&&
 \\\hline
 Y_3 &  &  &  &  & [i:0:1] & [i:-1:1] & [i:1:1] & [0:1:0] \\ 
 &&&&&&&&
 \\\hline
 Z_3 &  &  &  &  &  &  [0:0:1] & [0:0:1] & [1:0:0] \\
 &&&&&&&&
 \\\hline
 X_4 &  &  &  &  &  &  & [0:0:1] & [-i:1:0] \\
  &&&&&&&&
  \\\hline
 Y_4 &  &  &  &  &  &  &  & [i:1:0]\\
   &&&&&&&&
   \\\hline
 \end{array}$$
\caption{The intersection data of the curves we track through the pencil. Multiplicities greater than one are indicated by subscripts.}
\label{tab:int}
\end{table}

To obtain an elliptic fibration, we blow up at the intersection points of $C_1\cup L_1$ with $C_2\cup L_2$. By Table \ref{tab:int}, the relevant points are: 
\begin{eqnarray*}
\left[0:0:1\right]&=&C_1\cap C_2 \cap Z_3 \cap X_4 \cap Y_4\\
\left[0:1:0\right]&=&C_1\cap C_2 \cap X_3 \cap Y_3 \cap Z_4\\
\left[i:1:1\right]&=&L_1\cap C_2 \cap Y_3 \cap Y_4\\
\left[-i:1:1\right] &=& L_1\cap C_2 \cap X_3 \cap X_4\\
\left[i:-1:1\right] &=& C_1\cap L_2 \cap Y_3 \cap X_4\\
\left[-i:-1:1\right] &=& C_1 \cap L_2 \cap X_3 \cap Y_4\\
\left[1:0:0\right] &=& L_1\cap L_2 \cap Z_3 \cap Z_4\\
\end{eqnarray*}
Observe that $[0:0:1]$ and $[0:1:0]$ appear as intersection points of multiplicity two in $C_1,C_2,Z_3$ and $C_1,C_2,Z_4$ respectively, but all other intersections are transverse. We will need to blow up twice at the multiplicity two points, and once at each other point, to eliminate the base locus of the pencil.

We will denote the generator of $H_2(\CP2;\Z)$ by $h$. Note that the homology class represented by one of the listed curves is $h$ if the curve is linear, and $2h$ if the curve is quadratic.

Blow-up, introducing the exceptional sphere $E_1$ (with homology class $e_1$) at $[0:0:1]$. Then the proper transforms $\widetilde{X}_4$ and $\widetilde{Y}_4$ intersect $E_1$ at distinct points, and $\widetilde{C}_1$, $\widetilde{C}_2$, and $\widetilde{Z}_3$ all intersect at a common third point on $E_1$. Note that while $X_4\cup Y_4 \cup Z_4$ was a fiber of the Lefschetz pencil, the homology class $e_1$ appears with multiplicity two in its proper transform, while it appears with multiplicity one in the proper transforms of the other fibers. Therefore the Lefschetz pencil on the blown-up manifold now has a fiber  $\widetilde{X}_4\cup \widetilde{Y}_4\cup \widetilde{Z}_4 \cup E_1$ which includes the exceptional sphere with multiplicity one so that the fibers all represent the same homology class. 

Next, we blow up at $\widetilde{C}_1\cap \widetilde{C}_2\cap \widetilde{Z}_3\cap E_1$, introducing a new exceptional sphere $E_2$ which intersects the proper transforms $\widetilde{C}_1$, $\widetilde{C}_2$, $\widetilde{Z}_3$, and $\widetilde{E}_1$. At this point the homology classes of all proper transforms are as follows.
$[\widetilde{C}_1]=2h-e_1-e_2$, $[\widetilde{L}_1]=h$, $[\widetilde{C}_2]=2h-e_1-e_2$, $[\widetilde{L}_2]=h$, $[\widetilde{X}_3]=h$, $[\widetilde{Y}_3]=h$, $[\widetilde{Z}_3]=h-e_1-e_2$,$[\widetilde{X}_4]=h-e_1$, $[\widetilde{Y}_4]=h-e_1$, $[\widetilde{Z}_4]=h$, and $[\widetilde{E}_1]=e_1-e_2$. Note that $\widetilde{E}_1$ is included with multiplicity one in a fiber with $\widetilde{X}_4\cup \widetilde{Y}_4\cup \widetilde{Z}_4$ 

A similar situation occurs at $[0:1:0]$. We blow-up two new exceptional spheres represented homology classes $e_3$ and $e_4$ at this point. This time we must include $\widetilde{E}_3$ with multiplicity one in a fiber with $\widetilde{X}_3\cup \widetilde{Y}_3\cup \widetilde{Z}_3$. 
%The resulting homology classes are now
%$[\widetilde{C}_1]=2h-e_1-e_2-e_3-e_4$, $[\widetilde{L}_1]=h$, %$[\widetilde{C}_2]=2h-e_1-e_2-e_3-e_4$, $[\widetilde{L}_2]=h$, %$[\widetilde{X}_3]=h-e_3$, $[\widetilde{Y}_3]=h-e_3$, %$[\widetilde{Z}_3]=h-e_1-e_2$,$[\widetilde{E}_3]=e_3-e_4$, %$[\widetilde{X}_4]=h-e_1$, $[\widetilde{Y}_4]=h-e_1$, %$[\widetilde{Z}_4]=h-e_3-e_4$, and $[\widetilde{E}_1]=e_1-e_2$.

Finally we blow-up once at the points $[i:1:1], [-i:1:1], [i:-1:1], [-i:-1:1]$, and $[1:0:0]$, introducing exceptional homology classes $e_5,e_6,e_7,e_8$, and $e_9$ respectively. The homology classes of the proper transforms of the relevant curves in the four singular fibers are given as in the statement of the proposition.

%In the first $I_2$ singular fiber, we have the components
%$[\widetilde{C}_1]=2h-e_1-e_2-e_3-e_4-e_7-e_8$ and $[\widetilde{L}_1]=h-e_5-e_6-e_9$.

%In the second $I_2$ singular fiber, we have the components $[\widetilde{C}_2]=2h-e_1-e_2-e_3-e_4-e_5-e_6$ and $[\widetilde{L}_2]=h-e_7-e_8-e_9$.

%In the one $I_4$ singular fiber, we have the components $[\widetilde{X}_3]=h-e_3-e_6-e_8$, $[\widetilde{Y}_3]=h-e_3-e_5-e_7$, $[\widetilde{Z}_3]=h-e_1-e_2-e_9$, and $[\widetilde{E}_3]=e_3-e_4$.

%In the other $I_4$ singular fiber, we have the components $[\widetilde{X}_4]=h-e_1-e_6-e_7$, $[\widetilde{Y}_4]=h-e_1-e_5-e_8$, $[\widetilde{Z}_4]=h-e_3-e_4-e_9$, and $[\widetilde{E}_1]=e_1-e_2$.
\end{proof}

Finally, we construct an elliptic fibration with two $I_5$ fibers and two fishtails.

\begin{lemma}\label{l:fibration3}
There is an elliptic fibration on $E(1)=\CP2\#9\barCP2$ with two $I_5$ fibers two fishtails and a section. Both $I_5$ fibers are made up of five $-2$ spheres $C_1,\dots,C_5$, $D_1,\dots,D_5$ with homology classes
\begin{align*}
&[C_1]=e_1-e_6, &[D_1]&=e_4-e_9,\\
&[C_2]=h-e_1-e_4-e_9, &[D_2]&=h-e_3-e_4-e_5,\\
&[C_3]=h-e_2-e_5-e_8, &[D_3]&=e_5-e_8,\\
&[C_4]=e_2-e_7, &[D_4]&=h-e_1-e_5-e_6,\\
&[C_5]=h-e_1-e_2-e_3, &[D_5]&=h-e_2-e_4-e_7.
\end{align*}
\end{lemma}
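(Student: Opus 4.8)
The plan is to construct the fibration exactly as in Lemmas~\ref{l:fibration} and~\ref{l:fibration2}: blow up an explicit cubic Lefschetz pencil on $\CP2$ and track the homology classes of proper transforms, with the pencil chosen so that its two reducible members become the two desired $I_5$ fibers once the base locus is resolved.

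The classes in the statement already dictate the pencil. Each $I_5$ fiber should be built from the proper transforms of three lines (the three classes of the form $h-e_i-e_j-e_k$) together with two exceptional spheres, and the classes $e_1-e_6$, $e_2-e_7$, $e_4-e_9$, $e_5-e_8$ record that $E_1,E_2,E_4,E_5$ each get blown up once more at an infinitely near base point. Accordingly I would take the pencil generated by $p_1=yz(x-z)$ and $p_2=x(x-y)(y-z)$, so that $T_{[1:0]}=\{p_1=0\}$ is the triangle $\ell_{C_2}\cup\ell_{C_5}\cup\ell_{C_3}$ with $\ell_{C_2}=\{y=0\}$, $\ell_{C_5}=\{z=0\}$, $\ell_{C_3}=\{x-z=0\}$, and $T_{[0:1]}=\{p_2=0\}$ is the triangle $\ell_{D_5}\cup\ell_{D_2}\cup\ell_{D_4}$ with $\ell_{D_5}=\{x=0\}$, $\ell_{D_2}=\{x-y=0\}$, $\ell_{D_4}=\{y-z=0\}$. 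A direct computation shows that the base locus $\{p_1=p_2=0\}$ consists of the five points $P_1=[1:0:0]$, $P_2=[0:1:0]$, $P_3=[1:1:0]$, $P_4=[0:0:1]$, $P_5=[1:1:1]$, with multiplicity two at $P_1,P_2$ (nodes of $T_{[1:0]}$) and at $P_4,P_5$ (nodes of $T_{[0:1]}$), and multiplicity one at the transverse point $P_3$; one checks the resulting pencil is a Lefschetz pencil. (Abstractly this is a member of the family of six-line arrangements whose fifteen pairwise intersections collapse onto five points via four triple-point coincidences, which imposes four conditions on the twelve parameters of a $6$-line configuration, so the family is nonempty.)

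Next I would resolve the base locus by blowing up, using the ``redefine the singular member'' bookkeeping from the proof of Lemma~\ref{l:fibration}. Blowing up $P_1,\dots,P_5$ and adjoining each new exceptional sphere with multiplicity one to the triangle that was nodal there exposes four infinitely near base points, $P_6=\widetilde\ell_{D_4}\cap E_1$, $P_7=\widetilde\ell_{D_5}\cap E_2$, $P_8=\widetilde\ell_{C_3}\cap E_5$, $P_9=\widetilde\ell_{C_2}\cap E_4$, which we blow up as well; this yields $E(1)=\CP2\# 9\barCP2$ with empty base locus. Tracking proper transforms through these nine blow-ups, the member over $[1:0]$ becomes the cycle whose components, in cyclic order, are $e_1-e_6$, $h-e_1-e_4-e_9$, $h-e_2-e_5-e_8$, $e_2-e_7$, $h-e_1-e_2-e_3$, and the member over $[0:1]$ the cycle with components $e_4-e_9$, $h-e_3-e_4-e_5$, $e_5-e_8$, $h-e_1-e_5-e_6$, $h-e_2-e_4-e_7$; these agree with the statement, and a direct intersection computation confirms that each is an $I_5$ chain of $-2$ spheres. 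The exceptional sphere $E_3$ over the transverse point $P_3$ has class $e_3$, satisfies $e_3\cdot(3h-\sum_{i=1}^9 e_i)=1$, and meets exactly one component of each cycle once, hence is a section. Finally $\chi(E(1))=12=5+5+2$, so the remaining singular fibers contribute $2$ to the Euler characteristic; for this pencil (or a generic member of the family) the residual degree-two factor of the discriminant has two simple roots, so these are two fishtail fibers.

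The step I expect to be the main obstacle is the resolution bookkeeping: making the reset-and-reblow procedure precise enough to confirm that all nine exceptional classes land in exactly the claimed positions, and verifying that the chosen arrangement is generic enough---no accidental extra concurrence, and no base point of higher multiplicity than expected---so that the resolution genuinely produces two disjoint $I_5$ cycles together with two fishtails rather than a more degenerate configuration.
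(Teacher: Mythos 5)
Your proposal is correct and follows essentially the same strategy as the paper: exhibit the two $I_5$ fibers as the two degenerate (triangular) cubic members of a pencil generated by six lines, blow up the five base points (two nodes of each triangle plus one transverse point), then blow up four infinitely near base points to obtain $E(1)$, and track proper transforms. The only difference is that you supply an explicit choice of defining polynomials $p_1=yz(x-z)$ and $p_2=x(x-y)(y-z)$ where the paper describes the six-line arrangement via a figure; this is a helpful addition of rigor, and your explicit bookkeeping (which I verified produces the stated classes, both $I_5$ cycles summing to the fiber class, and the section $E_3$) matches the paper's result.
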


\begin{proof}
Start with a configuration of six complex projective lines, intersecting as shown in figure \ref{fig:pencil1}. We can view these lines as two degenerate cubic curves indicated by distinct colors in figure \ref{fig:pencil1}, which generate a Lefschetz pencil on $\CP2$.
\begin{figure}[h]
	\includegraphics[width=.30\textwidth]{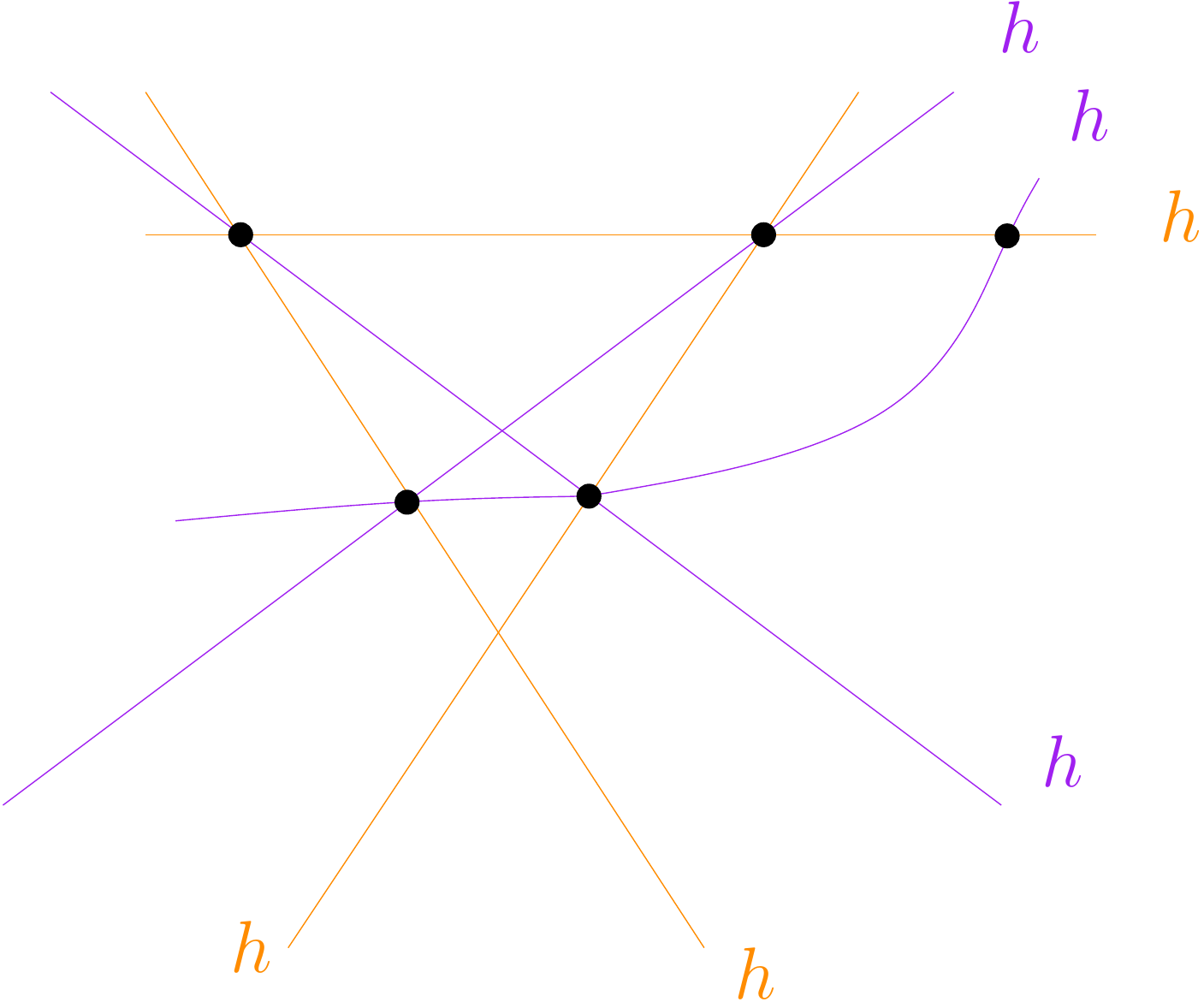}
	\caption{Initial configuration of lines}
	\label{fig:pencil1}
\end{figure}

Blowing up this pencil along the five intersection points between the orange and purple curves yields the configuration in figure \ref{fig:pencil2}. In order to keep the homology classes of the two curves the same so they continue to define a pencil, two of the exceptional spheres must be included with multiplicity one in the orange curve, and two must be included with multiplicity one in the purple curve. The resulting curves defining a pencil on $\CP2\#5\barCP2$ intersect in four distinct points, and blowing up at each of these points yields an elliptic fibration shown in figure \ref{fig:pencil3}. 
\begin{figure}[h]
	\includegraphics[width=0.40\textwidth]{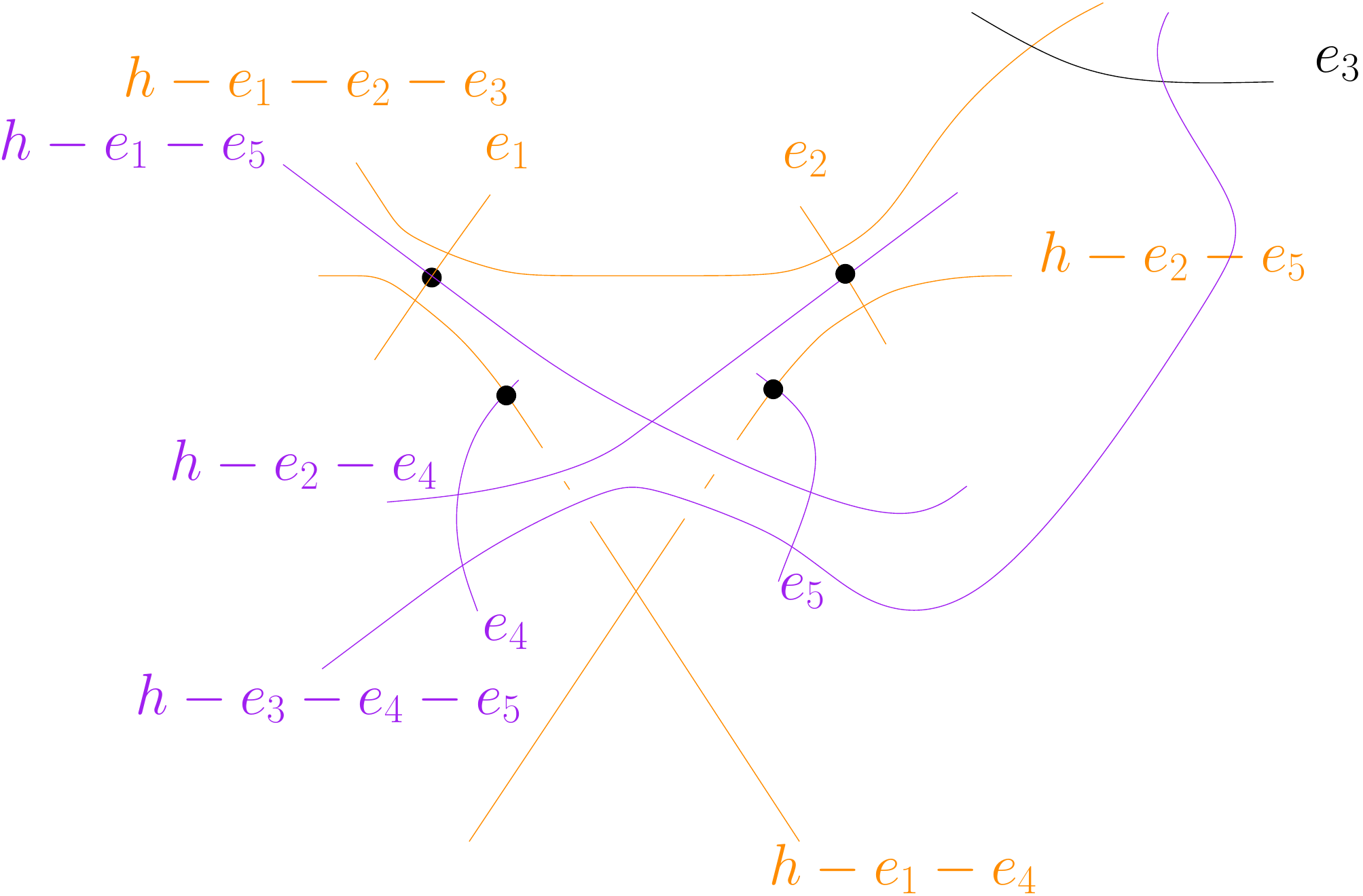}
	\caption{The after first five blow ups}
	\label{fig:pencil2}
\end{figure}

\begin{figure}[h]
	\includegraphics[width=0.40\textwidth]{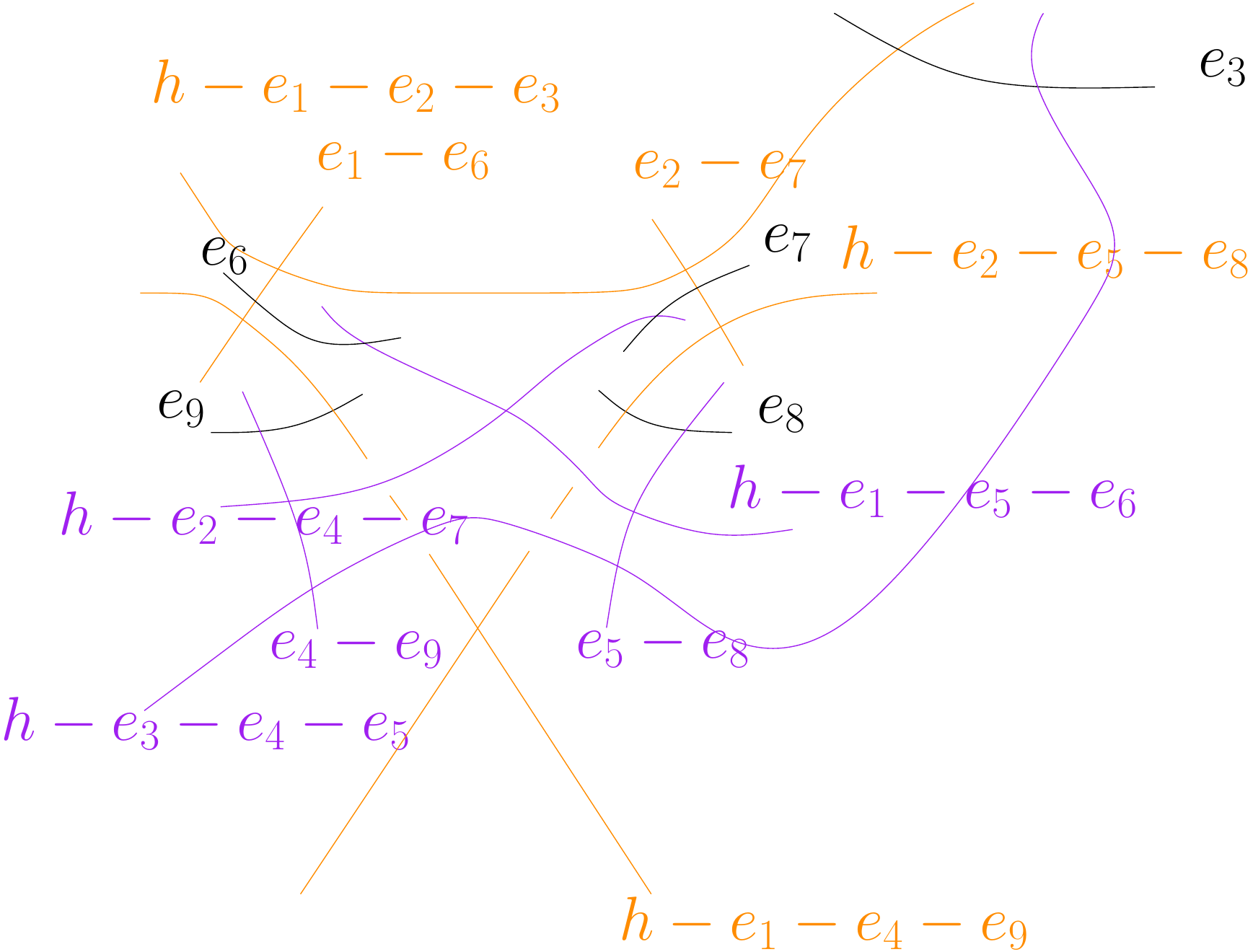}
	\caption{Elliptic fibration}
	\label{fig:pencil3}
\end{figure}

The proper transforms of the original curves defining the pencil become the $I_5$ fibers representing the specified homology classes. Any other singular fibers can be perturbed to be generic nodal (fishtail) singular fibers, and an Euler characteristic computation indicates there are two of these.

\end{proof}

\section{Constructions of Symplectic Exotic 4-manifolds} \label{s:exoticconstructions}

In this section we put together the information from the previous sections, to construct small exotic symplectic manifolds using star surgery. In the first construction, we will perform a full analysis of the smooth invariants of the resulting manifold. In the subsequent examples, we provide abridged computations that suffice to prove that the examples are exotic copies of $\CP2\# N\barCP2$.

\subsection{A Symplectic Exotic $\CP2 \# 8 \barCP2$}\label{s:8}

\subsubsection{The Construction} \label{s:con} Our first construction of an exotic $\CP2 \# 8 \barCP2$ uses the elliptic fibration given in Lemma \ref{l:fibration}, which has an $I_3$ fiber and $I_0^*$ fiber, three fishtail fibers, and a section, with homology classes specified in the proof of the lemma.

\begin{lemma}\label{l:classes}
The configuration $\St_2$ symplectically embeds into $\CP2 \# 11 \barCP2$ such that its vertices represent the following homology classes.
\begin{align*}
[u_0]&=2f+e_1-2e_{10}-2e_{11}\\
[u_1]&=h-e_1-e_2-e_3\\
[u_2]&=h-e_1-e_4-e_5\\
[u_3]&=h-e_1-e_6-e_7\\
[u_4]&=h-e_1-e_8-e_9\\
\end{align*}
where $f=3h-(e_1+\dots +e_9)$.
\end{lemma}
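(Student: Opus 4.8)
The plan is to build the embedding of $\St_2$ by starting from the elliptic fibration of Lemma~\ref{l:fibration} on $E(1)=\CP2\#9\barCP2$, performing two additional blow-ups (introducing $e_{10},e_{11}$), and then identifying the five spheres of $\St_2$ among the resulting configuration of spheres. The central vertex $u_0$ should be a sphere of square $-5$ (since $-i-3=-5$ for $i=2$), and the four arms of $\St_2$ each consist of a single $-2$ sphere (since each arm has $i-1=1$ sphere). So I need to produce a $-5$ sphere meeting four disjoint $-2$ spheres, each transversally in one point, inside $\CP2\#11\barCP2$.

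First I would recognize the four $-2$ spheres $u_1,u_2,u_3,u_4$ as (proper transforms of) components of singular fibers of the elliptic fibration. Looking at Lemma~\ref{l:fibration}, the classes $h-e_1-e_2-e_3$, $h-e_1-e_4-e_5$, $h-e_1-e_6-e_7$ are exactly $S_1,S_2,S_3$ of the $I_0^*$ fiber, and $h-e_1-e_8-e_9$ is $V_1$ of the $I_3$ fiber; each has square $-2$, each meets $e_1$-related data, and crucially all four of these classes contain $-e_1$, so they pairwise intersect only through whatever sits over the point blown up to give $e_1$. Actually I should double-check: $[S_i]\cdot[S_j] = -(-1) = 1$ for $i\neq j$ among $S_1,S_2,S_3$, coming from the common $e_1$; and $[S_i]\cdot[V_1]=1$ likewise. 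So these four $-2$ spheres all pass through a common region. To make them disjoint I would blow up: the class $f=3h-(e_1+\cdots+e_9)$ is the fiber class, and the point structure near $e_1$ needs to be separated. This is where the two new blow-ups $e_{10},e_{11}$ enter — I would blow up at intersection points to separate the $u_i$'s from each other, or rather to carve out the central sphere $u_0$.

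The key computational step is to verify that $[u_0]=2f+e_1-2e_{10}-2e_{11}$ has square $-5$ and meets each $[u_j]$ exactly once. Compute $[u_0]^2 = 4f^2 + e_1^2 + 4e_{10}^2+4e_{11}^2 + (\text{cross terms})$. Since $f^2=0$, $f\cdot e_i=0$ for $i\le 9$, $e_1^2=-1$, $e_{10}^2=e_{11}^2=-1$, and assuming $e_1\cdot e_{10}=e_1\cdot e_{11}=e_{10}\cdot e_{11}=0$, we get $[u_0]^2 = -1 -4 -4 = -9$?? That is wrong, so the blow-ups $e_{10},e_{11}$ must be infinitely near points lying on the original exceptional sphere $E_1$ or on the $u_j$'s, meaning $e_1\cdot e_{10}\neq 0$ in general — actually in a standard orthogonal basis after blowing up we always have $e_i\cdot e_j=-\delta_{ij}$, so this orthogonality does hold. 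Let me reconsider: perhaps $f$ here still satisfies $f^2=0$ but I should recompute including that $u_0$ may not be literally $2f+e_1-2e_{10}-2e_{11}$ in an orthogonal basis — or the point is that after blow-up $f$ gets modified. The resolution is that I'd track carefully which points are blown up: if $e_{10}, e_{11}$ come from blowing up points on a fiber in class $f$, then the fiber class becomes $f - e_{10}$ or $f-e_{10}-e_{11}$, and the arithmetic works out. I would recompute $[u_0]^2 = -5$ in whatever basis makes the total space $\CP2\#11\barCP2$, and check $[u_0]\cdot[u_j]=1$ and $[u_j]\cdot[u_k]=0$ for $j\neq k$, and that all classes are represented by symplectic spheres (using that fiber components and exceptional spheres are symplectic, and symplectic resolution of intersections).

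The main obstacle I expect is the geometric bookkeeping: producing $u_0$ as an honest embedded symplectic sphere of the right class. The class $2f+e_1-2e_{10}-2e_{11}$ suggests $u_0$ is built from two fiber copies together with the exceptional sphere $E_1$, resolved and then blown up twice — i.e., take a nodal configuration, symplectically smooth some intersections via Gay--Stipsicz/Gompf resolution, and blow up at two intersection points with the $u_j$'s to reduce the intersection numbers to exactly $1$. Verifying that after all this surgery the four arm spheres remain disjoint $-2$ symplectic spheres each meeting $u_0$ once, and that the ambient manifold is genuinely $\CP2\#11\barCP2$ (not something with different $b_2$), is the delicate part. Everything else — the homology class arithmetic, the square and intersection computations — is routine linear algebra in $H_2$, which I would organize in a short table rather than grind through inline.
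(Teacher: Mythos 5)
Your general strategy (use the elliptic fibration of Lemma~\ref{l:fibration}, identify $u_1,\dots,u_4$ with components of singular fibers, and build $u_0$ out of sections and fiber pieces) is the right one, but the proposal is derailed by two arithmetic slips that lead you to incorrect conclusions about the geometry and leave the key construction of $u_0$ unidentified.

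First, $[S_i]\cdot[S_j]=0$ for $i\ne j$, not $1$: you dropped the $h\cdot h=1$ term, so $(h-e_1-e_2-e_3)\cdot(h-e_1-e_4-e_5)=1-1=0$. The four spheres $S_1,S_2,S_3,V_1$ are already pairwise disjoint (as they must be, being components of singular fibers of an elliptic fibration, with $V_1$ in a different fiber), so no blow-ups are needed to separate them. Second, your computation $[u_0]^2=-9$ forgets the cross term $2\cdot 2\,f\cdot e_1 = 4$ (since $f\cdot e_1 = 1$); including it gives $0 - 1 - 4 - 4 + 4 = -5$ in the standard orthogonal basis, so there is no contradiction and no need to modify $f$ or worry about a non-orthogonal basis. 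These two errors lead you to postulate that $e_{10},e_{11}$ serve to separate the $u_j$'s, which is not their role.

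The missing idea is the actual construction of $u_0$: one blows up the nodes of two \emph{fishtail} fibers (producing $e_{10},e_{11}$ and embedded $-4$ spheres $F_1, F_2$ in classes $f-2e_{10}$ and $f-2e_{11}$), and then takes the symplectic resolution of the chain $F_1 \cup F_2 \cup E_9 \cup S_4 \cup S_5$. The section $E_9$ and the two remaining $I_0^*$ components $S_4, S_5$ contribute $e_9 + (e_8-e_9) + (e_1-e_8) = e_1$, so the total class is $2f + e_1 - 2e_{10} - 2e_{11}$. The resolved sphere $u_0$ inherits its intersections with $S_1, S_2, S_3, V_1$ from $S_4$ (which meets each of $S_1,S_2,S_3$ once) and $E_9$ (which meets $V_1$ once, as the section meets the $I_3$ fiber in $V_1$). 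Without identifying this chain, the proposal does not actually produce the embedding — it only verifies part of the target homology arithmetic, and even that incorrectly.
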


\begin{proof}
Consider the elliptic fibration constructed in Lemma \ref{l:fibration}. In this fibration we see the following symplectic spheres:
\begin{itemize}
\item The components $S_1,\dots,S_5$ of the $I^*_0$ fiber, with homology classes $[S_1]=h-e_1-e_2-e_3$, $[S_2]=h-e_1-e_4-e_5$, $[S_3]=h-e_1-e_6-e_7$, $[S_4]=e_1-e_8$ and $[S_5]=e_8-e_9$.
\item The component $V_1$ of the $I_3$ fiber with homology class $[V_1]=h-e_1-e_8-e_9$.
\item The exceptional sphere $E_9$ which is a section.
\end{itemize}
Take two fishtail fibers and blow-up their double points. The proper transforms $F_1$ and $F_2$ are now symplectic spheres in $\CP2\# 11\barCP2$ .Their cohomology classes are $[F_1]=f-2e_{10}$ and $[F_2]=f-2e_{11}$.

Take the union of spheres $u_0=F_1\cup F_2\cup   E_9 \cup S_4 \cup S_5 $. After symplectically smoothing its double points, $u_0$ gives a symplectic sphere whose homology class is $[u_0]=2f+e_1-2e_{10}-2e_{11}$. Let $u_1=S_1$, $u_2=S_2$, $u_3=S_3$, $u_4=V_1$. Then the union $\bigcup_{i=0}^4 u_i$ gives the required embedding of $\St_2$. 
\begin{figure}
\begin{center}
\includegraphics[scale=.5]{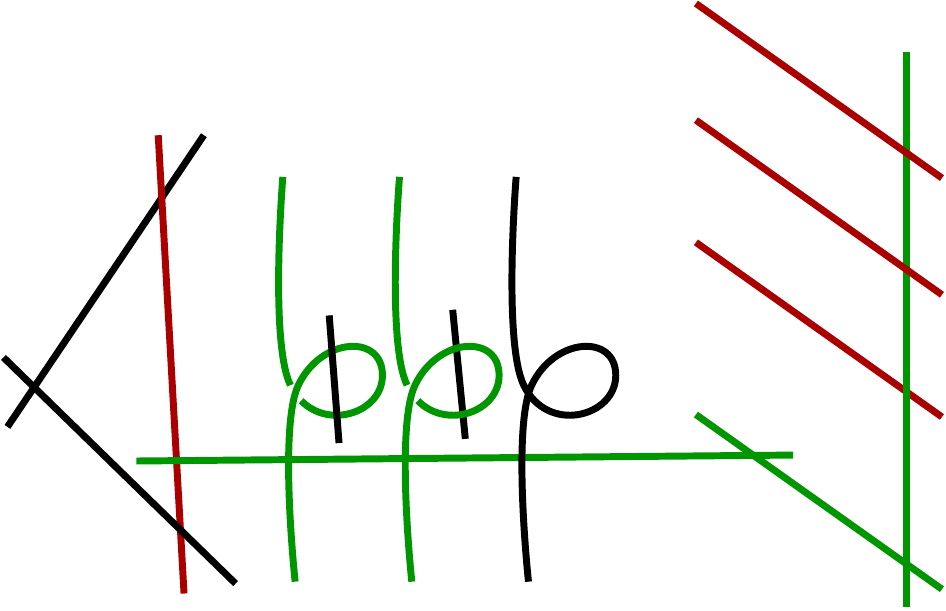}
\end{center}
\caption{A blow-up of the elliptic fibration of lemma \ref{l:fibration} shown in figure \ref{fig:LP0}. The symplectic resolution of the union of the green curves is the sphere $u_0$, and the four red curves are the spheres $u_1,\cdots , u_4$.}
\label{fig:Fibration}
\end{figure}
\end{proof}

Using this embedding of $\St_2$, perform star surgery on $\CP2\# 11\barCP2$ by cutting out this embedded $\St_2$ and replacing it with $\T_2$ resulting in a manifold
$$X= ((\CP2\#11\barCP2)\setminus \St_2)\cup_\partial \T_2.$$

\begin{lemma}\label{l:homeo}
The manifold $X$ is homeomorphic to $\CP2 \# 8 \barCP2$.
\end{lemma}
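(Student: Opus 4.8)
The plan is to use the standard strategy for showing two simply-connected 4-manifolds are homeomorphic: by Freedman's theorem, it suffices to verify that $X$ is simply connected, that its intersection form is isomorphic to that of $\CP2 \# 8 \barCP2$ (i.e. odd, negative-definite-on-the-$\barCP2$-part, of rank $9$, signature $-7$), and that the Kirby--Siebenmann invariant vanishes. Since both $X$ and $\CP2 \# 8 \barCP2$ are (almost-)complex/symplectic, the Kirby--Siebenmann invariant is automatically zero, so the real content is $\pi_1$ and the intersection form.

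First I would compute $\pi_1(X)$ via van Kampen applied to the decomposition $X = ((\CP2 \# 11 \barCP2) \setminus \St_2) \cup_\partial \T_2$. Since $\CP2 \# 11 \barCP2$ is simply connected and the complement $(\CP2 \# 11 \barCP2) \setminus \St_2$ is obtained by removing a neighborhood of a configuration of spheres, its fundamental group is generated by meridians of the spheres; in fact the only meridian that can survive is the meridian of one of the $-2$-framed arm-end curves in $\partial \St_2$, and this is the class that Proposition \ref{p:fund} identifies with the generator of $\pi_1(\T_2) = \Z/2$. Gluing in $\T_2$ therefore kills this meridian (it becomes an element of order $2$ that is already nullhomotopic on the other side, or more precisely van Kampen forces $\pi_1(X)$ to be a quotient of $\Z/2$ that is also a quotient of the trivial group on the complement side), so $\pi_1(X) = 1$. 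One should be a little careful: the cleanest argument is that $\pi_1$ of the complement is normally generated by that meridian (since filling it back in with $\St_2$ recovers the simply-connected $\CP2\#11\barCP2$), and in $\T_2$ that meridian bounds, so $\pi_1(X)=1$.

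Next I would compute the intersection form. Use a Mayer--Vietoris/Novikov-additivity bookkeeping: $b_2(X) = b_2(\CP2 \# 11 \barCP2) - (\text{rank contributed by } \St_2) + (\text{rank contributed by } \T_2) + (\text{correction from } H_1(\partial))$. Concretely $b_2(\CP2\#11\barCP2) = 12$, the plumbing $\St_2$ has $5$ spheres so contributes a rank-$5$ negative-definite block, and by Proposition \ref{p:hom} $\T_2$ has $b_2 = 2$; the Euler characteristic bookkeeping $\chi(X) = \chi(\CP2\#11\barCP2) - \chi(\St_2) + \chi(\T_2) = 14 - 7 + 3 = 10$ (using $\chi(\St_2)= 5+2 =7$ and $\chi(\T_2)=3$ from Proposition \ref{p:hom}) gives $b_2(X) = 8$... — wait, this needs care because $\partial$ has torsion $H_1$ but the $b_2$ count via $\chi$ and $\pi_1=1$ gives $b_2(X) = \chi(X) - 2 = 8$. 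Hmm, that is too small; the point is that $\St_2$'s complement in $\CP2\#11\barCP2$ carries the classes $[u_i]$ which are not all killed. The correct approach: compute $H_2(X)$ directly from the gluing, or better, compute the signature by Novikov additivity $\sigma(X) = \sigma((\CP2\#11\barCP2)\setminus \St_2) + \sigma(\T_2)$ and separately pin down $b_2(X)$. Since $\sigma(\CP2\#11\barCP2) = -10$, and the five spheres of $\St_2$ span a negative-definite sublattice (so $\sigma$ of its neighborhood is $-5$), we get $\sigma$ of the complement $= -10 - (-5) = -5$, and $\sigma(\T_2) = -2$ by Proposition \ref{p:hom}, giving $\sigma(X) = -7$. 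For $b_2$: with $\pi_1(X)=1$ and $\chi(X)=2 + b_2(X)$, recompute $\chi(X) = \chi(\CP2\#11\barCP2) - \chi(\nu\St_2) + \chi(\T_2)$; here $\chi(\nu \St_2)$ equals $\chi$ of the plumbing $= 1 + 5 = 6$ (not $7$; a plumbing of $5$ spheres is homotopy equivalent to a wedge of $5$ spheres plus a point, wait — it deformation retracts to the union of spheres which has $\chi = 2\cdot 5 - (\text{number of edges})$; for a tree with $5$ vertices there are $4$ edges so $\chi = 10 - 4 = 6$). Then $\chi(X) = 14 - 6 + 3 = 11$, so $b_2(X) = 9$, with $\sigma(X) = -7$: this matches $\CP2 \# 8 \barCP2$.

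Finally, to conclude the intersection form is the standard one I would verify it is odd (type I). This is where I expect the main obstacle, though it is manageable: one exhibits a class in $H_2(X)$ with odd self-intersection. The natural candidate comes from a sphere or surface in the complement $(\CP2\#11\barCP2)\setminus \St_2$ disjoint from $\St_2$, e.g. a proper transform of a line or exceptional sphere not meeting the configuration, or from the homology class $h$ itself suitably represented; the homology-class data in the proof of Lemma \ref{l:classes} should let one pick such a class with odd square. Since an odd, indefinite (rank $9$, signature $-7$) unimodular form over $\Z$ is unique by the classification of indefinite forms, it must be $\langle 1 \rangle \oplus 8\langle -1\rangle$. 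I should double-check unimodularity: $\partial X = \partial \T_2$ has torsion $H_1 = \Z/12 \oplus \Z/2\oplus \Z/2$ (Proposition \ref{p:H1boundary}), so $X$ is not automatically closed, but $X$ is in fact a \emph{closed} 4-manifold since $\partial \St_2 = \partial \T_2$ and we glued along the whole boundary — so the intersection form is unimodular. Combining unimodularity, rank $9$, signature $-7$, oddness, and $\pi_1 = 1$, Freedman's theorem gives $X \cong \CP2 \# 8 \barCP2$. The delicate points to get exactly right in the writeup are the $\chi$ bookkeeping (computing $\chi$ of the plumbing neighborhood correctly as $1 + \#\text{vertices} - \#\text{edges}$ is wrong, it should be $\#\text{vertices} - \#\text{edges}$ shifted — I would just recompute it as $b_0 - b_1 + b_2 = 1 - 0 + (\#\text{vertices})$ for a tree of spheres, since a tree of $2$-spheres is simply connected with $H_2$ of rank equal to the number of vertices, giving $\chi = 1 + \#\text{vertices}$), hmm — I realize I need to be careful here and the safest route in the actual proof is: the neighborhood $\nu\St_2$ deformation retracts onto the union of the $5$ spheres, which as a CW complex is simply connected with $H_2 = \Z^5$, so $\chi(\nu\St_2) = 1 + 5 = 6$; then $\chi(X) = \chi(\CP2\#11\barCP2) - \chi(\partial\St_2) + \chi(\T_2)$ via inclusion–exclusion with $\chi(\partial \St_2) = 0$ (it is a closed oriented 3-manifold), which gives $\chi(X) = 14 + 3 - \chi(\nu\St_2 \cap \overline{(\CP2\#11\barCP2)\setminus \nu\St_2})$; doing Mayer--Vietoris cleanly, $\chi(X) = \chi((\CP2\#11\barCP2)\setminus \nu\St_2) + \chi(\T_2) = (14 - 6) + 3 = 11$, hence $b_2(X) = 9$. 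The final packaging: $b_2 = 9$, $\sigma = -7$, odd, simply connected, unimodular, zero Kirby--Siebenmann — apply Freedman. $\qed$
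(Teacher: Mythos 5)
Your proposal follows the same high-level strategy as the paper — Freedman's theorem, simple connectivity, and a computation of $\chi$, $\sigma$, and parity — and your Euler characteristic and signature bookkeeping eventually lands on the correct values $\chi(X)=11$, $\sigma(X)=-7$, $b_2=9$. However, there are two issues, one of which is a genuine gap.

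The gap is in the $\pi_1$ argument. You assert that the meridian of a $-2$ arm sphere is ``already nullhomotopic on the other side,'' and then later write ``in $\T_2$ that meridian bounds, so $\pi_1(X)=1$.'' The second statement is false: by Proposition~\ref{p:fund} that meridian \emph{generates} $\pi_1(\T_2)\cong\Z/2$ and does not bound there. The argument only works if the meridian bounds in the \emph{complement} $(\CP2\#11\barCP2)\setminus\St_2$, and that is a genuinely geometric fact that must be verified, not asserted. The paper's proof supplies it concretely: by Lemma~\ref{l:classes} the arm sphere $u_4$ is taken to be the component $V_1$ of the $I_3$ fiber, and the other components $V_2,V_3$ of the $I_3$ fiber are disjoint from the embedded $\St_2$ but intersect $u_4$ transversely; the meridian of $u_4$ therefore bounds a small disk inside $V_2$ (say) that lies entirely in the complement. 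Without some such explicit nullhomotopy in the complement, all van Kampen gives you is that $\pi_1(X)$ is a quotient of $\Z/2$, which leaves open the possibility $\pi_1(X)\cong\Z/2$. (Also, ``$\pi_1$ of the complement is normally generated by that meridian'' should really be ``normally generated by the meridians of all the spheres in $\St_2$''; one then has to observe that in $\pi_1(\T_2)$ all the arm meridians are identified, and all of them die once one of them bounds in the complement.)

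The second, more minor issue is the parity argument. You propose to find an explicit surface of odd self-intersection in the complement and remark this is ``the main obstacle.'' The paper avoids this entirely with a purely algebraic observation: an even indefinite unimodular form with $b_2^+=1$ would be $pH\oplus q(-E_8)$ with $2p+8q=9$, which has no integer solutions, so the form must be odd. This is both simpler and already complete; you should replace your sketch with this.
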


\begin{proof}
First we show the manifold $X$ is simply connected. By Proposition \ref{p:fund}, it suffices to show that the dual circle of one of the $-2$ spheres on $\partial \St_2$ bounds a disk in $\CP2 \# 11 \barCP2 \setminus \St_2$. By our construction the sphere $u_4$ is  a part of the $I_3$ fiber. Hence the meridian of $u_4$ bounds a disk $D$ in the complement of $\St_2$ which is contained in a sphere component of the $I_3$ fiber transversely intersecting $u_4$.   

Next we show that the intersection form of $X$ is isomorphic to the intersection form of $\CP2 \# 8 \barCP2$. We calculate the Euler characteristic and signature to get
\begin{align*}
\chi(X)&= \chi(\CP2 \# 11 \barCP2) - \chi (\St_2) + \chi(\T_2)\\
&=11,\\
\sigma (X) &= \sigma (\CP2 \# 11 \barCP2) -\sigma (\St_2) +\sigma(\T_2)\\
&=-7 .
\end{align*}
These imply that $b_2^+(X)=1$ and $b_2(X)=9$. The intersection form cannot be even, otherwise it would be written as a direct sum of hyperbolic pieces and $E_8$s, but the values of $b_2^+(X)$ and  $b_2(X)$ say that such a decomposition is impossible.  Hence $X$ is homeomorphic to $\CP2 \# 8 \barCP2$ by Freedman's theorem \cite{Fre}.
\end{proof}

\subsubsection{Kodaira dimension}\label{s:kod}

In this subsection we compute the symplectic Kodaira dimension \cite{Li} of $X$. Along the way, we distinguish $X$ from $\CP2\#8\barCP2$. Our argument is similar to Park's \cite{P}.

\begin{definition}\cite{Li}
For a minimal symplectic $4$-manifold $(M,\omega)$ with symplectic canonical class $K_\omega$, the Kodaira dimension of $(M,\omega)$ is defined in the following way
$$\kappa(M,\omega)=\left \{
\begin{array}{lr}
-\infty & \text{if } K_\omega\cdot[\omega] < 0 \text{ or } K_\omega\cdot K_\omega<0,\\
0 & \text{if } K_\omega\cdot[\omega] = 0 \text{ and } K_\omega\cdot K_\omega=0,\\
1 & \text{if } K_\omega\cdot[\omega] > 0 \text{ and } K_\omega\cdot K_\omega=0,\\
2 & \text{if } K_\omega\cdot[\omega] > 0 \text{ and } K_\omega\cdot K_\omega>0.
\end{array} \right .
$$
The Kodaira dimension of a non-minimal manifold is defined to be that of any of its symplectic minimal models.
\end{definition}

It is known that the Kodaira dimension is well-defined for every closed symplectic $4$-manifold and depends only on the diffeomorphism type of the manifold.  

 Let $K$ and $\omega$ denote the canonical class and the symplectic class of $\CP2 \# 11 \barCP2$ respectively. Let $X_0=\CP2 \# 11\barCP2 \setminus \St_2$. Then
\begin{align*}
K&=K|_{X_0} +K|_{\St_2},\\
\omega&=\omega|_{X_0} +\omega|_{\St_2}.
\end{align*}
Hence
\begin{align*}
K\cdot\omega = K|_{X_0} \cdot \omega|_{X_0} + K|_{\St_2}\cdot \omega|_{\St_2}. 
\end{align*}
Let $K_X$, and $\omega_X$ denote the canonical class and the symplectic class of $X$ respectively. Then
\begin{align*}
K_X&=K|_{X_0} +K_{\T_2},\\
\omega_X&=\omega|_{X_0} +\omega|_{\T_2}.
\end{align*}
Since $K_{\T_2}=0$ by Proposition \ref{p:can}, we have
\begin{align*}
K_X\cdot \omega_X= K|_{X_0} \cdot \omega|_{X_0}= K\cdot\omega - K|_{\St_2}\cdot \omega|_{\St_2}.
\end{align*}
\begin{lemma}\label{l:symclass}
For every $k>0$, the manifold $\CP2\# k\barCP2$ admits a symplectic structure whose cohomology class is given by $\omega=ah-b_1e_1-\dots b_ke_k$ for some rational numbers  $a,b_1,\dots,b_k$ with $a>b_1>\dots>b_k$ and $a>b_1+\dots+b_k$. 
\end{lemma}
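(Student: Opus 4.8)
The plan is to construct the symplectic form on $\CP2 \# k\barCP2$ directly by iterated symplectic blow-up, starting from the Fubini–Study form on $\CP2$, and to track the cohomology class at each step. First I would fix the Fubini–Study symplectic form $\omega_0$ on $\CP2$, normalized so that $[\omega_0] = ah$ for some large $a > 0$ to be chosen at the end. Recall that the symplectic blow-up operation of Gromov–McDuff, performed at a point using a Darboux ball of (symplectic) radius $\delta$, produces a symplectic form on the blow-up whose class is $[\omega] - \pi\delta^2 e$, where $e$ is the Poincaré dual of the exceptional sphere; moreover $\delta$ can be taken to be any positive number small enough that a Darboux ball of that radius embeds, and in particular the $\delta$'s at successive blow-ups can be chosen to be a strictly decreasing sequence.

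Next I would blow up $k$ times at distinct points, choosing symplectic radii $\delta_1 > \delta_2 > \cdots > \delta_k > 0$ at the successive blow-ups, all small. Setting $b_i = \pi\delta_i^2$, the resulting symplectic form on $\CP2 \# k \barCP2$ has class $\omega = a h - b_1 e_1 - \cdots - b_k e_k$ with $b_1 > b_2 > \cdots > b_k > 0$ automatically. The condition $a > b_1$ is arranged by taking $a$ large (or equivalently the $\delta_i$ small), and the condition $a > b_1 + \cdots + b_k$ is likewise arranged by choosing $a$ larger than $\sum_i b_i$; since $k$ is fixed and finite, finitely many such inequalities can all be satisfied simultaneously by scaling. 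Finally, rationality of $a, b_1, \ldots, b_k$ is obtained by a small perturbation: the set of cohomology classes represented by symplectic forms is open (Moser/stability), and the strict inequalities $a > b_1 > \cdots > b_k > 0$ and $a > \sum b_i$ are open conditions, so one may replace $(a, b_1, \ldots, b_k)$ by a nearby rational tuple still satisfying all of them while remaining in the symplectic cone.

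The only genuine subtlety is ensuring that all the required inequalities — in particular $a > b_1 + \dots + b_k$ — can be met simultaneously while each $b_i = \pi\delta_i^2$ is constrained by how large a Darboux ball actually embeds at the $i$-th stage; but since we are free to start with $a$ as large as we like (the Fubini–Study form scales), and the embedded-ball constraint only forces the $\delta_i$ to be small rather than large, there is no obstruction: pick $a$ first, then pick the $\delta_i$ small and decreasing and with $\sum \pi\delta_i^2 < a$. I expect this balancing, together with citing the symplectic blow-up construction and the openness of the symplectic cone for the rationality perturbation, to be the entirety of the argument; the main (minor) obstacle is simply being careful that the order of quantifiers — choose $a$, then choose the $\delta_i$ — is the right one.
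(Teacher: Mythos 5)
Your proposal is correct and follows essentially the same approach as the paper: iterated symplectic blow-up with decreasing Darboux-ball radii, citing McDuff--Salamon for the effect on the cohomology class. The paper's proof is terser (it does not spell out the openness argument for rationality, implicitly taking the blow-up weights themselves to be rational), but the underlying argument is the same.
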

\begin{proof}
Note that $a$ is the symplectic area of $\CPo \subset \CP2$ and $b_j$ is the symplectic area of the exceptional sphere $e_j$ for all $j=1,\dots,k$. By \cite[Chapter 7]{MS}, we have $b_j=\lambda^2$ where $\lambda$ is the weight of the corresponding blow-up. In other words $\lambda$ is the radius of the Darboux ball which will be removed during the blow-up process. Since the weights of the blow-ups can be chosen to be arbitrarily small, the result follows.
\end{proof}

\begin{lemma}\label{l:kodaira}
We have $K_X\cdot \omega_X>0$.
\end{lemma}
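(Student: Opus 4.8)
The plan is to use the additivity of the intersection pairing over the cut along the boundary rational homology sphere $\partial\St_2=\partial\T_2$, recorded above in the form
\[
K_X\cdot\omega_X \;=\; K\cdot\omega \;-\; K|_{\St_2}\cdot\omega|_{\St_2},
\]
and to evaluate the two terms on the right explicitly. By the construction of Lemma~\ref{l:classes}, which realizes the embedded copy of $\St_2$ inside a blow-up of $\CP2$ whose blow-up weights may be taken arbitrarily small, we may assume $\omega$ has class $ah-\sum_{i=1}^{11}b_ie_i$ with $0<b_i$ and $a>b_1+\dots+b_{11}$ (cf.\ Lemma~\ref{l:symclass}), and that the spheres $u_0,\dots,u_4$ are symplectic for it.

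First I would compute $K\cdot\omega$ directly: the canonical class of $\CP2\#11\barCP2$ is $K=-3h+e_1+\dots+e_{11}$, so $K\cdot\omega=-3a+\sum_{i=1}^{11}b_i$. Next I would compute $K|_{\St_2}\cdot\omega|_{\St_2}$ from the plumbing lattice. The intersection form $Q$ of $\St_2$ on $H_2(\St_2;\mathbb{Q})=\langle[u_0],\dots,[u_4]\rangle$ is negative definite, hence invertible over $\mathbb{Q}$, so by Poincar\'e--Lefschetz duality the class dual to $\omega|_{\St_2}$ can be written as a rational combination $\sum_i c_i[u_i]$ with $\vec c=Q^{-1}\vec A$, where $A_j:=\omega\cdot[u_j]$ is the $\omega$-area of $u_j$. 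Then $K|_{\St_2}\cdot\omega|_{\St_2}=\sum_i c_i(K\cdot[u_i])$, and the adjunction equality for symplectic spheres $K\cdot[u_i]=-2-[u_i]^2$ gives $K\cdot[u_0]=3$ and $K\cdot[u_j]=0$ for $j=1,\dots,4$. Hence $K|_{\St_2}\cdot\omega|_{\St_2}=(Q^{-1}\vec k)^{T}\vec A$ for $\vec k=(3,0,0,0,0)$; solving $Q\vec v=\vec k$ yields $\vec v=(-1,-\tfrac12,-\tfrac12,-\tfrac12,-\tfrac12)$, so $K|_{\St_2}\cdot\omega|_{\St_2}=-A_0-\tfrac12(A_1+A_2+A_3+A_4)$.

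Finally I would substitute the homology classes of Lemma~\ref{l:classes}, which give $A_0=6a-b_1-2(b_2+\dots+b_{11})$ and $A_j=a-b_1-b_{2j}-b_{2j+1}$ for $j=1,\dots,4$, and combine the two pieces; the outcome should be $K_X\cdot\omega_X=5a-2b_1-\tfrac32(b_2+\dots+b_9)-b_{10}-b_{11}$. Since all $b_i>0$, the subtracted terms are at most $2\sum_{i=1}^{11}b_i$, which is $<2a$ by the constraint of Lemma~\ref{l:symclass}, so $K_X\cdot\omega_X>3a>0$. I expect the main obstacle to be the middle step: one must check that $K|_{\St_2}\cdot\omega|_{\St_2}$ genuinely equals the pairing of the relative class dual to $\omega|_{\St_2}$ with $K|_{\St_2}$, which is what licenses the reduction to the finite linear-algebra computation with $Q^{-1}$, and that $\omega\cdot[u_j]$ really is the symplectic area of $u_j$ (resting on the $u_j$ being symplectic for the chosen $\omega$, which comes from the blow-up/resolution construction in Lemma~\ref{l:classes}). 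The remaining steps are routine bookkeeping with the explicit homology classes.
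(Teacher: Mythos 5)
Your proposal is correct and takes essentially the same route as the paper: both use the decomposition $K_X\cdot\omega_X = K\cdot\omega - K|_{\St_2}\cdot\omega|_{\St_2}$, apply adjunction to identify $K|_{\St_2}$ with $3\gamma_0$ (equivalently $\vec k=(3,0,0,0,0)$), and reduce the restricted pairing to a computation with the inverse of the intersection matrix of $\St_2$, arriving at the same expression $5a-2b_1-\tfrac32(b_2+\cdots+b_9)-b_{10}-b_{11}>0$. The only cosmetic difference is that you solve $Q\vec v=\vec k$ directly for the single relevant vector rather than writing out the full $P^{-1}$ as the paper does; the arithmetic and the final bound via $a>b_1+\cdots+b_{11}$ coincide.
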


\begin{proof}
Write
\begin{align*}
K&= -3h +e_1+\dots+e_{11},\\
\omega&=ah-b_1e_1-\dots - b_{11} e_{11},
\end{align*}
where $a,b_1,\dots, b_{11}$ are rational numbers such that $a>b_1>\dots>b_{11}$ and 
\begin{align}\label{e:cond}
a>b_1+\dots+b_{11}.
\end{align} 
Then we have 
\[
K\cdot \omega=-3a+b_1+\dots+b_{11}.
\]
Let $\gamma_0,\dots,\gamma_4$ be the basis of $H^2(\St_2,\mathbb{Q})$ which is dual to $u_0,\dots,u_4$. Then the adjunction formula implies
\begin{align*}
K|_{\St_2}&=(K\cdot u_0)\gamma_0+ \dots +(K\cdot u_4)\gamma_4\\
&=3\gamma_0.
\end{align*}
We calculate the restriction of the symplectic class on $\St_2$ using Lemma \ref{l:classes} 
\begin{align*}
\omega|_{\St_2}&=(\omega\cdot u_0)\gamma_0+ \dots +(\omega\cdot u_4)\gamma_4\\
&=(6a-b_1-2b_2-\dots-2b_{11})\gamma_0 + (a-b_1-b_2-b_3)\gamma_1\\
&\quad + (a-b_1-b_4-b_5)\gamma_2 + (a-b_1-b_6-b_7) \gamma_3 + (a-b_1-b_8-b_9)\gamma_4.
\end{align*}
Let $P$ denote the intersection matrix for $\St_2$ then
\[ 
P^{-1}=-\frac{1}{12}
\left( \begin{array}{ccccc}
4 & 2 & 2 & 2 & 2 \\
2 & 7 & 1 & 1 & 1 \\
2 & 1 & 7 & 1 & 1 \\ 
2 & 1 & 1 & 7 & 1 \\
2 & 1 & 1 & 1 & 7 \\
\end{array} \right).\] 
Hence 
\begin{align*}
K|_{\St_2}\cdot \omega|_{\St_2} &= -\frac{1}{4} [ 4(6a-b_1-2b_2\dots-2b_{11}) + 2(a-b_1-b_2-b_3)\\
&\quad + 2(a-b_1-b_4-b_5) + 2(a-b_1-b_6-b_7) + 2(a-b_1-b_8-b_9)  ]\\
&=-\frac{1}{4}[32a-12b_1-10b_2-10b_3-\dots - 10 b_9-8b_{10}-8b_{11}].\\
\end{align*}
Therefore
\begin{align*}
K_X\cdot \omega_X&= K|_{X_0} \cdot \omega|_{X_0}= K\cdot\omega - K|_{\St_2}\cdot \omega|_{\St_2}\\
&=-3a+b_1+\dots+b_{11}+[8a-3b_1-\frac{5}{2}b_2-\dots -\frac{5}{2}b_9-2b_{10}-2b_{11}]\\
&=5a-2b_1-\frac{3}{2}b_2-\dots - \frac{3}{2}b_9-b_{10}-b_{11}\\
&>0 \quad \text{ (By Equation (\ref{e:cond}))}.
\end{align*}
\end{proof}

\begin{remark}
	The important input which ensures there is a symplectic form on $X$ with $K_X\cdot \omega_X>0$, is the high multiplicity of the homology class $h$ in the classes $[u_i]$ of the spheres in the embedding of $\St_2$. Note that if the embedding of $\St_2$ were completely disjoint from the $\CPo$ representing $h$, then by McDuff's theorem, the resulting star surgered manifold would be a standard blow-up of $\CP2$. This computation provides a quantitative way of ensuring when the plumbing spheres intersect $\CPo$ enough for the star surgery to produce an exotic smooth structure.
\end{remark}

\begin{proposition}\label{p:exotic}
The manifold $X$ is not diffeomorphic to $\CP2\# 8 \barCP2$.
\end{proposition}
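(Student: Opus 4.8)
The plan is to distinguish $X$ from $\CP2 \# 8 \barCP2$ using Seiberg-Witten theory (or rather, the small-perturbation Seiberg-Witten invariants, since $b_2^+(X) = 1$), exploiting the fact that $X$ is a minimal symplectic manifold with $\kappa(X) = 2$, whereas $\CP2 \# 8 \barCP2$ has Kodaira dimension $-\infty$. First I would invoke Lemma~\ref{l:kodaira} together with the computation that $K_X \cdot K_X = 9 - b_2^-(X) = 9 - 8 = 1 > 0$ to conclude that, \emph{provided $X$ is minimal}, its Kodaira dimension is $2$. Since $\CP2 \# 8\barCP2$ is rational, it has $\kappa = -\infty$, and Kodaira dimension is a diffeomorphism invariant (as recalled in the text right after the definition), so minimality of $X$ would immediately give $X \not\cong \CP2 \# 8 \barCP2$.

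The main obstacle, therefore, is proving that $X$ is minimal. I would argue this by contradiction: if $X$ contained a symplectic $(-1)$-sphere, then by Taubes' results on Seiberg-Witten invariants of symplectic manifolds with $b_2^+ = 1$ (and the wall-crossing/chamber structure), $X$ would have the Seiberg-Witten basic classes of a blow-up. The cleaner route is to use the structure of the canonical class: for a non-minimal symplectic $4$-manifold one can write $K_X = K_{X'} + \sum E_i$ where $X'$ is the minimal model and $E_i$ are exceptional classes with $E_i^2 = -1$, $K_X \cdot E_i = -1$. One then shows this is incompatible with the homological data of $X$ and the behavior of $K_X$ under the star surgery --- specifically, I would track $K_X$ via the decomposition $K_X = K|_{X_0} + K_{\T_2} = K|_{X_0}$ (using $K_{\T_2} = 0$ from Proposition~\ref{p:can}), and use that the relevant Seiberg-Witten invariant is preserved or computed by the fact (emphasized in the introduction) that $\partial \St_2$ is an $L$-space, so the small-perturbation Seiberg-Witten invariants of $X$ can be read off from those of $\CP2 \# 11 \barCP2$ relative to the surgery. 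Comparing the set of basic classes of $X$ (which, being symplectic with $\kappa = 2$, has $\pm K_X$ as basic classes with the canonical class having the expected dimension count) against the basic classes forced by non-minimality yields the contradiction.

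Concretely, the steps I would carry out are: (1) compute $K_X^2 = 1$ and $K_X \cdot \omega_X > 0$ (the latter is Lemma~\ref{l:kodaira}), establishing $\kappa(X) = 2$ \emph{if} $X$ is minimal; (2) show $X$ is minimal, by showing that the small-perturbation Seiberg-Witten invariant of $X$ for the $\mathrm{Spin}^c$ structure with $c_1 = K_X$ is $\pm 1$ and that this, combined with the constraint $b_2^+(X) = 1$ and the wall-crossing formula, prevents the existence of a $(-1)$-sphere (equivalently, rule out a blow-up decomposition by a homological/intersection-form argument using that $K_X$ has no exceptional summand since $K|_{X_0}$ restricts nontrivially from the spheres $u_i$ which carry large multiples of $h$); (3) conclude via the diffeomorphism-invariance of Kodaira dimension that $X \not\cong \CP2 \# 8 \barCP2$, since the latter is rational with $\kappa = -\infty$.

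I expect step (2) --- establishing minimality --- to be the crux. The computation in step (1) is essentially done in the preceding lemmas, and step (3) is a formal invariance argument. For minimality, the key input is Taubes' theorem identifying $SW_X(K_X) = \pm 1$ for any symplectic $X$ with $b_2^+ \geq 1$ (in the $b_2^+ = 1$ case with appropriate chamber), together with the observation that the Seiberg-Witten invariants change in a controlled way under star surgery precisely because the boundary Seifert fibered space is an $L$-space (so that the relative invariants of the complement $X_0$ determine those of $X$, mirroring the rational blow-down computations of Fintushel-Stern, Park, and Michalogiorgaki cited in the introduction). One then notes that a rational surface like $\CP2 \# 8\barCP2$ has vanishing Seiberg-Witten invariants in the relevant chamber (or only the chamber-dependent wall-crossing term), which differs from the symplectic answer $\pm 1$ for $X$; alternatively, the presence of a basic class $K_X$ with $K_X^2 = 1 > 0$ directly obstructs $X$ being rational or a blow-up of a rational surface. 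Either phrasing closes the argument.
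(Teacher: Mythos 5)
Your approach is valid but takes a genuinely different --- and substantially heavier --- route than the paper's actual proof of Proposition~\ref{p:exotic}. The paper's proof does \emph{not} invoke minimality, Seiberg--Witten theory, or Kodaira dimension at all. It argues directly: by Theorem~D of Li--Liu \cite{LL}, $\CP2\#k\barCP2$ carries a unique symplectic structure up to diffeomorphism and deformation for $2\le k\le 9$, and the standard one satisfies $K\cdot\omega<0$; since Lemma~\ref{l:kodaira} shows $K_X\cdot\omega_X>0$, the symplectic manifold $X$ cannot be diffeomorphic to $\CP2\#8\barCP2$. That is the entire argument --- four sentences. Notice in particular that the paper deliberately proves Proposition~\ref{p:exotic} \emph{before} establishing minimality (Proposition~\ref{p:minimal}) and Kodaira dimension $2$ (Proposition~\ref{p:kodaira}), precisely because the uniqueness-of-deformation-class argument bypasses those harder inputs.

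What you describe is close to the \emph{alternative} proof the paper sketches in a remark following Lemma~\ref{l:SWcan}: since the small-perturbation Seiberg--Witten invariants of $\CP2\#k\barCP2$ ($k\le9$) all vanish (unique chamber, positive scalar curvature), while $SW_X(\widetilde K)=\pm1$ by Lemma~\ref{l:SWcan}, $X$ is exotic. Your version, however, adds an unnecessary detour: you frame the conclusion through the diffeomorphism-invariance of Kodaira dimension, which forces you to first establish minimality of $X$ (Proposition~\ref{p:minimal}, a lengthy computer search), even though the SW nonvanishing you already need for minimality would by itself finish the argument. In other words, once you have a nonzero small-perturbation SW invariant on $X$, the Kodaira-dimension wrapper is redundant; and if you insist on the Kodaira-dimension route you are importing the full minimality proof, whereas the paper's uniqueness argument needs only the $K_X\cdot\omega_X>0$ computation. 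Both routes are logically correct; the paper's buys a much shorter proof of the statement by trading Seiberg--Witten input for the Li--Liu classification result.
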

\begin{proof}
The standard symplectic form on $\CP2\# k \barCP2$ satisfies $K\cdot \omega <0$. According to [\cite{LL}, Theorem D] there is a unique symplectic structure on $\CP2\# k \barCP2$ for $2\leq k \leq 9$ up to diffeomorphism and deformation. Hence $\CP2\# 8 \barCP2$ does not admit a symplectic structure with $K\cdot \omega>0$. We have just seen that $K_X\cdot \omega_X>0$. Hence $X$ is not diffeomorphic to $\CP2\# 8 \barCP2$.
\end{proof}

\begin{proposition}\label{p:minimal}
The manifold $X$ is minimal
\end{proposition}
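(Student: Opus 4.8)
The plan is to run the standard blow-up-formula argument for minimality, exactly in the spirit of Park's proof, so the argument will rest on two inputs: the adjunction and Kodaira-dimension data already established, and the computation of the Seiberg--Witten basic classes of $X$ (which I would carry out as the companion result of this section, by tracking the homology classes through the star surgery and using the effect of star surgery on Seiberg--Witten invariants).

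First I would collect the numerics. By Lemma~\ref{l:homeo} we have $\chi(X)=11$ and $\sigma(X)=-7$, hence $K_X\cdot K_X=2\chi(X)+3\sigma(X)=1>0$, and by Lemma~\ref{l:kodaira} we have $K_X\cdot\omega_X>0$. Now suppose $X$ is not minimal. Then $X$ contains a symplectic $(-1)$-sphere, which we blow down to write $X=Z\#\barCP2$ for a symplectic $4$-manifold $(Z,\omega_Z)$, where $K_X$ corresponds to $K_Z+E$ and $E$ is the class of the exceptional sphere. Then $Z$ is simply connected, $b_2^+(Z)=1$, $b_2^-(Z)=7$, and $K_Z\cdot K_Z=K_X\cdot K_X+1=2$; moreover, since a symplectic blow-down changes $K\cdot\omega$ only by (a multiple of) the square of the blow-up weight, which may be taken arbitrarily small, we can also keep $K_Z\cdot\omega_Z>0$. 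In particular $K_Z$ is not a torsion class.

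Next I would invoke gauge theory. Since $b_2^+(Z)=1$, $b_2^-(Z)=7\leq 9$, and $K_Z\cdot\omega_Z>0$, Taubes' theorem (in the form giving the small-perturbation Seiberg--Witten invariant used in this paper) yields $SW_Z(\pm K_Z)=\pm 1$. Feeding this into the blow-up formula for $X=Z\#\barCP2$ shows that $K_Z+E$, $K_Z-E$, $-K_Z+E$, and $-K_Z-E$ are all Seiberg--Witten basic classes of $X$; these four classes are pairwise distinct because $K_Z^2=2$ and $E^2=-1$ force $K_Z$ and $E$ to be nonzero, non-$2$-torsion, and non-proportional. Of them, $K_Z+E=K_X$ and $-K_Z-E=-K_X$, while $K_Z-E$ differs from both $K_X$ and $-K_X$. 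Thus $X$ has at least three distinct Seiberg--Witten basic classes, contradicting the computation that the only basic classes of $X$ are $\pm K_X$. Hence $X$ is minimal, and then $K_X\cdot K_X=1>0$ together with $K_X\cdot\omega_X>0$ gives $\kappa(X)=2$, completing Theorem~\ref{theo:main}.

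I expect the main difficulty to be the usual $b_2^+=1$ bookkeeping rather than anything conceptual: one must verify that Taubes' non-vanishing and the blow-up formula both apply in the chamber relevant to the small-perturbation invariant, on $Z$ and on $X$ (this is precisely where $b_2^-\leq 9$ and $K\cdot\omega>0$ are used), and that symplectic blow-down preserves the sign of $K\cdot\omega$. It is worth noting that a purely numerical argument is not available: a hypothetical minimal model of $X$ would be a minimal symplectic $4$-manifold of general type with $b_2^+=1$, $b_1=0$, and $K^2\geq 2$, parameters which are realized by known symplectic constructions (for instance exotic copies of $\CP2\#7\barCP2$), so the Seiberg--Witten computation for $X$ is genuinely needed.
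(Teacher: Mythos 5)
Your overall strategy matches the paper's: reduce minimality, via the blow-up formula, to the assertion that $\pm\widetilde K$ are the only Seiberg--Witten basic classes of $X$, and then establish that assertion by a direct computation (which you correctly identify as the substantive step and defer). The paper states the reduction in a single sentence and devotes its proof to the enumeration of adjunctive basic classes, using Theorem~\ref{theo:mic} and the wall-crossing formula; you instead flesh out the reduction in detail and only sketch the computation.

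One step of your reduction is not correct as written. After symplectically blowing down a $(-1)$-sphere of weight $\lambda$, one has $K_Z\cdot\omega_Z = K_X\cdot\omega_X - \lambda^2$, but $\lambda^2$ is the $\omega_X$-area of the chosen $(-1)$-sphere and is fixed; you do not get to ``take it arbitrarily small,'' so $K_Z\cdot\omega_Z>0$ does not follow. The conclusion can still be rescued. Since $b_2^+(Z)=1$ and both $K_Z^2=2$ and $\omega_Z^2$ are positive, the light-cone inequality gives $K_Z\cdot\omega_Z\neq0$; and $K_Z\cdot\omega_Z<0$ would, by Liu's theorem, make $Z$ rational or ruled, hence $Z\cong\CP2\#7\barCP2$ and $X\cong\CP2\#8\barCP2$, contradicting Proposition~\ref{p:exotic}. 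Alternatively, and more in line with what the paper's one-liner likely intends, one can avoid Taubes and the $K_Z\cdot\omega_Z$ issue entirely: Lemma~\ref{l:SWcan} already gives $SW_X(\widetilde K)\neq0$, so if $X=Z\#\barCP2$ with $\widetilde K=K_Z+E$, the blow-up formula forces $SW_X(K_Z-E)=SW_Z(K_Z)=SW_X(\widetilde K)\neq0$; since $K_Z-E=\widetilde K-2E$ differs from both $\pm\widetilde K$ (because $E^2=-1$ and $K_Z^2=2$ rule out the requisite $2$-torsion), this produces a third basic class, contradicting the computation.
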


The proof is postponed until the next section. With this in hand, we are ready to prove about the Kodaira dimension of $X$.

\begin{proposition}\label{p:kodaira}
The manifold $X$ has symplectic Kodaria dimension $2$.
\end{proposition}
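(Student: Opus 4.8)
The plan is to assemble the pieces already established: by Proposition~\ref{p:minimal}, $X$ is minimal, so its Kodaira dimension is computed directly from $(X,\omega_X)$ without passing to a minimal model. By the definition of symplectic Kodaira dimension, to conclude $\kappa(X,\omega_X)=2$ it suffices to verify the two inequalities $K_X\cdot\omega_X>0$ and $K_X\cdot K_X>0$. The first of these is exactly Lemma~\ref{l:kodaira}, so the only remaining work is the self-intersection computation $K_X^2>0$.

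For $K_X^2$, I would argue as follows. As in the Kodaira dimension discussion preceding Lemma~\ref{l:symclass}, write $X_0=\CP2\#11\barCP2\setminus\St_2$ and $K_X=K|_{X_0}+K_{\T_2}$, where $K|_{X_0}$ is the restriction of the canonical class $K=-3h+e_1+\dots+e_{11}$ of $\CP2\#11\barCP2$. Since $K_{\T_2}=0$ by Proposition~\ref{p:can}, the class $K_X$ is Poincar\'e dual (relative to the boundary) to the restriction of $K$ to $X_0$, and so $K_X^2 = K|_{X_0}^2 = K^2 - K|_{\St_2}^2$, exactly parallel to the computation $K_X\cdot\omega_X=K\cdot\omega-K|_{\St_2}\cdot\omega|_{\St_2}$ used in the proof of Lemma~\ref{l:kodaira}. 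Now $K^2=9-11=-2$. For the $\St_2$ term, the adjunction formula together with the homology classes of Lemma~\ref{l:classes} gives $K\cdot u_0=3$ and $K\cdot u_i=0$ for $i=1,\dots,4$ (each $u_i$ being a $-2$-sphere), so $K|_{\St_2}=3\gamma_0$ in the dual basis, whence $K|_{\St_2}^2 = 9\,(\gamma_0\cdot\gamma_0) = 9\cdot(P^{-1})_{00} = 9\cdot(-\tfrac{4}{12}) = -3$, using the inverse intersection matrix $P^{-1}$ recorded in the proof of Lemma~\ref{l:kodaira}. Therefore $K_X^2 = -2 - (-3) = 1 > 0$.

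Combining $K_X\cdot\omega_X>0$ (Lemma~\ref{l:kodaira}) with $K_X^2=1>0$ and the minimality of $X$ (Proposition~\ref{p:minimal}), the definition of Kodaira dimension yields $\kappa(X)=2$. As a consistency remark, $K_X^2=1$ also matches the expected value $2\chi(X)+3\sigma(X) = 2\cdot 11 + 3\cdot(-7) = 1$, which is a useful sanity check on the signs.

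The only delicate point is the claim that $K_X^2 = K^2 - K|_{\St_2}^2$: this requires knowing that $K_X$ restricted to $X_0$ agrees with $K|_{X_0}$ and that the cut-and-paste decomposition of the intersection form respects the canonical classes, which is precisely where $K_{\T_2}=0$ (Proposition~\ref{p:can}) is used, together with the Mayer--Vietoris / Novikov-additivity style decomposition of the intersection pairing over $X = X_0\cup_\partial \T_2$. This is the same mechanism already invoked (without further comment) in the computation of $K_X\cdot\omega_X$ preceding Lemma~\ref{l:kodaira}, so I would simply reference that discussion rather than redo it. Everything else is a short arithmetic calculation using data already assembled in the paper.
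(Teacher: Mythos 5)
Your proof is correct and follows the same overall strategy as the paper: invoke minimality (Proposition~\ref{p:minimal}), cite Lemma~\ref{l:kodaira} for $K_X\cdot\omega_X>0$, and establish $K_X^2>0$. The only difference is that you compute $K_X^2$ the long way via the cut-and-paste decomposition $K_X^2 = K^2 - K|_{\St_2}^2 = -2-(-3)=1$, whereas the paper simply applies the identity $K_X^2 = 2\chi(X)+3\sigma(X) = 22-21 = 1$ (valid for any almost-complex $4$-manifold) using $\chi(X)=11$, $\sigma(X)=-7$ already computed in Lemma~\ref{l:homeo}; you even observe this shortcut as a ``sanity check'' at the end, so in substance the two arguments coincide.
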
 

\begin{proof} Recall that the the symplectic Kodaira dimension is defined on the minimal model. Since $X$ is minimal, $K_X\cdot \omega_X>0$, and $K_X^2=3\sigma (X) +2 \chi (X)=1 >0$, its symplectic Kodaira dimension is $2$.
\end{proof}

This proposition shows that $X$ can also be distinguished from $\CP2\# 8 \barCP2$ using the symplectic Kodaira dimension, since $\kappa (\CP2\# 8 \barCP2)=\kappa(\CP2)=-\infty$.

\subsubsection{Minimality}\label{s:min}\label{s:min}
{
Throughout this section, we assume that the reader is familiar with the Seiberg-Witten invariants of manifolds with $b_2^+=1$. See \cite{M, FS3, SS} for excellent expositions. 

In order to prove the minimality of $X$ one needs to know the effect of star surgery on Seiberg-Witten invariants. It suffices to know all the Seiberg-Witten basic classes of $X$.  The effect of such cut and paste operations on Seiberg-Witten invariants was studied by Michalogiorgaki in \cite{Mic} in a more general framework (see also \cite{R}, for an analogous result in the perspective of Heegaard Floer theory).
 
\begin{theorem}\cite[Theorem 1]{Mic}\label{theo:mic}
Suppose $Y$ is a rational homology sphere which is a  monopole $L$-space. Let $P$ and $B$ be negative definite $4$-manifolds with $b_1(P)=b_1(B)=0$ and $\partial P=\partial B= Y$. Let  $X=Z \bigcup _Y P$ and $X'=Z\bigcup_Y B$, for some $4$-manifold $Z$. If $\frak{s} \in \mathrm{Spin}^c (X), \frak{s}' \in \mathrm{Spin}^c (X')$, $d_X(\frak{s}),d_{X'}(\frak{s'})\geq 0$ 
and $\frak{s}|_{Z}=\frak{s'}|_{Z}$ then $SW_X(\frak{s})=SW_{X'}(\frak{s'})$.\\
\noindent In the case $b_2^+(X)=1$, $SW_{X,a_1}(\frak{s})=SW_{X',a_2}(\frak{s'})$, where $a_1\in H_2(X,\mathbb{Z}), a_2\in H_2(X',\mathbb{Z})$ specify chambers such that $a_1|_P=a_2|_B=0$ and $a_1|_Z=a_2|_Z$.
\end{theorem}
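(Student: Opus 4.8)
\emph{Proof strategy.} The plan is to prove Theorem~\ref{theo:mic} by a neck-stretching and gluing computation for the Seiberg--Witten equations along the rational homology sphere $Y$, following the scheme used by Fintushel and Stern for the rational blow-down \cite{FS} and by Park \cite{P}, with the monopole $L$-space hypothesis on $Y$ playing the role that ``$B$ is a rational homology ball'' played in those arguments (a parallel argument in Heegaard Floer theory appears in \cite{R}). First I would fix a metric and small perturbation on $Y$ so that the monopole Floer groups $\widehat{HM}_*(Y)$ are defined, and to a $4$-manifold $W$ with $b_1(W)=0$, a single cylindrical end $[0,\infty)\times Y$, and a $\mathrm{Spin}^c$ structure $\mathfrak{t}$, I would associate a relative invariant $\Phi(W,\mathfrak{t})\in\widehat{HM}_*(Y,\mathfrak{t}|_Y)$ obtained by counting finite-energy monopoles on $W$ asymptotic to critical points over $Y$; since $b_2^+(P)=b_2^+(B)=0$ forces reducibles that cannot be perturbed away, this must be carried out in Kronheimer--Mrowka's blown-up configuration space. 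Dually, $Z$ (with cylindrical end $[0,\infty)\times(-Y)$ and $\mathrm{Spin}^c$ structure $\mathfrak{r}$) produces $\Psi(Z,\mathfrak{r})$ in the linear dual of $\widehat{HM}_*(Y,\mathfrak{r}|_Y)$.

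\emph{Gluing.} Because $P$ and $B$ are negative definite and $Y$ is a rational homology sphere, $b_2^+(X)=b_2^+(X')=b_2^+(Z)$, so I would invoke the gluing theorem for Seiberg--Witten solutions along a rational homology sphere to obtain
$$SW_X(\mathfrak{s})=\bigl\langle\Psi(Z,\mathfrak{s}|_Z),\,\Phi(P,\mathfrak{s}|_P)\bigr\rangle,\qquad SW_{X'}(\mathfrak{s}')=\bigl\langle\Psi(Z,\mathfrak{s}'|_Z),\,\Phi(B,\mathfrak{s}'|_B)\bigr\rangle,$$
where the bracket is the evaluation pairing on $\widehat{HM}_*(Y,\mathfrak{t}_0)$, $\mathfrak{t}_0:=\mathfrak{s}|_Y$. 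When $b_2^+(X)=1$ the left-hand sides are the chamber-dependent invariants, and I would check that the constraints $a_1|_P=a_2|_B=0$ and $a_1|_Z=a_2|_Z$ force the neck to be stretched in the same chamber on the $Z$-side, so that the same pairing formula holds. Since $\mathfrak{s}|_Z=\mathfrak{s}'|_Z$, the factor $\Psi(Z,\mathfrak{s}|_Z)$ is common to both expressions, and the problem reduces to showing $\Phi(P,\mathfrak{s}|_P)=\Phi(B,\mathfrak{s}'|_B)$ in $\widehat{HM}_*(Y,\mathfrak{t}_0)$ (both restrict to $\mathfrak{t}_0$ on $Y$).

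\emph{Using the $L$-space property.} Since $Y$ is a monopole $L$-space, $HM_{\mathrm{red}}(Y)=0$, so $\widehat{HM}_*(Y,\mathfrak{t}_0)$ is free of rank one, supported in the single grading equal to the correction term $d(Y,\mathfrak{t}_0)$. The relative invariant $\Phi(W,\mathfrak{t})$ of a negative definite $W$ with $b_1(W)=0$ lies in a grading determined by $c_1(\mathfrak{t})^2$, $\sigma(W)$ and $\chi(W)$; the hypotheses $d_X(\mathfrak{s})\ge 0$ and $d_{X'}(\mathfrak{s}')\ge 0$, together with additivity of the expected dimension under gluing, pin this grading down to $d(Y,\mathfrak{t}_0)$, so $\Phi(P,\mathfrak{s}|_P)$ and $\Phi(B,\mathfrak{s}'|_B)$ land in the same rank-one group. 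Finally, at that top grading the relative invariant of a negative definite filling is carried by the unique reducible on $W$ (no irreducibles contribute when $b_2^+(W)=0$), so it is the canonical generator up to sign; I would check that the standard homology orientations make the signs agree, whence $\Phi(P,\mathfrak{s}|_P)=\Phi(B,\mathfrak{s}'|_B)$ and, pairing with $\Psi(Z,\mathfrak{s}|_Z)$, $SW_X(\mathfrak{s})=SW_{X'}(\mathfrak{s}')$.

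\emph{Main obstacle.} The hard part will be the gluing theorem and this last identification. Concretely, I expect to have to (i) handle the unavoidable reducibles on $P$ and $B$ via blown-up moduli spaces (or, as Fintushel and Stern do, explicit chamber bookkeeping) and show no wall-crossing corrupts the count in grading $d(Y,\mathfrak{t}_0)$; (ii) match the internal grading on $\widehat{HM}(Y)$ with the $d$-invariant, which is exactly where the $L$-space hypothesis is used, guaranteeing both that the group is rank one and that its single nonzero grading is $d(Y,\mathfrak{t}_0)$; and (iii) in the $b_2^+(X)=1$ case, verify that $a_i|_P=0$ selects the reducible-avoiding side of the wall so the gluing formula carries no correction term. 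Each of these is by now standard in the rational-blow-down literature; the genuinely new point is only that negative definiteness of $P$ and $B$ plus the monopole $L$-space property of $Y$ suffices in place of requiring one filling to be a rational homology ball.
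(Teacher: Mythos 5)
The paper you are working from does not prove this statement at all: Theorem~\ref{theo:mic} is imported verbatim from Michalogiorgaki \cite{Mic} and used as a black box, so there is no internal proof to compare against. Measured against the strategy of the cited source and the related rational blow-down literature, your outline is the right kind of argument: stretch the neck along $Y$, express $SW_X$ and $SW_{X'}$ as pairings of a relative invariant of $Z$ with relative invariants of $P$ and $B$ in the monopole Floer homology of $Y$, and use the $L$-space hypothesis to force the two negative definite pieces to contribute identically.

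As written, however, there are concrete gaps at exactly the points carrying the content of the theorem. First, for a rational homology sphere $L$-space the Floer group in a fixed $\mathrm{Spin}^c$ structure is not ``free of rank one'': the $L$-space condition says the \emph{reduced} part vanishes, so the relevant group is an infinite $U$-tower whose bottom grading is the correction term. Hence your claim that grading considerations alone place $\Phi(P,\mathfrak{s}|_P)$ and $\Phi(B,\mathfrak{s}'|_B)$ in a common rank-one group, and that they are therefore both ``the canonical generator up to sign,'' does not follow; what must actually be proved is that the relative invariant of a negative definite filling with $b_1=0$ is concentrated at the bottom of the tower and equals a standard generator there, and this is precisely where negative definiteness, the reducible solution, and the hypotheses $d_X(\mathfrak{s}),d_{X'}(\mathfrak{s}')\geq 0$ must be combined with a correction-term/grading inequality. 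You list this, together with the gluing theorem itself and the chamber bookkeeping when $b_2^+(X)=1$ (where ``$a_i|_P=0$ selects the right side of the wall'' is asserted, not argued), as ``obstacles''; but these are the theorem, not peripheral technicalities. So your proposal is a correct high-level strategy consistent with \cite{Mic}, but it is an outline rather than a proof, and the sign/normalization and tower-structure issues need genuine arguments before the stated equality $SW_X(\mathfrak{s})=SW_{X'}(\mathfrak{s}')$ is established.
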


This result and the wall-crossing formula reduces the problem of determining basic classes of $X$ to a cohomology computation. In this section,  we incorporate a search method invented by Ozsv\'ath and Szab\'o to find all the basic classes of $X$. First we find a homology class that determines a common chamber for the manifolds before and after star surgery.

\begin{lemma}\label{l:vector1}
The element $V\in H_2(\CP2\#11\barCP2)$ defined by $$V=86h-36e_1-25e_2-25e_3-25e_4-25e_5-25e_6-25e_7-19e_8-31e_9-20e_{10}-20e_{11}$$
satisfies the following conditions:
\begin{enumerate}
 \item $V\cdot [u_i]=0$ for $i=0,\dots,4$ (i.e. $V$ is orthogonal to each embedded sphere of $\St_2$) \label{i:vector11}
 \item $V\cdot V>0$ \label{i:vector21}
 \item $V\cdot h>0$ \label{i:vector31}
 \item $V\cdot K>0$, where $K=-3h+e_1+\dots +e_{11}$ is the canonical class of $\CP2\#11\barCP2$. \label{i:vector41}
\end{enumerate}
\end{lemma}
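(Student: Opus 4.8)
The plan is to verify all four conditions by a direct computation in the intersection lattice of $\CP2 \# 11 \barCP2$, which in the basis $\{h, e_1, \dots, e_{11}\}$ is the standard diagonal form: $h\cdot h = 1$, $e_i\cdot e_i = -1$, and all mixed products vanish. First I would expand the classes $[u_0],\dots,[u_4]$ of Lemma \ref{l:classes} in this basis; substituting $f = 3h - (e_1 + \cdots + e_9)$ gives
\[
[u_0] = 6h - e_1 - 2e_2 - 2e_3 - \cdots - 2e_{11},
\]
while $[u_1],\dots,[u_4]$ are already written in the basis. All four items then become finite arithmetic checks.

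For condition \eqref{i:vector11}, pairing $V$ against each $[u_i]$ reduces to one identity per sphere: $V\cdot[u_1] = 86 - 36 - 25 - 25 = 0$, and the cases $i=2,3$ are identical by the symmetry of the coefficients of $e_2,\dots,e_7$ in $V$; $V\cdot[u_4] = 86 - 36 - 19 - 31 = 0$; and $V\cdot[u_0] = 6\cdot 86 - 36 - 2(6\cdot 25 + 19 + 31 + 20 + 20) = 516 - 516 = 0$. For \eqref{i:vector21}, $V\cdot V = 86^2 - 36^2 - 6\cdot 25^2 - 19^2 - 31^2 - 2\cdot 20^2 = 7396 - 7168 = 228 > 0$. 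Condition \eqref{i:vector31} is immediate: $V\cdot h = 86 > 0$. For \eqref{i:vector41}, with $K = -3h + e_1 + \cdots + e_{11}$ one gets $V\cdot K = -3\cdot 86 + 36 + 6\cdot 25 + 19 + 31 + 2\cdot 20 = -258 + 276 = 18 > 0$.

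Since the verification is mechanical, the only point worth commenting on is how such a $V$ arises and why one should expect it to exist, which is also where the genuine (but mild) issue lies. Requirement \eqref{i:vector11} is a homogeneous linear system of five equations in the twelve coefficients of $V$; its solution set over $\mathbb{R}$ is the orthogonal complement $\langle [u_0],\dots,[u_4]\rangle^\perp$. Because the intersection matrix $P$ of $\St_2$ (displayed in the proof of Lemma \ref{l:kodaira}) is invertible, these five classes span a rank-$5$ negative definite sublattice of signature $(0,5)$; as the ambient form has signature $(1,11)$, its complement has signature $(1,6)$ and therefore contains classes of positive square. One then simply selects an integral representative lying in the same component of the positive cone as $h$ and $K$, which makes \eqref{i:vector21}--\eqref{i:vector41} hold simultaneously. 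Thus the "hard part'' is only the one-time search producing the explicit vector recorded in the statement; everything after that is the routine lattice computation above.
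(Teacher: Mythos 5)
Your computation is correct and is exactly the direct verification the paper has in mind; the paper simply states ``the proof is a direct computation'' while you carry it out explicitly, and all your arithmetic checks (e.g., $V\cdot V = 7396 - 7168 = 228$ and $V\cdot K = -258 + 276 = 18$). The closing paragraph on why such a $V$ must exist, via the signature of the orthogonal complement of $\langle [u_0],\dots,[u_4]\rangle$, is a sound but inessential addition.
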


\begin{proof}
The proof is a direct computation. 

\begin{comment}

If $V=ch-d_1e_1-\dots -d_{11}e_{11}$ then the necessary equations are:
\begin{eqnarray*}
6c-d_1-2(d_2+\dots +d_{11})&=&0\\
c-d_1-d_2-d_3 &=&0\\
c-d_1-d_4-d_5 &=&0\\
c-d_1-d_6-d_7 &=&0\\
c-d_1-d_8-d_9 &=&0\\
c^2-d_1^2-\dots -d_{11}^2&>&0\\
c&>&0\\
-3c+d_1+\dots+d_{11}&>&0\\
\end{eqnarray*}

One can check that these equations and inequalities are all satisfied when 

$c=1$, $d_1=\frac{3}{7}$,  $d_2=d_4=d_6=d_8=d_{10}=d_{11}=\frac{1}{4}$, and $d_3=d_5=d_7=d_9=\frac{9}{28}$.
\end{comment}

\end{proof}

\begin{comment}
Note that the orthogonality relations are satisfied whenever $d_1=\frac{2}{7}(c+d_{10}+d_{11})$ and \\
$d_i=\frac{5}{7}c-d_{i-1}-\frac{2}{7}(d_{10}+d_{11})$ for $i=3,5,7,9$. Therefore a basis for $H_2(\CP2\#11\barCP2-\St_2;\Z)$ is given by
$$A_1=h-\frac{2}{7}e_1-\frac{5}{7}e_3-\frac{5}{7}e_5-\frac{5}{7}e_7-\frac{5}{7}e_9$$
$$A_2=e_2-e_3 \qquad A_3=e_4-e_5 \qquad A_4=e_6-e_7 \qquad A_5=e_8-e_9$$
$$A_6=-\frac{2}{7}(e_1+e_3+e_5+e_7+e_9)+e_{10}\qquad A_7=-\frac{2}{7}(e_1+e_3+e_5+e_7+e_9)+e_{11}$$
\end{comment}

\begin{lemma} \label{l:SWcan} The small perturbation Seiberg-Witten invariant of $X$ at the canonical class $\widetilde{K}\in H^2(X,\mathbb{Z})$ is non-zero.
\end{lemma}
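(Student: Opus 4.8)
The plan is to exploit the fact that $\widehat{K}$ (the canonical class of $X$) is a Seiberg--Witten basic class of the \emph{symplectic} manifold $X$, together with Theorem~\ref{theo:mic}, to transfer this nonvanishing to the small-perturbation invariant. First I would recall Taubes' theorem: since $X$ is a closed symplectic $4$-manifold with $b_2^+(X)=1$, the canonical $\mathrm{Spin}^c$ structure $\mathfrak{s}_{\widehat K}$ has Seiberg--Witten invariant $\pm 1$ in the chamber determined by the period point of the symplectic form $\omega_X$ (Taubes' ``SW $\Rightarrow$ Gr''). So $SW_{X,\omega_X}(\widehat K)=\pm 1$. The issue is that the ``small perturbation'' invariant uses a specific chamber (determined by sign of $c_1(\mathfrak s)\cdot h$ or equivalently by $K\cdot\omega$ on the reference manifold side), not necessarily the Taubes chamber; so I must identify the two chambers.

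The key step is the chamber comparison. The walls in $H^2(X;\mathbb{R})$ for $b_2^+=1$ theory are cut out by classes $\eta$ with $\eta^2=0$; two chambers agree for the $\mathrm{Spin}^c$ structure $\mathfrak s$ with $c_1(\mathfrak s)=\widehat K$ iff $c_1(\mathfrak s)$ has the same sign of pairing with period points in each. Concretely, the small-perturbation SW of a class $\mathfrak s$ is computed in the chamber where $c_1(\mathfrak s)\cdot \omega_X$ has a fixed sign (here, the positive side, since we computed $K_X\cdot\omega_X>0$ in Lemma~\ref{l:kodaira}). I would use Lemma~\ref{l:vector1}: the vector $V$ restricts to $X$ (since $V\cdot[u_i]=0$ for all $i$, $V$ lies in the image of $H_2(X_0)\to H_2(\CP2\#11\barCP2)$ and hence defines a class in $H_2(X)$ via the star surgery), and $V\cdot V>0$, $V\cdot K>0$. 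This $V$ is the period-point-type vector that pins down the relevant chamber on both the $X$ side and the $\CP2\#11\barCP2$ side simultaneously; combined with $K_X\cdot\omega_X>0$ (so $\omega_X$ and $V$ lie on the same side of the wall for $\widehat K$), this shows the small-perturbation chamber for $\widehat K$ coincides with Taubes' chamber. Therefore the small-perturbation SW invariant of $X$ at $\widehat K$ equals $SW_{X,\omega_X}(\widehat K)=\pm 1 \neq 0$.

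The main obstacle I expect is being careful about \emph{which} class actually descends to $X$ and how the canonical class $\widehat K$ is defined on $X$: one writes $\widehat K = K|_{X_0} + K_{\T_2}$ with $K_{\T_2}=0$ (Proposition~\ref{p:can}), so $\widehat K$ is the image of $K|_{X_0}$, and one must check this is genuinely the canonical class of the glued-up symplectic form $\omega_X$ — this is exactly the compatibility of the Gay--Mark/Wendl Lefschetz fibration structures used to build the star surgery, guaranteeing the symplectic forms and almost complex structures match along the contact boundary. Granting that (it is implicit in the construction of Section~\ref{s:starsurgerydefinitions} and the earlier computations), the rest is the chamber bookkeeping above. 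A secondary subtlety: one should confirm $b_2^+(X)=1$ (done in Lemma~\ref{l:homeo}) so that Taubes' $b_2^+=1$ version and the wall-crossing apparatus genuinely apply, and that $\widehat K^2 = 2\chi(X)+3\sigma(X) = 1 > 0$ so $\widehat K$ is not itself a wall class, which makes the chamber of $\widehat K$ unambiguous once we fix a side.
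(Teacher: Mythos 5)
Your route is genuinely different from the paper's. The paper never invokes Taubes' nonvanishing theorem on $X$ itself. Instead it computes $SW_{\CP2\#11\barCP2,V}(K)=\pm1$ on the \emph{reference manifold} $\CP2\#11\barCP2$ via wall-crossing from the positive-scalar-curvature chamber of $h$ (using $K\cdot h<0$ and $V\cdot K>0$ to detect a wall), and then transfers this to $X$ using Michalogiorgaki's cut-and-paste theorem for Seiberg--Witten invariants across a monopole $L$-space boundary (Theorem~\ref{theo:mic}), checking that $\partial\T_2=\partial\St_2$ is an $L$-space via the Ozsv\'ath--Szab\'o plumbing criterion. You bypass all of that by applying Taubes' theorem directly to the symplectic manifold $X$ and then arguing the Taubes chamber coincides with the small-perturbation chamber. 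Both are valid.

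Two comments on the relative merits. First, your chamber discussion is more complicated than it needs to be: the $V$-vector and the inequality $K_X\cdot\omega_X>0$ play no essential role in your argument. The whole chamber comparison reduces to the one line you put at the very end: $\widehat K^2 = 2\chi(X)+3\sigma(X)=1>0$, hence $\widehat K^\perp$ misses the positive cone, hence there is no wall for $\widehat K$, hence all chambers in the forward cone (Taubes' and the small-perturbation one alike) give the same invariant. This is exactly what the paper means by ``there is a unique small perturbation chamber for manifolds with $b_2^-\leq 9$.'' The vector $V$, by contrast, is essential to the paper's route --- it is what pins down the relevant chamber on $\CP2\#11\barCP2$, where $K$ does have a wall since $K^2=-2<0$ --- and it is reused heavily in the minimality computation of Proposition~\ref{p:minimal}. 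Second, the trade-off: your argument is shorter but leans on Taubes' theorem in the $b_2^+=1$ case, which is a somewhat delicate citation; the paper's version is longer but is purely wall-crossing plus Michalogiorgaki, and the machinery it sets up (the $L$-space observation, the vector $V$, Theorem~\ref{theo:mic}) is needed anyway for Proposition~\ref{p:minimal}, so the paper pays that cost once and amortizes it. Your flagged concern about whether $\widehat K$ is really the canonical class of the glued-up symplectic form is legitimate, but it is a shared prerequisite of both approaches (the paper also identifies $\widetilde K|_{X\setminus\T_2}=K|_{\CP2\#11\barCP2\setminus\St_2}$), so it does not distinguish them.
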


\begin{proof}
We first compute the Seiberg-Witten invariant of $\CP2\# 11 \barCP2$ in the chamber determined by the element $V$ in Lemma \ref{l:vector1}. Orient  $H_2^+(\CP2\#11\barCP2)$ with $h$. Note that the homology class $h$ gives the chamber of the positive scalar curvature metric hence all Seiberg-Witten invariants of $\CP2\# 11\barCP2$ are zero in this chamber. In particular $SW_{\CP2\# 11\barCP2,h}(K)=0$ for the canonical class $K$. By part (\ref{i:vector21}) of Lemma \ref{l:vector1}, $V\in H_2^+(\CP2\#11\barCP2)$, and by part (\ref{i:vector31}) $V$ has the correct orientation. Hence $V$ determines a chamber.  Since  $K\cdot h <0$, part (\ref{i:vector41}) says that there is a wall between the chambers of $h$ and $V$ with respect to the canonical class. By the wall-crossing formula \cite{LL},  $SW_{\CP2\# 11\barCP2,V}(K)=\pm 1$. 

Next we relate the Seiberg-Witten invariants of $\CP2\# 11\barCP2$ and $X$. The canonical class $\widetilde{K}$ of $X$ satisfies $\widetilde{K}_{X\setminus \T_2}=K_{\CP2\# 11\barCP2\setminus \St_2}$. Part (\ref{i:vector1}) says $V$ determines a chamber in $X$. If we can show that  $\partial \T_2$ is a monopole $L$-space then by Theorem \ref{theo:mic}, noting that $d_X(\widetilde{K})=d_{\CP2\# 11\barCP2}(K) =0$, we have $SW_{\CP2\# 11\barCP2,V}(K)= SW_{X,V}(\widetilde{K})$. Since there is a unique small perturbation chamber for manifolds with $b_2^-\leq 9$, the latter is equal to  $SW_{X}(\widetilde{K})$.

It remains to see that $\partial \T_2=\partial \St_2$ is a monopole $L$-space. In fact this holds for $\partial \St_i$ for all $i$. The plumbing graph of $\St_i$  satisfies $|m(v)|>d(v)$ at each vertex $v$, where $m(v)$ is the weight of $v$ and $d(v)$ is the number of edges connected to $v$. In [\cite{OS4}, Theorem 7.1], Ozsv\'ath and Szab\'o show that  the boundary of such a plumbing is a Heegaard Floer $L$-space. Their proof uses only the surgery exact triangle and the formal properties of Heegaard Floer homology. Hence it can be repeated to show that boundaries of such plumbings are also Monopole $L$-spaces. Alternatively, one can refer to the recently established equivalence of the Heegaard Floer homology and the Monopole Floer homology to see that every Heegaard Floer $L$-space is also a Monopole $L$-space \cite{KLT, CGH}. 
\end{proof}

\begin{remark}
Lemma \ref{l:SWcan} gives an alternative proof of Proposition \ref{p:exotic}. Indeed, the small perturbation Seiberg-Witten invariant is well defined for manifolds with $b_2^-\leq 9$ since there is a unique chamber for such manifolds. For $\CP2\# k\barCP2$ with $k\leq 9$ the unique chamber is the one given by the positive scalar curvature metric. Hence  $SW_{\CP2\# k\barCP2}(L)=0$ for all characteristic cohomology class $L$, for all $k\leq 9$.
\end{remark}

\begin{proof}[Proof of Proposition \ref{p:minimal}] 
By the blow-up formula for the Seiberg-Witten invariant, it suffices to show that the only basic classes of $X$ are $\pm \widetilde{K}$. Therefore, we want to check which integral characteristic cohomology classes (representing \Spinc structures) on $X$ are Seiberg-Witten basic classes. While we will show how to use Theorem 5.9 and the wall-crossing formula to compute the Seiberg-Witten invariant on a given cohomology class, there are infinitely many classes to check. The strategy of Ozsv\'{a}th and Szab\'{o} (which they used to prove minimality of a rationally blown-down manifold) is to check only the finitely many \emph{adjunctive classes} and then use the information about the adjunctive basic classes to rule out the possibility of non-adjunctive basic classes.

The fact that the homology of $\T_2$ has nontrivial rank makes the search somewhat more complicated than the case of rational blow-down. Even so, the computations can be handled by a simple computer program. First we find a basis for the subspace $H_2(\CP2\#11\barCP2)$ which is orthogonal to the homology of the cofiguration $\St_2$. The following elements form such a basis
$$A_1=h-e_3-e_5-e_7-e_9+e_{11},\qquad A_2:=-3h+2e_1+e_3+e_5+e_7+e_9+2e_{10}+2e_{11}$$
$$A_3=e_2-e_3, \qquad A_4=e_4-e_5, \qquad A_5=e_6-e_7, \qquad A_6=e_{10}-e_{11}, \qquad A_7=e_{8}-e_{9}.$$
\noindent Note that all of these homology classes can be represented by embedded spheres, excepting $A_2$ which can be represented by an embedded torus.  Moreover, all of these surfaces can be chosen in the complement  of $\St_2$ in $\CP2\#11\barCP2$. Additionally we let $A_{8}$ and $A_{9}$ be the homology generators of $\T_2$, which can be represented by embedded tori in $\T_2$, as indicated in Proposition \ref{p:hom}. Then $A_1,\cdots, A_9$ represent a basis for $H_2(X;\Z)$ so we can represent cohomology classes in $H^2(X;\Z)$ by a tuple of integers representing $[\langle L,A_1\rangle, \dots , \langle L,A_9\rangle ]$. 

First we will determine which of these tuples represent \emph{integral}, \emph{characteristic}, and\emph{adjunctive} cohomology classes. Because the inverse of the intersection matrix has rational coefficients, some tuples of integers could represent rational, but not integral, homology classes. We check a mod 2 equivalence to see if the cohomology class is characteristic and an adjunctive inequality for each $A_i$. Computationally, we find that there are exactly $243000$ cohomology classes $L\in H^2(X,\mathbb{Q})$ which satisfy these conditions for all $ i=1,\dots,9$,
$$\langle L, A_i \rangle  \in \mathbb{Z}, \qquad \langle L, A_i \rangle \equiv A_i^2 \; \;\mathrm{mod} \; 2, \qquad  |\langle L, A_i \rangle |  \leq -A_i^2.$$
In order to test whether a characteristic class is basic for $X$, we first check whether the expected dimension is nonnegative and even (necessary conditions for the Seiberg-Witten invariant to be non-zero). Let $d_X(L):=(L^2-3\sigma (X) -2 \chi(X))/4=(L^2-1)/4$. In the second round of our search, we check how many of these cohomology classes satisfy $d_X(L) \in \mathbb{Z}$, $d_X(L) \geq 0$ and $d_X(L) \equiv 0 \mod 2$. It turns out that there are $25040$ such classes.

For this collection of $25040$ classes, we will use Theorem \ref{theo:mic} and the wall-crossing formula to calculate the Seiberg-Witten invariant on each of those classes. To do this, we must relate each class on $X$ to a class on $\CP2\#11\barCP2$ so that the two classes have equal restrictions to $X\setminus \T_2=(\CP2\#11\barCP2)\setminus \St_2$. The difficulty is, once we restrict the chosen class to $X\setminus \T_2$, we must find a class on $\St_2$ which extends it over $\CP2\#11\barCP2$. For this, we compare the classes on $\partial \St_2$ which occur as the restriction of a class on $\St_2$ to the classes on $\partial \T_2=\partial \St_2$ that occur as the restriction of a class on $\T_2$. We showed that the image of the map $H^2(\T_2,\mathbb{Z})\to H^2 (\partial \T_2,\mathbb{Z})$ is an index $2$ subgroup, hence it has order $24$. Our aim is to find a  numerical criterion for a characteristic cohomology class $L\in H^2(\St_2,\mathbb{Z})$ to extend to $\T_2$ when restricted to $\partial \St_2 =\partial \T_2$. First observe that the set of \Spinc structures on $\partial \St_2$ as a $H^2(\partial \St_2,\mathbb{Z})$ torsor is isomorphic to $2H^2(\St_2,\partial \St_2,\mathbb{Z})$-orbits of characteristic elements in $H^2(\St_2,\mathbb{Z})$. Hence two characteristic classes $L_1,L_2\in H^2(\St_2,\mathbb{Z})$  restrict to the same \Spinc structure on the boundary if and only if $L_1=L_2+2PD(Z)$ for some $Z\in H_2(\St_2,\mathbb{Z})$. We will use $d$-invariants to determine which orbits in $H^2(\St_2,\Z)$ restrict to a \Spinc structure on $\partial \St_2=\partial \T_2$ which appears in the boundary of a \Spinc structure on $\T_2$ and to choose distinguished representatives of these orbits which maximize the expected dimension of the Seiberg-Witten moduli space. Define $d_{\St_2} (L)= (L^2-3\sigma(\St_2)-2\chi (\St_2))/4=(L^2+3)/4$. Observe that the $d_{\St_2}(L)\; \mathrm{mod}\; 2$ is constant in the orbit $L + 2H^2(\St_2,\partial \St_2,\mathbb{Z})$. Each orbit has a representative $L'$ satisfying $[u_i]^2 +2\leq \langle L', [u_i]  \rangle \leq  -[u_i]^2$ for all $i=0,\dots,4$, where each $u_i$ is a sphere appearing as a vertex in the star shaped plumbing graph of $\St_2$. Computing $d_{\St_2}$ for all of these representatives, we see that, the possible  $\mathrm{mod}\; 2$ reductions of $d_{\St_2}$ for  characteristic cohomology classes on $\St_2$ are $\{-2/3,-1/3, -1/4, -1/12, 0 , 1/4, 2/3,-11/12,1\}$. 

In the proof of Proposition \ref{p:hom}, we computed the intersection form of $\T_2$. From this, we see that the possible $\mod 2$ reductions of $d_{\T_2}$ for  characteristic cohomology classes on $\T_2$ are $\{-1/3,0,2/3,1\}$. We observe that exactly $24$ of the $2H^2(\St_2,\partial \St_2,\mathbb{Z})$ orbits have the $\mathrm{mod}\; 2$ reductions of their $d_{\St_2}$ belong to this set.  Hence we conclude that a characteristic cohomology class $L\in H^2(\St_2,\mathbb{Z})$ extends to $\T_2$ if and only if $d_{\St_2}(L)\in \{-1/3,0,2/3,1\}$ modulo $2$. We can explicitly write distinguished representatives of these $24$ orbits. They are the elements of the following set.
\begin{align} \label{e:phi} \Phi =\{&
[ 1, 0, 0, 0, 0 ],
[ -3, 2, 2, 2, 2 ],
[ -1, 2, 0, 0, 2 ],
[ -3, 0, 2, 2, 0 ],
[ 1, 2, 0, 2, 0 ],
[ 3, 0, 0, 0, 0 ],
[ -1, 2, 2, 0, 0 ],\\
&\nonumber [ -3, 2, 0, 2, 0 ],
[ -1, 0, 2, 0, 2 ],
[ -3, 0, 0, 2, 2 ],
[ 1, 0, 2, 2, 0 ],
[ 1, 0, 0, 2, 2 ],
[ -1, 0, 2, 2, 0 ],
[ -3, 0, 2, 0, 2 ],\\
&\nonumber [ -3, 0, 0, 0, 0 ],
[ 1, 2, 0, 0, 2 ],
[ 5, 0, 0, 0, 0 ],
[ -3, 2, 0, 0, 2 ],
[ -1, 0, 0, 2, 2 ],
[ 1, 0, 2, 0, 2 ],
[ -1, 2, 0, 2, 0 ],\\
&\nonumber [ -3, 2, 2, 0, 0 ],
[ 1, 2, 2, 0, 0 ],
[ -1, 0, 0, 0, 0 ] \}.
\end{align}
Here each cohomology class $L\in H^2(\St_2,\mathbb{Z})$ is represented by the  tuple $[\langle L, [u_0] \rangle , \dots, \langle L, [u_4] \rangle  ]$. Elements of $\Phi$ maximize $d_{\St_2}$ in their respective $2H^2(\St_2,\partial \St_2,\mathbb{Z})$ orbits.

We continue our search for basic classes of $X$. Recall that in round two, we got $25,040$ potential adjunctive basic classes. We restrict each one to $X\setminus\T_2$ (i.e. we forget the intersections with $A_8$ and $A_9$). Each of these characteristic cohomology classes glues to exactly one element of the set $\Phi$ to define a characteristic cohomology class on $\CP2\#11\barCP2$. There are $600,960$ triples $(A,B,C)$ where $A$ is an adjunctive class on $\T_2$, $B$ is an adjunctive class on $X\setminus \T_2=\CP2\#11\barCP2 \setminus \St_2$,   $C$ is one of the 24 distinguished cohomology classes on $\St_2$, and the pair $(B,A)$ is one of the 25,040 potential adjunctive basic classes on $X$. We now restrict our attention to all triples where the combination $(B,C)$ represents a cohomology class on $\CP2\#11\barCP2$ with $d\in \Z$, $d\geq 0$ and $d\equiv 2 \mod 2$. This leaves us with $219,064$ possible triples. In only $122,212$ of these triples does $(B,C)$ represent an integral and characteristic cohomology class on $\CP2\#11\barCP2$. Finally, we check whether the chamber determined by $V$ in $\CP2\#11\barCP2$ is the same as the chamber of the positive scalar curvature metric with respect to each of the characteristic cohomology classes $(B,C)$ in the remaining triples by comparing the signs of $(B,C)\cdot V$ with $(B,C)\cdot H$. When the chambers agree, $B$ will not descend to a basic class on $X$, but when they disagree the wall-crossing formula and Theorem \ref{theo:mic} ensure that $(A,B)$ is a basic class on $X$. It turns out that only $2$ of the remaining triples $(A,B,C)$ have the property that $V$ is in a different chamber than $H$ with respect to $(B,C)$. These are necessarily the canonical class and its negative. Hence $\widetilde{K}$ and $-\widetilde{K}$ are the only adjunctive basic classes of $X$.

Lastly we argue that there can be no non-adjunctive basic class. If there was such an $L$, then the adjunction relations \cite{OS2} imply that adding or subtracting twice the Poincare dual of any surface with negative self intersection where the adjunction inequality fails, we would obtain another basic class $L'$ with $d_X(L')>d_X(L)$. 

Since $X$ has only finitely many basic classes this process eventually stops at an adjunctive basic class, $\overline{L}$ with $d_X(\overline{L})>d_X(L)\geq 0$, but this is a contradiction because $\widetilde{K}$ and $-\widetilde{K}$ are the only adjunctive basic classes and $d_X(\widetilde{K})=d_X(-\widetilde{K})=0$.
\end{proof}

\begin{proof}[Proof of Theorem \ref{theo:main}]
Follows from Lemma \ref{l:homeo}, Proposition \ref{p:exotic},  Proposition \ref{p:minimal}, and Proposition \ref{p:kodaira}.
\end{proof}

\begin{remark}
The Seiberg-Witten invariants of $X$ agree up to isomorphism with the all known examples of minimal exotic $\CP2\#8\barCP2$ \cite{P, SS, FS2, Mic}. Hence we cannot distinguish $X$ from these manifolds using Seiberg-Witten invariants.
\end{remark}

}
\subsection{An exotic $\CP2\#7\barCP2$} \label{s:morestar}
\begin{lemma}\label{l:embedS}
There is an embedding of the star-shaped plumbing $\mathcal{Q}$ into $\CP2\#12\barCP2$ such that the spheres represent the following homology classes:
\begin{align*}
[u_0]&=6h-2e_1-e_2-2e_3\cdots -2e_9-2e_{11}-2e_{12}\\
[u_{1,1}]&=2h-e_1-e_2-e_3-e_4-e_5-e_6\\
[u_{2,1}]&=h-e_1-e_2-e_9\\
[u_{2,2}]&=h-e_3-e_5-e_7\\
[u_{3,1}]&=h-e_1-e_6-e_7\\
[u_{3,2}]&=h-e_3-e_4-e_9-e_{10}\\
[u_{4,1}]&=h-e_1-e_5-e_8-e_{10}\\
\end{align*}
\end{lemma}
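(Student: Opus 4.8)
\emph{Proof proposal.} The plan is to follow the scheme used for Lemma~\ref{l:classes}: realize $\mathcal{Q}$ inside a blow-up of a rational elliptic surface by taking its arm spheres to be components of singular fibres and its central sphere $u_0$ to be a symplectic smoothing of a tree of spheres. The natural starting point is the elliptic fibration of Lemma~\ref{l:fibration2} on $E(1)=\CP2\#9\barCP2$, since the classes $[u_{1,1}]$, $[u_{2,1}]$, $[u_{2,2}]$, $[u_{3,1}]$ are exactly the classes of the conic component $\widetilde C_2$ of one $I_2$ fibre, of the components $\widetilde Z_3,\widetilde Y_3$ of one $I_4$ fibre, and of the component $\widetilde X_4$ of the other $I_4$ fibre, while $[u_{3,2}]$ and $[u_{4,1}]$ arise from $\widetilde Z_4$ and $\widetilde Y_4$ after one further blow-up.

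Concretely I would: (1) fix inside $E(1)$ the symplectic spheres $\widetilde C_2,\widetilde Z_3,\widetilde Y_3,\widetilde X_4,\widetilde Z_4,\widetilde Y_4$ and a section of class $e_2$; (2) blow up at the transverse point $\widetilde Z_4\cap\widetilde Y_4$, with exceptional class $e_{10}$, so that the proper transforms $u_{3,2}=\widetilde Z_4-e_{10}$ and $u_{4,1}=\widetilde Y_4-e_{10}$ become disjoint, while $u_{3,1}=\widetilde X_4$ still meets $u_{3,2}$ once; (3) perturb the other $I_2$ fibre (whose two components are not used) into two fishtail fibres and blow up at each of their nodes, introducing exceptional classes $e_{11},e_{12}$, so that the proper transforms are two disjoint smooth symplectic $(-4)$-spheres $F_1,F_2$ of classes $f-2e_{11}$, $f-2e_{12}$, where $f=3h-(e_1+\dots+e_9)$; at this point we are inside $\CP2\#12\barCP2$; (4) build $u_0$ by symplectically resolving a tree of spheres assembled from $F_1$, $F_2$, the section, and whatever further exceptional or fibre components are needed to attach it to the first sphere of each arm, using that $\omega$-orthogonally (equivalently, positively transversely) intersecting symplectic spheres can be smoothed to a symplectic surface. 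Since the smoothed configuration is a tree, $u_0$ is a sphere; its homology class is the sum of the classes of the pieces, which I would check equals the $[u_0]$ in the statement, and an Euler-characteristic/adjunction count confirms $[u_0]^2=-5$ and $g(u_0)=0$.

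What remains is to verify that $u_0\cup u_{1,1}\cup\dots\cup u_{4,1}$ is a symplectic configuration realizing the plumbing graph $\mathcal{Q}$ of Figure~\ref{fig:plumbalt}: one computes all pairwise intersection numbers and self-intersections from the homology classes and checks that they match the edges and weights of $\mathcal{Q}$, and checks that every geometric intersection is positive and transverse, so that after a symplectic isotopy the configuration becomes $\omega$-orthogonal and admits a convex neighbourhood by Theorem~1.2 of \cite{GS2}. I expect the main obstacle to be step (4): choosing precisely which spheres to smooth, and exactly where the extra blow-up points lie, so that $u_0$ is simultaneously embedded of genus $0$, carries exactly the stated homology class, and meets the first sphere of each of the four arms in a single point while remaining disjoint from all the other arm spheres; checking that the blow-ups and smoothings create no spurious intersections among the $u_i$ is the delicate bookkeeping, whereas the intersection-number computation itself is routine.
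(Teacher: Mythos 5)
Your approach matches the paper's: use the elliptic fibration of Lemma~\ref{l:fibration2}, blow up at $\widetilde Y_4\cap\widetilde Z_4$ (giving $e_{10}$), split the unused $I_2$ fiber into two fishtails and blow up their nodes (giving $e_{11},e_{12}$), and build $u_0$ by symplectic resolution of a tree of spheres. The only thing you leave open is exactly which spheres constitute that tree. The answer is: $u_0$ is the symplectic resolution of the section $E_2$ (class $e_2$), the two blown-up fishtail $(-4)$-spheres $F_1,F_2$ (classes $f-2e_{11}$, $f-2e_{12}$), and $\widetilde E_4$ (class $e_1-e_2$), the component of the second $I_4$ fiber that meets the section. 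These four spheres form a star-tree with $E_2$ at the center meeting each of $F_1,F_2,\widetilde E_4$ transversally in one point, while $F_1,F_2,\widetilde E_4$ are pairwise disjoint; hence the resolution is an embedded genus-$0$ symplectic sphere of square $(-1)+(-4)+(-4)+(-2)+2(1+1+1)=-5$.

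One further caution on the bookkeeping: summing those four classes gives $e_2+(f-2e_{11})+(f-2e_{12})+(e_1-e_2)=6h-e_1-2e_2-2e_3-\cdots-2e_9-2e_{11}-2e_{12}$, whereas the lemma statement writes $6h-2e_1-e_2-\cdots$. The statement appears to have the $e_1$ and $e_2$ coefficients transposed: with the stated class one finds $u_0\cdot u_{3,1}=u_0\cdot u_{4,1}=0$ rather than the required value $1$, while the computed class yields $u_0\cdot u_{i,1}=1$ for $i=1,\dots,4$ and $u_0\cdot u_{2,2}=u_0\cdot u_{3,2}=0$, matching $\mathcal{Q}$. So when you carry out the intersection-number verification you flagged as routine, use the corrected class.
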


\begin{proof}
In Lemma~\ref{l:fibration2}, we showed that there is an elliptic fibration with two $I_2$ fibers, two $I_4$ fibers, and a section, and we specified the homology classes of the spheres making up the singular fibers and the section. According to that notation, the spheres in the $I_2$ fibers were $\widetilde{C}_1$, $\widetilde{L}_1$ and $\widetilde{C}_2$, $\widetilde{L}_2$. The spheres in the $I_4$ fibers were denoted $\widetilde{X}_3$, $\widetilde{Y}_3$, $\widetilde{Z}_3$, $\widetilde{E}_3$, and $\widetilde{X}_4$, $\widetilde{Y}_4$, $\widetilde{Z}_4$, $\widetilde{E}_4$. $\widetilde{C}_1$, $\widetilde{C}_2$, $\widetilde{Z}_3$, and $\widetilde{E}_4$ intersected the section $E_2$.

To construct the embedding of $\mathcal{Q}$, first blow-up at the intersection of $\widetilde{Y}_4$ and $\widetilde{Z}_4$ to produce two adjacent $-3$ spheres. Then we can perturb the fibration near one of the $I_2$ fibers to split it into two fishtail fibers. Blow-up each the self-intersection points of each of these fishtail fibers to create two $-4$ spheres. To produce the central $-5$ sphere, take the symplectic resolution of the section $E_2$, the two blown-up fishtail fibers, and $E_4$ (the sphere in the blown-up $I_4$ fiber which intersects the section). The remaining spheres can be taken to be $u_{1,1}=\widetilde{C}_2$, $u_{2,1}=\widetilde{Z}_3$, $u_{2,2}=\widetilde{Y}_3$, $u_{3,1}=\widetilde{X}_4$, $u_{3,2}=\overline{Z_4}$, $u_{4,1}=\overline{Y_4}$ (where $\overline{C}$ denotes the proper transform under blow-up). Using the homology computation indicated in  Lemma~\ref{l:fibration2}, the homology classes for the $u_{i,j}$ follows.
\end{proof}

\begin{lemma}\label{l:vector}
The element $R\in H_2(\CP2\#12\barCP2)$ defined by $$R=533h-188e_1-186e_2-192e_3-126e_4-185e_5-189e_6-156e_7-104e_8-159e_9-56e_{10}-0e_{11}-151e_{12}$$
satisfies the following conditions:
\begin{enumerate}
 \item $R\cdot [u_{i,j}]=0$ for all $i$ and $j$, \label{i:vector12}
 \item $R\cdot R>0$ \label{i:vector22},
 \item $R\cdot h>0$ \label{i:vector32},
 \item $R\cdot K>0$, where $K=-3h+e_1+\dots +e_{12}$ is the canonical class of $\CP2\#12\barCP2$. \label{i:vector42}
\end{enumerate}
\end{lemma}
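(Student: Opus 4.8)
The plan is to prove this exactly as Lemma~\ref{l:vector1} was proved: by a direct computation. The only structural input needed is that the intersection form on $\CP2\#12\barCP2$ is diagonal in the standard basis $\{h,e_1,\dots,e_{12}\}$, with $h\cdot h=1$, $e_i\cdot e_i=-1$, and all cross terms zero. Given this, each of the four assertions reduces to an arithmetic statement about the coefficients of $R$ and of the classes $[u_{i,j}]$ recorded in Lemma~\ref{l:embedS}.

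Concretely, for item~(\ref{i:vector12}) I would pair $R$ with each of the seven classes $[u_0],[u_{1,1}],[u_{2,1}],[u_{2,2}],[u_{3,1}],[u_{3,2}],[u_{4,1}]$ and check that every pairing vanishes; for example $R\cdot[u_{2,1}]=R\cdot(h-e_1-e_2-e_9)=533-188-186-159=0$, and the remaining six are entirely similar (the coefficient vector of $R$ was chosen precisely so as to solve this system of seven linear equations). Item~(\ref{i:vector32}) requires only reading off $R\cdot h=533>0$, since $h$ pairs trivially with every $e_i$. For item~(\ref{i:vector22}) one computes $R\cdot R=533^{2}-(188^{2}+186^{2}+\cdots+151^{2})$ and checks positivity, and for item~(\ref{i:vector42}) one computes $R\cdot K=-3\cdot533+(188+186+\cdots+151)$ and checks positivity; note that in the last sum each exceptional coefficient enters with a plus sign, because the minus sign in front of $e_i$ in $R$ cancels the sign in $e_i\cdot e_i=-1$.

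Since $R$ is handed to us explicitly, there is no genuine obstacle: the whole argument is a finite arithmetic verification, doable by hand or by computer, just as for the analogous Lemma~\ref{l:vector1}. The only remark worth adding is that the \emph{existence} of such an $R$ is not accidental. Condition~(\ref{i:vector12}) describes the orthogonal complement $W$ of the span of the classes $[u_{i,j}]$ inside $H_2(\CP2\#12\barCP2;\mathbb{Q})$; since that span is a negative definite subspace of a form of signature $(1,12)$, the complement $W$ has signature $(1,5)$ and therefore contains classes of positive square, and the additional open conditions $R\cdot h>0$ and $R\cdot K>0$ can then be met and the coefficients cleared to integers. The displayed $R$ is one such choice; its purpose --- exactly as $V$ served for $\St_2$ in Lemma~\ref{l:vector1} --- is to pin down a chamber of $\CP2\#12\barCP2$ that survives the star surgery $(\mathcal{Q},\mathcal{R})$ and lies on the opposite side of a wall from the positive-scalar-curvature chamber with respect to the canonical class, as the Seiberg--Witten computation later in this section will require.
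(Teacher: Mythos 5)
Your approach matches the paper's exactly: the proof given there is literally ``All of the above claims can be verified by a direct computation,'' and your plan (expand each pairing against the diagonal intersection form, check items (2)--(4) by arithmetic, and note that the existence of such an $R$ is guaranteed because the orthogonal complement of the negative definite span of the $[u_{i,j}]$ has signature $(1,5)$) is the same verification with helpful motivation added.

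One caveat worth acting on rather than waving past: when you write ``the remaining six are entirely similar,'' you should actually run them, because with the numbers as printed the check for $[u_0]$ does \emph{not} vanish. Using $R$ as given and $[u_0]=6h-2e_1-e_2-2e_3-\cdots-2e_9-2e_{11}-2e_{12}$ from Lemma~\ref{l:embedS}, one gets $R\cdot[u_0]=3198-(376+186+384+252+370+378+312+208+318+0+302)=112\neq 0$; and if (as the construction in the proof of Lemma~\ref{l:embedS} actually gives, since $[u_0]=2f+e_1-2e_{11}-2e_{12}$) the coefficients of $e_1$ and $e_2$ should be $-1$ and $-2$ respectively, one gets $R\cdot[u_0]=114\neq 0$. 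The six arm spheres $u_{1,1},u_{2,1},u_{2,2},u_{3,1},u_{3,2},u_{4,1}$ do all pair to zero with $R$, as do items (2)--(4) ($R^2=5093$, $R\cdot h=533$, $R\cdot K=93$), so the issue is isolated to $[u_0]$: either one coefficient of $R$ (the suspicious $0$ on $e_{11}$, say) or the displayed class $[u_0]$ contains a typo. The lesson for your write-up is that a ``direct computation'' proof is only a proof if the computation is actually carried out; here it exposes a small but real inconsistency in the printed data that should be resolved before the chamber argument downstream of this lemma can be trusted.
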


\begin{proof}
All of the above claims can be verified by a direct computation.
\end{proof}

\begin{proposition}\label{p:ex7}
The result of star surgery on this embedding of $\mathcal{Q}$ into $\CP2 \#12\barCP2$ is an exotic copy of $\CP2\#7\barCP2$ which supports a symplectic structure.
\end{proposition}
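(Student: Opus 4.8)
The plan is to mirror the exotic $\CP2\#8\barCP2$ construction of Section~\ref{s:8}, now using the star surgery $(\mathcal Q,\mathcal R)$ of Proposition~\ref{pro:exchangeST} applied to the embedding of $\mathcal Q$ into $\CP2\#12\barCP2$ from Lemma~\ref{l:embedS}. First I would build the manifold: by Gay--Stipsicz's theorem the embedded configuration $\mathcal Q$ admits an $\omega$-convex neighborhood whose contact boundary is $(\partial\mathcal Q,\xi_{can})$; Proposition~\ref{pro:exchangeST} provides a symplectic filling $\mathcal R$ of Euler characteristic $3$ with the same convex boundary, and \cite{GM} identifies the two boundaries through their common supporting open book. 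Excising the neighborhood of $\mathcal Q$ and gluing in $\mathcal R$ along this identification produces a closed symplectic $4$-manifold $Z = \big((\CP2\#12\barCP2)\setminus\mathcal Q\big)\cup_\partial\mathcal R$, which settles the symplectic part of the statement.

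Next I would determine the homeomorphism type. Both $\mathcal Q$ (a plumbing of spheres along a tree) and $\mathcal R$ (shown simply connected in Section~\ref{s:fillingproperties}) are simply connected, so van Kampen applied to $\CP2\#12\barCP2 = \big((\CP2\#12\barCP2)\setminus\mathcal Q\big)\cup_\partial\mathcal Q$ shows that $\pi_1$ of the complement of $\mathcal Q$ is normally generated by the image of $\pi_1(\partial\mathcal Q)$, and then van Kampen applied to $Z$ kills precisely that subgroup, so $\pi_1(Z)=1$. Novikov additivity gives $\chi(Z) = \chi(\CP2\#12\barCP2) - \chi(\mathcal Q) + \chi(\mathcal R) = 15 - 8 + 3 = 10$ and $\sigma(Z) = -11 - (-7) + (-2) = -6$, using that $\mathcal Q$ has seven vertices and is negative definite and that $\sigma(\mathcal R)=-2$ by Lemma~\ref{l:homT}. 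Hence $b_2^+(Z)=1$ and $b_2^-(Z)=7$. The intersection form cannot be even, since an even form with $b_2^+=1$ is the orthogonal sum of one hyperbolic summand and a negative definite even unimodular form, whose rank is divisible by $8$, contradicting $b_2^-=7$. So the form is the odd indefinite form $\langle 1\rangle\oplus 7\langle -1\rangle$, and $Z\cong\CP2\#7\barCP2$ by Freedman's theorem \cite{Fre}.

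For the exotic part I would run the Seiberg--Witten argument of Lemma~\ref{l:SWcan}, for which Lemma~\ref{l:vector} has been prepared. As there, $\partial\mathcal Q=\partial\mathcal R$ is the link of a negative definite star-shaped plumbing with no bad vertices, hence a Heegaard Floer $L$-space by \cite{OS4} and therefore a monopole $L$-space by \cite{KLT, CGH}. Set $X_0 = (\CP2\#12\barCP2)\setminus\mathcal Q = Z\setminus\mathcal R$. The canonical class $\widetilde K = K_Z$ restricts to $X_0$ as $K_{\CP2\#12\barCP2}|_{X_0}$, because the symplectic forms agree away from a collar of $\partial X_0$. By Lemma~\ref{l:vector}(\ref{i:vector12}) the vector $R$ is orthogonal to every plumbing sphere $[u_{i,j}]$, so it lies in the image of $H_2(X_0)$ and determines a chamber on both $\CP2\#12\barCP2$ and $Z$; by parts (\ref{i:vector22}) and (\ref{i:vector32}) it has positive square and the correct orientation relative to $h$. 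Since $h$ gives the chamber of a positive scalar curvature metric, $SW_{\CP2\#12\barCP2,h}(K)=0$, and since $K\cdot h = -3 < 0 < K\cdot R$ by Lemma~\ref{l:vector}(\ref{i:vector42}), the wall-crossing formula \cite{LL} gives $SW_{\CP2\#12\barCP2,R}(K)=\pm 1$. As $d_{\CP2\#12\barCP2}(K)=d_Z(\widetilde K)=0\ge 0$, Theorem~\ref{theo:mic} (with $P=\mathcal Q$, $B=\mathcal R$, and common piece $X_0$) transports this to $SW_{Z,R}(\widetilde K)=\pm 1$. Since $b_2^-(Z)=7\le 9$ there is a unique small perturbation chamber, so $SW_Z(\widetilde K)\ne 0$; but all Seiberg--Witten invariants of $\CP2\#7\barCP2$ vanish, so $Z$ is not diffeomorphic to it.

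The hard part is this last step: one has to check carefully that every hypothesis of Theorem~\ref{theo:mic} is genuinely met — that $\partial\mathcal Q$ is a monopole $L$-space, that $\widetilde K$ and the chamber class $R$ restrict compatibly across the cut, and that the dimension conditions hold — and then keep the chamber bookkeeping in the wall-crossing formula straight. (A variant would instead compute $c_1(\mathcal R)$ together with the symplectic area pairing and invoke the uniqueness of symplectic structures on $\CP2\#7\barCP2$ à la Proposition~\ref{p:exotic}, but the Seiberg--Witten route is the one Lemma~\ref{l:vector} is set up for.) Everything else is routine given Proposition~\ref{pro:exchangeST}, Lemma~\ref{l:embedS}, and Lemma~\ref{l:vector}.
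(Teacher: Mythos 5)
Your proof is correct and follows essentially the same route as the paper's: compute $\chi$ and $\sigma$ via Novikov additivity and the negative definiteness of $\mathcal Q$ and $\mathcal R$, deduce simple connectivity from that of $\mathcal R$ (you spell out the van Kampen argument that the paper leaves implicit), appeal to Freedman, and then use the monopole $L$-space property of $\partial\mathcal Q$, the wall-crossing formula applied to the chamber-determining class $R$ of Lemma~\ref{l:vector}, and Theorem~\ref{theo:mic} to conclude $SW_{X',R}(K_{X'})=\pm 1$. Your added checks (the explicit argument that the intersection form must be odd, and the reminder that $\partial\mathcal Q$ has no bad vertices so \cite{OS4} applies) fill in details the paper defers to the earlier $\St_2$ discussion, but do not change the argument.
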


\begin{proof}
Let $X'=((\CP2\#12\barCP2)\setminus \mathcal{Q})\cup \mathcal{R}$ be the result of star-surgery. Then $\chi(X')=\chi(\CP2\#12\barCP2)-\chi(\mathcal{Q})+\chi(\mathcal{R})=15-8+3=10$. Since $\mathcal{Q}$ and $\mathcal{R}$ are both negative definite, $\sigma(X')=-6$. Since $\mathcal{R}$ is simply connected, $X'$ is simply connected. Therefore $X'$ is homeomorphic to $\CP2\#7\barCP2$ by Freedman's theorem \cite{Fre}.
Recall that for 4-manifolds with $b_2^+=1$, the Seiberg-Witten invariants depend upon a choice of a chamber. Let $h$ denote the homology class of $\mathbb{C}P^1 \subset \CP2\# 12\barCP2$. This class gives the chamber of positive scalar curvature metric, so $SW_{\CP2\# 12\barCP2,h}(L)=0$ for every characteristic class $L$. In particular for the canonical class $K$, we have $SW_{\CP2\# 12\barCP2,h}(K)=0$. By Lemma \ref{l:vector}, the class $R$ determines a chamber and there is a  wall between this chamber and the chamber of $h$ with respect to $K$. Hence by the wall crossing formula \cite{LL}, we have $SW_{\CP2\# 12\barCP2,R}(K)=\pm 1$. We will show that $X'$ also has a non-zero Seiberg-Witten invariant. Indeed the $3$-manifold $\partial S$ is a monopole $L$-space.  Let $K_{X'}$ denote the canonical class of $X'$. Clearly we have $K_{X'}|_{X'\setminus \mathcal{R}}=K|_{\CP2\# 12\barCP2 \setminus \mathcal{Q}}$ and $d_{X'}(K_{X'})=d_{\CP2\# 12\barCP2}(K)=0$. Hence by \cite{Mic}, we have $SW_{X',R}(K_{X'})=SW_{\CP2\# 12\barCP2,R}(K)=\pm 1$. For manifolds with $b_2^-\leq 9$, the choice of a chamber is unique. Therefore $X'$ is not diffeomorphic to $ \CP2\# 7\barCP2$ whose Seiberg-Witten invariants are all zero in the unique chamber of positive scalar curvature.
\end{proof}

\subsection{An exotic $\CP2\#6\barCP2$}\label{s:morestar2}

For this construction we use the elliptic fibration with two $I_5$ fibers, two fishtails, and a section, with homology classes specified in lemma \ref{l:fibration3}

\begin{lemma}
The configuration of spheres $\mathcal{U}$ embeds into $\CP2\#13\barCP2$.
\end{lemma}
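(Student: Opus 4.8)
The plan is to follow the strategy of Lemmas~\ref{l:classes} and~\ref{l:embedS}: inside a four-fold blow-up of the elliptic fibration on $E(1)=\CP2\#9\barCP2$ from Lemma~\ref{l:fibration3}, realize the nine spheres of $\mathcal{U}$ as holomorphic spheres together with one symplectic resolution of a tree of holomorphic spheres, and then invoke the observation following the Gay--Stipsicz theorem that a configuration of symplectic spheres intersecting positively and transversally can be isotoped to be $\omega$-orthogonal. Reading off the Gay--Mark monodromy displayed above, $\mathcal{U}$ is the negative definite star-shaped plumbing with a central $-5$ sphere and four arms which, read outward from the center, are $(-2,-2,-3)$, $(-2,-3)$, $(-2,-3)$ and $(-3)$. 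So the goal is to produce, after exactly four blow-ups of $E(1)$, one $-5$ sphere, four $-3$ spheres and four $-2$ spheres meeting in this star pattern.

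First I would build the arms and the $-3$ spheres out of the two $I_5$ fibers $C_1,\dots,C_5$ and $D_1,\dots,D_5$, each of which is a cyclic chain of five $-2$ spheres with consecutive components meeting once transversally. Blowing up the intersection point of two consecutive components of an $I_5$ fiber replaces those two $-2$ spheres by two \emph{disjoint} $-3$ spheres (each meeting the new exceptional sphere) and breaks the cycle into a chain; doing this once in each $I_5$ fiber produces all four required $-3$ spheres. Choosing the breakpoint in the $C$-fiber near one end of the chain that hangs off the section yields one chain of type $(-2,-2,-3)$ together with one isolated $-3$, while breaking the $D$-fiber in the middle yields two chains of type $(-2,-3)$; this matches the arm data of $\mathcal{U}$ exactly. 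The remaining two blow-ups are performed at the nodes of the two fishtail fibers, turning each into an embedded $-4$ sphere. This uses four blow-ups in all, so the ambient manifold is $E(1)\#4\barCP2=\CP2\#13\barCP2$. The central $-5$ sphere is then obtained as the symplectic resolution of the star-shaped tree formed by the section ($-1$), the two blown-up fishtails ($-4$ each), and the two $I_5$ components through which the section passes ($-2$ each): this tree is genus $0$, so its resolution is a sphere, and its self-intersection is $-1-4-4-2-2+2\cdot 4=-5$. The two $-2$ components absorbed into this sphere are exactly the two not used in any arm, and each arm-innermost $-2$ sphere meets one of them, so every arm attaches to the central sphere with multiplicity one; checking that all other intersection numbers vanish then shows the resulting weighted graph is precisely the star graph of $\mathcal{U}$. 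The homology classes of the nine spheres, needed for the later Seiberg--Witten computation, follow from Lemma~\ref{l:fibration3} together with the blow-up formula.

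The main obstacle is the combinatorial bookkeeping: one must choose \emph{which} points to blow up so that, simultaneously, the four blow-ups (two fishtail nodes, two $I_5$ intersection points) land exactly on the budget $\CP2\#13\barCP2$; the central resolution comes out with self-intersection precisely $-5$ and absorbs precisely the two $I_5$ components that should not become arm spheres; and the four arms acquire the precise lengths $3,2,2,1$ prescribed by $\mathcal{U}$, rather than, say, four arms of length $2$. The key subtlety is the asymmetric choice of breakpoint in the two $I_5$ cycles. Once this is arranged, the remaining steps --- verifying self-intersections, transversality, that the dual tree of the central configuration is acyclic, that the configuration can be made $\omega$-orthogonal, and recording the homology classes --- are routine.
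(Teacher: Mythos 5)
Your proof is correct and follows essentially the same construction as the paper: read the star shape of $\mathcal{U}$ off the Gay--Mark monodromy, break each $I_5$ cycle by a single blow-up, blow up the two fishtail nodes, and take a symplectic resolution of the section, the two $-4$ fishtails, and the two $I_5$ components hitting the section to form the central $-5$ sphere. The only discrepancy is a harmless labeling swap: the paper breaks the $D$-cycle (where the section meets $D_1$) off-center to get the $(-2,-2,-3)$ and $(-3)$ arms and the $C$-cycle (where the section meets $C_2$) opposite the section to get the two $(-2,-3)$ arms, whereas you assigned these roles to $C$ and $D$ respectively; both choices work, and your remark that the asymmetric versus symmetric placement of the breakpoint relative to the section component is the crucial bookkeeping matches the paper's actual choice of blowing up $C_4\cap C_5$ and $D_4\cap D_5$.
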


\begin{proof}

\begin{figure}[h]
	\includegraphics[width=0.40\textwidth]{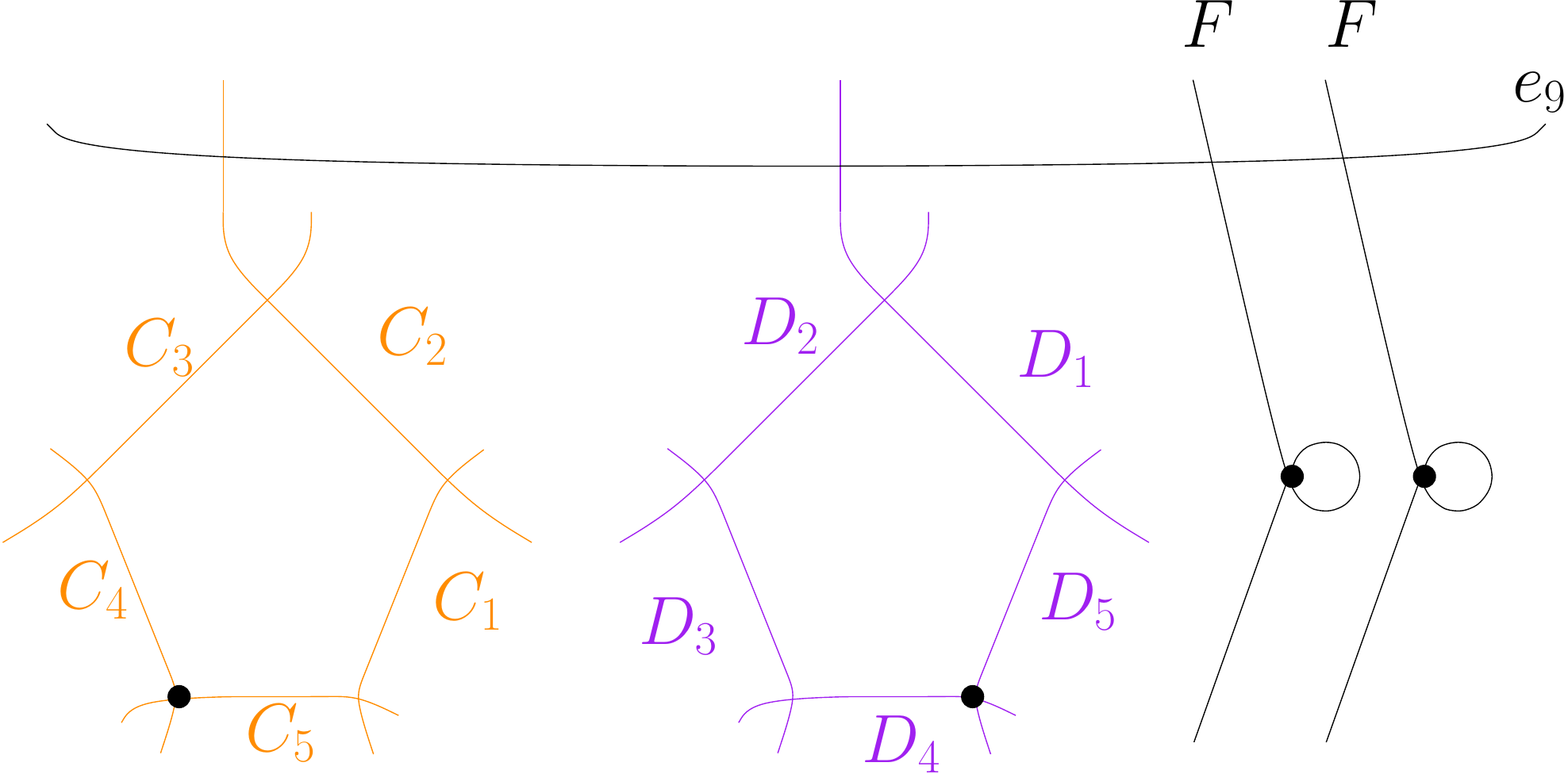}
	\caption{Elliptic fibration on E(1) with section}
	\label{fig:BlowupE_1}
\end{figure}

By blowing up at one singular point in each $I_5$ fiber, we obtain four $-3$ spheres which will make up the ends of each of the arms in the configuration $\mathcal{U}$. The central sphere of square $-5$ is obtained by taking the symplectic resolution of a section with two blown-up fishtail fibers as well as the $-2$ spheres in each $I_5$ fiber which intersect this section.

\begin{figure}[h]
	\includegraphics[width=0.40\textwidth]{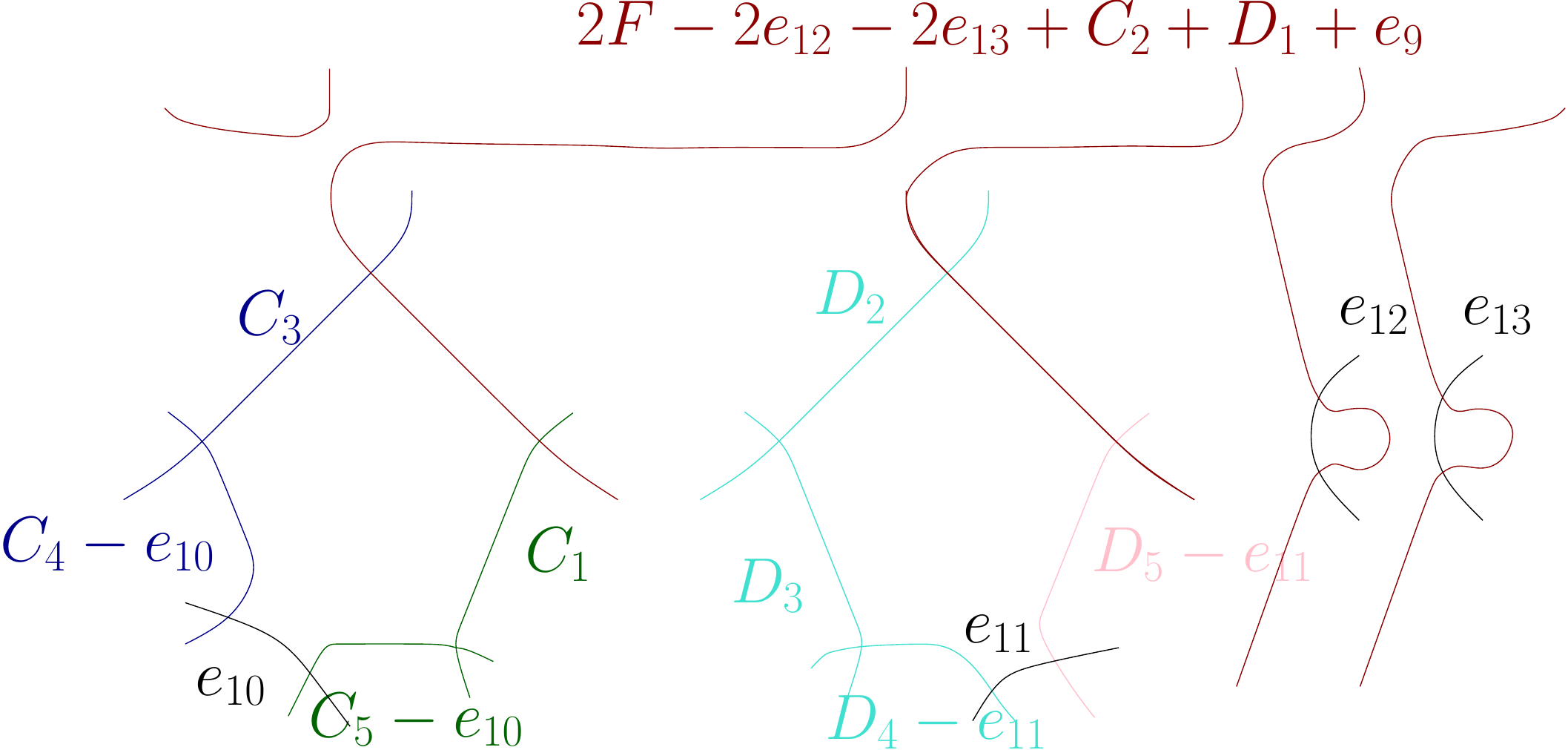}
	\caption{Embedding of $\mathcal{U}$ into $\CP2\#13\barCP2$ }
	\label{fig:BlowupE_1-2}
\end{figure}

The configuration is shown in figure \ref{fig:BlowupE_1-2}, and the homology classes are given below.

\begin{align*}
&[u_0]=2[F]-2e_{12}-2e_{13}+[C_2]+[D_1]+e_9 & & & & & &\\
&[u_{1,1}]=[D_2] &[u_{2,1}]&=[C_3] &[u_{3,1}]&= [C_1] &[u_{4,1}]=[D_5]-e_{11}&\\
&[u_{1,2}]=[D_3] &[u_{2,2}]&=[C_4]-e_{10} &[u_{3,2}]&=[C_5]-e_{10} & & \\
&[u_{1,3}]=[D_4]-e_{11} & & & & & &
\end{align*}
\end{proof}

\begin{lemma}\label{l:vector2}
The element $R\in H_2(\CP2\#13\barCP2)$ defined by \begin{align*}
R=&5656h-1728e_1-1846e_2-1836e_3-1915e_4-1905e_5-1728e_6-1600e_7\\
&-1905e_8-1890e_9-246e_{10}-295e_{11}-393e_{12}-1241e_{13}
\end{align*}
satisfies the following conditions:
\begin{enumerate}
 \item $R\cdot [u_{i,j}]=0$ for all $i$ and $j$ (i.e. $R$ is orthogonal to each embedded sphere of $\mathcal{U}$) \label{i:vector1}
 \item $R\cdot R>0$ \label{i:vector2}
 \item $R\cdot h>0$ \label{i:vector3}
 \item $R\cdot K>0$, where $K=-3h+e_1+\dots +e_{13}$ is the canonical class of $\CP2\#13\barCP2$. \label{i:vector4}
\end{enumerate}
\end{lemma}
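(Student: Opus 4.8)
The plan is to reduce everything to arithmetic in the standard diagonal basis. On $\CP2\#13\barCP2$ the intersection form is given by $h^2=1$, $e_i^2=-1$, and vanishing cross terms, so once every relevant class is written as an integral combination of $h,e_1,\dots,e_{13}$, each of the four conditions becomes a finite computation. First I would expand the classes $[u_0],[u_{1,1}],\dots,[u_{4,1}]$ of the preceding embedding lemma in this basis: the fiber class is $[F]=3h-e_1-\cdots-e_9$ (it is the anticanonical class of $E(1)$, and indeed $[C_1]+\cdots+[C_5]$ from Lemma \ref{l:fibration3} equals it), the classes $[C_i]$ and $[D_i]$ are listed explicitly in Lemma \ref{l:fibration3}, and the only further bookkeeping is to subtract the exceptional classes $e_{10},\dots,e_{13}$ of the four extra blow-ups (one at the node shared by two components of each $I_5$ fiber, and one at each of the two fishtail nodes), exactly as recorded in the displayed formulas for the $[u_{i,j}]$ and $[u_0]$.

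With all classes in standard form, condition (1) becomes the system of nine linear identities $R\cdot[u_0]=R\cdot[u_{1,1}]=\cdots=R\cdot[u_{4,1}]=0$ among the integer coefficients $5656,1728,\dots,1241$; conditions (2), (3), (4) are then the single evaluations $R\cdot R=5656^2-1728^2-\cdots-1241^2$, $R\cdot h=5656>0$, and $R\cdot K=-3\cdot 5656+1728+\cdots+1241$, each to be checked positive. This is mechanical and can be confirmed by hand or by a short computer check, in the same spirit as Lemmas \ref{l:vector1} and \ref{l:vector}.

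The only part requiring thought is not the verification but the \emph{production} of $R$, which is what I would actually present if more than ``direct computation'' is wanted. Because $\mathcal{U}$ is a negative-definite plumbing, the nine classes $[u_0],\dots,[u_{4,1}]$ span a $9$-dimensional subspace $L\subset H_2(\CP2\#13\barCP2;\mathbb{Q})$ on which the form is negative definite, so $L^\perp$ is $5$-dimensional of signature $(1,4)$; solving the $9\times 14$ linear system gives a $\mathbb{Q}$-basis of $L^\perp$, and clearing denominators and searching its positive cone for an integral vector produces the candidate $R$ (the large coefficients reflect the denominators that must be cleared). That the three inequalities can be met simultaneously is where the geometry of the embedding enters: decomposing $h=h_\parallel+h_\perp$ and $K=K_\parallel+K_\perp$ along $L\oplus L^\perp$, one has $h_\perp^2=1-h_\parallel^2\ge 1>0$ and $K_\perp^2=-4-K_\parallel^2$, and because the spheres $u_{i,j}$ carry high multiplicities of $h$ (compare the remark following Lemma \ref{l:kodaira}) the number $K_\parallel^2$ is sufficiently negative that $K_\perp$ is timelike and lies in the same component of the positive cone of $L^\perp$ as $h_\perp$. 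A future-pointing timelike integral $R\in L^\perp$ then automatically satisfies $R\cdot h=R\cdot h_\perp>0$ and $R\cdot K=R\cdot K_\perp>0$, with $R^2>0$ by construction, so (1)--(4) all hold. The main obstacle is thus purely computational: executing the integral linear algebra and exhibiting one explicit such $R$, after which the lemma is immediate.
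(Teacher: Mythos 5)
Your proposal matches the paper's proof, which is simply a direct arithmetic verification of the four conditions after writing all classes in the standard diagonal basis $h,e_1,\dots,e_{13}$; the paper states only ``The proof is a direct check.'' Your additional paragraph explaining how one \emph{produces} such an $R$ (finding an integral timelike vector in the $(1,4)$-signature orthogonal complement $L^\perp$ lying in the same cone component as $h_\perp$ and $K_\perp$) is a useful heuristic the paper does not spell out, but it is not part of what the lemma asserts, so the two proofs are essentially the same.
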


\begin{proof}
The proof is a direct check.
\end{proof}

\begin{proposition}
The result of the $(\mathcal{U},\mathcal{V})$ star surgery on this embedding of $\mathcal{U}$ into $\CP2 \#12\barCP2$ is an exotic copy of $\CP2\#6\barCP2$ which supports a symplectic structure.
\end{proposition}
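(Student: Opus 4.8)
The plan is to follow the proof of Proposition~\ref{p:ex7} almost verbatim, with the $(\mathcal{Q},\mathcal{R})$ star surgery replaced by the $(\mathcal{U},\mathcal{V})$ star surgery and the orthogonal vector of Lemma~\ref{l:vector} replaced by the vector $R$ of Lemma~\ref{l:vector2}. Write $Z:=(\CP2\#13\barCP2)\setminus\nu\mathcal{U}$ for the complement of a plumbing neighborhood of the embedded configuration $\mathcal{U}\subset\CP2\#13\barCP2=E(1)\#4\barCP2$ constructed in the previous lemma, and let $X''=Z\cup_\partial\mathcal{V}$ be the result of the star surgery. The spheres $u_{i,j}$ in that embedding are symplectic (they are components of singular fibers of an elliptic fibration together with a symplectic resolution of a transverse union of such spheres) and may be isotoped so as to meet $\omega$-orthogonally; since $\mathcal{V}$ is a symplectic filling of $(\partial\mathcal{U},\xi_{can})$ by Proposition~\ref{pro:exchangeUV}, the manifold $X''$ inherits a symplectic structure.

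First I would fix the homeomorphism type. The plumbing $\mathcal{U}$ has $9$ vertices, so $\chi(\nu\mathcal{U})=10$, while $\chi(\mathcal{V})=3$; hence $\chi(X'')=16-10+3=9$. Both $\nu\mathcal{U}$ and $\mathcal{V}$ are negative definite (the latter by the computation in the subsection on $\mathcal{V}$), so Novikov additivity gives $\sigma(X'')=-12-(-9)+(-2)=-5$, and in particular $b_2^+(X'')=1$, $b_2^-(X'')=6$. As in Proposition~\ref{p:ex7}, the simple connectivity of $\mathcal{V}$ and of the plumbing neighborhood $\nu\mathcal{U}$ forces $\pi_1(X'')=1$ by van Kampen. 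No even unimodular form with $b_2^+=1$ has signature $-5$, so the intersection form of $X''$ is $\langle1\rangle\oplus6\langle-1\rangle$, and Freedman's theorem \cite{Fre} gives $X''\cong\CP2\#6\barCP2$.

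The substantive step is the Seiberg--Witten comparison. The common boundary $Y:=\partial\mathcal{U}=\partial\mathcal{V}$ is a monopole $L$-space: reading the graph of $\mathcal{U}$ off the homology classes of the $u_{i,j}$ (equivalently off the Gay--Mark monodromy in Proposition~\ref{pro:exchangeUV}) one finds a negative-definite star with a central $-5$ vertex of valence $4$, four $-3$ leaves, and four interior $-2$ vertices of valence $2$, so there are no bad vertices; hence the argument of Lemma~\ref{l:SWcan} (via \cite{OS4} and \cite{KLT,CGH}) applies. Orient $H_2^+(\CP2\#13\barCP2)$ by $h$; the class $h$ gives the chamber of a positive-scalar-curvature metric, so $SW_{\CP2\#13\barCP2,h}(K)=0$ for the canonical class $K=-3h+e_1+\dots+e_{13}$. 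By parts~(2)--(4) of Lemma~\ref{l:vector2}, $R$ lies in the forward cone on the same side of the light cone as $h$ but is separated from $h$ by the wall dual to $K$ (since $K\cdot h<0<K\cdot R$); as $d_{\CP2\#13\barCP2}(K)=0$, the wall-crossing formula \cite{LL} gives $SW_{\CP2\#13\barCP2,R}(K)=\pm1$. By part~(1) of Lemma~\ref{l:vector2}, $R$ is orthogonal to every $[u_{i,j}]$, hence restricts trivially to $\nu\mathcal{U}$ and to $Y$, so $R|_Z$ extends by zero to a class $R''\in H_2(X'')$ with $(R'')^2=R^2>0$ that determines a chamber on $X''$ with $R''|_{\mathcal{V}}=0$ and $R''|_Z=R|_Z$. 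The canonical $\mathrm{Spin}^c$ structures of $\CP2\#13\barCP2$ and of $X''$ agree on $Z$ (the star surgery leaves the symplectic form on $Z$ unchanged), and $d_{X''}(K_{X''})=0$, so Theorem~\ref{theo:mic} yields $SW_{X'',R''}(K_{X''})=SW_{\CP2\#13\barCP2,R}(K)=\pm1$. Since $b_2^-(X'')=6\le 9$ the small-perturbation Seiberg--Witten invariant of $X''$ is chamber-independent, hence nonzero on $K_{X''}$, whereas every Seiberg--Witten invariant of $\CP2\#6\barCP2$ vanishes; therefore $X''$ is not diffeomorphic to $\CP2\#6\barCP2$.

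The only genuinely new computational input is Lemma~\ref{l:vector2}, asserted there by direct verification. The point that requires care — and what I expect to be the principal obstacle — is the bookkeeping around Theorem~\ref{theo:mic}: confirming that $\nu\mathcal{U}$ and $\mathcal{V}$ satisfy its hypotheses (negative definite, $b_1=0$, $L$-space boundary) and that the orthogonality of $R$ to the configuration spheres is exactly what is needed for $R$ to descend to a chamber on $X''$ matched with the one it determines on $\CP2\#13\barCP2$. Once this is recognized as the precise analogue of the proof of Proposition~\ref{p:ex7}, no further difficulty remains.
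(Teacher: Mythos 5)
Your proposal is correct and takes the same route as the paper: the paper's entire proof of this proposition is the one sentence ``The proof is similar to the proof of Proposition~\ref{p:ex7},'' and what you have written is precisely that argument carried out in the $(\mathcal{U},\mathcal{V})$ setting, with the correct numerics ($\chi=9$, $\sigma=-5$, $b_2^+=1$, $b_2^-=6$), the correct replacement of the chamber vector by the $R$ of Lemma~\ref{l:vector2}, and the correct use of the monopole $L$-space property of $\partial\mathcal{U}$ together with Theorem~\ref{theo:mic} and the wall-crossing formula. The one place where your phrasing is a touch loose is the line ``the simple connectivity of $\mathcal{V}$ and of the plumbing neighborhood $\nu\mathcal{U}$ forces $\pi_1(X'')=1$ by van Kampen'': to get $\pi_1(X'')=1$ one also needs that $\pi_1(Z)$ is normally generated by the image of $\pi_1(\partial\mathcal{U})$, which follows from $\pi_1(\CP2\#13\barCP2)=1$ and $\pi_1(\nu\mathcal{U})=1$; this is implicit in the paper's treatment of Proposition~\ref{p:ex7} as well, so it is not a substantive gap, just something to state explicitly.
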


\begin{proof}
The proof is similar to the proof of Propsition~\ref{p:ex7}
\end{proof}

\subsection{Another symplectic exotic $\CP2\#8\barCP2$}

\begin{proposition}
The configuration $\mathcal{K}$ can be symplectically embedded in $\CP2\#12\barCP2$ so that the homology classes of the central sphere $C_0$ and the four spheres in the arms $C_1,C_2,C_3,C_4$ are given as follows: 
\begin{align*}
[C_0]&=2f+e_1-2e_{10}-2e_{11}-e_{12}\\
[C_1]&=h-e_1-e_2-e_3\\
[C_2]&=h-e_1-e_4-e_5\\
[C_3]&=h-e_1-e_6-e_7\\
[C_4]&=h-e_1-e_8-e_9\\
\end{align*}
where $f=3h-(e_1+\cdots +e_9)$.
\end{proposition}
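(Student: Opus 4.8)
The plan is to obtain this embedding from the embedding of $\St_2$ constructed in Lemma \ref{l:classes} by performing a single additional symplectic blow-up on its central sphere. Recall that Lemma \ref{l:classes} produces a symplectic copy of $\St_2$ inside $\CP2\#11\barCP2$ whose central sphere $u_0$ has class $2f+e_1-2e_{10}-2e_{11}$ (self-intersection $-5$) and whose four arm spheres $u_1,\dots,u_4$ have classes $h-e_1-e_2-e_3$, $h-e_1-e_4-e_5$, $h-e_1-e_6-e_7$, $h-e_1-e_8-e_9$, with $u_0\cdot u_i=1$ and $u_i\cdot u_j=0$ for $i\ne j$. The key observation is that the desired classes satisfy $[C_i]=[u_i]$ for $i=1,\dots,4$, while $[C_0]=[u_0]-e_{12}$, which is exactly the effect on $u_0$ of blowing up one point lying on $u_0$.

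Concretely, I would start with the $\St_2$-configuration of Lemma \ref{l:classes} inside $\CP2\#11\barCP2$. Since $u_0$ is an embedded symplectic sphere meeting the union $u_1\cup u_2\cup u_3\cup u_4$ in only four points, I can choose a point $p\in u_0$ off of those four points and perform a symplectic blow-up at $p$, introducing an exceptional sphere of class $e_{12}$; in the resulting $\CP2\#12\barCP2$ set $C_0$ to be the proper transform of $u_0$ and $C_i=u_i$ for $i=1,\dots,4$. The proper transform of a symplectic sphere is again a symplectic sphere, so all five $C_i$ are symplectic, and since $p$ was disjoint from the other spheres we have $[C_0]=[u_0]-e_{12}=2f+e_1-2e_{10}-2e_{11}-e_{12}$ and $[C_i]=[u_i]$, matching the stated classes.

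It then remains to verify that $\{C_0,C_1,\dots,C_4\}$ is a symplectic configuration intersecting $\omega$-orthogonally according to the plumbing graph of $\mathcal{K}$ of Figure \ref{fig:notqbd}. From the homology classes one computes directly that $[C_0]^2=-6$, $[C_i]^2=-2$, $C_0\cdot C_i=1$, and $C_i\cdot C_j=0$ for $i\ne j$, so the adjunction identities $[C_\bullet]^2+K\cdot[C_\bullet]=-2$ hold and the dual graph is the star-shaped graph with a $-6$ central vertex and four $-2$ leaves. Because $p$ was chosen away from the intersection points of $u_0$ with the arms, the geometric intersection pattern matches the algebraic one, and the spheres already meet positively and transversely; by the isotopy remark following the Gay–Stipsicz theorem we may then isotope them through symplectic spheres so that all intersections become $\omega$-orthogonal, producing the required symplectic embedding of $\mathcal{K}$.

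The only step demanding real care — the analogue of an obstacle here — is confirming that the configuration realized by the $C_i$ is genuinely the plumbing $\mathcal{K}$ rather than merely a collection of surfaces with the right intersection numbers: one must check that $C_0$ meets each arm $C_i$ transversely in exactly one point and that the arms are pairwise disjoint, which is inherited from the corresponding properties of the $\St_2$-embedding together with the fact that the blow-up point avoids those intersections. Everything else is a routine homology bookkeeping check, essentially identical to the verification in Lemma \ref{l:classes}, and this embedding is exactly what is needed to set up the subsequent star surgery replacing $\mathcal{K}$ by $\mathcal{L}$.
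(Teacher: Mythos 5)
Your proof is correct and matches the paper's own argument: the paper likewise obtains the embedding of $\mathcal{K}$ by performing one additional blow-up on the $\St_2$-configuration from Lemma~\ref{l:classes}, at a point of the central sphere $u_0$ (the paper specifies a point on the section component $E_9$ of $u_0$, which achieves exactly what you describe). Your added care about choosing $p$ away from the four intersection points with the arm spheres, and your verification of the resulting intersection pattern and adjunction identities, are exactly the details that make the paper's ``follows immediately'' correct.
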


\begin{proof}
This follows immediately from blowing up the embedding of $\St_2$ into $\CP2\# 11\barCP2$ given in lemma \ref{l:classes} along a single point on the section.
\end{proof}

Note the other elliptic fibrations constructed in this paper can be used to find other embeddings of $\St_2$ and $\mathcal{K}$ into blow-ups of $E(1)$. It is an interesting question to ask whether the same star surgery operation performed on different embeddings of the plumbing into the same manifold can result in non-diffeomorphic manifolds.

\begin{theorem}
The manifold resulting from star surgery on this embedding of $\mathcal{K}$ into $\CP2\# 12\barCP2$ is an exotic copy of $\CP2\#8\barCP2$.
\end{theorem}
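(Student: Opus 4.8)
The plan is to follow the template of the first exotic $\CP2\#8\barCP2$ in \S\ref{s:8}, replacing the pair $(\St_2,\T_2)$ by $(\mathcal{K},\mathcal{L})$, and writing
$$Z=((\CP2\#12\barCP2)\setminus\mathcal{K})\cup_\partial\mathcal{L}$$
for the result of the star surgery. From the homology classes listed above, the embedded $\mathcal{K}$ is a plumbing of five spheres: a central vertex with $[C_0]^2=-6$ (the central $-5$-sphere of $\St_2$ blown up once at the indicated point, so $[C_0]=[u_0]-e_{12}$) joined to four leaves $C_i$ with $[C_i]^2=-2$; thus $\chi(\mathcal{K})=6$ and $\sigma(\mathcal{K})=-5$. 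Combining this with the properties of $\mathcal{L}$ computed in \S\ref{s:fillingproperties} --- $\chi(\mathcal{L})=2$, $b_2(\mathcal{L})=1$, $\sigma(\mathcal{L})=-1$, $b_1(\mathcal{L})=0$, $\mathcal{L}$ negative definite, and $\pi_1(\mathcal{L})\cong\Z/4$ with generator restricting to $\partial\mathcal{L}=\partial\mathcal{K}$ as the meridian of a $-2$-framed arm curve --- gives $\chi(Z)=15-6+2=11$ and $\sigma(Z)=-11-(-5)+(-1)=-7$, so $b_2^+(Z)=1$ and $b_2^-(Z)=8$. The intersection form of $Z$ is odd by the same $E_8$-plus-hyperbolic parity argument as in Lemma~\ref{l:homeo}.

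To see that $Z$ is simply connected I would, as in Lemma~\ref{l:homeo}, exhibit a disk in $(\CP2\#12\barCP2)\setminus\mathcal{K}$ bounded by the meridian of one of the $-2$-framed arm curves. The leaf $C_4$ has class $h-e_1-e_8-e_9=[V_1]$, a component of the $I_3$ fiber of the elliptic fibration of Lemma~\ref{l:fibration}; the neighbouring component $V_2$ (class $h-e_2-e_4-e_6$) meets $C_4$ once and, by a short intersection-number check from Lemmas~\ref{l:fibration} and~\ref{l:classes}, is disjoint from $C_0,C_1,C_2,C_3$. Hence $V_2\cap((\CP2\#12\barCP2)\setminus\mathcal{K})$ is a disk whose boundary is the required meridian. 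Since $\mathcal{K}$ is simply connected, van Kampen shows $\pi_1(Z)$ is a quotient of $\pi_1(\mathcal{L})\cong\Z/4$ generated by the class of that meridian, which is nullhomotopic in $(\CP2\#12\barCP2)\setminus\mathcal{K}$; therefore $\pi_1(Z)=1$, and Freedman's theorem \cite{Fre} gives $Z\cong\CP2\#8\barCP2$.

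Next I would distinguish $Z$ from $\CP2\#8\barCP2$ by a small-perturbation Seiberg-Witten computation at the canonical class $\widetilde{K}$ of the symplectic manifold $Z$, imitating Lemma~\ref{l:SWcan}. The boundary $\partial\mathcal{K}=\partial\mathcal{L}$ is a rational homology sphere which is a monopole $L$-space: its plumbing graph satisfies $|m(v)|>d(v)$ at every vertex (weight $-6$ with $4$ edges centrally, weight $-2$ with $1$ edge at each leaf), so \cite[Theorem~7.1]{OS4} applies, together with the equivalence of Heegaard Floer and monopole Floer homology \cite{KLT,CGH}. As in Lemma~\ref{l:vector1}, I would produce an explicit $W\in H_2(\CP2\#12\barCP2)$ with $W\cdot[C_i]=0$ for all $i$, $W^2>0$, $W\cdot h>0$, and $W\cdot K>0$, obtained by solving the orthogonality system $W\cdot[C_i]=0$ over $\Q$, clearing denominators, and perturbing inside $\bigcap_i[C_i]^\perp$ to enforce the three positivity conditions. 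Orienting $H_2^+$ by $h$, the class $h$ lies in the chamber of a positive-scalar-curvature metric, so $SW_{\CP2\#12\barCP2,h}(K)=0$; since $K\cdot h<0<K\cdot W$ there is a wall between the $h$- and $W$-chambers relative to $K$, and the wall-crossing formula \cite{LL} gives $SW_{\CP2\#12\barCP2,W}(K)=\pm1$. Because $\widetilde{K}$ and $K$ restrict identically to the common piece $(\CP2\#12\barCP2)\setminus\mathcal{K}$, because $\mathcal{L}$ is negative definite with $b_1=0$, and because $d_Z(\widetilde{K})=d_{\CP2\#12\barCP2}(K)=0$, Theorem~\ref{theo:mic} gives $SW_{Z,W}(\widetilde{K})=SW_{\CP2\#12\barCP2,W}(K)=\pm1$. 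Since $b_2^-(Z)=8\leq 9$, the small-perturbation invariant is chamber-independent, so $SW_Z(\widetilde{K})\ne0$; but every small-perturbation Seiberg-Witten invariant of $\CP2\#8\barCP2$ vanishes, its unique chamber being that of a positive-scalar-curvature metric. Hence $Z$ is not diffeomorphic to $\CP2\#8\barCP2$.

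The only genuinely new ingredient is the vector $W$, the analogue of $V$ in Lemma~\ref{l:vector1} and of $R$ in Lemmas~\ref{l:vector} and~\ref{l:vector2}; finding it is finite linear algebra, and the one (mild) obstacle is ensuring that the strict inequalities $W^2>0$, $W\cdot h>0$, $W\cdot K>0$ survive the clearing of denominators, which in practice one arranges by scaling a rational solution of the homogeneous system and perturbing within the orthogonal complement of the plumbing spheres. I would also note, as in \S\ref{s:min}, that a full adjunctive-basic-class search --- only slightly more involved here since $H_2(\mathcal{L})\cong\Z$ has positive rank --- should show that $\pm\widetilde{K}$ are the only basic classes of $Z$, hence that $Z$ is minimal with symplectic Kodaira dimension $2$; this refinement is not needed for the exotic claim.
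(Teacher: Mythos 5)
Your proposal is correct and follows essentially the paper's own argument: compute $\chi(Z)=11$ and $\sigma(Z)=-7$ from $\chi(\mathcal{K})=6$, $\sigma(\mathcal{K})=-5$, $\chi(\mathcal{L})=2$, $\sigma(\mathcal{L})=-1$, prove simple connectedness via a disk in an untouched transverse component of the $I_3$ fiber meeting $C_4$ (killing the $\Z/4$ generator of $\pi_1(\mathcal{L})$), and distinguish the smooth structure by showing $SW_Z(\widetilde{K})\neq0$ via the wall-crossing formula and Theorem \ref{theo:mic}, using that $\partial\mathcal{K}=\partial\mathcal{L}$ is a monopole $L$-space. The only place you work harder than the paper is the chamber vector: since $[C_0]=[u_0]-e_{12}$ with $[C_i]=[u_i]$ for $i\geq1$ and the class $V$ of Lemma \ref{l:vector1} has zero $e_{12}$-coefficient, $V$ itself (viewed in $H_2(\CP2\#12\barCP2)$) is already orthogonal to every $[C_i]$ and retains the required positivity properties, so no new vector $W$ need be constructed.
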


\begin{proof}
The manifold is homeomorphic to $\CP2\#8\barCP2$ by Freedman's theorem once we show that it is simply connected and has Euler characteristic $11$ and signature $-7$. The Euler characteristic and signature computations follow from the fact that $\chi(\mathcal{L})=2$, $\sigma(\mathcal{L})=-1$, $\chi(\mathcal{K})=6$, and $\sigma(\mathcal{K})=-5$. The manifold is simply connected as in the proof of lemma \ref{l:homeo} because the generator of $\pi_1(\mathcal{L})$ is isotopic to the meridian of $C_4$ in the embedding which is homotopically trivial because $C_4$ is one sphere in an $I_3$ fiber where the other transversally intersecting spheres are not cut out in the star surgery.

The diffeomorphism type can be distinguished from $\CP2\#8\barCP2$ either by showing that the resulting symplectic manifold has Kodaira dimension two as in lemma \ref{l:kodaira}, or by showing that the canonical class and its negation are basic classes in the small perturbation chamber of the star surgered manifold as in lemma \ref{l:SWcan}. In fact the value of $K_X\cdot \omega_X$ to compute the Kodaira dimension comes out to be exactly the same value as for the star surgery using $\St_2$ and $\T_2$, and the element $V$ defining the chamber on $\CP2\# 12\barCP2$ which descends to the small perturbation chamber in the star surgered manifold can be defined identically as in lemma \ref{l:vector1}.
\end{proof}

Similar constructions could yield applications of the $(\mathcal{M},\mathcal{N})$ and $(\mathcal{O},\mathcal{P})$ star surgeries, using the embeddings yielding the exotic copies of $\CP2\#7\barCP2$ and $\CP2\#5\barCP2$ of sections \ref{s:morestar} and \ref{s:morestar2}.

\section{Infinitely Many Exotic Smooth Structures on $\CP2 \# 7 \barCP2$} \label{s:smoothexample}
{

Using a different embedding of $\St_2$, we can produce other examples of exotic 4-manifolds using star surgery. Here we find an embedding which produces an exotic  $\CP2 \# 7 \barCP2$ by  combining our star surgery operation with a knot surgery in the double node neighborhood which was introduced by Fintushel and Stern \cite{FS2}. Thanks to the knot surgery we will have a better control over the Seiberg-Witten invariants of our manifold. The price we pay is that our manifold is no longer symplectic.

\begin{proof}[Proof of Theorem \ref{theo:main2}]
We will start with the elliptic fibration with two $I_2$ fibers, and two $I_4$ fibers whose homology classes are specified in lemma \ref{l:fibration2}. The homology classes of these spheres in $\CP2\# N \barCP2$ will be crucial to our computation of the Seiberg-Witten invariant of the manifold resulting from star-surgery along this embedding.

We follow the same steps as in \cite{FS2}. Let $K_n$ denote the $n$-twist knot. Recall that $K_n$ admits a Seifert surface of genus one and its  symmetrized Alexander polynomial is given by 
$$\Delta_{K_n}(t)=nt-(2n+1)+nt^{-1}.$$
Consider the elliptic fibration described in Lemma   \ref{l:fibration2}. We pick one of the $I_2$ fibers, say the one consisting of $\widetilde{C}_1$ and $\widetilde{L}_1$, and perturb the fibration locally so the $I_2$ fiber turns into a double node neighborhood. Then we pick a regular fiber in the double node neighborhood and do knot surgery on it. The knot surgery operation does not preserve the fibration structure inside the double node neighborhood. In particular the section represented by the exceptional sphere $E_2$ does not survive after the knot surgery. On the other hand if  we choose the gluing map in the knot surgery carefully, then $E_2$ turns into a \emph{pseudo-section}, an immersed sphere with one transverse self intersection which is a section outside of the double node neighborhood.

\begin{figure}[h]
	\includegraphics[width=0.60\textwidth]{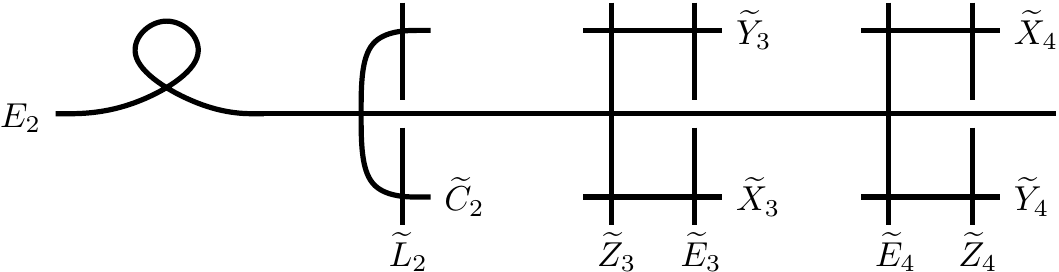}
	\caption{$E(1)_{K_n}$.}
	\label{fig:fibration2-1}
\end{figure}

Before describing our construction further, we would like to make a couple of observations about the manifold $E(1)_{K_n}$ which is the result of the knot surgery described above. First note that   $E(1)_{K_n}$ is simply connected: The fundamental group of the complement of a regular fiber is generated by a normal circle which bounds a disk in $E_2$. The second observation is that the only Seiberg-Witten basic classes of   $E(1)_{K_n}$ are $\pm PD([F])$ where $[F]$ is the fiber class, and the small perturbation Seiberg-Witten invariants at these classes are $n$, \cite{Sz, FS4}. Hence $E(1)_{K_n}$ is homeomorphic but not diffeomorphic to $E(1)=\CP2\# 9 \barCP2$.

We continue with the construction.  Blow-up $E(1)_{K_n}$ at the double point of the pseudo-section $E_2$. The proper transform of the pseudo-section is  an embedded sphere $S$ whose homology class is represented is $e_2-2e_{10}$. Now we see the configuration $\St_2$ embedded in $E(1)_{K_n}\# \barCP2$ using Lemma \ref{l:fibration2}:
\begin{align*}
[u_0]&=[S]+[\widetilde{E}_4]=e_1-2e_{10},\\
[u_1]&=[\widetilde{C}_2]=2h-e_1-e_2-e_3-e_4-e_5-e_6,\\
[u_2]&=[\widetilde{Z}_3]=h-e_1-e_2-e_9,\\
[u_3]&=[\widetilde{X}_4]=h-e_1-e_6-e_7,\\
[u_4]&=[\widetilde{Y}_4]=h-e_1-e_5-e_8.
\end{align*}

\begin{figure}[h]
	\includegraphics[width=.60\textwidth]{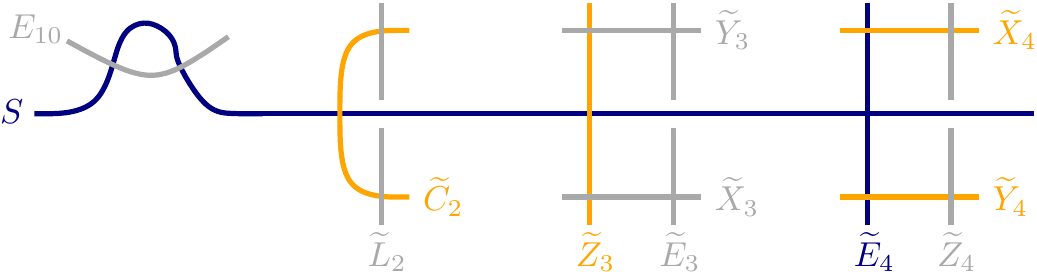}
	\caption{Configuration $\St_2$ inside $E(1)_{K_n}\# \barCP2$. }
	\label{fig:fibration2-2}
\end{figure}

Let $Y_n$ denote the result of star surgery of $E(1)_{K_n}\# \barCP2$ along the configuration $\St_2$ described above. We claim that $Y_n$ is homeomorphic to $\CP2\# 7 \barCP2$. First we must see that $Y_n$ is simply connected. By Lemma \ref{p:fund}, the generator of $\pi_1(\T_2)$ is represented by the curve in $\partial \T_2=\partial \St_2$ given by the boundary of the normal disk to any of the $-2$ spheres in $\St_2$. We will take the representative curve which bounds the normal disk to $u_2$. Since $u_2$ is one $-2$ sphere in the $I_4$ fiber, and there are other $-2$ spheres in that fiber which intersect $u_2$ transversally but are otherwise disjoint from the embedding of $\St_2$ ($\widetilde{X_3}$ or $\widetilde{E_3}$), this curve bounds a disk in $E(1)_{K_n}\# \barCP2 \setminus \St_2=Y_n\setminus \T_2$. Therefore $Y_n$ is simply connected. Next we check that $Y_n$  has the same Euler characteristic, signature, and parity of $\CP2\#7\barCP2$. A simple computation shows 
$$\chi(Y_n)=\chi(E(1)_{K_n}\# \barCP2)-\chi(\St_2)+\chi(\T_2)= 10,$$
and 
$$\sigma(Y_n)=\sigma(E(1)_{K_n}\# \barCP2)-\sigma(\St_2)+\sigma(\T_2)=-6.$$
Since $b_2(Y_n)=8$ and $b_2^+(Y_n)=1$, the intersection form cannot be a direct sum of hyperbolic pieces and $E_8$'s, so the parity of $Y_n$ is odd. Therefore $Y_n$ is homeomorphic to $\CP2\# 7 \barCP2$ by Freedman's theorem.

Finally we compute the Seiberg-Witten invariants of $Y_n$. By the blow-up formula $E(1)_{K_n}\# \barCP2$ has exactly four Seiberg-Witten basic classes $\pm PD([F]) \pm e_{10}$. The small perturbation Seiberg-Witten invariant in the chamber of $h$ evaluates as $\pm n$. We need to translate this information to a chamber whose representative homology class is orthogonal $\St_2$. Consider the following homology class
$$H:=50h-32e_1-14e_2-12e_3-21e_4-5e_5-15e_6-3e_7-12e_8-4e_9-16e_{10}.$$
It can be checked that $H\cdot H>0$, $H\cdot h >0$, and $H\cdot[u_i]=0$ for all $i=0,\dots,4$. Let $K=-PD([F]) - e_{10}=-3h+e_1+\dots+e_9-e_{10}$. We have $K\cdot H<0$ and $K\cdot h<0$. Hence there is no wall between the chambers determined by $H$ and $h$ with respect to $K$. Note that $K|_{\St_2}$ is the canonical class of $\St_2$, so  $K$ descends to $Y_n$  as a characteristic class $\widetilde{K}$ with $\widetilde{K}|_{\T_2}=0$. 

Let $X_n=E(1)_{K_n}\# \barCP2$, we will check if $d_{X_n}(K)\geq 0$ and $d_{Y_n}(\widetilde{K})\geq 0$. Clearly 
$$d_{X_n}(K)=\frac{K^2-3\sigma(X_n)-2\chi(X_n)}{4}=\frac{-1 -3(-9) - 2(13)}{4}=0.$$
On the other hand 
\begin{align*}
d_{Y_n}(\widetilde{K})&=\frac{\widetilde{K}^2-3\sigma (Y_n) - 2\chi (Y_n)}{4}\\
&=\frac{(K)^2-(K|_{\St_2})^2 + (\widetilde{K}|_{\T_2})^2-3\sigma (Y_n) - 2\chi (Y_n)}{4}\\
&=\frac{(-1)-(-3) + (0)-3(-6) - 2(10)}{4}\\
&=0.
\end{align*}
Hence by Theorem \ref{theo:mic}, we have 
$$|SW_{Y_n,H}(\pm \widetilde{K})| = |SW_{E(1)_{K_n}\# \barCP2,H}(\pm K)|=n.$$
Since the small perturbation Seiberg-Witten invariant is well-defined for those manifolds with $b_2^-\leq 9$, we conclude that   $Y_n$ has at least two basic classes. In particular $Y_n$ is not diffeomorphic to $\CP2\# 7\barCP2$ which does not have any basic classes.

It remains to prove the minimality of $Y_n$ for $n\geq 2$. By the blow-up formula, it suffices to show that there are exactly two basic classes whose Seiberg-Witten invariants are $\pm n$. We will show that $\pm \widetilde{K}$ are the only Seiberg-Witten basic classes of $Y_n$ satisfying $|SW_{Y_n}(\pm \widetilde{K})| =n$. In other words, we will prove that the cohomology class $P:=-PD([F])+e_{10}$ (the only other basic class up to sign of $X_n$), does not descend to a basic class of $Y_n$. Suppose to the contrary that there is a basic class $\widetilde{P}$ of $Y_n$ such that $\widetilde{P}|_{Y_n-\T_2}=P|_{X_n\setminus \St_2}$. Then
\begin{align*}
d_{Y_n}(\widetilde{P})&=\frac{\widetilde{P}^2-3\sigma (Y_n) - 2\chi (Y_n)}{4}\\
&=\frac{(P)^2-(P|_{\St_2})^2 + (\widetilde{P}|_{\T_2})^2-3\sigma (Y_n) - 2\chi (Y_n)}{4}\\
&=\frac{(-1)-(-1/3) + (\widetilde{P}|_{\T_2})^2-3(-6) - 2(10)}{4}\\
&=-13/6 + (\widetilde{P}|_{\T_2})^2/4 <0.
\end{align*}
The last inequality follows from the fact that the intersection form of $\T_2$ is negative definite. This contradicts with the assumption that $\widetilde{P}$ is a basic class.
\end{proof}

}

\bibliography{References}
\bibliographystyle{plain}

\end{document}